\numberwithin{equation}{section}
\def\Re{{\sf Re}\,}
\def\Im{{\sf Im}\,}
\newcommand{\D}{\mathbb D}
\newcommand{\de}{\partial}
\newcommand{\R}{\mathbb R}
\newcommand{\Ha}{\mathbb H}
\newcommand{\C}{\mathbb C}
\newcommand{\oD}{\overline{\mathbb D}}
\newcommand{\N}{\mathbb N}
\def\Re{{\sf Re}\,}
\def\Im{{\sf Im}\,}
\newcommand{\strip}{\mathbb{S}}
\newcommand{\UD}{\mathbb{D}}
\def\Re{{\sf Re}\,}
\def\Im{{\sf Im}\,}
\def\Re{{\sf Re}\,}
\def\Im{{\sf Im}\,}
\def\1#1{\overline{#1}}
\def\2#1{\widetilde{#1}}
\def\3#1{\widehat{#1}}
\def\4#1{\mathbb{#1}}
\def\5#1{\frak{#1}}
\def\6#1{{\mathcal{#1}}}
\def\Re{{\sf Re}\,}
\def\Im{{\sf Im}\,}
\newcommand{\mcite}[1]{\csname b@#1\endcsname}
\theoremstyle{theorem}
\def\Re{{\sf Re}\,}
\def\Im{{\sf Im}\,}
\def\Arg{{\rm Arg}}
\newtheorem{theorem}{Theorem}[section]
\newtheorem{lemma}[theorem]{Lemma}
\newtheorem{proposition}[theorem]{Proposition}
\newtheorem{corollary}[theorem]{Corollary}
\theoremstyle{definition}
\newtheorem{definition}[theorem]{Definition}
\newtheorem{example}[theorem]{Example}
\theoremstyle{remark}
\newtheorem{remark}[theorem]{Remark}
\numberwithin{equation}{section}
\title[Backward orbits and petals]{Backward orbits and petals of semigroups of holomorphic self-maps of the unit disc}
\author[F. Bracci]{Filippo Bracci$^\dag$}
\address{F. Bracci: Dipartimento di Matematica, Universit\`a di Roma ``Tor Vergata", Via della Ricerca
Scientifica 1, 00133, Roma, Italia.} \email{fbracci@mat.uniroma2.it}
\author[M. D. Contreras]{Manuel D. Contreras$^\ddag$}
\author[S. D\'{\i}az-Madrigal]{Santiago D\'{\i}az-Madrigal$^\ddag$}
\address{M. D. Contreras, S. D\'{\i}az-Madrigal: Camino de los Descubrimientos, s/n\\
Departamento de Matem\'{a}tica Aplicada~II and IMUS\\ Universidad de Sevilla\\ Sevilla,
41092\\ Spain.}\email{contreras@us.es} \email{madrigal@us.es}
\author[H. Gaussier]{Herv\'e Gaussier}
\address{H. Gaussier: Univ. Grenoble Alpes, CNRS, IF, F-38000 Grenoble, France}\email{herve.gaussier@univ-grenoble-alpes.fr}
\subjclass[2010]{Primary 37C10, 30C35; Secondary 30D05, 30C80, 37F99, 37C25}
\keywords{Semigroups of holomorphic functions; backward orbits; petals; K\"onigs function; holomorphic models}
\thanks{$^\dag\,$Partially supported by GNSAGA of INdAM}
\thanks{$^\ddag$ Partially supported by the \textit{Ministerio
de Econom\'{\i}a y Competitividad} and the European Union (FEDER) MTM2015-63699-P and  by \textit{La Consejer\'{\i}a de Econom\'{\i}a y Conocimiento de la Junta de Andaluc\'{\i}a}.}
\long\def\REM#1{\relax}
\begin{document}
\maketitle

\selectlanguage{english}
\begin{abstract}
We study the backward invariant set of one-parameter semigroups of holomorphic self-maps of the unit disc. Such a set is foliated in maximal invariant curves and its open connected components are petals, which are, in fact, images of Poggi-Corradini's type pre-models. Hyperbolic petals are in one-to-one correspondence with repelling fixed points, while only parabolic semigroups can have parabolic petals. Petals have locally connected boundaries and, except a very particular case, they are indeed Jordan domains. The boundary of  a petal contains the Denjoy-Wolff point and, except such a fixed point, the closure of a petal contains either no other boundary fixed point or a unique repelling fixed point.
We also describe petals in terms of geometric and analytic behavior of K\"onigs functions using divergence rate and universality of models. Moreover, we construct a semigroup having a repelling fixed point in such a way that the intertwining map of the pre-model is not regular.  
\end{abstract}

\tableofcontents

\section{Introduction}

One-parameter continuous semigroups of holomorphic self-maps  of $\D$---for short, holomorphic semigroups in $\D$---have been widely studied, see, {\sl e.g.}, \cite{Ababook89,BerPor78,Shobook01,Sis98}.
In this paper, we study the behavior of semigroups at the boundary from a dynamical point of view, with special attention to boundary regular (in particular repelling) fixed points, a subject that has been addressed in a number of recent papers \cite{ConDia05a,CoDiPo04,CoDiPo06,ElShZa08-1,ElShZa08,Gum14}. 

Given a semigroup $(\phi_t)$ in $\D$ and a point $z\in \D$, one can follow the ``backward'' trajectory up to a boundary point. The union of the backward trajectory and the forward trajectory of $z$ is a maximal invariant curve for the semigroup. In case the backward trajectory is defined for all negative times, it is called a {\sl backward orbit}.  

Backward orbits for discrete holomorphic iteration in the unit disc have been introduced in \cite{Pog00}, where Poggi-Corradini proved that for every repelling fixed point of a holomorphic self-map of $\D$ there exists a backward orbit with bounded hyperbolic step. Using such an orbit as basis for a suitable rescaling, Poggi-Corradini showed the existence of pre-models. Abstract backward orbits for discrete iteration have been studied by the first named author in \cite{BrTAMS}, with the aim of proving a conjecture of Cowen \cite{Cow84} about common boundary fixed points of commuting holomorphic maps. Later, Poggi-Corradini \cite{Pog03,Pog04} gave a systematic treatment of the subject, and, recently, Arosio \cite{Ar} focused on backward orbits and pre-models with a categorial point of view which holds also in higher dimension. The previous cited results can be clearly adapted to holomorphic semigroups in $\D$, and we collect them in Section \ref{Sec:regular-backward}. 

We point out that, in  \cite{Pog00}, the author proved that the intertwining map of a pre-model at a repelling fixed point is always quasi-conformal, and he gave a (rather complicated) example of a holomorphic self-map of $\D$ for which the intertwining map of the pre-model is not regular. In Section \ref{sec-nr}, using suitable localization of the hyperbolic distance, we construct a holomorphic {\sl semigroup} of $\D$ for which the intertwining map of the pre-model at a repelling fixed point is not regular. 

The core and the main novelties of the paper are contained in Section \ref{Sec:petals}. There we start considering the {\sl backward invariant} set $\mathcal W$ of a holomorphic semigroup of $\D$, namely, the union of those points for which the backward trajectories are defined for every negative times. This set is foliated in real analytic curves (which we call {\sl maximal invariant curves}) which are orbits along which the Cauchy problem for the infinitesimal generator of the semigroup has a maximal solution defined for every real time. The interior of $\mathcal W$, the boundary of $\mathcal W$ and its complement are all completely $(\phi_t)$-invariant. The connected components of the interior of $\mathcal W$ are called {\sl petals}. All petals contain the Denjoy-Wolff point of $(\phi_t)$ in their closure. We call a petal {\sl parabolic} if it contains only a fixed point of the semigroup---that is, the Denjoy-Wolff point---in its closure, {\sl hyperbolic} otherwise. The main new results in the paper can be summarized in the following theorem:

\begin{theorem}\label{main-intro}
Let $(\phi_t)$ be a holomorphic semigroup in $\D$, not an elliptic group, with Denjoy-Wolff point $\tau\in \overline{\D}$. Let $\Delta$ be a petal of $(\phi_t)$. Then
\begin{enumerate}
\item $\Delta$ is simply connected and $\tau\in\partial\Delta$,
\item $\phi_t(\Delta)= \Delta$ and $(\phi_t|_\Delta)$ is a continuous one-parameter group of $\Delta$.
\item $\partial \Delta$ is locally connected and, except a very particular case, $\Delta$ is a Jordan domain. There are, in fact, only 5 possible types of petals (see Proposition \ref{Prop:shape-of-petals} for a precise description).
\item $\overline{\Delta}\setminus \tau$ contains at most one boundary fixed point $\sigma\in \partial \D$ of $(\phi_t)$. If this is the case, that is, $\Delta$ is hyperbolic, then $\sigma$ is a repelling fixed point of $(\phi_t)$. 
\item If $\Delta$ is parabolic, then $(\phi_t)$ is a parabolic semigroup. 
\item There is a one-to-one correspondence between repelling fixed points of $(\phi_t)$ and hyperbolic petals. 
\item For every $z\in \Delta$, the curve $[0,+\infty)\mapsto \phi_t(z)$ is a backward orbit with bounded hyperbolic step. 
\item If $\Delta$ is hyperbolic and $\sigma\in \partial \Delta\setminus\{\tau\}$ is the (unique) repelling fixed point of $(\phi_t)$ contained in $\overline{\Delta}$, then, for every $z\in \Delta$ it holds $\lim_{t\to-\infty}\frac{1}{t}\log(1-\overline{\sigma}\phi_t(z))=-\lambda$, where $\lambda\in (-\infty, 0)$ is the repelling spectral value of $(\phi_t)$ at $\sigma$.
\end{enumerate}
\end{theorem}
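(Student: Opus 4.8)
The plan is to produce, from the K\"onigs function of $(\phi_t)$, a conformal model of a single petal, and then to read off all eight assertions from that model together with the pre-model theory collected in Section~\ref{Sec:regular-backward} and the Julia--Wolff--Carath\'eodory lemma for infinitesimal generators. I would establish (1)--(2) first, since they produce the model; then (4)--(6); then the rate statements (7)--(8); and finally (3), which I expect to be the genuine obstacle.

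\emph{Items (1)--(2).} Since $\operatorname{int}\mathcal{W}$ is completely $(\phi_t)$-invariant and each $\phi_t$ is continuous, for $z\in\Delta$ the forward orbit $\{\phi_t(z):t\ge 0\}$ is a connected subset of $\operatorname{int}\mathcal{W}$ meeting $\Delta$, hence $\phi_t(\Delta)\subseteq\Delta$. As $z\in\mathcal{W}$, its backward orbit is defined for all negative times, and because $\partial\mathcal{W}$ is completely $(\phi_t)$-invariant while $z\notin\partial\mathcal{W}$, the whole backward orbit of $z$ avoids $\partial\mathcal{W}$, so it lies in $\operatorname{int}\mathcal{W}$ and, by connectedness, in $\Delta$; thus $\phi_{-t}|_\Delta$ is well defined and, by standard dependence of the generator's flow on time and initial data, $(\phi_t|_\Delta)_{t\in\R}$ is a continuous one-parameter group with $\phi_t(\Delta)=\Delta$. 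For simple connectivity I would use the K\"onigs function $h$, univalent on $\D$ and intertwining $(\phi_t)$ with its linear model flow $(\Phi_t)$ on $\Omega:=h(\D)$: then $h(\Delta)$ is a connected open subset of $\Omega$ invariant under the full group $(\Phi_t)_{t\in\R}$, and in the natural coordinate ($\log$ when $\Phi_t$ is a dilation $w\mapsto e^{\mu t}w$, the identity when it is a translation $w\mapsto w+\mu t$) the group becomes translation along a line, so $h(\Delta)$ is a strip---possibly a half-plane, a sector, or all of $\C$---hence simply connected, and so is $\Delta$ because $h|_\Delta$ is injective. Finally $\tau\in\partial\Delta$, because $\phi_t(z)\to\tau$ as $t\to+\infty$ gives $\tau\in\overline\Delta$, while $\tau\notin\Delta$: otherwise $(\phi_t|_\Delta)$ would fix the interior point $\tau$ and be an elliptic group, keeping $d_\Delta(\phi_t(z),\tau)$ constant, contradicting $\phi_t(z)\to\tau$.

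\emph{Items (4)--(6).} By (1)--(2), $(\phi_t|_\Delta)$ is a continuous one-parameter group of the simply connected $\Delta\cong\D$ with no interior fixed point (that point would be $\tau\notin\Delta$), hence conjugate to a hyperbolic or parabolic one-parameter subgroup of $\operatorname{Aut}(\D)$. Transporting its one or two boundary fixed points to $\partial\D$ via the boundary values of the conformal model yields $\tau$ always and, in the hyperbolic case, exactly one extra point $\sigma\in\partial\D$: this is the ``at most one'' in (4). That $\sigma$ is a boundary \emph{regular} fixed point of $(\phi_t)$ follows from (7)---the backward orbit from every $z\in\Delta$ lands at $\sigma$ with bounded hyperbolic step---together with the Julia--Wolff--Carath\'eodory lemma, and since forward orbits are repelled from $\sigma$ it is a repelling fixed point, with spectral value $\lambda<0$. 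Conversely, given a repelling fixed point $\sigma$, the pre-model of Section~\ref{Sec:regular-backward} provides a backward orbit with bounded hyperbolic step converging to $\sigma$; it lies in $\operatorname{int}\mathcal{W}$, hence in a petal, which is then hyperbolic with $\sigma\in\overline\Delta$. The assignment $\sigma\mapsto$ this petal is injective (by (4), no petal has two repelling fixed points in its closure) and onto the hyperbolic petals, giving the bijection (6). For (5): a parabolic petal makes $(\phi_t|_\Delta)$ parabolic, so $d_\D(z,\phi_t(z))\le d_\Delta(z,\phi_t(z))=O(\log|t|)$, forcing zero divergence rate and excluding the hyperbolic case; and the elliptic cases are excluded directly from the model (a rotation model makes $h(\Delta)$ an annulus, contradicting simple connectivity, while a dilation/spiral model makes $(\phi_t|_\Delta)$ hyperbolic), so $(\phi_t)$ is parabolic.

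\emph{Items (7)--(8), and (3).} Item (7) is immediate: $\phi_t|_\Delta$ is an isometry of $d_\Delta$ and $d_\D\le d_\Delta$, so along $\gamma(t)=\phi_{-t}(z)$ one has $d_\D(\gamma(t),\gamma(t+1))\le d_\Delta(\gamma(t),\gamma(t+1))=d_\Delta(z,\phi_{-1}(z))$, a constant. For (8), at the repelling fixed point $\sigma$ the generator $G$ has finite angular derivative with $G'(\sigma)=-\lambda>0$, so non-tangentially $G(z)=-\lambda(z-\sigma)+o(|z-\sigma|)$; since the backward orbit $x_t=\phi_t(z)$ tends to $\sigma$ non-tangentially as $t\to-\infty$ (pre-model plus bounded hyperbolic step), the identity $\tfrac{d}{dt}\log(1-\overline\sigma x_t)=-\overline\sigma\,G(x_t)/(1-\overline\sigma x_t)$ has integrand tending to $-\lambda$, and a Ces\`aro/Gr\"onwall argument upgrades this to $\lim_{t\to-\infty}\tfrac1t\log(1-\overline\sigma\phi_t(z))=-\lambda$ (equivalently, this is the divergence-rate/universality statement announced in the introduction). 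The main obstacle is (3): to prove $\partial\Delta$ locally connected and to enumerate the five shapes, I would run a Carath\'eodory prime-end analysis of $\Delta$ using the univalence of $h$, distinguishing whether $h(\Delta)$ is a strip, a half-plane, or a sector and whether its two ``ends'' reach distinct points of $\partial\D$ (generically $\tau$ and $\sigma$) or collapse at $\tau$; the lone non-Jordan petal arises precisely from a degenerate parabolic configuration in which both ends accumulate at $\tau$, which must be singled out and described by hand, as in Proposition~\ref{Prop:shape-of-petals}. Keeping track of these boundary configurations, and ruling out any boundary fixed point of $(\phi_t)$ in $\overline\Delta$ beyond $\tau$ and the single $\sigma$, is where I expect the real work to lie.
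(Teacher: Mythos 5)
Your overall architecture tracks the paper's (the theorem is assembled from Propositions \ref{Prop:simple-prop-petals}, \ref{Prop:shape-of-petals}, \ref{Prop:petal-no-other-fixed}, \ref{Prop:petals-premodel} and \ref{Prop:rate conv repelling}), and items (1), (2), (7), (8) are essentially right; your route to simple connectivity through the K\"onigs model (showing $h(\Delta)$ is a full union of orbit lines or spirals, hence a strip, half-plane or spirallike sector) is a legitimate variant of the paper's topological argument with maximal invariant curves, and in effect establishes Theorem \ref{Thm:petals-koenigs} en route. There are, however, genuine gaps. First, your proof of the ``at most one'' in (4) transports fixed points ``via the boundary values of the conformal model'', i.e.\ it needs the Riemann map $g\colon\D\to\Delta$ to extend continuously to $\overline{\D}$; that is exactly local connectivity of $\partial\Delta$, which is your item (3), and you have scheduled (3) last. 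The paper proves Proposition \ref{Prop:shape-of-petals} before Proposition \ref{Prop:petal-no-other-fixed} precisely to avoid this circularity. Moreover, even granting the extension, you must show that a boundary fixed point of $(\phi_t)$ lying on $\partial\Delta$ pulls back to a fixed point of the conjugated group $(g^{-1}\circ\phi_t\circ g)$ --- the paper does this by observing that otherwise $g$ would be constant on an arc of $\partial\D$ --- and this step is absent from your sketch.

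Second, the bijection in (6) is not closed: you assert that the map sending $\sigma$ to ``the petal containing the pre-model's backward orbit'' is onto the hyperbolic petals, but that requires knowing that a repelling fixed point lies on the boundary of exactly one petal (equivalently, that the pre-model image $g(\D)$ exhausts the petal). This is the content of Proposition \ref{Prop:petals-premodel}, whose proof uses Corollary \ref{Cor:premodel-and-back} (every regular backward orbit converging non-tangentially to $\sigma$ lies in $g(\D)$) plus a separate argument excluding a second petal; nothing in your sketch supplies it. Third, (3) is only announced, and your identification of the exceptional non-Jordan petal is wrong: the petal that fails to be a Jordan domain is case (1) of Proposition \ref{Prop:shape-of-petals}, the elliptic slit-disc configuration $\partial\Delta=J\cup\partial\D$ with $\tau\in\D$ (see Example \ref{Ex:petal type1}), whereas the parabolic configuration in which both ends of the invariant curve land at $\tau$ (case (5)) \emph{is} a Jordan domain.
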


As a corollary of the previous result, we prove that if $\sigma\in\partial \D$ is a boundary fixed point which is not regular (also called {\sl super-repelling} fixed point), then there exists at most one backward orbit of $(\phi_t)$ landing at $\sigma$ (see Proposition \ref{Prop:uno-super-orbit}). 

\smallskip

In case of an elliptic and starlike holomorphic self-map of $\D$, Poggi-Corradini  proved that every repelling fixed point corresponds to a sector in the image domain of the K\"onigs function with amplitude related to the boundary dilation coefficient at the fixed point. This result, and its converse, for every case of semigroups was proved with a direct, lengthy and rather complicated argument (which needs to consider  the type of semigroup case by case) by the second and third author in \cite{ConDia05a}.

The next aim of this paper is to give a simple comprehensive proof of those results (see  Theorem \ref{Thm:petals-koenigs} and Theorem \ref{Thm:koenigs-petals}). The idea is that, since on each hyperbolic petal the semigroup acts as a hyperbolic group,  the restriction of  the K\"onigs function on the petal is a {\sl holomorphic model} in the sense of \cite{AroBra16}. Then the {\sl rate of divergence}---which is a measure of how fast an orbit escapes to the Denjoy-Wolff point---of the semigroup on the petal has to be the same for the model via the K\"onigs function. The divergence rate ``upstairs'' is  essentially given by the repelling spectral value of the semigroup at the corresponding repelling fixed point,  and hence, ``downstairs'', this forces the shape of the image of the K\"onigs function (see Section \ref{Sec:petals-Koenigs} for details). 

In Section  \ref{Sec:Boundary Fixed Points and Koenigs Functions}, we use the previously proved results to characterize the analytic behavior of the K\"onigs function of the semigroup at boundary points.

Finally, in Section \ref{Sec:examples}, we provide several examples and in Section \ref{sec-nr} we construct an example of a holomorphic semigroup having a  pre-model at a repelling fixed point whose intertwining map is not regular at such a point.

\section{Preliminaries}

For all the statements without references, we refer the reader to, {\sl e.g.}, \cite{Ababook89}, \cite{CowMacbook95} or \cite{Shabook83}.

A continuous one-parameter semigroup $(\phi_t)$ of holomorphic self-maps of~$\UD$---a holomorphic semigroup of $\D$ for short---is a continuous homomorphism $t\mapsto \phi_t$ from the
additive semigroup $(\R_{\ge0}, +)$ of non-negative real numbers to the
semigroup $({\sf Hol}(\D,\D),\circ)$ of  holomorphic self-maps
of $\D$ with respect to composition, endowed with the
topology of uniform convergence on compacta.

If $\phi_{t_0}$ is an automorphism of $\D$ for some $t_0>0$, then $\phi_t$ is an automorphism of $\D$ for all $t\geq 0$ and the semigroup can be extended to a group.

If $(\phi_t)$ is not a group of hyperbolic rotations, namely, it does not contain elliptic automorphisms of $\D$, then there exists a unique point $\tau\in \oD$ such that $\phi_t$ converges uniformly on compacta to the constant map $z\mapsto \tau$. Such a point $\tau$ is called the {\sl Denjoy-Wolff point} of $(\phi_t)$. 

The semigroup is called {\sl elliptic} if $\tau\in  \D$. In such a case, $\phi_t'(\tau)=e^{-\lambda t}$ with $\lambda\in \C$, $\Re \lambda>0$. 

If $\tau\in\partial \D$, then the non-tangential limit $\angle\lim_{z\to \tau}\phi_t(z)=\tau$ for all $t\geq 0$ and $\angle\lim_{z\to \tau}\phi_t'(z)=e^{-\lambda t}$ for some $\lambda\geq 0$. In case $\lambda>0$ the semigroup is called {\sl hyperbolic}, while, if $\lambda=0$ the semigroup is {\sl parabolic}. The number $\lambda$ is called the {\sl dilation} (or the {\sl spectral value}) of $(\phi_t)$. 

Parabolic semigroups can be divided in two sub-types: a parabolic holomorphic semigroup in $\D$ is {\sl of positive hyperbolic step} if $\lim_{t\to+\infty}\omega(\phi_{t+1}(0), \phi_t(0))>0$ (here $\omega(z,w)$ is the hyperbolic distance in $\D$ between $z\in \D$ and $w\in \D$). Otherwise, it is called   {\sl of zero hyperbolic step}.

\begin{definition}
A point $\sigma\in \partial \D$ is a {\sl boundary fixed point} of $(\phi_t)$ if $\angle\lim_{z\to \sigma}\phi_t(z)=\sigma$ for all $t\geq 0$.

Moreover, a boundary fixed point $\sigma\in \partial \D$ is called a {\sl boundary regular fixed point}  if $\angle\lim_{z\to \sigma}\phi_t'(z)=e^{-\mu t}$ for some $\mu\in \R$ and for all $t\geq 0$. If $\mu<0$, the point $\sigma$ is a {\sl repelling fixed point} of $(\phi_t)$. In this case, number $\mu$ is called the {\sl repelling spectral value} of $(\phi_t)$ at $\sigma$.

A boundary fixed point which is not regular is called a {\sl super-repelling fixed point}.
\end{definition}

We point out for the reader convenience that the previous definition is not the standard definition of a boundary regular fixed point for a holomorphic self-map of $\D$, but it is equivalent to that (see \cite[Prop. 1.2.8]{Ababook89}), and it is enough for our aims. We also note that, apart from the trivial semigroup, the only boundary regular fixed points of a semigroup are the Denjoy-Wolff point (provided the semigroup is not elliptic) and repelling fixed points.

It is  known (see, \cite[Theorem 1]{CoDiPo04}, \cite[Theorem 2]{CoDiPo06}, \cite[pag. 255]{Sis85}, \cite{EliShobook10}) that  a point $\sigma\in \de \D$ is a boundary (regular) fixed point of $\phi_{t_0}$ for some $t_0>0$ if and only if it is a boundary (regular) fixed point of $\phi_t$ for all $t\geq 0$.

By Berkson-Porta's theorem \cite[Theorem~(1.1)]{BerPor78}, if $(\phi_t)$
is a holomorphic semigroup in $\D$, then $t\mapsto \phi_t(z)$
is real-analytic and there exists a unique holomorphic vector field
$G:\D\to \C$ such that $\frac{\de \phi_t(z)}{\de
t}=G(\phi_t(z))$ for all $z\in\UD$ and all~$t\ge0$. This vector field $G$---the  {\sl infinitesimal generator} of $(\phi_t)$---is {\sl semicomplete} in the sense that the associated Cauchy problem
\[
\begin{cases}
\frac{dx(t)}{dt}=G(x(t)),\\
x(0)=z,
\end{cases}
\]
has a  solution $x^z:[0,+\infty)\to \D$ for every $z\in \D$.
Conversely, any semicomplete holomorphic vector field in $\D$
generates a continuous one-parameter semigroup of holomorphic self-maps of $\D$.

Another key notion associated to  semigroups is that of holomorphic model.  
\begin{definition}
Let $(\phi_t)$ be a semigroup of holomorphic self-maps of $\D$. A {\sl (holomorphic) model} for $(\phi_t)$ is a triple $(\Omega, h, \Phi_t)$ such that $\Omega$ is an open subset of $\C$, $\Phi_t$ is a group of (holomorphic) automorphisms of $\Omega$, $h: \D \to h(\D)\subset \Omega$ is univalent on the image, $h\circ \phi_t= \Phi_t\circ h$ and 
\begin{equation}\label{absorbing}
\cup_{t\geq 0} \Phi_t^{-1}(h(\D))=\Omega.
\end{equation}
\end{definition}
The previous notion of holomorphic model was introduced in \cite{AroBra16}, where it was proved that every semigroup of holomorphic self-maps of any complex manifold admits a  holomorphic model, unique up to ``holomorphic equivalence''. Moreover, a model is ``universal'' in the sense that every other conjugation of the semigroup to a group of automorphisms factorizes through the model (see \cite[Section 6]{AroBra16} for more details).

Notice that given a model $(\Omega, h, \Phi_t)$ for a semigroup $(\phi_t)$ of holomorphic self-maps of $\D$, then $(\phi_t)$ is a group if and only if $h(\D)=\Omega$.

Holomorphic models always exist and are unique up to holomorphic equivalence of models. In what follows we denote by  $\Ha:=\{\zeta \in \C: \Re \zeta>0\}$, $\Ha^{-}:=\{\zeta\in \C: \Re \zeta<0\}$ and, given $\rho>0$, $\strip_\rho:=\{\zeta\in \C: 0<\Re \zeta<\rho\}$. We simply write $\strip:=\strip_1$. The following result sums up the results in \cite{AroBra16, Cow81}, see also \cite{Ababook89}.

\begin{theorem}\label{modelholo}
Let $(\phi_t)$ be a  semigroup in $\D$. Then
\begin{enumerate}
\item $(\phi_t)$ is the trivial semigroup if and only if $(\phi_t)$ has a holomorphic model $(\D, {\sf id}_\D, z\mapsto z)$.
\item $(\phi_t)$ is a {\sl group} of elliptic automorphisms with spectral value $i\theta$, for $\theta\in \R\setminus\{0\}$, if and only if $(\phi_t)$ has a holomorphic model $(\D, h, z\mapsto e^{-i\theta t}z)$.
\item  $(\phi_t)$ is elliptic, not a group, with spectral value $\lambda$, for $\lambda\in \C$ with $\Re \lambda>0$, if and only if $(\phi_t)$ has a holomorphic model  $(\C, h, z\mapsto e^{-\lambda t} z)$.
\item  $(\phi_t)$ is hyperbolic with spectral value $\lambda>0$ if and only if  it has a holomorphic model $(\strip_{\frac{\pi}{ \lambda}}, h, z\mapsto z+it)$.
\item  $(\phi_t)$ is parabolic of positive hyperbolic step if and only if it has a holomorphic model either of the form $(\Ha, h, z\mapsto z+it)$ or of the form $(\Ha^-, h, z\mapsto z+it)$.
\item $(\phi_t)$ is parabolic of zero hyperbolic step if and only if
it has a holomorphic model $(\C, h, z\mapsto z+it)$.
\end{enumerate}
\end{theorem}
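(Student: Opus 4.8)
The plan is to combine two facts. First, every semigroup of holomorphic self-maps of $\D$ admits a holomorphic model, unique up to holomorphic equivalence of models (\cite{AroBra16}). Second, such a semigroup can be linearized by its K\"onigs function, whose existence and univalence rest on Berkson--Porta's theorem (\cite{BerPor78}, \cite{Cow81}; see also \cite{Ababook89}). Because the model is unique, it suffices to exhibit, for each of the six mutually exclusive and exhaustive types of semigroups (trivial; elliptic group; elliptic non-group; hyperbolic; parabolic of positive, resp.\ of zero, hyperbolic step), a holomorphic model of the asserted form. The converse ``if'' implications then come for free: the six listed models are pairwise inequivalent, so a semigroup carrying one of them cannot be of any type but the corresponding one.

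I would build the models as follows. For the trivial semigroup, $(\D,\id_\D,z\mapsto z)$ works. If $(\phi_t)$ is a non-trivial group of elliptic automorphisms, all $\phi_t$ share a fixed point $\tau\in\D$; conjugating $\tau$ to the origin by a disc automorphism $h$ turns $(\phi_t)$ into a one-parameter group of rotations, giving the model $(\D,h,z\mapsto e^{-i\theta t}z)$ of case (2). In the remaining cases, with $G$ the infinitesimal generator, take for $h$ the K\"onigs function: if $(\phi_t)$ is elliptic with Denjoy--Wolff point $\tau\in\D$ and spectral value $\lambda$ (so $\Re\lambda>0$, since $(\phi_t)$ is not a group), normalize it so that $h\circ\phi_t=e^{-\lambda t}h$; if $(\phi_t)$ is not elliptic, normalize it so that $h\circ\phi_t=h+it$. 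From $\phi_s(\D)\subseteq\D$ one gets $e^{-\lambda s}h(\D)\subseteq h(\D)$, resp.\ $h(\D)+is\subseteq h(\D)$, for all $s\geq0$. Hence in the elliptic case $\Omega:=\bigcup_{s\geq0}e^{\lambda s}h(\D)$ is a simply connected domain containing $0=h(\tau)$ together with discs about $0$ of arbitrarily large radius (because $|e^{\lambda s}|\to\infty$) and invariant under $w\mapsto e^{\lambda t}w$, so $\Omega=\C$; since a univalent map $\D\to\C$ is never onto, $h(\D)\subsetneq\C$, and $(\C,h,w\mapsto e^{-\lambda t}w)$ is the model of case (3). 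In the non-elliptic case $\Omega:=\bigcup_{s\geq0}(h(\D)-is)$ is a connected open set invariant under all vertical translations, hence $\Omega=A+i\R$ with $A:=\{\mRe w:w\in\Omega\}$ an open interval, i.e.\ $\Omega$ is a strip $\strip_\rho$, a half-plane $\Ha$ or $\Ha^-$, or $\C$; with $\Phi_t(w)=w+it$ one checks the model axioms for $(\Omega,h,\Phi_t)$.

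It then remains to match $\Omega$ with the type of $(\phi_t)$. In the non-elliptic case $(\phi_t)$ has no interior fixed point (an interior fixed point $z_0$ of $\phi_{t_0}$ would give $h(z_0)=h(z_0)+it_0$), so $\tau\in\partial\D$. If $\Omega=\strip_\rho$, the conformal map $w\mapsto e^{i\pi w/\rho}$ sends $\strip_\rho$ onto a half-plane and conjugates $\Phi_t$ to the dilation $u\mapsto e^{-\pi t/\rho}u$, which fixes $0$ with derivative $e^{-\pi t/\rho}$; since the K\"onigs function transports the angular derivative at $\tau$ (Julia--Wolff--Carath\'eodory theory, see \cite[Ch.~1]{Ababook89}), $(\phi_t)$ is hyperbolic with spectral value $\lambda=\pi/\rho$, i.e.\ $\rho=\pi/\lambda$, which is case (4). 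If $\Omega\in\{\Ha,\Ha^-,\C\}$, then $\Phi_t$ fixes only the point at infinity, with unit derivative, so $(\phi_t)$ is parabolic; using that $h$ is an isometry between the hyperbolic distances of $\D$ and of $h(\D)$, and that $h(\D)-it$ exhausts $\Omega$ as $t\to+\infty$, one obtains
\[
\lim_{t\to+\infty}\omega\big(\phi_{t+1}(0),\phi_t(0)\big)
=\lim_{t\to+\infty}\omega_{h(\D)}\big(h(0)+i(t+1),\,h(0)+it\big)
=\omega_{\Omega}\big(h(0)+i,\,h(0)\big),
\]
which vanishes precisely when $\Omega=\C$ and is strictly positive when $\Omega$ is a half-plane; this settles cases (5) and (6). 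Finally, the six model domains with their group actions are pairwise inequivalent (they are distinguished by the conformal type of $\Omega$, by whether $h(\D)=\Omega$, and by the conjugacy class of $(\Phi_t)$ inside $\Aut(\Omega)$), so every ``if'' direction follows from the ``only if'' directions just proved together with the trichotomy elliptic / hyperbolic / parabolic and its sub-cases.

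I expect the main obstacle to be the identification of the K\"onigs data with the analytic invariants of the semigroup in that last step: that the width of the strip is exactly $\pi/\lambda$ (equivalently, that the K\"onigs function carries the angular derivative at the Denjoy--Wolff point) and that the hyperbolic step is positive exactly when the image of the K\"onigs function looks like a half-plane, rather than like $\C$, near its infinite end. These are Julia--Wolff--Carath\'eodory and hyperbolic-metric-comparison statements, and they carry the genuine function-theoretic content; they are precisely what is established in \cite{Cow81} and \cite{AroBra16}. Everything else reduces to the soft classification of one-parameter subgroups of $\Aut(\D)$ and $\Aut(\C)$ together with the abstract existence and uniqueness of holomorphic models.
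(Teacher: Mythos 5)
Your proposal is correct, but note that the paper does not prove this theorem at all: it is stated as a summary of known results and explicitly attributed to \cite{AroBra16}, \cite{Cow81} and \cite{Ababook89}, so there is no in-paper argument to compare against. What you have written is essentially a faithful reconstruction of the standard proof from those references: existence and uniqueness of the holomorphic model, construction of $\Omega$ as the increasing union $\bigcup_{s\geq 0}\Phi_s^{-1}(h(\D))$, the observation that translation-invariance forces $\Omega$ to be a vertical strip, a half-plane or $\C$, and the derivation of the converses from pairwise inequivalence of the six models. Two small points deserve care. First, when you distinguish the six models you should not rely on the conformal type of $\Omega$ alone to separate cases (4) and (5): a strip and a half-plane are conformally equivalent, and what actually separates them is the conjugacy class of the translation group in ${\sf Aut}(\Omega)$ (hyperbolic versus parabolic), which you do mention. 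Second, the two genuinely function-theoretic steps --- that the strip width equals $\pi/\lambda$ (equivalently that $h$ transports the angular derivative at the Denjoy--Wolff point) and that positive hyperbolic step corresponds exactly to a half-plane rather than $\C$ via monotone convergence of the hyperbolic metrics of $h(\D)-it$ to that of $\Omega$ --- are precisely the content of \cite{Cow81} and \cite{AroBra16}; you correctly isolate them and cite them rather than reprove them, which is consistent with the paper's own treatment of the statement as quoted background.
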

The holomorphic models defined in the previous theorem are called {\sl canonical}. The function $h$ in the canonical model of $(\phi_t)$ is called the {\sl K\oe nigs function} of the semigroup. 

Finally, in this paper we will make use of Carath\'eodory's prime ends theory. We refer the reader to  Pommerenke's books \cite{Pombook75, Pombook92} and Collingwood and Lohwater's book \cite{ColLohbook66} for all non proven statements about it.

\section{Backward orbits and pre-models}\label{Sec:regular-backward}

For $z,w\in \D$, we let $\omega(z,w)$ be the hyperbolic distance in $\D$ of $z, w$.

\begin{definition}
Let $(\phi_t)$ be a semigroup in $\D$. A continuous curve $\gamma:[0,+\infty)\to \D$ is called a  {\sl backward orbit}\index{Backward orbit} if for every $t\in (0,+\infty)$ and for every $0\leq s\leq t$,
\[
\phi_s(\gamma(t))=\gamma(t-s).
\]
A backward orbit $\gamma$ is said to be a {\sl regular backward orbit}\index{Regular backward orbit} if
\[
V(\gamma):=\limsup_{t\to +\infty}\omega(\gamma(t), \gamma(t+1))<+\infty.
\]
We call $V(\gamma)$ the {\sl hyperbolic step}\index{Hyperbolic step of a regular backward orbit} of $\gamma$.
\end{definition}

\begin{remark}\label{Rem:back-model}
Let $(\phi_t)$ be a semigroup in $\D$ and let $\gamma:[0,+\infty)\to \D$ be a backward  orbit for $(\phi_t)$. Let $(\Omega, h, \psi_t)$ be the canonical model of $(\phi_t)$ given by Theorem \ref{modelholo}. For all $t\geq 0$, $h(\gamma(t))\in h(\D)$, hence, $\psi_t(h(\gamma(t)))=h(\phi_t(\gamma(t)))=h(\gamma(0))$ and 
\[
h(\gamma(t))=\psi_{-t}(h(\gamma(0))),\quad \hbox{for all $t\geq 0$}.
\]
In particular, if $(\phi_t)$ is elliptic, not a group, and $\lambda\in \C$, $\Re\lambda>0$ is its spectral value, then 
\[
h(\gamma([0,+\infty)))=\left\{ e^{-\lambda s}h(\gamma(0)):s\in \R
\right\}\cap\{w\in \C: |w|\geq |h(\gamma(0))|\}.
\]
 While, if $(\phi_t)$ is non-elliptic, 
\[
h(\gamma([0,+\infty)))=\{w\in \C:\, \Re w=\Re h(\gamma(0))\}\cap\{w\in \C: \Im w\leq \Im h(\gamma(0))\}.
\]
\end{remark}

The study of backward orbits of groups is particularly easy by direct computation and we leave the details for the reader:

\begin{proposition}\label{Prop:group-backward}
Let $(\phi_t)$ be a non-trivial group in $\D$ and let $\tau\in\overline{\D}$ be the Denjoy-Wolff point of $(\phi_t)$. Let $\gamma:[0,+\infty)\to \D$ be a backward orbit. Then $\gamma$ is regular. Moreover,
\begin{enumerate} 
\item if $(\phi_t)$ is elliptic, then either $\gamma(t)\equiv \tau$ or the image of $\gamma$ is the boundary of a hyperbolic disc centered at $\tau$.
\item If $(\phi_t)$ is hyperbolic and $\sigma\in \partial \D\setminus\{\tau\}$ is the other fixed point of $(\phi_t)$, then $\lim_{t\to +\infty}\gamma(t)=\sigma$ and there exists $\alpha\in (-\pi/2, \pi/2)$ such that  $\lim_{t\to +\infty}\Arg(1-\overline{\sigma}\gamma(t))=\alpha$. 
\item  If $(\phi_t)$ is parabolic, then $\lim_{t\to +\infty}\gamma(t)=\tau$ and  $|\tau-\gamma(t)|^2=R( 1-|\gamma(t)|^2)$ for some $R>0$ and for all $t\geq 0$. In particular, $\gamma(t)$ converges to $\tau$ tangentially.
\end{enumerate}
\end{proposition}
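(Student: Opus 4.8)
The plan is to reduce everything to the canonical model via Remark \ref{Rem:back-model}, where backward orbits become explicit curves that can be pushed back to $\D$ by the K\oe nigs function. First I would observe that regularity is automatic: since $(\phi_t)$ is a group, each $\phi_t$ is an automorphism of $\D$, hence an isometry for the hyperbolic distance $\omega$, so $\omega(\gamma(t),\gamma(t+1))=\omega(\phi_t(\gamma(t)),\phi_t(\gamma(t+1)))=\omega(\gamma(0),\gamma(1))$ is constant in $t$; therefore $V(\gamma)<+\infty$ and in fact the $\limsup$ is a genuine limit equal to $\omega(\gamma(0),\gamma(1))$.

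For item (1), the elliptic case, I would use that $h$ is a univalent self-map of $\D$ (model $(\D,h,z\mapsto e^{-i\theta t}z)$) with $h(\tau)=0$, and by Remark \ref{Rem:back-model}, $h(\gamma(t))=e^{i\theta t}h(\gamma(0))$. Thus $|h(\gamma(t))|=|h(\gamma(0))|$ is constant, so $h(\gamma([0,+\infty)))$ lies on a circle centered at $0$; pulling back by the automorphism-like behavior of $h$ conjugating $\phi_t$ to a rotation, $\gamma$ traces (a subset of) the boundary of a hyperbolic disc centered at $\tau$, and the full circle is covered since the group parameter runs over all of $\R$. If $h(\gamma(0))=0$ then $\gamma(0)=\tau$ and $\gamma\equiv\tau$.

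For items (2) and (3), I would move to the half-plane/strip pictures. In the hyperbolic case the canonical model is $(\strip_{\pi/\lambda},h,z\mapsto z+it)$; by Remark \ref{Rem:back-model}, $h(\gamma(t))$ is the downward vertical ray $\{\Re w=\Re h(\gamma(0)),\ \Im w\le \Im h(\gamma(0))\}$. As $t\to+\infty$, $h(\gamma(t))\to\Re h(\gamma(0))-i\infty$, which is the prime end of $\strip_{\pi/\lambda}$ corresponding (under $h$ and the standard identification of $\strip_{\pi/\lambda}$ with $\D$) to the second fixed point $\sigma$; hence $\gamma(t)\to\sigma$. The asymptotic argument statement I would get by transporting the vertical ray through the Cayley-type map sending $\strip_{\pi/\lambda}$ to $\D$ with $-i\infty\mapsto\sigma$: a vertical line at horizontal position $a\in(0,\pi/\lambda)$ lands non-tangentially at $\sigma$ with a definite angle $\alpha=\alpha(a)\in(-\pi/2,\pi/2)$, and $\Arg(1-\overline{\sigma}\gamma(t))\to\alpha$. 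In the parabolic case the model is $(\Ha^{\pm},h,z\mapsto z+it)$ (positive step) or $(\C,h,z\mapsto z+it)$ (zero step), and again $h(\gamma(t))$ is a downward vertical ray, so $h(\gamma(t))\to\Re h(\gamma(0))-i\infty$, which corresponds to $\tau$; hence $\gamma(t)\to\tau$. The relation $|\tau-\gamma(t)|^2=R(1-|\gamma(t)|^2)$, i.e. that $\gamma(t)$ lies on a fixed horocycle at $\tau$, I would verify by writing the standard biholomorphism from the right half-plane (or $\C$) normalized so the point at $-i\infty$ goes to $\tau$: vertical lines $\{\Re w=\text{const}\}$ map to horocycles internally tangent to $\partial\D$ at $\tau$, the Euclidean "diameter" parameter $R$ depending only on $\Re h(\gamma(0))$; tangential convergence then follows since horocyclic approach is tangential.

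The main obstacle is the book-keeping in items (2) and (3): identifying the prime end $-i\infty$ of the strip (resp. half-plane, resp. plane) with the boundary fixed point $\sigma$ (resp. $\tau$) and then computing, via the explicit conformal map to $\D$, that vertical rays land non-tangentially with a prescribed angle (hyperbolic case) or along a horocycle (parabolic case). These are standard but fiddly Cayley-transform computations; everything else is essentially a one-line isometry argument plus the explicit formula from Remark \ref{Rem:back-model}.
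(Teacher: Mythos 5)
Your proposal is correct and is precisely the ``direct computation'' the paper has in mind --- the paper explicitly leaves the details of this proposition to the reader, and your route (isometry argument for regularity, then transporting circles, vertical rays and horocycles through the canonical model, whose K\oe nigs map is onto since $(\phi_t)$ is a group) is the intended one. The only cosmetic point: a parabolic \emph{group} necessarily has positive hyperbolic step, so the plane model $(\C,h,z\mapsto z+it)$ you mention in passing cannot occur and only the half-plane computation is needed.
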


\begin{remark}\label{Rem:backward-hyper-direc}
It is easy to see that if $(\phi_t)$ is a hyperbolic group with Denjoy-Wolff point $\tau\in \partial \D$ and other fixed point $\sigma\in \partial \D\setminus\{\tau\}$, then for every $\alpha\in (-\pi/2,\pi/2)$ there exists $z_0\in \D$ such that $[0,+\infty)\ni t\mapsto \phi_{-t}(z_0)$ is a regular backward orbit which converges to $\sigma$ and such that $\lim_{t\to +\infty}\Arg(1-\overline{\sigma}\phi_{-t}(z_0))=\alpha$. 
\end{remark}

Now, we examine the case of semigroups which are not groups. We start with the following result:

\begin{lemma}\label{Lem:converge-no-back}
Let $(\phi_t)$ be a semigroup in $\D$ which is not a group. Let $\tau\in \overline{\D}$ be its  Denjoy-Wolff point.  Let $\gamma:[0,+\infty)\to \D$ be a backward orbit for $(\phi_t)$. Then, 
\begin{itemize}
\item either $\tau\in \D$ and $\gamma(t)\equiv \tau$ for all $t\geq 0$,
\item or there exists $\sigma\in \partial \D$ (possibly $\sigma=\tau$) such that $\sigma$ is a fixed point of $(\phi_t)$ and $\lim_{t\to +\infty}\gamma(t)=\sigma$.
\end{itemize}
\end{lemma}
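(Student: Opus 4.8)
The plan is to transfer the problem to the canonical model and to track the backward orbit there. Let $(\Omega,h,\psi_t)$ be the canonical model of $(\phi_t)$ given by Theorem~\ref{modelholo}, so $h\colon\D\to h(\D)\subseteq\Omega$ is univalent, $h\circ\phi_t=\psi_t\circ h$, and, by Remark~\ref{Rem:back-model}, $h(\gamma(t))=\psi_{-t}(h(\gamma(0)))$ for all $t\ge 0$. Since $(\phi_t)$ is not a group, I distinguish the elliptic and the non-elliptic case. If $(\phi_t)$ is elliptic, then $\Omega=\C$, $\psi_t(w)=e^{-\lambda t}w$ with $\Re\lambda>0$, and $h(\tau)=0$; if $h(\gamma(0))=0$ then $\gamma(0)=\tau\in\D$ and $h(\gamma(t))\equiv 0$, so $\gamma(t)\equiv\tau$ and we are in the first alternative, while if $h(\gamma(0))\ne 0$ then $|h(\gamma(t))|=e^{t\Re\lambda}|h(\gamma(0))|\to+\infty$. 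If $(\phi_t)$ is not elliptic, then $\psi_t(w)=w+it$ and $\Omega\in\{\strip_{\pi/\lambda},\Ha,\Ha^-,\C\}$, so $h(\gamma(t))=h(\gamma(0))-it$ and $\Im h(\gamma(t))\to-\infty$. Hence, outside the first alternative, $h(\gamma(t))\to\infty$ in $\overline{\C}$.

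From this I would first obtain $|\gamma(t)|\to 1$: otherwise a sequence $\gamma(t_n)$ with $t_n\to+\infty$ would stay in a compact subset of $\D$ and, after extraction, converge to some $p\in\D$, giving $h(\gamma(t_n))\to h(p)\in\C$, a contradiction. Thus $\Gamma:=\bigcap_{T\ge 0}\overline{\gamma([T,+\infty))}$ is a nonempty compact connected subset of $\partial\D$.

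The heart of the matter is to prove that $\gamma(t)$ actually converges, i.e.\ that $\Gamma$ is a single point $\sigma\in\partial\D$. Upstairs, $\beta(t):=h(\gamma(t))$ describes inside $h(\D)$ a vertical half-line escaping to $-i\infty$ (non-elliptic case) or a logarithmic spiral arm escaping to $\infty$ (elliptic case); by the invariance $\psi_t(h(\D))\subseteq h(\D)$ (for $t\ge 0$), applied to this half-line, resp.\ spiral arm, an entire vertical line, resp.\ an entire logarithmic spiral, lies in $h(\D)$, so $\beta$ is confined to a fixed curve of $h(\D)$ running to $\infty$. Using this together with the absorbing property $\bigcup_{t\ge 0}\psi_t^{-1}(h(\D))=\Omega$ and the fact that $\Omega$ is a plane, a strip or a half-plane, a prime-ends (crosscut) analysis shows that $\beta$ lands at a single prime end of $h(\D)$; by Carath\'eodory's prime-ends theory applied to the univalent map $h$, the orbit $\gamma=h^{-1}\circ\beta$ then converges to the corresponding $\sigma\in\partial\D$. (Alternatively, one may restrict to $\phi_1$, for which $(\gamma(n))_{n\in\N}$ is a backward orbit, invoke the convergence of backward orbits for discrete iteration, and pass back to continuous time using the continuity of $t\mapsto\gamma(t)$ and of $(t,z)\mapsto\phi_t(z)$.) I expect this step---excluding oscillation of $\beta$, hence of $\gamma$---to be the main obstacle, and it is exactly where one genuinely uses that $h(\D)$ is a K\oe nigs image and not an arbitrary simply connected domain.

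It remains to identify $\sigma$. For $s\ge 0$ and $t\ge s$ we have $\phi_s(\gamma(t))=\gamma(t-s)$, and $\gamma(t-s)\to\sigma$ as $t\to+\infty$, so $\phi_s$ has limit $\sigma$ along the curve $\gamma$, which terminates at $\sigma$; by Lindel\"of's theorem $\angle\lim_{z\to\sigma}\phi_s(z)=\sigma$ for every $s\ge 0$, i.e.\ $\sigma$ is a boundary fixed point of $(\phi_t)$. If $\tau\in\D$ then $\sigma\ne\tau$; if $\tau\in\partial\D$ the point $\sigma$ may equal $\tau$. This is the second alternative, and completes the plan.
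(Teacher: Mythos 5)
Your proposal follows essentially the same route as the paper: pass to the canonical model, observe that $h(\gamma(t))=\psi_{-t}(h(\gamma(0)))$ is either constantly $0$ (the elliptic fixed-point case) or tends to $\infty$, deduce that the curve $\gamma$ lands at some $\sigma\in\partial\D$, and conclude with Lindel\"of/Lehto--Virtanen that $\sigma$ is a boundary fixed point. The one step you flag as ``the main obstacle''---that the image curve tending to $\infty$ forces $\gamma=h^{-1}\circ\beta$ to land at a single point---is exactly the step the paper disposes of by citing a standard landing lemma for univalent maps (\cite[Lemma 2, p.~162]{Shabook83}, essentially the no-Koebe-arcs principle: a univalent function cannot tend to a single boundary value along a curve whose preimage clusters on an arc of $\partial\D$). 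Note that this lemma needs nothing special about K\oe nigs images: it holds for any univalent $h$ and any curve in $h(\D)$ converging to a single point of $\partial_\infty h(\D)$, so your remark that one ``genuinely uses that $h(\D)$ is a K\oe nigs image'' at this point is misplaced; with that citation supplied, your argument is complete and coincides with the paper's.
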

\begin{proof}
Let $(\Omega, h, \psi_t)$ be the canonical model for $(\phi_t)$, where $\Omega$ is either $\C, \Ha, \Ha^-$ or the strip $\strip_\rho$ for some $\rho>0$ and either $\psi_t(w)=w+it$ or $\psi_t(w)=e^{-\lambda t}w$ for some $\lambda\in \C$ with $\Re \lambda>0$. Let $w_0:=h(\gamma(0))$. Let $t\geq 0$. By Remark \ref{Rem:back-model}, $h(\gamma(t))=\psi_{-t}(w_0)$ for all $t\geq 0$. In particular, by the form of $\psi_t$, it follows that either $\psi_{-t}(w_0)\equiv w_0=0$ (and hence $\gamma(t)\equiv \tau$) or $\lim_{t\to+\infty}\psi_{-t}(w_0)=\infty$ in $\C_\infty$. Therefore, by \cite[Lemma 2, p. 162]{Shabook83}, there exists $\sigma\in \partial \D$ such that 
\[
\lim_{t\to +\infty}\gamma(t)=\lim_{t\to +\infty}h^{-1}(h(\gamma(t))=\lim_{t\to +\infty}h^{-1}(\psi_{-t}(w_0))=\sigma.
\]
Now, let $s>0$. Then 
\[
\lim_{t\to +\infty}\phi_s(\gamma(t))=\lim_{t\to +\infty}\gamma(t-s)=\sigma.
\]
Therefore, by Lehto-Virtanen Theorem, $\angle\lim_{z\to \sigma}\phi_s(z)=\sigma$ for all $s\geq 0$, hence $\sigma$ is a boundary fixed point of $(\phi_t)$. 
\end{proof}

The proof of the next result follows quite directly from the corresponding results for the discrete case (see   \cite[Lemma 2.1]{Pog03},  \cite{BrTAMS} and \cite{ConDia05a}).

\begin{proposition}\label{Prop:converge-back}
Let $(\phi_t)$ be a semigroup in $\D$ which is not a group. Let $\tau\in \overline{\D}$ be its  Denjoy-Wolff point.  Let $\gamma:[0,+\infty)\to \D$ be a regular backward orbit for $(\phi_t)$. Then,
\begin{enumerate}
\item either $\tau\in \D$ and $\gamma(t)=\tau$ for all $t\in [0,+\infty)$,
\item or, there exists a unique point $\sigma\in \partial\D\setminus\{\tau\}$ such that $\gamma(t)$ converges to $\sigma$ non-tangentially. Moreover,  $\sigma$ is a repelling fixed point of $(\phi_t)$ with repelling spectral value $\lambda\geq -2V(\gamma)$,
\item or, $\tau\in \partial \D$ and $\lim_{t\to+\infty}\gamma(t)=\tau$. Moreover, in this case, the convergence of $\gamma$ to $\tau$ is tangential and $(\phi_t)$ is parabolic. 
\end{enumerate}
\end{proposition}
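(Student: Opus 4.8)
The plan is to transport the regular backward orbit $\gamma$ to the canonical model $(\Omega,h,\psi_t)$ via $h$ and exploit the fact that $h$ is, by the Julia--Wolff--Carath\'eodory machinery, "quasi-isometric" near the relevant boundary point. First, by Lemma \ref{Lem:converge-no-back}, either $\tau\in\D$ and $\gamma$ is constant equal to $\tau$ (giving case (1)), or there is $\sigma\in\partial\D$, a boundary fixed point of $(\phi_t)$, with $\gamma(t)\to\sigma$. So assume the latter. By Remark \ref{Rem:back-model}, $h(\gamma(t))=\psi_{-t}(w_0)$ where $w_0=h(\gamma(0))$, and in the non-elliptic case $h(\gamma(t))$ moves along a vertical half-line $\{\Re w=\Re w_0,\ \Im w\le\Im w_0\}$ down to $-i\infty$, while in the elliptic non-group case it moves along a logarithmic spiral out to $\infty$. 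The key analytic input I would use is that regularity of $\gamma$, i.e. $V(\gamma)=\limsup_t\omega(\gamma(t),\gamma(t+1))<\infty$, bounds the hyperbolic (in $\Omega$) speed of $h(\gamma(t))$: since $h$ is a holomorphic model, $h$ decreases the hyperbolic distance, so $\omega_\Omega(h(\gamma(t)),h(\gamma(t+1)))\le\omega(\gamma(t),\gamma(t+1))$, hence the $\Omega$-step of the image orbit is also bounded by $V(\gamma)$.

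Next I would separate the three shapes of $\Omega$. \emph{Elliptic non-group case} ($\Omega=\C$, $\psi_t(w)=e^{-\lambda t}w$, $\Re\lambda>0$): here $\omega_\C\equiv 0$ carries no information, but I would instead note $h(\D)\subsetneq\C$ is a proper simply connected domain, pull back to $h(\D)$ with its own hyperbolic metric, and observe the backward orbit $\psi_{-t}(w_0)=e^{\lambda t}w_0$ leaves every compact set; combined with $\gamma(t)\to\sigma\in\partial\D$ this forces a contradiction with non-tangential/tangential dichotomy unless we are in case (2) — actually here one shows $\sigma$ must be a repelling fixed point by transferring the rate $|e^{\lambda t}w_0|\sim e^{\Re\lambda\, t}$ through the standard angular-derivative estimate (Julia's lemma applied to $h$ near $\sigma$), yielding the repelling spectral value and the bound $\lambda\ge-2V(\gamma)$ (the factor $2$ is the usual comparison constant between $\omega$ and $\tfrac12\log$ of the "boundary distance"). \emph{Hyperbolic case} ($\Omega=\strip_{\pi/\lambda_0}$, $\psi_t(w)=w+it$): the image orbit runs down a vertical line inside a strip; a vertical segment of Euclidean length $1$ at $\Re w=\Re w_0$ has $\omega_{\strip}$-length comparable to $\pi/(\lambda_0\sin(\pi\Re w_0/\lambda_0))$, a finite constant, so regularity is automatic and moreover $\Re w_0\in(0,\pi/\lambda_0)$ is interior; reading $\gamma(t)=h^{-1}(w_0-it)$ through the boundary behavior of $h$ (which, being the K\oe nigs function of a hyperbolic semigroup, sends vertical lines to curves landing non-tangentially at boundary fixed points) gives $\gamma(t)\to\sigma\ne\tau$ with non-tangential convergence, and the repelling spectral value equals $\lambda=\lambda_0/(\ldots)$ extracted from which vertical line we sit on — this is exactly case (2), and uniqueness of $\sigma$ follows since distinct boundary fixed points correspond to the two ends $\pm i\infty$ of distinct vertical lines but only the $-i\infty$ end is reached. \emph{Parabolic cases} ($\Omega=\Ha,\Ha^-$ or $\C$, $\psi_t(w)=w+it$): the image orbit again descends a vertical line; if $\Omega=\Ha$ (positive hyperbolic step) the line is at some $\Re w_0>0$, interior, regularity automatic, and $h^{-1}$ of $w_0-it$ converges to $\tau$ (the Denjoy-Wolff point, which in the positive-step parabolic model corresponds to all of $\partial\Ha$'s "top"), tangentially by Proposition \ref{Prop:group-backward}-type computation transported through $h$; this is case (3). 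If $\Omega=\C$ (zero hyperbolic step), the same vertical descent lands at $\tau$ tangentially, again case (3). The case $\Omega=\Ha^-$ is handled by the reflection $w\mapsto-w$.

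Finally, assembling: in every non-elliptic sub-case I would verify (a) which fixed point is the limit, (b) that convergence is non-tangential exactly when the limit is a repelling fixed point ($\ne\tau$) and tangential exactly when the limit is $\tau$ and the semigroup is parabolic, and (c) the inequality $\lambda\ge-2V(\gamma)$ by the standard estimate relating $\omega(\gamma(t),\gamma(t+1))$ to $\tfrac12\log\frac{1-\overline\sigma\gamma(t+1)}{1-\overline\sigma\gamma(t)}$ up to the angular derivative, which is essentially the semigroup analogue of \cite[Lemma 2.1]{Pog03}. The main obstacle I anticipate is the elliptic non-group case, where the model metric is trivial: there one genuinely needs the Poggi-Corradini-style rescaling argument (or a direct Julia-lemma estimate on $h$ at $\sigma$) to both exclude the limit being $\tau$ and to produce the repelling spectral value with the correct inequality; the non-elliptic cases are comparatively soft once one knows the vertical-line geometry of the canonical models and the contraction property of $h$.
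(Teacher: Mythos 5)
The paper itself does not prove this proposition: it explicitly defers to the discrete-time results of Poggi-Corradini and \cite{BrTAMS}, \cite{ConDia05a}, so any honest proof must either reproduce that Julia-lemma/rescaling argument or cite it. Your proposal tries instead to read everything off the canonical model, and in doing so it introduces two genuine errors. First, you claim that in the hyperbolic and parabolic-positive-step cases ``regularity is automatic'' because a unit vertical segment has bounded hyperbolic length in $\strip_{\pi/\lambda}$ or $\Ha$. But $V(\gamma)$ is computed with $\omega=\omega_\D$, i.e.\ with the hyperbolic distance of $h(\D)$, not of $\Omega$; since $h(\D)\subsetneq\Omega$ the step $\omega_{h(\D)}(w_0-it,\,w_0-i(t+1))$ can tend to $+\infty$ along a vertical half-line (the domain may pinch onto that line), and indeed non-regular backward orbits of hyperbolic semigroups exist and land tangentially at super-repelling fixed points. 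For the same reason your parenthetical claim that the K\oe nigs function ``sends vertical lines to curves landing non-tangentially at boundary fixed points'' is false in general. The upshot is that the one place where the hypothesis $V(\gamma)<+\infty$ must actually be used --- to force non-tangential landing and to force the landing point to be a boundary \emph{regular} fixed point with $\lambda\geq -2V(\gamma)$ --- is never engaged: in the elliptic case you defer it to ``Poggi-Corradini-style rescaling,'' and in the non-elliptic cases you replace it with assertions that are not true.

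Second, your case analysis conflates the type of the semigroup with which alternative of the trichotomy holds. A parabolic semigroup can have repelling fixed points, and a regular backward orbit of a parabolic semigroup may land at such a point (alternative (2)) rather than at $\tau$; your parabolic paragraph treats convergence to $\tau$ as automatic. Conversely, the statement that the convergence to $\tau$ in alternative (3) is tangential and forces $(\phi_t)$ to be parabolic again requires the Julia-type estimate (if a regular orbit converged non-tangentially to the Denjoy--Wolff point one would contradict $\phi_t'(\tau)\le 1$), not merely the vertical-line geometry of $\Omega$. The correct route is the one the paper points to: apply \cite[Lemma 2.1]{Pog03} (or the argument of \cite{BrTAMS}) to the single map $\phi_1$ and the discrete orbit $(\gamma(n))_{n\in\N}$, which has bounded hyperbolic step, to get non-tangential convergence, regularity of the fixed point, and the bound $\phi_1'(\sigma)=e^{-\lambda}\le e^{2V(\gamma)}$; then upgrade from $\phi_1$ to the whole semigroup using the quoted fact that boundary regular fixed points of $\phi_{t_0}$ are boundary regular fixed points of every $\phi_t$. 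If you want a self-contained proof you must reproduce that discrete estimate; the canonical model alone does not supply it.
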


Therefore, every  regular non-constant backward orbit lands at a repelling fixed point or at the Denjoy-Wolff point, and the hyperbolic step of the orbit controls the repelling spectral value of the semigroup at the fixed point. Adapting an argument from \cite{Pog00}, one can easily prove the converse:

\begin{proposition}\label{Prop:Existence-backward}
Let $(\phi_t)$ be a semigroup, not a group, in $\D$. Let $\sigma\in \partial \D$. Assume  $\sigma$ is a repelling fixed point of $(\phi_t)$ with repelling spectral value $\lambda\in (-\infty, 0)$. Then  there exists a (non-constant) regular backward orbit $\gamma:[0,+\infty)\to \D$  for $(\phi_t)$ such that $\lim_{t\to +\infty}\gamma(t)=\sigma$ and with hyperbolic step  $V(\gamma)=-\frac{1}{2}\lambda$.
\end{proposition}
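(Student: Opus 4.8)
The plan is to reduce the assertion to the corresponding fact for discrete iteration, due to Poggi-Corradini \cite{Pog00}, applied to the time-one map $f:=\phi_1$, and then to interpolate along the flow. Since $\sigma$ is a repelling fixed point of $(\phi_t)$ with repelling spectral value $\lambda<0$, the definition of the repelling spectral value gives $\angle\lim_{z\to\sigma}\phi_1'(z)=e^{-\lambda}>1$; thus $\sigma$ is a boundary regular fixed point of $f$ with boundary dilation $f'(\sigma)=e^{-\lambda}>1$, distinct from the Denjoy--Wolff point of $f$ (which is $\tau$, and $\tau\ne\sigma$ because a repelling fixed point has negative spectral value while $\tau$ has a non-negative one). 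Hence \cite{Pog00} provides a backward orbit $\{z_n\}_{n\ge0}\subset\D$ for $f$, i.e. $\phi_1(z_{n+1})=z_n$ for all $n$, with $z_n\to\sigma$ and $\omega(z_n,z_{n+1})\to\tfrac12\log f'(\sigma)=-\tfrac{\lambda}{2}$; note that by the Schwarz--Pick inequality $\omega(z_n,z_{n+1})=\omega(\phi_1(z_{n+1}),\phi_1(z_{n+2}))\le\omega(z_{n+1},z_{n+2})$, so this sequence increases to $-\tfrac{\lambda}{2}$, and in particular $\omega(z_n,z_{n+1})\le-\tfrac{\lambda}{2}$ for every $n$.

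Next I would set $\gamma(t):=\phi_{n-t}(z_n)$ for $t\in[n-1,n]$, $n\ge1$. Joint continuity of $(s,z)\mapsto\phi_s(z)$ makes each piece continuous, and at $t=n$ the two relevant expressions match ($\phi_0(z_n)=z_n$ and, from the piece on $[n,n+1]$, $\phi_1(z_{n+1})=z_n$), so $\gamma\colon[0,+\infty)\to\D$ is a well-defined continuous curve with $\gamma(n)=z_n$; it is non-constant since its values tend to $\partial\D$. The one bookkeeping point is that $\gamma$ is a backward orbit for the whole semigroup: given $0\le s\le t$, write $n:=\lceil t\rceil$ and $m:=\lceil t-s\rceil$, so $m\le n$ and $z_m=\phi_{n-m}(z_n)$ by iterating $\phi_1(z_{k+1})=z_k$; then, all exponents below being non-negative,
\[
\phi_s(\gamma(t))=\phi_{s+n-t}(z_n)=\phi_{(m-t+s)+(n-m)}(z_n)=\phi_{m-(t-s)}(z_m)=\gamma(t-s).
\]

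To compute the hyperbolic step, fix $t\in[n-1,n]$ and write $\gamma(t)=\phi_{n-t}(z_n)=\phi_{(n-t)+1}(z_{n+1})$ and $\gamma(t+1)=\phi_{n-t}(z_{n+1})$; then Schwarz--Pick applied to $\phi_{n-t}$ gives
\[
\omega(\gamma(t),\gamma(t+1))=\omega\bigl(\phi_{n-t}(\phi_1(z_{n+1})),\phi_{n-t}(z_{n+1})\bigr)\le\omega(\phi_1(z_{n+1}),z_{n+1})=\omega(z_n,z_{n+1})\le-\tfrac{\lambda}{2}.
\]
So $\gamma$ is a regular backward orbit with $V(\gamma)\le-\tfrac{\lambda}{2}$, and since $\omega(\gamma(n),\gamma(n+1))=\omega(z_n,z_{n+1})\to-\tfrac{\lambda}{2}$ we obtain $V(\gamma)=-\tfrac{\lambda}{2}$. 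Being a non-constant regular backward orbit, by Proposition~\ref{Prop:converge-back} the curve $\gamma(t)$ converges to a repelling fixed point of $(\phi_t)$; the subsequence $\gamma(n)=z_n\to\sigma$ forces that limit to be $\sigma$. This gives everything claimed.

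The only substantive ingredient is the discrete statement of \cite{Pog00}: not merely that a backward orbit at a repelling fixed point exists with \emph{bounded} hyperbolic step, but that it may be chosen with step equal to the optimal value $\tfrac12\log f'(\sigma)$ (which, by Proposition~\ref{Prop:converge-back}, is a lower bound for the step of any backward orbit landing at $\sigma$); I expect this — equivalently, arranging that the orbit approaches $\sigma$ along the distinguished ``axial'' direction — to be the delicate point, everything else (the interpolation, the backward-orbit identity, the Schwarz--Pick estimates, the appeal to Proposition~\ref{Prop:converge-back}) being routine. Alternatively one could give a self-contained continuous argument, constructing $\gamma$ directly as the backward solution of $\dot y=-G(y)$ issued from an appropriate point near $\sigma$ and checking, via the asymptotic linearity of the generator $G$ at the boundary regular fixed point $\sigma$, that this solution is defined for all negative times, stays in $\D$, and tends to $\sigma$ at the correct rate; the corresponding care in choosing the starting point would then be the crux.
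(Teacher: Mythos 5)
Your proposal is correct and is essentially the route the paper intends: the paper gives no proof, saying only that one "adapts an argument from \cite{Pog00}", and your reduction to the time-one map $\phi_1$ followed by flow-interpolation, with the Schwarz--Pick bound at non-integer times and the appeal to Proposition \ref{Prop:converge-back} for the lower bound $V(\gamma)\geq -\lambda/2$ and for the landing at $\sigma$, is a complete and accurate realization of that adaptation. You also correctly isolate the only non-routine external input, namely Poggi-Corradini's construction of a discrete backward orbit at $\sigma$ with hyperbolic step bounded by $\frac12\log\phi_1'(\sigma)=-\lambda/2$.
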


The previous results show that for every repelling fixed point of a semigroup, there exists a regular backward orbit converging to such a point. Moreover, every backward orbit lands at a boundary fixed point of the semigroup. To close the circle one might wonder whether every super-repelling fixed point is the landing point of a (non-regular) backward orbit. The answer is negative: as it is proved in \cite[Prop. 4.9]{BCD}, a super-repelling fixed point is the landing point of a backward orbit if and only if it is not the initial point of a maximal contact arc. In particular, by \cite[Prop. 4.2]{BrGu}, if $\sigma\in \partial \D$ is a super-repelling fixed point for the semigroup $(\phi_t)$ and $G$ is the associated infinitesimal generator, then $\angle\lim_{z\to \sigma}G(z)= 0$ implies that  there exist no  backward orbits for $(\phi_t)$ landing at $\sigma$.

Now we  introduce the notion of pre-model:

\begin{definition}\label{Def:premodelo}
Let $(\phi_t)$ be a semigroup in $\D$ which is not a group. Let $\sigma\in \partial \D$ be a repelling fixed point for $(\phi_t)$ with repelling spectral value $\lambda\in (-\infty,0)$. We say that the triple $(\D, g, \eta_t)$ is a {\sl pre-model} for $(\phi_t)$ at $\sigma$ if 
\begin{enumerate}
\item  $(\eta_t)$ is the unique hyperbolic group with Denjoy-Wolff point $-\sigma$, other fixed point $\sigma$ and spectral value $-\lambda$,
\item $g:\D \to \D$ is univalent, $\angle\lim_{z\to \sigma}g(z)=\sigma$, and $g$ is  semi-conformal at $\sigma$, {\sl i.e.}, 
\[
\angle\lim_{z\to \sigma}\Arg \frac{1-\overline{\sigma}g(z)}{1-\overline{\sigma}z}=0,
\]
\item $g\circ \eta_t = \phi_t \circ g$ for all $t \geq 0$.
\end{enumerate}
\end{definition}

\begin{remark}\label{Rem:spectro-hyper-pre}
If $(\D, g, \eta_t)$ is a pre-model for a semigroup $(\phi_t)$ at a repelling fixed point $\sigma\in \partial\D$ with repelling spectral value $\lambda\in (-\infty, 0)$, it follows that the repelling spectral value of $(\eta_t)$ at $\sigma$ is $\lambda$. Indeed, we have $\eta_t'(\sigma)=e^{-\lambda t}$ for all $t\geq 0$.
\end{remark}

The proof of the next theorem can be  adapted from the discrete case from \cite{Pog00}. We leave details to the reader.

\begin{theorem}\label{Thm:premodel}
Let $(\phi_t)$ be a semigroup in $\D$ which is not a group. Let $\sigma\in \partial \D$ be a repelling fixed point for $(\phi_t)$ with repelling spectral value $\lambda\in (-\infty,0)$. Then there exists a pre-model $(\D, g, \eta_t)$  for $(\phi_t)$ at $\sigma$. 
\end{theorem}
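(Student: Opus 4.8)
The plan is to build the pre-model directly from a regular backward orbit, mimicking Poggi-Corradini's construction in the discrete case \cite{Pog00}. By Proposition \ref{Prop:Existence-backward}, there exists a non-constant regular backward orbit $\gamma:[0,+\infty)\to\D$ with $\lim_{t\to+\infty}\gamma(t)=\sigma$ and hyperbolic step $V(\gamma)=-\tfrac12\lambda$. Fix the target group $(\eta_t)$: the unique hyperbolic group in $\D$ with Denjoy-Wolff point $-\sigma$, other fixed point $\sigma$, and spectral value $-\lambda$. Passing to the half-plane picture via a Cayley map sending $\sigma$ to $0$ and $-\sigma$ to $\infty$, the group $(\eta_t)$ becomes scalar multiplication $w\mapsto e^{-\lambda t}w$ (note $-\lambda>0$) on $\Ha$ or a suitable half-plane; pick a base point $w_0$ on the appropriate ray so that $\eta_{-t}(w_0)\to\sigma$, and re-coordinatize so that the backward orbit of the group through this base point matches $\gamma$ in the following sense: define $g$ first on the discrete orbit $\{\eta_{-n}(w_0):n\in\N\}$ by $g(\eta_{-n}(w_0)):=\gamma(n)$, then extend along the continuous flow by $g(\eta_{-t}(w_0)):=\gamma(t)$. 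This is forced by the intertwining relation $g\circ\eta_t=\phi_t\circ g$.

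The heart of the argument is the rescaling/normal-families step that extends $g$ from this one orbit to all of $\D$. Following \cite{Pog00,Pog03}, one writes, for each $n$, an automorphism (or group element) $\theta_n$ of $\D$ (in the half-plane model, a dilation) that moves $\eta_{-n}(w_0)$ back to $w_0$, and considers $g_n:=\phi_n\circ\gamma_n$ where $\gamma_n$ is built so that $g_n(w_0)=\gamma(n)\cdots$ — more precisely one sets $g_n:=$ (the map obtained by transporting $\phi_n$ through the $n$-th rescaling) and shows $(g_n)$ is a normal family. The regularity hypothesis $V(\gamma)<+\infty$ is exactly what guarantees that the hyperbolic distances $\omega(\gamma(n),\gamma(n+1))$ stay bounded, which in the rescaled coordinates keeps the images $g_n(\D)$ from degenerating and gives a non-constant locally uniform limit $g$. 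One then checks $g$ is univalent (using Hurwitz together with the injectivity of each $g_n$ and the fact that the limit is non-constant) and that $g\circ\eta_t=\phi_t\circ g$ holds by passing to the limit in $g_n\circ\eta_t=\phi_t\circ g_n$ (valid up to controlled error), first for $t$ in a dense set and then for all $t\ge0$ by continuity.

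It remains to verify the boundary properties in item (2) of Definition \ref{Def:premodelo}: that $\angle\lim_{z\to\sigma}g(z)=\sigma$ and that $g$ is semi-conformal at $\sigma$. The landing statement follows because the orbit $\gamma(t)=g(\eta_{-t}(w_0))$ converges to $\sigma$ while $\eta_{-t}(w_0)\to\sigma$ (so $g$ takes a sequence converging to $\sigma$ nontangentially — indeed radially in the half-plane model — to a sequence converging to $\sigma$), and then the Lehto--Virtanen theorem upgrades this to a nontangential limit; alternatively one invokes Proposition \ref{Prop:converge-back}, which already asserts $\gamma$ converges to $\sigma$ nontangentially. Semi-conformality is the most delicate point: here one uses that $g$ intertwines two hyperbolic dynamical systems with the \emph{same} spectral value $\lambda$ at $\sigma$ — by Remark \ref{Rem:spectro-hyper-pre} the group $(\eta_t)$ has $\eta_t'(\sigma)=e^{-\lambda t}$, matching $\angle\lim_{z\to\sigma}\phi_t'(z)=e^{-\lambda t}$ — and the Julia--Wolff--Carath\'eodory inequality forces the boundary dilation of $g$ at $\sigma$ to be finite and, combined with the intertwining, to behave conformally in angle; this is precisely the quasi-conformality/semi-conformality argument of \cite[\S2]{Pog00}. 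The main obstacle is this last step: controlling the argument $\Arg\frac{1-\overline\sigma g(z)}{1-\overline\sigma z}$ along nontangential approach, which in the discrete setting requires the careful estimates that Poggi-Corradini developed; the passage to semigroups is essentially formal once the discrete machinery is in place, which is why the authors say the proof ``can be adapted from the discrete case'' and leave details to the reader.
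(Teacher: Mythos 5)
The paper contains no proof of this theorem: the authors state only that it ``can be adapted from the discrete case from \cite{Pog00}'' and leave every detail to the reader. Your rescaling/normal-families outline is therefore exactly the intended route, and most of it is a faithful summary of Poggi-Corradini's construction: the regular backward orbit from Proposition \ref{Prop:Existence-backward}, the bounded hyperbolic step keeping the rescaled maps from degenerating to a boundary constant, univalence by Hurwitz, the intertwining relation obtained in the limit, and the landing statement via Lehto--Virtanen or Proposition \ref{Prop:converge-back}.

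There is, however, one genuine error in your treatment of the hardest step. You claim that ``the Julia--Wolff--Carath\'eodory inequality forces the boundary dilation of $g$ at $\sigma$ to be finite,'' and derive semi-conformality from that. This is false, and the paper itself is emphatic about it: Section \ref{sec-nr} constructs a semigroup whose pre-model intertwiner $g$ is \emph{not} regular at $\sigma$, i.e.\ $\liminf_{z\to\sigma}\frac{1-|g(z)|}{1-|z|}=+\infty$. If the boundary dilation were automatically finite, Julia--Wolff--Carath\'eodory would make $g$ conformal (in particular regular) at $\sigma$ and Section \ref{sec-nr} would be vacuous. Semi-conformality --- the statement $\angle\lim_{z\to\sigma}\Arg\frac{1-\overline{\sigma}g(z)}{1-\overline{\sigma}z}=0$ --- is strictly weaker than regularity and must be proved by an argument that does not presuppose a finite angular derivative; this is precisely the delicate geometric part of \cite{Pog00} (and the reason the authors there can still exhibit a non-regular example). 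As written, your justification of item (2) of Definition \ref{Def:premodelo} would fail; you need to import Poggi-Corradini's actual semi-conformality argument rather than the JWC shortcut.
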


It can be proved directly arguing as in \cite{Pog00} that if $(\D, \tilde g, \eta_t)$ is another pre-model for $(\phi_t)$, then $\tilde g=g\circ T$, where $T$ is a hyperbolic automorphism of $\D$ fixing $\pm \sigma$ (and in particular $T\circ \eta_t=\eta_t\circ T$ for all $t\geq 0$). This also follows easily from our further construction, see Remark \ref{unico-modello}.

In general, given a pre-model $(\D, g, \eta_t)$ for $(\phi_t)$ at $\sigma$, the map $g$ is not regular at $\sigma$ (see Section~\ref{sec-nr}).

A  straightforward consequence of Theorem \ref{Thm:premodel} is the following:

\begin{corollary}\label{Cor:premodel-and-back}
Let $(\phi_t)$ be a semigroup, not a group, in $\D$. Let $\sigma\in \partial \D$ be a repelling fixed point of $(\phi_t)$ with repelling spectral value $\lambda\in (-\infty, 0)$. Let $(\D, g, \eta_t)$ be a pre-model for $(\phi_t)$ at $\sigma$.  Then, for every $\alpha\in (-\pi/2,\pi/2)$ there exists a (non-constant) backward orbit $\gamma:[0,+\infty)\to \D$ for $(\phi_t)$, with $\gamma([0,+\infty))\subset g(\D)$, converging to $\sigma$ such that 
\begin{equation}\label{Eq:pendiente-backward}
\lim_{t\to +\infty}\Arg (1-\overline{\sigma}\gamma(t))=\alpha.
\end{equation}
Conversely, if $\gamma:[0,+\infty)\to \D$ is a backward orbit converging  to $\sigma$, then $\gamma$ is regular if and only if it converges non-tangentially to $\sigma$. Moreover, in this case, $\gamma([0,+\infty))\subset g(\D)$ and there exists $\alpha\in (-\pi/2,\pi/2)$ such that \eqref{Eq:pendiente-backward} holds.
\end{corollary}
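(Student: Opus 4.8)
The plan is to prove the two assertions by transporting backward orbits through the intertwining map $g$. For the direct assertion, fix $\alpha\in(-\pi/2,\pi/2)$. Since $(\eta_t)$ is a hyperbolic group with Denjoy-Wolff point $-\sigma$ and other fixed point $\sigma$, Remark~\ref{Rem:backward-hyper-direc} provides $z_0\in\D$ such that $\gamma_0(t):=\eta_{-t}(z_0)$, $t\ge 0$, is a (regular) backward orbit of $(\eta_t)$ with $\gamma_0(t)\to\sigma$ and $\Arg(1-\overline{\sigma}\gamma_0(t))\to\alpha$. I would then set $\gamma:=g\circ\gamma_0$; using $\phi_s\circ g=g\circ\eta_s$ and the group law of $(\eta_t)$, one computes $\phi_s(\gamma(t))=g\big(\eta_{-(t-s)}(z_0)\big)=\gamma(t-s)$ for $0\le s\le t$, so $\gamma$ is a backward orbit of $(\phi_t)$, non-constant because $g$ is univalent, and manifestly contained in $g(\D)$. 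As $\alpha\in(-\pi/2,\pi/2)$, the curve $\gamma_0$ eventually runs inside a Stolz angle at $\sigma$, so $\angle\lim_{z\to\sigma}g(z)=\sigma$ forces $\gamma(t)\to\sigma$; and from
\[
\Arg(1-\overline{\sigma}\gamma(t))=\Arg\frac{1-\overline{\sigma}g(\gamma_0(t))}{1-\overline{\sigma}\gamma_0(t)}+\Arg(1-\overline{\sigma}\gamma_0(t)),
\]
where the first term tends to $0$ by the semi-conformality of $g$ at $\sigma$ and the second to $\alpha$, one gets \eqref{Eq:pendiente-backward}. This part is truly immediate from Theorem~\ref{Thm:premodel} and Remark~\ref{Rem:backward-hyper-direc}.

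For the converse, the implication ``$\gamma$ regular $\Rightarrow$ $\gamma$ converges non-tangentially'' is immediate from Proposition~\ref{Prop:converge-back}: its case (1) cannot occur, since there the limit $\tau$ is interior while $\gamma(t)\to\sigma\in\partial\D$, and its case (3) cannot occur, since it would force $\sigma=\tau$, contradicting that $\sigma$ is repelling; hence case (2) holds. For ``$\gamma$ converges non-tangentially $\Rightarrow$ $\gamma$ regular'' I would argue directly: fix a Stolz angle $S$ at $\sigma$ eventually containing $\gamma$; since $\sigma$ is a boundary regular fixed point of $\phi_1$, the Julia-Wolff-Carath\'eodory theorem gives $\angle\lim_{z\to\sigma}\frac{1-\overline{\sigma}\phi_1(z)}{1-\overline{\sigma}z}=e^{-\lambda}\neq 0$, so, evaluating along $\gamma(t+1)\to\sigma$ and using $\phi_1(\gamma(t+1))=\gamma(t)$, the quantity $|1-\overline{\sigma}\gamma(t)|/|1-\overline{\sigma}\gamma(t+1)|$ stays bounded away from $0$ and $\infty$. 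Since $1-|z|$ and $|1-\overline{\sigma}z|$ are comparable on $S$, the same holds for $1-|\gamma(t)|$ and $1-|\gamma(t+1)|$, and standard estimates for the hyperbolic distance inside a Stolz angle then yield $\sup_t\omega(\gamma(t),\gamma(t+1))<+\infty$, i.e. $\gamma$ is regular.

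There remains $\gamma([0,+\infty))\subset g(\D)$ together with the value of the limiting argument. Once $\gamma$ is known to be regular, I would re-run the construction behind Theorem~\ref{Thm:premodel} (following Poggi-Corradini \cite{Pog00}) using $\gamma$ itself as the basic backward orbit; this produces a pre-model $(\D,g_\gamma,\eta_t)$ for $(\phi_t)$ at $\sigma$ with $\gamma([0,+\infty))\subset g_\gamma(\D)$, and by the uniqueness of pre-models (recorded just after Theorem~\ref{Thm:premodel}) one has $g_\gamma=g\circ T$ for a hyperbolic automorphism $T$ of $\D$, so $g_\gamma(\D)=g(\D)$ and $\gamma\subset g(\D)$. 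Then $\gamma_0:=g^{-1}\circ\gamma$ is a non-constant backward orbit of the hyperbolic group $(\eta_t)$, hence by Proposition~\ref{Prop:group-backward} it converges, non-tangentially, to the non-Denjoy-Wolff fixed point of $(\eta_t)$, namely $\sigma$, with $\Arg(1-\overline{\sigma}\gamma_0(t))\to\alpha$ for some $\alpha\in(-\pi/2,\pi/2)$; the semi-conformality of $g$ at $\sigma$ then transfers this to $\gamma$ exactly as in the first paragraph, which is \eqref{Eq:pendiente-backward}.

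The step I expect to be the main obstacle is precisely the inclusion $\gamma([0,+\infty))\subset g(\D)$ for a regular backward orbit landing at $\sigma$; everything else is a direct application of the results of this section. An alternative route for that step is to pass to the canonical model $(\Omega,h,\psi_t)$ of Theorem~\ref{modelholo}, where the relation $h\circ g\circ\eta_t=\psi_t\circ h\circ g$ forces $h(g(\D))$ to be an explicit vertical sub-strip of $\Omega$ (a sector in the elliptic case), after which one must check that the vertical line or ray which represents the regular backward orbit $\gamma$ in $\Omega$ lies inside it --- a point which hinges on the prime-end geometry of $h(\D)$ near $\sigma$, and which is really the heart of the matter.
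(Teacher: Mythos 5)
Your proposal is correct. The paper offers no proof of this corollary at all --- it is stated as ``a straightforward consequence of Theorem \ref{Thm:premodel}'' --- so there is nothing to compare line by line; what you have written is a legitimate filling-in of the details. The direct implication and the equivalence ``regular $\Leftrightarrow$ non-tangential'' are handled exactly as one would expect (the Julia--Wolff--Carath\'eodory argument for ``non-tangential $\Rightarrow$ regular'' is sound: on a Stolz region $S(\sigma,M)$ one has $1-|z|\le|1-\overline\sigma z|<M(1-|z|)$, and two points of $S(\sigma,M)$ whose distances to $\sigma$ are comparable are at bounded hyperbolic distance, e.g.\ by passing to a half-plane where the Stolz region becomes a cone and the estimate is scaling-invariant). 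You correctly identify the inclusion $\gamma([0,+\infty))\subset g(\D)$ as the only delicate point, and your route through it works: in the Poggi--Corradini rescaling $g_\gamma=\lim_n\phi_{t_n}\circ M_{t_n}$ seeded with the regular orbit $\gamma$, the points $M_{t_n}^{-1}(\gamma(t_n+s))$ stay in a compact subset of $\D$ because $\omega(\gamma(t_n),\gamma(t_n+s))$ is bounded, so $\gamma(s)\in g_\gamma(\D)$ for every $s$; and the uniqueness $g_\gamma=g\circ T$ you invoke is the one the authors assert can be ``proved directly arguing as in \cite{Pog00}'' immediately after Theorem \ref{Thm:premodel}, not the later Remark \ref{unico-modello} (which depends on this very corollary), so there is no circularity. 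Your proposed ``alternative route'' via the geometry of $h(g(\D))$ in the canonical model, by contrast, \emph{would} be circular, since that description is only established in Section 5 using this corollary; it is good that you kept it out of the main argument.
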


\section{Petals} \label{Sec:petals}

\begin{definition}
Let $(\phi_t)$ be a semigroup, not a group, in $\D$. Let 
\[
\mathcal W:=\cap_{t\geq 0}\phi_t(\D).
\]
We call $\mathcal W$ the {\sl backward invariant set}\index{Backward invariant set of a semigroup} of $(\phi_t)$.
We call each non-empty connected component of $\stackrel{\circ}{\mathcal W}$, the interior of $\mathcal W$, a {\sl petal} \index{Petal of a semigroup} for $(\phi_t)$. 
\end{definition}

Backward invariant sets are strictly related to invariant curves. We start with a definition,

\begin{definition}
Let $(\phi_t)$ be a semigroup, not a group, in $\D$, with Denjoy-Wolff point $\tau\in\overline{\D}$. Let $a\in [-\infty, 0)$. A continuous curve $\gamma:(a, +\infty)\to \D$ is called a {\sl maximal invariant curve}\index{Maximal invariant curve} for $(\phi_t)$ if $\phi_s(\gamma(t))=\gamma(t+s)$ for all $s\geq 0$ and $t\in (a,+\infty)$, $\lim_{t\to +\infty} \gamma(t)=\tau$ and there exists $p\in \partial \D$ such that $\lim_{t\to a^+}\gamma(t)=p$. We call $p$ the {\sl starting point}\index{Starting point of a maximal invariant curve} of $\gamma$.
\end{definition}

\begin{remark}\label{Rem:complete-inv-Jordan}
Let $a\in [-\infty, 0)$. Let $\gamma:(a, +\infty)\to \D$ be  a maximal invariant curve for a semigroup $(\phi_t)$ which is not a group and let $\lim_{t\to a^+}\gamma(t)=p\in \partial \D$. Then $\gamma$ is injective. Indeed, if not, say $\gamma(t)=\gamma(s)$ for some $a<s<t$. Then $\phi_{t-s}(\gamma(s))=\gamma(t)=\gamma(s)$, implying that $\gamma(s)=\tau\in \D$, the Denjoy-Wolff point of $(\phi_t)$. But then, for all $a<t_0<s$, $\phi_{s-t_0}(\gamma(t_0))=\gamma(s)=\tau$. Since $\phi_{s-t_0}$ is injective in $\D$, this forces $\gamma(t_0)=\tau$ for all $a<t_0<s$, a contradiction.

Let $h:(0,1)\to (a,+\infty)$ be any orientation preserving homeomorphism. Then, setting $\ell (0)=p$, $\ell(t):=\gamma(h(t))$ and $\ell(1)=\tau$, it follows that $\ell:[0,1]\to \overline{\D}$ is a Jordan arc (or a Jordan curve if $p=\tau$). 
\end{remark}

\begin{remark}\label{Rem:backward to complete invariant}
Clearly every non-constant backward orbit gives rise to a maximal invariant curve with starting point a fixed point by  flowing forward the backward orbit and reversing orientation. 
\end{remark}

\begin{proposition}\label{Prop:compl-inv-puntos}
Let $(\phi_t)$ be a semigroup, not a group, in $\D$ with Denjoy-Wolff point $\tau\in\overline{\D}$ and backward invariant set $\mathcal W$. Then for every $z_0\in \D\setminus\{\tau\}$ there exists a unique maximal invariant curve $\gamma:(a,+\infty)\to \D$ for $(\phi_t)$ such that $\gamma(0)=z_0$. Moreover,  $z_0\in \mathcal W$ if and only if $a=-\infty$. Also, denote by $p\in \partial \D$  the starting point of $\gamma$. Then
\begin{enumerate}
\item  if $z_0\not\in \mathcal W$ then $\gamma(t+a)=\phi_{t}(p)$  for all $t\in (0,+\infty)$, in particular $p$ is not a fixed point for $(\phi_t)$, and $\gamma(t)\not\in \mathcal W$ for all $t> a$.
\item if $z_0\in \mathcal W$ then $[0,+\infty)\ni t\mapsto \gamma(-t)$ is a backward orbit and $p$ is a boundary fixed point of $(\phi_t)$. Also,  $z_0\in \stackrel{\circ}{\mathcal W}$ if and only if $\gamma(t)\in \stackrel{\circ}{\mathcal W}$ for all $t\in \R$.
\end{enumerate}
In particular, $\phi_t(\stackrel{\circ}{\mathcal W})=\stackrel{\circ}{\mathcal W}$, $\phi_t(\mathcal W\setminus \stackrel{\circ}{\mathcal W})=\mathcal W\setminus \stackrel{\circ}{\mathcal W}$ and $\phi_t(\D\setminus \mathcal W)\subset \D\setminus \mathcal W$ for all $t\geq 0$.
\end{proposition}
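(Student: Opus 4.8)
The plan is to build the maximal invariant curve through $z_0$ by pasting together the forward orbit of $z_0$ with the backward trajectory, using the canonical model $(\Omega, h, \psi_t)$ of Theorem \ref{modelholo} to control the backward part. Since $z_0 \neq \tau$, the point $w_0 := h(z_0)$ is nonzero (in the elliptic-non-group case) respectively any point of $h(\D)$ (in the non-elliptic cases), and the curve $t \mapsto \psi_{-t}(w_0)$ is defined for all $t \geq 0$ in $\Omega$ but may leave $h(\D)$ in finite backward time. I would set $a := -\sup\{t \geq 0 : \psi_{-s}(w_0) \in h(\D) \text{ for all } 0 \leq s \leq t\}$ (allowing $a = -\infty$), and define $\gamma(t) := h^{-1}(\psi_t(w_0))$ for $t \in (a, +\infty)$. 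The absorbing property \eqref{absorbing} of the model guarantees that $\psi_t(w_0) \in \Omega$ for all $t$; injectivity of $h$ and the relation $h \circ \phi_s = \psi_s \circ h$ give $\phi_s(\gamma(t)) = \gamma(t+s)$; and $\lim_{t\to+\infty}\gamma(t) = \tau$ follows as in Lemma \ref{Lem:converge-no-back} from the explicit form of $\psi_t$. For the starting point, $\lim_{t\to a^+}\gamma(t)$ exists and lies in $\partial\D$: either $a = -\infty$ and one argues as in Lemma \ref{Lem:converge-no-back} (via \cite[Lemma 2, p. 162]{Shabook83}) that the limit is a fixed point, or $a > -\infty$ and $\psi_a(w_0) \in \partial(h(\D)) \cap \Omega$, so $h^{-1}$ extends continuously there only after passing to the boundary of $\D$ — this needs a short argument that $\psi_a(w_0)$ is a boundary point of $h(\D)$ that is not interior to $\Omega$ in a way detectable by $h^{-1}$, i.e. that $\gamma(t)$ cannot stay in a compact subset of $\D$ as $t \to a^+$. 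Uniqueness of $\gamma$ is immediate: any invariant curve through $z_0$ must satisfy $h(\gamma(t)) = \psi_t(w_0)$ wherever defined, and maximality pins down the domain.

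Next I would prove the equivalence $z_0 \in \mathcal W \iff a = -\infty$. If $a = -\infty$ then $\gamma(-t) \in \D$ for all $t \geq 0$, and applying $\phi_t$ gives $\phi_t(\gamma(-t)) = \gamma(0) = z_0$, so $z_0 \in \phi_t(\D)$ for every $t$, hence $z_0 \in \mathcal W$. Conversely if $z_0 \in \mathcal W$, then for each $t \geq 0$ there is $w_t \in \D$ with $\phi_t(w_t) = z_0$; uniqueness of the invariant curve (two preimages would generate, via forward flowing, two invariant curves through $z_0$, forcing them to agree) shows the $w_t$ fit together into a backward orbit, so $a = -\infty$. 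Claim (1) then follows: if $z_0 \notin \mathcal W$, so $a > -\infty$, the curve $\gamma(\,\cdot + a)$ restricted to $(0,+\infty)$ is a forward orbit issuing from $p = \lim_{t\to a^+}\gamma(t) \in \partial\D$, i.e. $\gamma(t+a) = \phi_t(p)$ by continuity of the semigroup up to the boundary (Lehto–Virtanen, exactly as in the proof of Lemma \ref{Lem:converge-no-back}); and $p$ cannot be a fixed point, for otherwise $\phi_t(p) = p$ for all $t$ would force $\gamma \equiv p \in \partial\D$, impossible. That $\gamma(t) \notin \mathcal W$ for $t > a$ is because $\gamma(t) \in \mathcal W$ would reopen the backward trajectory past $a$, contradicting maximality. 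Claim (2): if $z_0 \in \mathcal W$ then $t \mapsto \gamma(-t)$ is by construction a backward orbit, and its limit $p$ is a boundary fixed point by Lemma \ref{Lem:converge-no-back}; the interior statement $z_0 \in \stackrel{\circ}{\mathcal W} \iff \gamma(\R) \subset \stackrel{\circ}{\mathcal W}$ follows from the final displayed invariance identities, which I prove next, since $\stackrel{\circ}{\mathcal W}$ is completely invariant and $\gamma(\R)$ is the orbit of $z_0$ under the group action on $\mathcal W$.

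Finally, for the invariance statements: $\phi_t(\mathcal W) \subset \mathcal W$ is trivial from $\mathcal W = \cap_s \phi_s(\D)$ and the semigroup property; the reverse inclusion on the pieces comes from the curve description — every point of $\mathcal W$ lies on an invariant curve defined on all of $\R$, and $\phi_t$ maps such a curve onto the time-shifted curve, which has the same image as $t$ ranges over $\R$, whence $\phi_t(\mathcal W) = \mathcal W$ is false in general but $\phi_t$ is a bijection of $\mathcal W$ onto itself — here I should be careful: actually $\phi_t$ need not be surjective onto $\mathcal W$, but it is onto $\stackrel{\circ}{\mathcal W}$ and onto $\mathcal W \setminus \stackrel{\circ}{\mathcal W}$ separately, because on $\mathcal W$ every point has a full backward orbit so preimages exist and stay in the correct stratum by openness of $\stackrel{\circ}{\mathcal W}$ and continuity. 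The containment $\phi_t(\D \setminus \mathcal W) \subset \D \setminus \mathcal W$ is equivalent to $\phi_t^{-1}(\mathcal W) \subset \mathcal W$, which holds because if $\phi_t(z) \in \mathcal W$ then $\phi_s(\phi_t(z)) = \phi_{s+t}(z)$ has a preimage for all $s$, so does $z$ (take $s$ large), giving $z \in \mathcal W$. The main obstacle I anticipate is the careful boundary analysis establishing that $\lim_{t\to a^+}\gamma(t)$ is a genuine boundary point of $\D$ when $a$ is finite — ruling out the curve accumulating at an interior point of $\D$ — which requires combining the model description with the fact that $\psi_{-t}(w_0)$ exits $h(\D)$ precisely at time $-a$; and, relatedly, verifying the stratified surjectivity of $\phi_t$ on $\mathcal W \setminus \stackrel{\circ}{\mathcal W}$, where one must know that a boundary point of $\mathcal W$ has a backward orbit staying on $\partial\mathcal W$.
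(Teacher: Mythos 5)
Your construction is essentially the paper's: the same canonical-model definition of the exit time $a$, the same appeal to \cite[Lemma 2, p.~162]{Shabook83} and to Lehto--Virtanen at the starting point when $a>-\infty$, and the same injectivity/openness arguments for the invariance of the strata, so the two obstacles you flag are resolved exactly as you suggest (for the first one: if $\gamma(t_n)$ accumulated at an interior point of $\D$ as $t_n\to a^+$, then $\psi_a(h(z_0))$ would lie in the open set $h(\D)$, contradicting the definition of $a$). The only slip is your hedge that $\phi_t(\mathcal W)=\mathcal W$ ``is false in general'': it is true (this is the paper's \eqref{Eq:totally-invariant}), your own backward-orbit argument proves the surjectivity, and combined with injectivity of $\phi_t$ and $\phi_t(\stackrel{\circ}{\mathcal W})=\stackrel{\circ}{\mathcal W}$ it yields $\phi_t(\mathcal W\setminus \stackrel{\circ}{\mathcal W})=\mathcal W\setminus \stackrel{\circ}{\mathcal W}$ by pure set algebra, so the ``stratified surjectivity'' you worry about at the end requires no separate argument about backward orbits remaining on the boundary of $\mathcal W$.
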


\begin{proof}
First we show that if there exists a maximal invariant curve $\gamma$ for $(\phi_t)$ such that $\gamma(0)=z_0$, then it is unique. Assume $\tilde{\gamma}:(\tilde a,+\infty)\to \D$ is another maximal invariant curve such that $\tilde{\gamma}(0)=z_0$, for some $\tilde a<0$. Then for $t\geq 0$, 
\[
\gamma(t)=\phi_t(\gamma(0))=\phi_t(z_0)=\phi_t(\tilde\gamma(0))=\tilde\gamma(t). 	
\]
For $\max\{a, \tilde a\}<t<0$, 
\[
\phi_{-t}(\gamma(t))=\gamma(0)=z_0=\tilde\gamma(0)=\phi_{-t}(\tilde\gamma(t)).
\]
Since $\phi_{-t}$ is injective, we have $\gamma(t)=\tilde\gamma(t)$. If $\tilde a>a$, then $\lim_{t\to \tilde a^+}\tilde\gamma(z)=\lim_{t\to \tilde a^+}\gamma(t)\in \D$, a contradiction. Similarly, if $a<\tilde a$, we get a contradiction. Then, if exists, such a maximal invariant curve is unique.

Now we construct a maximal invariant curve. Let $(\Omega, h, \psi_t)$ be the canonical model for $(\phi_t)$, where, $\Omega$ is either $\C, \Ha, \Ha^-$ or the strip $\strip_\rho$ for some $\rho>0$ and either $\psi_t(w)=w+it$ or $\psi_t(w)=e^{-\lambda t}w$ for some $\lambda\in \C$ with $\Re \lambda>0$. Let  $w_0:=h(z_0)$ and
\[
a:=\inf\{t<0: \psi_{-t}(w_0)\in h(\D)\}.
\]
By the geometry of $\Omega$ (spirallike or starlike at infinity) and the form of $\psi_t$, it follows that  $\psi_t(w_0)\in h(\D)$ for every $t>a$. Hence, we can well define
\begin{equation*}
\gamma(t):=h^{-1}(\psi_t(w_0)), \quad t\in (a,+\infty).
\end{equation*}
Since $h^{-1}\circ \psi_t=\phi_t\circ h^{-1}$ on $h(\D)$, it is easy to see that $\phi_s(\gamma(t))=\gamma(s+t)$ for all $s\geq 0$ and $t\in (a,+\infty)$. This implies at once that $\lim_{t\to +\infty}\gamma(t)=\tau$.

Assume that $a>-\infty$. Then $q:=\psi_{a}(w_0)\in \partial h(\D)\cap \C$ and $w_0=\psi_{-a}(q)$. By construction, the curve $(a,+\infty)\ni\mapsto \psi_{t-a}(q)$ is contained in $h(\D)$ and $\lim_{t\to a^+}\psi_{t-a}(q)=q$. By \cite[Lemma 2, p. 162]{Shabook83},  there exists $p\in \partial \D$ such that $\lim_{t\to a^+}h^{-1}(\psi_{t-a}(q))=p$. Hence,
\[
\lim_{t\to a^+}\gamma(t)=\lim_{t\to a^+}h^{-1}(\psi_t(w_0))=\lim_{t\to a^+}h^{-1}(\psi_t(\psi_{-a}(q)))=\lim_{t\to a^+}h^{-1}(\psi_{t-a}(q))=p.
\]
Moreover, for all $s\geq 0$ and $t> a$, 
\[
\phi_s(\gamma(t))=\phi_s(h^{-1}(\psi_t(w_0)))=\phi_s(h^{-1}(\psi_{t-a}(q)))=h^{-1}(\psi_{t+s-a}(q)).
\]
Hence, $\lim_{t\to a+}\phi_s(\gamma(t))=h^{-1}(\psi_{s}(q))\in \D$. Therefore, by Lehto-Virtanen Theorem,  for all $s>0$,
\[
\phi_s(p)= h^{-1}(\psi_{s}(q))=h^{-1}(\psi_{s+a}(w_0))=\gamma(s+a).
\]
It is clear that $\phi_s(p)\not\in\mathcal W$ for all $s>0$, hence, $\gamma(t)\not\in \mathcal W$ for all $t>a$. 

Assume now $a=-\infty$. Clearly, $[0,+\infty)\ni t\mapsto \gamma(-t)$ is a backward orbit for $(\phi_t)$, hence either it is constantly equal to $\tau\in \D$, which implies at once that $z_0=\tau$, against our hypothesis, or it converges to a point $p\in\partial \D$ which is fixed for $(\phi_t)$ by Proposition \ref{Prop:converge-back}. Now, let $t\in \R$ and $s\geq 0$. Then $\gamma(t)=\phi_s(\gamma(t-s))$, which implies that $\gamma(t)\in \mathcal W$ for all $t\in \R$, in particular, $z_0\in \mathcal W$. 

The previous argument shows that  for all $t\geq 0$
\begin{equation}\label{Eq:totally-invariant}
\phi_t(\mathcal W)=(\phi_t)^{-1}(\mathcal W)=\mathcal W. 
\end{equation}
Now, suppose $z_0\in \stackrel{\circ}{\mathcal W}$. Then there exists an open set $U\subset \mathcal W$ such that $z_0\in U$.  Let $z_1=\gamma(t_1)$ for some $t_1\in (a,+\infty)$. If $t_1>0$, then $\phi_{t_1}(U)\subset \mathcal W$ is an open neighborhood of $\phi_{t_1}(z_0)=\gamma(t_1)=z_1$, hence, $z_1\in \stackrel{\circ}{\mathcal W}$. If $t_1<0$, taking into account that $U\subset \phi_{t_1}(\D)$, it follows that $V:=(\phi_{t_1})^{-1}(U)$ is an open neighborhood of $z_1$. By \eqref{Eq:totally-invariant}, $V\subset \mathcal W$, hence $z_1\in \stackrel{\circ}{\mathcal W}$. 
\end{proof}

\begin{remark}
Let $(\phi_t)$ be a semigroup, not a group, in $\D$, with infinitesimal generator $G$. Let $\gamma:(a, +\infty)\to \D$ be a maximal invariant curve. Then $\gamma$ is the maximal solution to the Cauchy problem $\frac{d}{dt}x(t)=G(x(t)), x(0)=\gamma(0)$. It follows at once differentiating in $s$ the expression $\phi_s(\gamma(t))=\gamma(t+s)$. Conversely, by the uniqueness property of the solution to the Cauchy problem, given $z_0\in \D$ and $\gamma:(-\epsilon, +\infty)\to \D$, $\epsilon\in (0,+\infty]$ the maximal solution to the Cauchy problem $\frac{d}{dt}x(t)=G(x(t)), x(0)=z_0$, then $\gamma$ is a maximal invariant curve of $(\phi_t)$. One can use this point of view to prove in another way the previous proposition.
\end{remark}

\begin{remark}\label{Rem:trasla-curva-max}
Let $(\phi_t)$ be a semigroup, not a group, in $\D$. Let $z_0\in \D\setminus\{\tau\}$ and let   $\gamma:(a,+\infty)\to \D$ be the unique maximal invariant curve for $(\phi_t)$ such that $\gamma(0)=z_0$, $a<0$. Let $t_0\in (a,+\infty)$.  It is easy to check that $\tilde\gamma:(a+t_0,+\infty)\to \D$  (where, if $a=-\infty$, we set $a+t_0=-\infty$) is a maximal invariant curve for $(\phi_t)$ such that $\tilde\gamma(0)=\gamma(t_0)$. In other words,  for every $z\in \gamma((a,+\infty))$, the image of the maximal invariant curve for $(\phi_t)$ which values $z$ at time $0$ is $ \gamma((a,+\infty))$.
\end{remark}

\begin{remark}\label{Rem:unique-start-max-bd}
The uniqueness of a maximal invariant curve holds also at  the starting point  in case this is not a fixed point. More precisely, let $(\phi_t)$ be a semigroup, not a group. Let $\gamma:(a,+\infty)\to \D$ be a maximal invariant curve for $(\phi_t)$ with $a<0$ and starting point $\sigma\in\partial \D$. Assume $a>-\infty$. If $\tilde{\gamma}:(\tilde a,+\infty)\to \D$ is a maximal invariant curve for $(\phi_t)$, $\tilde a<0$, such that $\lim_{t\to \tilde a^+}\tilde\gamma(t)=\sigma$ then $\tilde a>-\infty$ and $\gamma((a,+\infty))=\tilde\gamma((\tilde a,+\infty))$.

Indeed, by Proposition \ref{Prop:compl-inv-puntos}, $a>-\infty$  implies $\sigma$ is not a fixed point of $(\phi_t)$ which in turn, by the same token, implies $\tilde a>-\infty$. Then, by Proposition \ref{Prop:compl-inv-puntos}(1), $\gamma(t+a)=\phi_t(\sigma)=\tilde{\gamma}(t+\tilde a)$ for all $t\geq 0$, from which the previous statement follows at once.
\end{remark}

Now, we turn our attention to petals:

\begin{proposition}\label{Prop:simple-prop-petals}
Let $(\phi_t)$ be a semigroup, not a group, in $\D$ with Denjoy-Wolff point $\tau\in \overline{\D}$. Let $\Delta$ be a petal for $(\phi_t)$. Then
\begin{enumerate}
\item $\Delta$ is simply connected,
\item $\phi_t(\Delta)=\Delta$ for all $t\geq 0$ and $(\phi_t|_\Delta)$ is a continuous group of automorphisms of $\Delta$,
\item  $\tau\in \partial \Delta$,
\item there exists $\sigma\in\partial \D\cap\partial \Delta$ (possibly $\sigma=\tau$) such for every $z\in \Delta$ the curve $[0,+\infty)\ni t\mapsto (\phi_t|_\Delta)^{-1}(z)$ is a regular backward orbit for $(\phi_t)$ which converges to $\sigma$. Moreover, $\sigma$ is a boundary regular fixed point and, if $\sigma=\tau$, then $(\phi_t)$ is parabolic.
\end{enumerate}
\end{proposition}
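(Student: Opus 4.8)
The plan is to deduce everything from Proposition \ref{Prop:compl-inv-puntos} together with the analysis of backward orbits in Section \ref{Sec:regular-backward}. For (2), note that by Proposition \ref{Prop:compl-inv-puntos} we have $\phi_t(\stackrel{\circ}{\mathcal W})=\stackrel{\circ}{\mathcal W}$, and since $\phi_t$ is a homeomorphism of $\D$ onto its image it is an open map, hence it permutes the connected components of $\stackrel{\circ}{\mathcal W}$. To see that each component $\Delta$ is actually fixed (not just permuted), I would use the last assertion of Proposition \ref{Prop:compl-inv-puntos}: for $z_0\in\Delta$ the whole maximal invariant curve through $z_0$ stays in $\stackrel{\circ}{\mathcal W}$, so $t\mapsto\gamma(t)$ is a continuous path inside $\stackrel{\circ}{\mathcal W}$ joining $z_0$ to $\phi_t(z_0)$; hence $\phi_t(z_0)$ lies in the same component as $z_0$. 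Thus $\phi_t(\Delta)\subseteq\Delta$; applying the same argument to $\phi_t$ restricted and using $\phi_t(\Delta)$ open together with $\phi_t\colon\Delta\to\phi_t(\Delta)$ being a homeomorphism and the invariant-curve argument run in negative time (available precisely because $z_0\in\stackrel{\circ}{\mathcal W}$, so $a=-\infty$) gives surjectivity. Then $(\phi_t|_\Delta)$ is a semigroup of injective self-maps of $\Delta$ with $\phi_t(\Delta)=\Delta$; by the extension principle recalled in the Preliminaries (an element being onto forces the whole family to be automorphisms and the semigroup to be a group), $(\phi_t|_\Delta)$ is a continuous one-parameter group of automorphisms of $\Delta$.

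For (1), simple connectedness: $\mathcal W=\cap_{t\ge0}\phi_t(\D)$ is an intersection of simply connected domains, but the intersection of simply connected sets need not be connected, let alone simply connected, so I would instead argue on $\Delta$ directly. Each $\phi_t(\D)$ is simply connected; write $\Delta$ as the component of the interior. I would show $\widehat{\C}\setminus\Delta$ is connected by a standard argument: if $K$ is a bounded component of $\C\setminus\Delta$, then $K\subset\stackrel{\circ}{\mathcal W}$ would follow from $K$ being surrounded by $\Delta\subset\phi_t(\D)$ for every $t$ (each $\phi_t(\D)$ being simply connected cannot have a hole there), forcing $K$ to lie in $\Delta$ — contradiction. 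More cleanly, since $(\phi_t|_\Delta)$ is a group of automorphisms of $\Delta$ containing non-identity elements (the semigroup is not a group on $\D$, but on $\Delta$ it is nontrivial unless $\Delta=\{\tau\}$, which is excluded as $\Delta$ is open), $\Delta$ carries a nontrivial one-parameter group of conformal automorphisms; a planar domain admitting such a group is conformally a disc, a punctured plane, a plane, an annulus or a punctured disc, and the last two cases are ruled out because the group would then have to be (quasi-)rotations, contradicting that the orbits $t\mapsto\phi_t(z)$ converge to $\tau\in\partial\Delta$ (see below); in all the remaining cases $\Delta$ is simply connected.

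For (3) and (4): fix $z\in\Delta$; by Proposition \ref{Prop:compl-inv-puntos}(2) the curve $t\mapsto(\phi_t|_\Delta)^{-1}(z)=\gamma(-t)$ is a backward orbit of $(\phi_t)$, and by Lemma \ref{Lem:converge-no-back} it converges to a boundary fixed point $\sigma\in\partial\D$ (the constant case $\gamma\equiv\tau$ is impossible since $z\ne\tau$). Moreover the forward orbit $t\mapsto\phi_t(z)=\gamma(t)$ converges to $\tau$ by Proposition \ref{Prop:compl-inv-puntos}, so $\tau\in\overline{\Delta}$; since $\tau\notin\stackrel{\circ}{\mathcal W}$ (orbits in $\D$ accumulate only at $\tau$, so $\tau$ cannot have a whole neighbourhood of backward-defined points — more precisely $\phi_t(\D)\to\{\tau\}$ shows $\tau\in\partial\phi_t(\D)\subset\partial\mathcal W$), we get $\tau\in\partial\Delta$. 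To upgrade the backward orbit to a \emph{regular} one: on $\Delta$ the group $(\phi_t|_\Delta)$ is conjugate (via a Riemann map $\Delta\to\D$) to a one-parameter group of automorphisms of $\D$, which is necessarily a hyperbolic, parabolic, or elliptic group; the elliptic case is excluded since the orbit escapes to $\partial\Delta$; in the hyperbolic or parabolic case, direct computation (Proposition \ref{Prop:group-backward}) shows backward orbits have bounded hyperbolic step \emph{in $\Delta$}, and since the inclusion $\Delta\hookrightarrow\D$ is a contraction for the hyperbolic metrics, $V(\gamma)\le V_\Delta(\gamma)<\infty$, so $\gamma$ is regular. Finally, by Proposition \ref{Prop:converge-back} applied to this regular backward orbit, either $\sigma\ne\tau$ and $\sigma$ is a repelling fixed point (the hyperbolic case), or $\sigma=\tau$ and $(\phi_t)$ is parabolic (the parabolic case on $\Delta$); either way $\sigma\in\partial\D\cap\partial\Delta$ is a boundary regular fixed point. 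The map $\sigma$ is independent of $z\in\Delta$: two such backward orbits, being both regular and landing respectively at $\sigma_z,\sigma_{z'}$, correspond to the same group on $\Delta$, whose (at most two) fixed points on $\partial\Delta$ are determined by the group, so the landing point is forced.

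The main obstacle I anticipate is making the simple-connectedness argument for (1) genuinely rigorous: the cheap "intersection of simply connected sets" idea fails, and the clean route really does go through the classification of planar domains with a nontrivial one-parameter automorphism group (equivalently, through the holomorphic model / Riemann-map conjugation of $(\phi_t|_\Delta)$), so one must first secure (2) and the escape-to-$\tau$ behaviour and only then conclude (1). I would therefore reorder the proof as (2) $\Rightarrow$ (3)–(4) $\Rightarrow$ (1).
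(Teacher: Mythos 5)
The one genuine gap is in your proof of (3), in the elliptic case $\tau\in\D$. You need $\tau\notin\stackrel{\circ}{\mathcal W}$, and neither justification you offer holds up. That forward orbits accumulate only at $\tau$ does not by itself preclude a full neighbourhood of $\tau$ consisting of backward-defined points --- that is exactly what must be excluded. And the ``more precise'' claim is false: $\tau$ is a fixed point of every $\phi_t$, so $\tau=\phi_t(\tau)$ lies in the \emph{open} set $\phi_t(\D)$; hence $\tau\in\mathcal W$ and $\tau\notin\partial\phi_t(\D)$, and moreover $\phi_t(\D)$ need not shrink to $\{\tau\}$ at all (for the Koebe semigroup of Example \ref{Ex:petal type1} one has $\phi_t(\D)=\D\setminus(-1,-r_t]$ with $r_t\to 0$, while $\tau=0$ stays in every $\phi_t(\D)$). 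The correct argument, and the one the paper uses, goes through (2): if $\tau$ lay in a petal $\Delta_0$, then $\phi_t|_{\Delta_0}$ would be an automorphism of the hyperbolic domain $\Delta_0$ fixing the interior point $\tau$, whence $|\phi_t'(\tau)|=1$ and, by the Schwarz lemma applied to $\phi_t:\D\to\D$, each $\phi_t$ would be an automorphism of $\D$, contradicting that $(\phi_t)$ is not a group. (Alternatively: $k_{\Delta_0}(z,w)=k_{\Delta_0}(\phi_t(z),\phi_t(w))\to k_{\Delta_0}(\tau,\tau)=0$ for all $z,w\in\Delta_0$, which is absurd.) Note that the gap propagates into your ``cleaner'' classification proof of (1) and into (4), both of which invoke ``orbits converge to $\tau\in\partial\Delta$''.

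Apart from this, the architecture is sound and close to the paper's: (2) is proved exactly as in the paper (connectedness of the maximal invariant curve inside $\stackrel{\circ}{\mathcal W}$, plus its negative-time branch for surjectivity), and the key step of (4) --- $\omega(\gamma(t),\gamma(t+1))\leq k_\Delta(\gamma(t),\gamma(t+1))=k_\Delta(z,(\phi_1|_\Delta)^{-1}(z))$ followed by Proposition \ref{Prop:converge-back} --- is the paper's argument; the detour through a Riemann map of $\Delta$ is unnecessary there, and would in any case make your proposed order (2)$\Rightarrow$(3)--(4)$\Rightarrow$(1) circular, since the Riemann map needs (1). Your first argument for (1) is a genuine and clean alternative: a hole of $\Delta$ is enclosed by a Jordan curve $\Gamma\subset\Delta\subset\phi_t(\D)$, each $\phi_t(\D)$ is simply connected and therefore contains the bounded component of $\C\setminus\Gamma$, which thus lies in $\stackrel{\circ}{\mathcal W}$ and hence in $\Delta$. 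The paper instead runs the maximal invariant curve through a point of the hole and notes that it must cross $\Gamma$ while staying outside $\mathcal W$; either route works, yours trading the dynamical lemma for a purely topological fact.
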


\begin{proof}
(1) Seeking for a contradiction, we assume $\Delta$ is not simply connected. Hence, there exists a Jordan curve $\Gamma\subset \Delta$ such that the bounded connected component of $\C\setminus \Gamma$  contains a point $z_0\not\in {\mathcal W}$, the  backward invariant set of $(\phi_t)$. By Proposition \ref{Prop:compl-inv-puntos}, there exists a maximal invariant curve $\gamma:(a,+\infty)\to \D$ such that $\lim_{z\to a^-}\gamma(t)=p\in \partial \D$ and $\gamma(0)=z_0$. Moreover, $\gamma(t)\not\in  {\mathcal W}$ for all $t>a$. But  $\gamma((a,+\infty))$ has to intersect $\Gamma$, a contradiction. Therefore, $\Delta$ is simply connected.

(2) By Proposition \ref{Prop:compl-inv-puntos}, $\phi_t(\Delta)\subset \stackrel{\circ}{\mathcal W}$ for all $t\geq 0$. Hence,  since $\Delta$ is  connected, if $z_0\in \Delta$ then  $[0,+\infty)\ni t\mapsto \phi_t(z_0)$ is contained in $\Delta$. That is, $\phi_t(\Delta)\subseteq \Delta$ for all $t\geq 0$. On the other hand, if $z_0\in \Delta\setminus\{\tau\}$, let $\gamma:(-\infty,+\infty)\to \Delta$ be the maximal invariant curve  such that $\gamma(0)=z_0$, given by Proposition \ref{Prop:compl-inv-puntos}. Hence, since $\Delta$ is connected and $\gamma((-\infty,+\infty))\subset \stackrel{\circ}{\mathcal W}$, it follows that $\gamma(t)\in \Delta$ for all $t\in\R$. Thus, $\phi_t(\gamma(t))=\gamma(0)=z_0$ for every $t\geq 0$. Namely,  
$\phi_{t}(\Delta)=\Delta$ for all $t\geq 0$. Therefore, $(\phi_t|_\Delta)$ is a continuous semigroup whose iterates are automorphisms of $\Delta$. Clearly, it extends to a continuous group $(\phi_t|_\Delta)$  of $\Delta$. 

(3) If $z\in \Delta$, then $\phi_t(z)\in \Delta$ for all $t\geq 0$ and $\phi_t(z)\to \tau$. Hence, $\tau\in \overline{\Delta}$. In particular, if $(\phi_t)$ is not elliptic, then $\tau\in \partial \Delta$. Assume $(\phi_t)$ is elliptic and $\tau\in \Delta$. Since $\Delta$ is simply connected, there exists a univalent map $f:\D\to \C$ such that $f(\D)=\Delta$ and $f(0)=\tau$. Hence, by (2), $(f^{-1}\circ \phi_t\circ f)$ is a group of $\D$ with Denjoy-Wolff point $0$ such that 
\[
1=|(f^{-1}\circ \phi_t\circ f)'(0)|=|\phi_t'(\tau)|.
\]
By the Schwarz Lemma, it follows that $(\phi_t)$ is a group, against our assumption. 

(4) Let $z\in \Delta$. Taking into account that $\phi_t|_\Delta$ is an automorphism of $\Delta$ for all $t\geq0$, and denoting by $k_\Delta$ the hyperbolic distance in $\Delta$, we have 
\begin{equation*}
\begin{split}
\omega((\phi_t|_\Delta)^{-1}(z), (\phi_{t+1}|_\Delta)^{-1}(z))&\leq k_\Delta((\phi_t|_\Delta)^{-1}(z), (\phi_{t+1}|_\Delta)^{-1}(z))\\&= k_\Delta(z, (\phi_{1}|_\Delta)^{-1}(z))<+\infty.
\end{split}
\end{equation*}
Moreover, $\phi_s((\phi_t|_\Delta)^{-1}(z))=(\phi_{t-s}|_\Delta)^{-1}(z)$ for all $0\leq s\leq t$. Therefore, $[0,+\infty)\ni t\mapsto (\phi_t|_\Delta)^{-1}(z)$ is a regular backward orbit for $(\phi_t)$. Hence, by Proposition \ref{Prop:converge-back}, there exists $\sigma\in \partial \D\cap \partial \Delta$ such that $\lim_{t\to+\infty}(\phi_t|_\Delta)^{-1}(z)=\sigma$---and, if $\sigma=\tau$ then $(\phi_t)$ is parabolic. 

Finally, if $w\in \Delta$, for all $t\geq 0$,
\[
\omega((\phi_t|_\Delta)^{-1}(z), (\phi_{t}|_\Delta)^{-1}(w))\leq k_\Delta((\phi_t|_\Delta)^{-1}(z), (\phi_{t}|_\Delta)^{-1}(w))= k_\Delta(z, w)<+\infty,
\]
hence  $\lim_{t\to+\infty}(\phi_t|_\Delta)^{-1}(w)=\sigma$.
\end{proof}

The previous result allows to give the following definition:

\begin{definition}
Let $(\phi_t)$ be a semigroup, not a group, in $\D$ with Denjoy-Wolff point $\tau\in \overline{\D}$. Let $\Delta$ be a petal for $(\phi_t)$. We say that $\Delta$ is a {\sl hyperbolic petal}\index{Hyperbolic petal of a semigroup} if $\partial \Delta$ contains a repelling fixed point of $(\phi_t)$. Otherwise, we call  $\Delta$  a {\sl parabolic petal}.\index{Parabolic petal of a semigroup} 
\end{definition}

\begin{remark}\label{Rem:petalo-parab-parab}
By Proposition \ref{Prop:simple-prop-petals}, a petal  contains no (inner) fixed points of the semigroup. Moreover, only parabolic semigroup can have parabolic petals.
\end{remark}

Our aim now is to describe the boundary of petals. We start with the following result which, taking into account Remark \ref{Rem:backward to complete invariant}, shows that backward orbits which land at the Denjoy-Wolff point have to be contained in the closure of a parabolic petal:

\begin{proposition}\label{Prop:back-to-para-pet}
Let $(\phi_t)$ be a non-elliptic semigroup in $\D$ with Denjoy-Wolff point $\tau\in\partial \D$. Let $\gamma:(-\infty,+\infty)\to \D$ be a maximal invariant curve for $(\phi_t)$ such that $\lim_{t\to -\infty}\gamma(t)=\tau$. Let $J$ be the Jordan curve defined by $\gamma$, i.e., $J:=\overline{\gamma(\R)}$. Let $V$ be the bounded connected component of $\C\setminus J$. Then there exists a parabolic petal $\Delta$ of $(\phi_t)$ such that $V\subseteq \Delta$. In particular, $J$ is contained in the closure of a parabolic petal of $(\phi_t)$ and the semigroup is parabolic.
\end{proposition}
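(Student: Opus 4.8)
The plan is to show that the bounded Jordan domain $V$ enclosed by $J$ is contained in the \emph{interior} $\stackrel{\circ}{\mathcal W}$ of the backward invariant set; it then lies in a single petal $\Delta$, the maximal invariant curves through points of $V$ stay trapped inside $V$, and this traps their backward limit at $\tau$, which forces $\Delta$ to be parabolic. For the Jordan picture: since $\gamma$ is defined on all of $\R$ with $\lim_{t\to-\infty}\gamma(t)=\lim_{t\to+\infty}\gamma(t)=\tau\in\partial\D$, Remark~\ref{Rem:complete-inv-Jordan} gives that $\gamma$ is injective and that $J=\gamma(\R)\cup\{\tau\}$ is a Jordan curve with $J\cap\partial\D=\{\tau\}$; moreover, by Proposition~\ref{Prop:compl-inv-puntos}(2) (the case $a=-\infty$) we have $\gamma(\R)\subset\mathcal W$. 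Writing $V$ for the bounded component of $\C\setminus J$, we have $\partial V=J$; and since $\C\setminus\overline{\D}$ is connected, unbounded and disjoint from $J$, it lies in the unbounded component, so $V\subset\D$ and $\overline V\cap\partial\D=\{\tau\}$.

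\emph{Step 1: $V\subset\stackrel{\circ}{\mathcal W}$.} Suppose for contradiction that some $z_0\in V$ lies outside $\mathcal W$. By Proposition~\ref{Prop:compl-inv-puntos} the maximal invariant curve $\delta:(a,+\infty)\to\D$ through $z_0$ has finite starting time $a>-\infty$, satisfies $\delta(t)\notin\mathcal W$ for all $t>a$, and its starting point $p:=\lim_{t\to a^+}\delta(t)\in\partial\D$ is \emph{not} a fixed point of $(\phi_t)$. The set $\delta((a,+\infty))$ is connected, contained in $\D$, disjoint from $\gamma(\R)$ (because $\gamma(\R)\subset\mathcal W$) and from $\{\tau\}$, hence disjoint from $J=\partial V$; as it meets $V$ at $z_0$, it is contained in $V$. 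Therefore $p\in\overline V\cap\partial\D=\{\tau\}$, i.e.\ $p=\tau$. But $\tau$ is the Denjoy--Wolff point of the non-elliptic semigroup $(\phi_t)$, hence a boundary fixed point — a contradiction. Thus $V\subset\mathcal W$, and being open $V\subset\stackrel{\circ}{\mathcal W}$; by connectedness $V$ lies in a single petal $\Delta$, and $J=\partial V\subset\overline V\subset\overline\Delta$, so in particular $\tau\in\partial\Delta$.

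\emph{Step 2: $\Delta$ and $(\phi_t)$ are parabolic.} Fix $z_0\in V\subset\Delta$ and let $\delta:\R\to\D$ be the maximal invariant curve through it. Since $z_0\in\stackrel{\circ}{\mathcal W}$, Proposition~\ref{Prop:compl-inv-puntos}(2) gives $\delta(\R)\subset\stackrel{\circ}{\mathcal W}$, hence $\delta(\R)\subset\Delta$ by connectedness. By uniqueness of maximal invariant curves (Remark~\ref{Rem:trasla-curva-max}), $\delta$ and $\gamma$ have either disjoint images or the same image; the latter is excluded since $z_0=\delta(0)\in V$ while $\gamma(\R)\subset\partial V$. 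So, exactly as in Step~1, $\delta(\R)$ is connected, disjoint from $J$ and meets $V$, whence $\delta(\R)\subset V$. By Proposition~\ref{Prop:simple-prop-petals}(4), $t\mapsto\delta(-t)=(\phi_t|_\Delta)^{-1}(z_0)$ is a regular backward orbit converging to some $\sigma\in\partial\D\cap\partial\Delta$; but $\sigma=\lim_{t\to-\infty}\delta(t)\in\overline V\cap\partial\D=\{\tau\}$, so $\sigma=\tau$, and Proposition~\ref{Prop:simple-prop-petals}(4) then forces $(\phi_t)$ to be parabolic. Finally, every backward orbit in $\Delta$ converges to this common point $\sigma=\tau$, the Denjoy--Wolff point, and not to a repelling fixed point; hence $\partial\Delta$ contains no repelling fixed point, i.e.\ $\Delta$ is parabolic. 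Together with $J\subset\overline\Delta$ from Step~1, this proves the proposition.

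\emph{Main obstacle.} The delicate part is the two ``no crossing'' arguments — that a maximal invariant curve lying outside $\mathcal W$, respectively distinct from $\gamma$, cannot meet $J$ — which rest on $J\setminus\{\tau\}\subset\mathcal W$ and on the uniqueness of the maximal invariant curve through a point, combined with correctly identifying the limit of $\delta$ at its starting/ending time as a point of $\overline V\cap\partial\D$. A secondary, milder point is to upgrade ``$(\phi_t)$ is parabolic'' to ``$\Delta$ is parabolic'': this uses that Proposition~\ref{Prop:simple-prop-petals}(4) associates to the petal $\Delta$ a \emph{single} boundary fixed point, which we have identified with $\tau$, so that no repelling fixed point can survive on $\partial\Delta$.
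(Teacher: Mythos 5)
Your proof is correct and follows essentially the same route as the paper: show every point of $V$ lies in $\mathcal W$ by trapping its maximal invariant curve inside $V$ (so its starting point must be $\tau$, a fixed point), then identify the limit of the backward orbits from $\Delta$ with $\tau$ via Proposition \ref{Prop:simple-prop-petals}(4) to conclude parabolicity. The only cosmetic difference is that in Step 1 you obtain disjointness of $\delta$ from $J$ via $\delta\cap\mathcal W=\emptyset$ versus $\gamma(\R)\subset\mathcal W$, while the paper invokes the uniqueness of maximal invariant curves (Remark \ref{Rem:trasla-curva-max}); both are valid.
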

\begin{proof}
By Remark \ref{Rem:complete-inv-Jordan}, $J$ is a Jordan curve. Fix $\zeta_0\in V$.  Let $\eta:(a,+\infty)\to \D$ be the maximal invariant curve for $(\phi_t)$ such that $\eta(0)=\zeta_0$, $a<0$. Note that, since $J$ is the closure of the image of a maximal invariant curve which does not contain $\zeta_0$, we have $\eta((a,+\infty))\cap J=\emptyset$ by Remark \ref{Rem:trasla-curva-max}. Hence,  $\eta((a,+\infty))\subset V$. By Proposition \ref{Prop:compl-inv-puntos}, the initial point $w_0$ of $\eta$ belongs to $\partial \D$, hence, the only possibility is  $w_0=\tau$. In this case, again by Proposition \ref{Prop:compl-inv-puntos}, we have $\zeta_0\in \mathcal W$, the backward invariant set of $(\phi_t)$. By the arbitrariness of $\zeta_0$, this implies that $V\subset \stackrel{\circ}{\mathcal W}$. Therefore, there exists a petal $\Delta$ of $(\phi_t)$ such that $V\subseteq \Delta$. 

Finally, let $z\in V$. The curve $[0,+\infty)\ni t\mapsto (\phi_t|_\Delta)^{-1}(z)$  is a backward orbit of $(\phi_t)$ by Proposition \ref{Prop:simple-prop-petals}. By Remark \ref {Rem:backward to complete invariant}, it extends to a maximal invariant curve of $(\phi_t)$. Hence, for what we already proved, such a curve  converges to $\tau$.  It follows that $\Delta$ is parabolic, again by Proposition \ref{Prop:simple-prop-petals}(4).  
\end{proof}

Now,  we focus on the boundary of  petals. To this aim, and for the subsequent results, we need a lemma:

\begin{lemma}\label{Lem:boundary-inv-c}
Let $(\phi_t)$ be a  semigroup, not a group, in $\D$ with Denjoy-Wolff point $\tau\in\overline{\D}$. Suppose $\Delta\subset \D$ is a petal and $z_0\in \partial \Delta\cap (\D\setminus\{\tau\})$. Let $\gamma:(a,+\infty)\to \D$, $a<0$, be the maximal invariant curve of $(\phi_t)$ such that $\gamma(0)=z_0$. Then $\gamma((a,+\infty))\subset \partial \Delta$.
\end{lemma}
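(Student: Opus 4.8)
The plan is to exploit the fact that $\partial\Delta$ is completely $(\phi_t)$-invariant (Proposition~\ref{Prop:compl-inv-puntos} tells us $\phi_t(\mathcal W\setminus\stackrel{\circ}{\mathcal W})=\mathcal W\setminus\stackrel{\circ}{\mathcal W}$, and one needs the analogous statement for the boundary of a \emph{single} petal) and to recall the rigidity of the maximal invariant curve through a point. First I would observe that since $\phi_t(\stackrel{\circ}{\mathcal W})=\stackrel{\circ}{\mathcal W}$ and each petal is a connected component of $\stackrel{\circ}{\mathcal W}$ with $\phi_t(\Delta)=\Delta$ (Proposition~\ref{Prop:simple-prop-petals}(2)), the map $\phi_t$ permutes the petals, so it sends $\partial\Delta$ into $\partial\Delta$ for every $t\geq 0$; more precisely, for $t\geq 0$ and $z_0\in\partial\Delta$, any neighborhood of $\phi_t(z_0)$ is the image under $\phi_t$ of a neighborhood of $z_0$, which meets both $\Delta$ and its complement, and since $\phi_t(\Delta)=\Delta$ we get $\phi_t(z_0)\in\partial\Delta$. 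Running the flow forward thus keeps the curve on $\partial\Delta$: $\gamma(t)=\phi_t(z_0)\in\partial\Delta$ for all $t\geq 0$.

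For negative times the key point is that $z_0\in\D\setminus\{\tau\}$, so the maximal invariant curve $\gamma:(a,+\infty)\to\D$ with $\gamma(0)=z_0$ is defined for $t$ slightly negative, and $\phi_{-t}$ (for small $t>0$) is injective. I would argue that $\gamma(t)\in\partial\Delta$ for all $t\in(a,0)$ as well: if some $\gamma(t_1)$, $t_1\in(a,0)$, were in $\stackrel{\circ}{\mathcal W}$, then by Proposition~\ref{Prop:compl-inv-puntos}(2) the whole curve $\gamma(\R)\subset\stackrel{\circ}{\mathcal W}$ (the statement there is that $z_1=\gamma(t_1)\in\stackrel{\circ}{\mathcal W}$ iff $\gamma(t)\in\stackrel{\circ}{\mathcal W}$ for all $t$), in particular $z_0=\gamma(0)\in\stackrel{\circ}{\mathcal W}$, contradicting $z_0\in\partial\Delta$. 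Conversely, if some $\gamma(t_1)\in\D\setminus\mathcal W$ then by Proposition~\ref{Prop:compl-inv-puntos}(1) we would have $\gamma(t)\notin\mathcal W$ for all $t>t_1$, in particular $\gamma(0)=z_0\notin\mathcal W$, again a contradiction since $\partial\Delta\subset\overline{\mathcal W}$ and actually $z_0\in\mathcal W$ (the petal's closure in $\D$ lies in $\mathcal W$, as $\mathcal W$ is not necessarily closed but $\stackrel{\circ}{\mathcal W}\subset\mathcal W$ and $z_0$ is a limit of points of $\Delta\subset\mathcal W$; here one uses that $\gamma(t)\to z_0$ forces $z_0\in\overline{\mathcal W}$, and then the dichotomy of Lemma~\ref{Lem:converge-no-back}/Proposition~\ref{Prop:compl-inv-puntos} applied along $\gamma$ rules out the remaining case). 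Hence $\gamma(t)\in\mathcal W\setminus\stackrel{\circ}{\mathcal W}$ for all $t\in(a,0)$.

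It remains to upgrade ``$\gamma(t)\in\mathcal W\setminus\stackrel{\circ}{\mathcal W}$'' to ``$\gamma(t)\in\partial\Delta$'', i.e.\ to check the curve stays on the boundary of the \emph{same} petal rather than merely on the topological boundary of $\mathcal W$. For this I would use that $\gamma$ is continuous and $\gamma(0)=z_0\in\partial\Delta$: the set $\{t\in(a,+\infty):\gamma(t)\in\partial\Delta\}$ is closed in $(a,+\infty)$ by continuity, and it is open relative to $\{t:\gamma(t)\in\mathcal W\setminus\stackrel{\circ}{\mathcal W}\}=(a,+\infty)$ because $\partial\Delta$ is relatively open in $\mathcal W\setminus\stackrel{\circ}{\mathcal W}$ — indeed $\Delta$ is a connected component of the open set $\stackrel{\circ}{\mathcal W}$, so $\overline{\Delta}$ meets $\mathcal W\setminus\stackrel{\circ}{\mathcal W}$ exactly in $\partial\Delta$, and any point of $\partial\Delta$ has a neighborhood meeting no other petal (distinct components of an open subset of $\C$ cannot accumulate a common boundary point that lies in $\mathcal W\setminus\stackrel{\circ}{\mathcal W}$ — wait, this needs care: components of open sets \emph{can} share boundary points in general). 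The honest argument is: a point $w\in\mathcal W\setminus\stackrel{\circ}{\mathcal W}$ lies in $\partial\Delta$ iff $w\in\overline{\Delta}$, and $w\mapsto$ ``$w\in\overline{\Delta}$'' is not obviously open, so instead I would connect $\gamma(t)$ to $z_0$ directly: for $t\in(a,0)$ close to $0$, continuity gives $\gamma(t)$ arbitrarily close to $z_0\in\partial\Delta$, hence $\gamma(t)$ is in any prescribed neighborhood $U$ of $z_0$; choosing $U$ so that $U\cap\stackrel{\circ}{\mathcal W}\subset\Delta$ (possible since $\Delta$ is \emph{a} component and $z_0\in\partial\Delta$, so near $z_0$ the only nearby component is $\Delta$ — this is where I invoke that $\partial\Delta\cap\stackrel{\circ}{\mathcal W}=\emptyset$ and that components of open sets are open), every point of $U$ in $\mathcal W\setminus\stackrel{\circ}{\mathcal W}$ that is a limit of $\stackrel{\circ}{\mathcal W}$-points is a limit of $\Delta$-points, i.e.\ lies in $\overline{\Delta}$. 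Finally a standard connectedness/continuation argument along the whole interval $(a,+\infty)$, using that $\phi_t$ permutes petals and $\phi_t(\Delta)=\Delta$, propagates $\gamma(t)\in\partial\Delta$ from a neighborhood of $0$ to all of $(a,+\infty)$. \emph{The main obstacle} is precisely this last step — ruling out that the maximal invariant curve slides along $\partial\mathcal W$ from the boundary of one petal onto the boundary of a different petal — and the clean way around it is to push the neighborhood $U$ of $z_0$ forward/backward by $\phi_t$ (which is a homeomorphism onto its image and preserves $\Delta$) to transport the ``locally only $\Delta$ is near'' property along the orbit.
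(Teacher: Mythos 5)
Your forward-time argument ($t\geq 0$) is essentially correct (and close in spirit to the paper's, which gets it even faster from the inequality $\omega(\phi_t(w),\phi_t(z_0))\leq\omega(w,z_0)$ applied to $w\in\Delta$ near $z_0$). The backward-time part, however, has a genuine gap, and it rests on two claims that are false in general. First, you assert $z_0\in\mathcal W$ because $z_0$ is a limit of points of $\Delta\subset\mathcal W$; but $\mathcal W$ is not closed in $\D$ and $\partial\Delta\cap\D$ need \emph{not} lie in $\mathcal W$: in Example~\ref{Ex:petal type2-2} the arc $h^{-1}(1+i(0,+\infty))$ is a component of $\partial\Delta\cap\D$ consisting entirely of points whose maximal invariant curve has finite starting time $a>-\infty$, i.e.\ of points outside $\mathcal W$ (see also Proposition~\ref{Prop:leer-frontera-h-petalo-nell}(2)). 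So the correct invariant statement is only $\gamma(t)\notin\stackrel{\circ}{\mathcal W}$ for all $t$ (which follows from $z_0\notin\stackrel{\circ}{\mathcal W}$ and Proposition~\ref{Prop:compl-inv-puntos}), hence $\gamma(t)\notin\Delta$; the target $\gamma(t)\in\mathcal W\setminus\stackrel{\circ}{\mathcal W}$ is both unreachable and unnecessary. Second, your ``upgrade'' step needs a neighborhood $U$ of $z_0$ with $U\cap\stackrel{\circ}{\mathcal W}\subset\Delta$. This is also false in general: two distinct petals can share a whole boundary arc in $\D$ (take $h(\D)=\strip_2\setminus\{w:\Re w=1,\ \Im w\leq 0\}$, so that $\strip$ and $\strip+1$ are two adjacent maximal strips whose petals both have $h^{-1}(\{\Re w=1,\Im w>0\})$ on their boundary). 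You flag both difficulties yourself, but the proposed fix (``transport the locally-only-$\Delta$-is-near property along the orbit'') transports a property that need not hold anywhere, so the argument does not close.

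The missing idea is to argue by contradiction \emph{at the point $\gamma(t)$} rather than trying to control the local structure of $\stackrel{\circ}{\mathcal W}$ near $z_0$. Fix $t\in(a,0)$ and suppose $\gamma(t)\notin\partial\Delta$; since $\gamma(t)\notin\Delta$, there is an open $U\ni\gamma(t)$ with $U\cap\Delta=\emptyset$. Then $\phi_{-t}(U)$ is an open neighborhood of $\phi_{-t}(\gamma(t))=z_0\in\partial\Delta$, so it contains some $w\in\Delta$. Since $\phi_{-t}(\Delta)=\Delta$, write $w=\phi_{-t}(v)$ with $v=(\phi_{-t}|_\Delta)^{-1}(w)\in\Delta$; injectivity of $\phi_{-t}$ forces $v\in U$, contradicting $U\cap\Delta=\emptyset$. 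This uses only the openness and injectivity of $\phi_{-t}$ together with $\phi_{-t}(\Delta)=\Delta$, and needs no separation of petals and no membership of $z_0$ in $\mathcal W$. With this replacement (and keeping your forward-time argument), the proof is complete; the final ``connectedness/continuation along $(a,+\infty)$'' step you propose then becomes superfluous.
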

\begin{proof}
Let  ${\mathcal W}$ be the backward invariant set of $(\phi_t)$. Since $z_0\in \partial \Delta$, it follows that $z_0\not\in\stackrel{\circ}{\mathcal W}$. By Proposition \ref{Prop:compl-inv-puntos}, $\gamma(t)\not\in \stackrel{\circ}{\mathcal W}$ for all $t\in (a,+\infty)$. In particular, $\gamma(t)\not\in \Delta$ for all $t\in (a,+\infty)$.

Let $t\geq 0$ and fix $\delta>0$. Since $z_0\in\partial\Delta$,  there exists $w\in \Delta$ such that $\omega(z_0,w)<\delta$.  Then
\[
\omega(\phi_t(w), \gamma(t))=\omega(\phi_t(w), \phi_t(z_0))\leq \omega(w,z_0)<\delta.
\]
Since $\phi_t(w)\in \Delta$ for all $t\geq 0$ by Proposition \ref{Prop:simple-prop-petals}, by the arbitrariness of $\delta$, we have $\gamma(t)\in \partial \Delta$ for all $t\geq 0$. Now, let $t\in (a,0)$. Assume by contradiction that $\gamma(t)\not\in \partial \Delta$. Then there exists an open neighborhood $U$ of $\gamma(t)$ such that $U\cap \Delta=\emptyset$. Since $\phi_{-t}(\gamma(t))=\gamma(0)=z_0$, and $\phi_{-t}(U)$ is open, there exists $w\in \phi_{-t}(U)\cap \Delta$.  Since $\phi_{-t}|_\Delta$ is an automorphism of $\Delta$, it follows that $(\phi_{-t}|_\Delta)^{-1}(w)\in \Delta$. But $\phi_{-t}$ is injective, hence $(\phi_{-t}|_\Delta)^{-1}(w)\in U$, a contradiction. Hence, $\gamma(t)\in \partial \Delta$ for all $t\in (a,+\infty)$.  
\end{proof}

We first consider the case in which the boundary of a petal contains a maximal invariant curve starting at the Denjoy-Wolff point:

\begin{proposition}\label{Prop:para-pet-back}
Let $(\phi_t)$ be a parabolic semigroup in $\D$ with Denjoy-Wolff point $\tau\in\partial \D$. Let $\Delta\subset \D$ be a petal of $(\phi_t)$. Suppose there exists a maximal invariant curve $\gamma:(-\infty,+\infty)\to \D$ of $(\phi_t)$ such that $\lim_{t\to -\infty}\gamma(t)=\tau$. Let $J:=\overline{\gamma(\R)}$ (a Jordan curve) and let $V$ be the bounded connected component of $\C\setminus J$. Assume that  $J\cap \partial \Delta\cap \D\neq\emptyset$. Then $\Delta$ is parabolic, $\Delta=V$ and $J=\partial \Delta$. 

In particular, the boundary of a hyperbolic petal cannot contain maximal invariant curves with starting point $\tau$.
\end{proposition}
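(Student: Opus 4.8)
The plan is to combine Lemma~\ref{Lem:boundary-inv-c} with Proposition~\ref{Prop:back-to-para-pet}. Throughout I write $\stackrel{\circ}{\mathcal W}$ for the interior of the backward invariant set, so that petals are the connected components of $\stackrel{\circ}{\mathcal W}$. First I would pick a point $z_0\in J\cap\partial\Delta\cap\D$. Since $\tau\in\partial\D$ we have $z_0\neq\tau$, and since $z_0\in J=\overline{\gamma(\R)}$ with $\gamma(\R)\subset\D$, necessarily $z_0=\gamma(t_0)$ for some $t_0\in\R$. By Remark~\ref{Rem:trasla-curva-max} the maximal invariant curve of $(\phi_t)$ through $z_0$ has image $\gamma(\R)$, so Lemma~\ref{Lem:boundary-inv-c} gives $\gamma(\R)\subset\partial\Delta$, whence $J=\overline{\gamma(\R)}\subset\partial\Delta$. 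Moreover $z_0\in\partial\Delta$ forces $z_0\notin\stackrel{\circ}{\mathcal W}$, so by Proposition~\ref{Prop:compl-inv-puntos}(2) (applied along $\gamma$, using Remark~\ref{Rem:trasla-curva-max}) no point $\gamma(t)$ lies in $\stackrel{\circ}{\mathcal W}$; as $\gamma$ is defined for all real times one has $\gamma(\R)\subset\mathcal W$, hence $\gamma(\R)\subset\mathcal W\setminus\stackrel{\circ}{\mathcal W}$.

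Next I would invoke Proposition~\ref{Prop:back-to-para-pet} to obtain a parabolic petal $\Delta_0$ with $V\subseteq\Delta_0$. Being connected, contained in $\stackrel{\circ}{\mathcal W}$ and hence (by the previous step) disjoint from $\gamma(\R)$, and contained in $\D$ and hence disjoint from $J\setminus\D=\{\tau\}$, the petal $\Delta_0$ is disjoint from $J$, so it lies in a single component of $\C\setminus J$, which must be $V$; with $V\subseteq\Delta_0$ this gives $\Delta_0=V$. Likewise $J\subset\partial\Delta$ and $\Delta$ open give $\Delta\cap J=\emptyset$, so $\Delta$ lies in $V$ or in the unbounded component $U$ of $\C\setminus J$. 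Once one shows $\Delta\subset V$, it follows that $\Delta\subset V=\Delta_0$, and since $\Delta$ and $\Delta_0$ are connected components of $\stackrel{\circ}{\mathcal W}$ one gets $\Delta=\Delta_0=V$; therefore $\partial\Delta=\partial V=J$ and $\Delta=\Delta_0$ is parabolic, which is precisely the statement. The closing ``in particular'' is then immediate: if a maximal invariant curve with starting point $\tau$ (hence defined on all of $\R$) lay on the boundary of a petal $\Delta'$, then, with $J'$ the corresponding Jordan curve, $J'\cap\partial\Delta'\cap\D\neq\emptyset$, and the first part would force $\Delta'=V'$ to be parabolic, contradicting hyperbolicity.

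Thus everything reduces to excluding the case $\Delta\subset U$, and I expect this to be the main obstacle. Assume $\Delta\subset U\cap\D=\D\setminus\overline V$. The mechanism I would use is that, by the first step, every $p\in\gamma(\R)$ lies on $\partial\mathcal W$ and is approached by $\stackrel{\circ}{\mathcal W}$ only from the $V$-side: since $G(p)\neq0$ (a point on the non-constant orbit $\gamma$ is not a zero of the generator), near $p$ the trajectories of $G$ form a real-analytic foliation whose leaf through $p$ is an arc of $\gamma(\R)$ cutting a small disc $B\ni p$ into a half-disc $B^-\subset V$ and a half-disc $B^+\subset\D\setminus\overline V$; as $B^-\subset V=\Delta_0\subset\stackrel{\circ}{\mathcal W}$ while $p\notin\stackrel{\circ}{\mathcal W}$, the half-disc $B^+$ must meet $\D\setminus\mathcal W$. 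But $\Delta\subset\D\setminus\overline V$ and $p\in\partial\Delta$ force $B^+$ to meet $\Delta$ as well; joining a point of $\Delta\cap B^+$ to a point of $(\D\setminus\mathcal W)\cap B^+$ inside $B^+$ one must cross $\partial\Delta$, and by Lemma~\ref{Lem:boundary-inv-c} this yields a maximal invariant curve contained in $\partial\Delta$, distinct from $\gamma$, which enters $B$. Letting $B$ shrink, such curves accumulate on $\gamma$, and the contradiction is extracted from the real-analytic structure of the foliation together with $\gamma(\R)\subset\mathcal W\setminus\stackrel{\circ}{\mathcal W}$: the leaves strictly between $\gamma$ and the accumulating ones all lie in $\D\setminus\overline V$, so a whole one-parameter family of maximal invariant curves would be trapped between $\gamma(\R)$ and $\D\setminus\mathcal W$, which cannot occur. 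The cleanest way to make this final step rigorous is to transport the picture to the canonical (parabolic) model $(\Omega,h,\psi_t)$ of $(\phi_t)$: there $\gamma(\R)$ becomes a full vertical line $L\subset h(\D)$, the set $\mathcal W$ becomes $\{w\in h(\D):\ w-is\in h(\D)\ \text{for all}\ s\ge 0\}$, and the two-sided behaviour of this set along a vertical line contained in the boundary of a component of its interior is straightforward to analyse and puts $\Delta$ on the same side as $V$.
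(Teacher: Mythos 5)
Your first two paragraphs are sound and coincide with the opening of the paper's proof: Lemma \ref{Lem:boundary-inv-c} gives $J\subset\partial\Delta$, Proposition \ref{Prop:back-to-para-pet} identifies $V$ with a parabolic petal, and everything reduces to ruling out $\Delta\subset U:=\D\setminus\overline V$. For that exclusion you offer two arguments, and only the second can be made to work. The foliation/accumulation argument is not a proof: after producing maximal invariant curves in $\partial\Delta\cap U$ accumulating on $\gamma$, the decisive claim that ``a whole one-parameter family of maximal invariant curves would be trapped between $\gamma(\R)$ and $\D\setminus\mathcal W$, which cannot occur'' is simply asserted; nothing you have established forbids curves of $\mathcal W\setminus\stackrel{\circ}{\mathcal W}$ from accumulating on one another, ``trapped'' is not defined, and lying between two maximal invariant curves contained in $\mathcal W$ does not by itself place a point in $\mathcal W$.

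Your fallback---passing to the canonical model---is the right move and is genuinely different from (and cleaner than) the paper's argument, but you leave the decisive step as ``straightforward to analyse'', so let me record it. Writing $w=x+iy$ and using starlikeness at infinity of $h(\D)$, one gets $h(\mathcal W)=E\times\R$ with $E:=\{x\in\R:\ \{x\}\times\R\subset h(\D)\}$; hence $h(\stackrel{\circ}{\mathcal W})=\stackrel{\circ}{E}\times\R$ and every petal equals $h^{-1}(I\times\R)$ for a component $I$ of $\stackrel{\circ}{E}$. Here $h(\gamma(\R))=\{x_0\}\times\R$ with $x_0\in E\setminus\stackrel{\circ}{E}$, since $\gamma(\R)\subset\partial\Delta$. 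The two components $h(V)$ and $h(U\cap\D)$ of $h(\D)\setminus(\{x_0\}\times\R)$ are connected, hence lie in opposite half-planes; say $h(V)=h(\D)\cap\{x<x_0\}$ and $h(U\cap\D)=h(\D)\cap\{x>x_0\}$. Then $V\subset\Delta_0$ forces $(x_0-\epsilon,x_0)\subset\stackrel{\circ}{E}$ for some $\epsilon>0$, while $\Delta\subset U$ would force $h(\Delta)=(x_0,b)\times\R$ for some $b>x_0$, so $(x_0,b)\subset E$; together with $x_0\in E$ this makes $x_0$ interior to $E$, contradicting $x_0\notin\stackrel{\circ}{E}$. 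By contrast, the paper stays inside the disc and proves a local one-sidedness claim---a neighborhood $A$ of a point $z_0\in J\cap\D$ with $A\cap U\subset\Delta$---via an iterated Jordan-curve construction, and then concludes $z_0\in\stackrel{\circ}{\mathcal W}$ from $A=(A\cap U)\cup(A\cap J)\cup(A\cap V)\subset\mathcal W$. Your route buys a shorter, more transparent contradiction at the cost of invoking the model machinery, which the paper in any case uses elsewhere.
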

\begin{proof}
By Lemma \ref{Lem:boundary-inv-c}, $J\subset \partial \Delta$. Since $\Delta$ is connected and $J\cap \Delta=\emptyset$, either $\Delta\subseteq V$ or $\Delta\subseteq \D\setminus \overline{V}$. But, by Proposition \ref{Prop:back-to-para-pet}, $V$ is contained in a parabolic petal. Therefore, if $\Delta\subset V$, then, in fact, $V=\Delta$ and the statement holds.

Thus, assume by contradiction that $\Delta\subseteq U:=\D\setminus \overline{V}$.  Let $z_0\in J\cap \D$. We claim that there exists   an open neighborhood $A$ of $z_0$ such that $A\cap U\subset \Delta$.

Assume the claim is true. Let $\mathcal W$ be the backward invariant set of $(\phi_t)$. Note that $J\cap\D\subset \mathcal W\setminus \stackrel{\circ}{\mathcal W}$. But, we have: $A\cap U\subset \Delta\subset \mathcal W$ (by the claim), $A\cap V\subset \mathcal W$ (because $V$ is a petal) and  $J\cap \D\subset \mathcal W$  (by Proposition \ref{Prop:compl-inv-puntos}). Hence,  $A\subset \mathcal W$, which implies $z_0\in \stackrel{\circ}{\mathcal W}$, a contradiction.

We are left to prove the claim. 

If $\Delta=U$, then set $A=U$.  

Otherwise, there exists $\zeta_0\in \partial\Delta\cap U$. Let $\eta_0:(a,+\infty)\to \D$,  $a<0$, be the maximal invariant curve such that $\eta_0(0)=\zeta_0$. Let $C_0$ be its closure (it is a Jordan curve by Remark \ref{Rem:complete-inv-Jordan}). By Lemma \ref{Lem:boundary-inv-c}, $C_0\subset \partial \Delta$.  The Jordan curve $C_0$ divides $\D$ in two connected components, one of them, call it $B_0$, contains $\Delta$. Since $\zeta_0\not\in J$, we have $C_0\cap J\cap \D=\emptyset$ by Remark \ref{Rem:trasla-curva-max}, and hence, $\overline{V}\cap \D \subset B_0$. 

Let $p_0$ be the starting point of $\eta_0$. If $p_0=\tau$, then $B_0$ is the 
bounded connected component of $\D\setminus C_0$, hence, it is
contained in a parabolic petal by Proposition \ref{Prop:back-to-para-pet}. This implies $J\cap \D\subset \stackrel{\circ}{\mathcal W}$, a contradiction. Therefore, $p_0\neq \tau$. 

If $B_0=\Delta$, we are done setting $A=B_0$. Otherwise, there exists  $\zeta_1\in \partial\Delta\cap B_0$, and, arguing as before, we obtain another Jordan curve $C_1\subset \partial \Delta$, which does not intersect $C_0\cap \D$ and $J\cap \D$. Let $B_1$ be the connected component of $\D\setminus C_1$ which contains $\Delta$. By construction, such connected component contains $\overline{V}\cap \D$ and $C_0\cap \D$. 

Let $p_1$ be the starting point of the maximal invariant curve whose closure defines $C_1$. As before, $p_1\neq \tau$. By construction, $\Delta\subset G:=B_0\cap B_1\cap U$. If $G=\Delta$, we set $A=G$ and we are done. 

Otherwise, note that, by construction, $G$ is a simply connected domain whose  boundary  is given by $J\cup C_0\cup C_1\cup T$, where $T$ is a closed arc in $\partial \D$ with end points $p_0$ and $p_1$ (we set $T=\{p_0\}$ in case $p_0=p_1$). If there were a point $\zeta_2\in G\cap \partial \Delta$, the Jordan curve $C_2$, defined as before by $\zeta_2$, would divide $G$ into two connected components and $\Delta$ would belong to one of the two connected components. But this is impossible because $J, C_0, C_1\subset \partial \Delta$. The claim is proved. 
\end{proof}

Now we are in good shape to describe the boundary of petals:

\begin{proposition}\label{Prop:shape-of-petals}
Let $(\phi_t)$ be a semigroup, not a group, in $\D$ with Denjoy-Wolff point $\tau\in\overline{\D}$. Let $\Delta$ be a petal for $(\phi_t)$. Then one and only one of the following cases can happen:
\begin{enumerate}
\item $\tau \in \D$ and there exists a Jordan  arc $J$ with end points $\tau$ and a point  $p\in \partial \D$ such that $\partial \Delta= J\cup \partial \D$. In particular, in this case, $\Delta$ is the only petal of  $(\phi_t)$;
\item $\tau \in \partial \D$ and there exists a Jordan  arc $J$ with end points $\tau$ and a point  $p\in \partial \D\setminus\{\tau\}$ such that $\partial \Delta= J\cup A$ where, $A$ is an arc in $\partial \D$ with end points $\tau$ and $p$;
\item there exist two  Jordan arcs $J_1, J_2$ with end points $p\in \partial \D\setminus\{\tau\}$ and $\tau$ such that $J_1\cap J_2=\{\tau, p\}$ and $\partial \Delta=J_1\cup J_2$;
\item there exist two  Jordan arcs $J_1, J_2$ such that $J_j$ has end points $\tau$ and $p_j\in \partial \D\setminus\{\tau\}$, $j=1,2$, with $J_1\cap J_2=\{\tau\}$ and   $\partial \Delta=J_1\cup J_2\cup A$, where $A\subset \partial \D$ is a closed arc with end points $p_1$ and $p_2$;
\item $\tau\in \partial \D$ and there exists a Jordan curve $J$  such that  $J\cap\partial\D=\{\tau\}$, $\partial \Delta=J$ and $\Delta$ is the bounded connected component of $\C\setminus J$. In this case, $\Delta$ is a parabolic petal.
\end{enumerate}
In particular, $\partial \Delta$ is locally connected and, in cases (2), (3), (4) and (5), $\Delta$ is a Jordan domain.
\end{proposition}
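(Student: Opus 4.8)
The plan is to transport the petal to the canonical model via the K\"onigs function and read off its boundary there. Let $(\Omega,h,\psi_t)$ be the canonical model of $(\phi_t)$ (Theorem~\ref{modelholo}), so $\psi_t(\zeta)=\zeta+it$ when $(\phi_t)$ is non-elliptic and $\psi_t(\zeta)=e^{-\lambda t}\zeta$ when it is elliptic. Since $\phi_t(\Delta)=\Delta$ for all $t\ge0$ and $(\phi_t|_\Delta)$ extends to a group (Proposition~\ref{Prop:simple-prop-petals}(2)), the relation $h\circ\phi_t=\psi_t\circ h$ gives $\psi_s(h(\Delta))=h(\Delta)$ for every $s\in\R$ (here $\psi_{-s}$ is the inverse of $\psi_s$, an honest affine self-map of $\C$). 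In the non-elliptic case this says the open connected set $h(\Delta)$ is invariant under all vertical translations, so projecting onto the real axis forces $h(\Delta)=\{a<\Re\zeta<b\}$ for some $-\infty\le a<b\le+\infty$: a vertical strip or a half-plane (it cannot be all of $\C$, for that would force $h(\D)=\C$, which is impossible). In the elliptic case, $\tau\in\D$ and $\tau\notin\Delta$ by Proposition~\ref{Prop:simple-prop-petals}(3), so $0=h(\tau)\notin h(\Delta)$, and since $h(\Delta)$ is simply connected it carries a single-valued branch of $\log$ with $\log h(\Delta)$ invariant under $w\mapsto w-\lambda t$; hence $\log h(\Delta)$ is a strip transverse to the direction $\lambda$ and $h(\Delta)$ a logarithmic spiral sector. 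In all cases, because $\Delta$ is a connected component of $\stackrel{\circ}{\mathcal W}$ --- equivalently $h(\Delta)$ is a component of $\stackrel{\circ}{h(\mathcal W)}$ with $h(\mathcal W)=\bigcap_{t\ge0}\psi_t(h(\D))$, itself $\psi_s$-invariant for all $s\in\R$ --- the shape of $h(\Delta)$ is rigid; in the elliptic case this rigidity is what yields case~(1) ($h(\Delta)=\C^*\setminus S$ for a single logarithmic spiral or ray $S$ from $0$ to $\infty$, and $\Delta$ is the only petal).

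Next I would locate $\partial\Delta$. Since $h:\D\to h(\D)$ is a homeomorphism, $\partial_\D\Delta=h^{-1}(\partial h(\Delta)\cap h(\D))$; and $\partial h(\Delta)$ is the (at most two) vertical lines bounding the strip, or the spiral $S$. Each connected component of such a line (resp. of $S$) met with $h(\D)$ is, by the intertwining relation, the $h$-image of (an arc of) a maximal invariant curve of $(\phi_t)$, whose closure is a Jordan arc or Jordan curve in $\overline{\D}$ with endpoints among $\tau$ and the boundary fixed points of $(\phi_t)$ on $\partial\D$ (Remark~\ref{Rem:complete-inv-Jordan}, Proposition~\ref{Prop:compl-inv-puntos}). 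So $\partial_\D\Delta$ is a union of at most two such Jordan arcs, and $\tau\in\partial\Delta$ (Proposition~\ref{Prop:simple-prop-petals}(3)). The \emph{technical heart} is to control $\overline\Delta\cap\partial\D$. The idea: the restriction of $h^{-1}$ to the strip (or spiral sector) $h(\Delta)$ extends continuously to the closure of $h(\Delta)$ in $\widehat{\C}$. Indeed, a bounding line of $h(\Delta)$ meets $\partial h(\D)$ only along segments across which the model flow $\psi_t$ leaves $h(\D)$ monotonically, so $h(\D)$ is locally connected from the side of the strip, and Carath\'eodory's theorem applies to the simply connected domain $h(\Delta)$: its boundary in $\widehat{\C}$ is locally connected, and the continuous extension carries it onto $\partial\Delta$. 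In particular $\partial\Delta$ is locally connected.

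It remains to check injectivity of the extension (so that $\Delta$ is a Jordan domain) away from case~(1), and to enumerate the types. The extension fails to be injective exactly when two boundary arcs of $\Delta$ get identified; tracking the images of the two ends of the strip (both equal to $\tau$ if $\Delta$ is parabolic; equal to $\tau$ and to the repelling fixed point $\sigma\in\partial\D$ if $\Delta$ is hyperbolic, by Proposition~\ref{Prop:simple-prop-petals}(4)) and of its bounding lines, one finds this identification occurs only in the elliptic case, where the two edges of the maximal-opening sector coincide, giving $\partial\Delta=J\cup\partial\D$. Otherwise $\tau\in\partial\D$ and the bookkeeping is straightforward: $h(\Delta)$ a full half-plane yields one interior Jordan arc in $\partial\Delta$, a genuine strip yields two; each bounding line may or may not meet $\partial h(\D)$ in a nondegenerate set, corresponding to $\overline\Delta$ meeting $\partial\D$ in an arc or only at $\tau$; listing the possibilities gives precisely types (2), (3), (4) (a boundary arc $A\subset\partial\D$ present) and (5) (no boundary arc; then the backward orbits in $\Delta$ converge to $\tau$, so $\Delta$ is parabolic). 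In each of (2)--(5) the bounding Jordan curve of $h(\Delta)$ in $\widehat{\C}$ maps injectively, so $\Delta$ is a Jordan domain. The main obstacle I anticipate is precisely the continuous-extension step: the K\"onigs function need not extend continuously to all of $\partial\D$, so one must genuinely exploit the monotone exit of the model flow across $\partial h(\D)$, together with the prime-end structure of $h(\D)$, to obtain the extension along the edges of $h(\Delta)$, and hence the local connectedness and Jordan-domain statements.
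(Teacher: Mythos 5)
Your route (push the petal into the canonical model, identify $h(\Delta)$ as a strip, half-plane or spirallike sector, and read $\partial\Delta$ off from $\partial h(\Delta)$) is genuinely different from the paper's, which stays in $\D$: there one shows that the maximal invariant curve through any point of $\partial\Delta\cap(\D\setminus\{\tau\})$ lies entirely in $\partial\Delta$ (Lemma \ref{Lem:boundary-inv-c}), that its closure is a Jordan arc landing at $\tau$ and at a point of $\partial\D$ (Remark \ref{Rem:complete-inv-Jordan}, Proposition \ref{Prop:compl-inv-puntos}), and then a topological separation argument caps the number of such arcs at two; the five types, local connectedness and the Jordan-domain claim follow because $\partial\Delta$ is then an explicit finite union of Jordan arcs and arcs of $\partial\D$. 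Your approach could be made to work, but as written it has two genuine gaps.

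First, the step you yourself call the technical heart is not carried out, and the justification offered for it is backwards. To extend $h^{-1}|_{h(\Delta)}$ continuously to the closure of $h(\Delta)$ in $\C_\infty$ you invoke Carath\'eodory's theorem for the domain $h(\Delta)$, on the grounds that $\partial h(\Delta)$ is locally connected. But Carath\'eodory's continuity criterion concerns the boundary of the \emph{image} of the conformal map, not of its domain: the relevant map is (a Riemann map of $\D$ onto $h(\Delta)$ followed by) $h^{-1}:h(\Delta)\to\Delta$, and what the criterion requires is local connectedness of $\partial\Delta$ --- precisely the statement to be proved. So the argument is circular. The extension can be obtained by hand (at each point $w_0$ of a bounding line lying outside $h(\D)$ one builds a null chain of half-circular crosscuts on the strip side and uses that $h^{-1}$ maps their interiors onto sets shrinking to a single point of $\partial\D$, then checks continuity along the line and at the two ends), but none of this is in your text; it is exactly the work the paper's Lemma \ref{Lem:boundary-inv-c} plus separation argument is designed to replace.

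Second, the case enumeration is wrong in the elliptic case. You assert that the elliptic case yields case (1), i.e.\ $h(\Delta)=\C^*\setminus S$ for a single spiral $S$, and later that ``otherwise $\tau\in\partial\D$ and the bookkeeping is straightforward,'' treating only half-planes and strips. But an elliptic semigroup can have a hyperbolic petal whose K\"onigs image is a \emph{proper} spirallike sector of amplitude strictly less than $2\pi$, in which case $\tau\in\D$ and $\partial\Delta$ is of type (3) or (4): for instance $h(\D)=\C\setminus\bigl((-\infty,-1]\cup[1,+\infty)\bigr)$ with spectral value $1$ gives the petal $h^{-1}(\{w:\Im w>0\})$, whose boundary consists of the two Jordan arcs $\overline{h^{-1}([0,1))}$ and $\overline{h^{-1}((-1,0])}$ meeting only at $\tau=h^{-1}(0)\in\D$, together with an arc of $\partial\D$ --- type (4) with $\tau\in\D$ (compare Proposition \ref{Prop:leer-frontera-h-petalo-ell}). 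Your bookkeeping omits all elliptic petals of types (3) and (4), so the ``one and only one of the following'' dichotomy is not established by your argument.
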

\begin{proof}
Since $(\phi_t)$ is not a group by hypothesis, it follows that $\Delta\neq \D$  by Proposition \ref{Prop:simple-prop-petals}.  Hence, $\partial \Delta\cap \D\neq\emptyset$. Since $\Delta$ is simply connected, $\partial \Delta\cap \D$ cannot reduce to one point. Let $z_0\in  \partial \Delta\cap \D\setminus\{\tau\}$.

Let $\gamma:(a,+\infty)\to \D$ be the maximal invariant curve such that $\gamma(0)=z_0$, $a<0$, given by Proposition \ref{Prop:compl-inv-puntos}.
Let $J$ be the closure of $\gamma((a,+\infty))$. By Remark \ref{Rem:complete-inv-Jordan}, $J$ is a Jordan arc with end points $\tau$ and $p\in \partial \D$ (if $p=\tau$ then $J$ is a Jordan curve). By Lemma \ref{Lem:boundary-inv-c}, $J\subset \partial \Delta$. 

By Proposition \ref{Prop:para-pet-back}, if $p=\tau$ then we are in case (5). 

If $\partial \Delta\setminus J\subset \partial \D$ and $p\neq \tau$, 
then we are in case (1) or (2). In this case, if $\tau\in \D$ then $\partial \Delta$ is necessarily the union of $J$ with $\partial \D$ and, moreover, $\D\setminus\Delta=J$, which, since $J$ has no interior, implies that $\Delta$ is the only petal of $(\phi_t)$. On the other hand, if $\tau\in \partial \D$, and $p\neq \tau$, since $\Delta$ is connected, and $J$ disconnects $\D$ in two connected components by Jordan's Theorem, it follows that $\partial \Delta$ is  the union of $J$ with an arc in $\partial \D$ with end points $\tau$ and $p$.

Now, assume there exists $z_0'\in \partial \Delta\setminus J\cap \D$. We repeat the above argument, in order to obtain another Jordan arc (or Jordan curve) $J_2\subset\partial \Delta$ which contains $z_0'$ and $J_2$ is contained in $\D$ except, at most, the two ends points. By Remark  \ref{Rem:trasla-curva-max}, since the interior parts of both $J$ and $J_2$ are maximal invariant curves, and $z_0'\not\in J$, it follows that  $J\cap J_2\subset \partial \D\cup\{\tau\}$ (that is, $J$ and $J_2$ can have in common only the end points).
Let $p_2\in \partial\D$ be the initial point of $J_2$. By Proposition \ref{Prop:para-pet-back}, $p_2\neq \tau$.

We claim that $(\partial \Delta\setminus (J\cup J_2))\subset \partial \D$. Indeed, if this is not the case, one can find a point $w_0\in (\partial \Delta\setminus (J\cup J_2))\cap \D$. Repeating the above argument, we end up with another  Jordan curve $J_3$ whose interior does not intersect $J, J_2$. Recalling that $J, J_2, J_3$ have a common end point $\tau$ and another end point on $\partial \D\setminus\{\tau\}$, it is easy to see that we reach a contradiction. For instance, in case $\tau\in \D$, the Jordan curve $J\cup J_2$ divides $\D$ into two connected components, and $\Delta$ has to stay in one of them, call it $U$. Then $J_3$ is contained in $U$, and divides $U$ into two connected components, one whose boundary is $J, J_3$ and (possibly) an arc on $\partial \D$, and the other whose boundary is $J, J_2$ and (possibly) an arc on $\partial \D$. Since $\Delta$ is connected, it has to stay in one of the two components, say the first, but then $J_2$ can not be contained in $\partial \Delta$, a contradiction. The other cases are similar.

If $J_1$ and $J_2$ have the same end points, then we are in case (3). If $J_1$ has a different end point than $J$ (they both have $\tau$ as common end point), then we are in case (4). 
\end{proof}

Every case given by Proposition \ref{Prop:shape-of-petals} actually happens, as we will see in the last section.

\begin{proposition}\label{Prop:petal-no-other-fixed}
Let $(\phi_t)$ be a semigroup, not a group, in $\D$ with Denjoy-Wolff point $\tau\in\overline{\D}$. Let $\Delta$ be a petal for $(\phi_t)$.
\begin{enumerate}
\item If $\Delta$ is a hyperbolic petal, then there exists a repelling fixed point $\sigma\in \partial \Delta$ of $(\phi_t)$ such that $\partial \Delta\setminus \{\tau,\sigma\}$ does not contain any (repelling or super-repelling) boundary fixed point of $(\phi_t)$.
\item If $\Delta$ is a parabolic petal, then $\partial \Delta\setminus \{\tau\}$ does not contain any (repelling or super-repelling) boundary fixed point of $(\phi_t)$.
\end{enumerate}
\end{proposition}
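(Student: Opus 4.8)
The plan is to reduce both assertions to the single statement that the only boundary fixed points of $(\phi_t)$ lying on $\partial\Delta$ are $\tau$ and the distinguished point $\sigma_\Delta\in\partial\D\cap\partial\Delta$ furnished by Proposition~\ref{Prop:simple-prop-petals}(4) (the common limit of all backward orbits contained in $\Delta$), and to prove that statement by conjugating $(\phi_t|_\Delta)$ to a hyperbolic or parabolic automorphism group of $\D$ via a Riemann map and reading off its (at most two) boundary fixed points. First I would set up the conjugation. By Proposition~\ref{Prop:simple-prop-petals}(1)--(2), $\Delta$ is simply connected, $\Delta\neq\D$ (since $(\phi_t)$ is not a group), each $\phi_t|_\Delta$ is an automorphism of $\Delta$, and $(\phi_t|_\Delta)$ extends to a continuous one-parameter group of automorphisms of $\Delta$; by Remark~\ref{Rem:petalo-parab-parab} this group has no fixed point in $\Delta$, hence is non-elliptic, and it is non-trivial (otherwise $\phi_t\equiv\id$ on $\Delta$, hence on $\D$, contradicting that $(\phi_t)$ is not a group). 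Fixing a Riemann map $F\colon\D\to\Delta$ and putting $\eta_t:=F^{-1}\circ(\phi_t|_\Delta)\circ F$, we obtain a non-trivial, non-elliptic, continuous one-parameter group $(\eta_t)$ of automorphisms of $\D$, hence hyperbolic or parabolic; let $\xi_1$ be its Denjoy--Wolff point (so $\eta_t(z)\to\xi_1$ as $t\to+\infty$ for every $z\in\D$) and $\xi_2$ the repelling one (so $\eta_t(z)\to\xi_2$ as $t\to-\infty$), with $\xi_1=\xi_2$ precisely when $(\eta_t)$ is parabolic; then $\{\xi_1,\xi_2\}$ is exactly the set of common fixed points of the $\eta_t$ in $\overline\D$.

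Next, since $\partial\Delta$ is locally connected by Proposition~\ref{Prop:shape-of-petals}, $F$ extends to a continuous map $\bar F\colon\overline\D\to\overline\Delta$ with $\bar F(\zeta)=\lim_{r\to1^-}F(r\zeta)$ for all $\zeta\in\partial\D$; moreover, from the explicit description of $\partial\Delta$ in Proposition~\ref{Prop:shape-of-petals} as a finite union of analytic arcs and circular arcs, each boundary point of $\Delta$ is the principal point of only finitely many prime ends, so $\bar F^{-1}(\sigma)$ is finite for every $\sigma\in\partial\D$. Using $\phi_t\circ F=F\circ\eta_t$, the fact that forward orbits in $\Delta$ converge to $\tau$ (Proposition~\ref{Prop:simple-prop-petals}(3)), and the fact that backward orbits in $\Delta$ converge to $\sigma_\Delta$ (Proposition~\ref{Prop:simple-prop-petals}(4)), one gets $\bar F(\xi_1)=\tau$ and $\bar F(\xi_2)=\sigma_\Delta$; in particular $\sigma_\Delta=\tau$ exactly when $(\eta_t)$ is parabolic. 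Recall also that $\sigma_\Delta$ is a boundary regular fixed point of $(\phi_t)$.

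The key claim is: \emph{every boundary fixed point $\sigma$ of $(\phi_t)$ with $\sigma\in\partial\Delta$ belongs to $\{\tau,\sigma_\Delta\}$.} To prove it, note that $\sigma\in\partial\Delta=\overline\Delta\setminus\Delta\subset\bar F(\partial\D)$, so there is $\zeta_0\in\partial\D$ with $\bar F(\zeta_0)=\sigma$. Fix $t\ge0$. The arc $r\mapsto F(r\zeta_0)$, $r\in[0,1)$, lies in $\D$ and tends to $\sigma$, and since $\eta_t$ is continuous on $\overline\D$ we have $\phi_t(F(r\zeta_0))=F(\eta_t(r\zeta_0))\to\bar F(\eta_t(\zeta_0))$ as $r\to1^-$. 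As $\phi_t$ is a bounded holomorphic function on $\D$ with $\angle\lim_{z\to\sigma}\phi_t(z)=\sigma$ (because $\sigma$ is a boundary fixed point), Lindel\"of's theorem forces $\bar F(\eta_t(\zeta_0))=\sigma$, i.e.\ $\eta_t(\zeta_0)\in\bar F^{-1}(\sigma)$ for all $t\ge0$, hence for all $t\in\R$. Since $t\mapsto\eta_t(\zeta_0)$ is continuous and $\bar F^{-1}(\sigma)$ is finite, this map is constant, so $\zeta_0$ is fixed by every $\eta_t$, i.e.\ $\zeta_0\in\{\xi_1,\xi_2\}$, and therefore $\sigma=\bar F(\zeta_0)\in\{\tau,\sigma_\Delta\}$.

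Finally I would assemble the two parts. If $\Delta$ is hyperbolic, then by definition $\partial\Delta$ contains a repelling fixed point $\sigma^*$ of $(\phi_t)$; by the claim $\sigma^*\in\{\tau,\sigma_\Delta\}$, and since $\tau$ is never a repelling fixed point we get $\sigma^*=\sigma_\Delta$, so $\sigma_\Delta$ is repelling; the claim then gives that no repelling or super-repelling fixed point of $(\phi_t)$ lies in $\partial\Delta\setminus\{\tau,\sigma_\Delta\}$, which is (1) with $\sigma:=\sigma_\Delta$. If $\Delta$ is parabolic, then $\sigma_\Delta$ is not repelling; since the only boundary regular fixed points of a non-trivial semigroup that is not a group are its Denjoy--Wolff point (when the semigroup is non-elliptic) and its repelling fixed points, this forces $\sigma_\Delta=\tau$ (and, consistently, $(\phi_t)$ is then parabolic), and the claim gives that no boundary fixed point of $(\phi_t)$ lies in $\partial\Delta\setminus\{\tau\}$, which is (2). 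The step I expect to be the main obstacle is the prime-end bookkeeping inside the claim: guaranteeing via Proposition~\ref{Prop:shape-of-petals} that $\bar F^{-1}(\sigma)$ is finite, so that the continuity argument actually produces a fixed point, and deploying Lindel\"of's theorem to turn the non-tangential fixing of $\sigma$ by $\phi_t$ into the ordinary fixing of $\zeta_0$ by $\eta_t$; everything else is formal. (If one preferred not to rely on the finiteness of $\bar F^{-1}(\sigma)$, one could instead observe that $\bar F^{-1}(\sigma)$ is a non-empty closed $(\eta_t)$-invariant subset of $\partial\D$ and invoke the north--south dynamics of a hyperbolic or parabolic group on $\partial\D$ to conclude it must meet $\{\xi_1,\xi_2\}$.)
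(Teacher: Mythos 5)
Your proposal is correct and follows essentially the same route as the paper: conjugate $(\phi_t|_\Delta)$ to a non-elliptic one-parameter group via a Riemann map, extend that map continuously using the local connectivity of $\partial\Delta$ from Proposition \ref{Prop:shape-of-petals}, use Lindel\"of/Lehto--Virtanen to obtain the boundary equivariance, and conclude from the fact that a non-elliptic group has at most two boundary fixed points. The only (harmless) deviations are that you deduce $\eta_t(\zeta_0)=\zeta_0$ from finiteness of the fiber $\bar F^{-1}(\sigma)$ (or north--south dynamics) rather than from the fact that a univalent map cannot have constant boundary values on an arc, and that in the parabolic case you identify $\sigma_\Delta=\tau$ via the classification of boundary regular fixed points rather than by showing the conjugated group is parabolic.
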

\begin{proof}
Since $\Delta$ is simply connected by Proposition \ref{Prop:simple-prop-petals}, there exists a univalent function $g:\D\to \C$ such that $g(\D)=\Delta$.  Let $\psi_t:= g^{-1}\circ   \phi_t\circ g$, $t\geq 0$. Since $(\phi_t|_\Delta)$ is a continuous group of automorphisms of $\Delta$ by Proposition \ref{Prop:simple-prop-petals}, it follows that $(\psi_t)$ is a group in $\D$. Moreover, $\lim_{t\to +\infty}\phi_t(g(0))\to \tau\in \partial \Delta$, and hence $\psi_t(0)=g^{-1}(\phi_t(g(0)))$ can accumulate only on $\partial \D$. Therefore, $(\psi_t)$ is a non-elliptic group in $\D$.

By Proposition \ref{Prop:shape-of-petals}, $\partial \Delta$ is locally connected, hence, by Carath\'eodory Extension Theorem,  $g$ extends to a continuous  and surjective function, which we still denote by $g$, from $\overline{\D}$ to $\overline{\Delta}$. In particular, for all $p\in \partial \D$, 
\[
g(\psi_t(p))=\lim_{r\to 1^-}g(\psi_t((1-r)p)=\lim_{r\to 1^-}\phi_t(g((1-r)p)).
\]
By Lehto-Virtanen Theorem, it follows that the non-tangential limit of $\phi_t$ at $g(p)$ is $g(\psi_t(p))$, that is, 
\begin{equation}\label{Eq:model-ext-cont}
g(\psi_t(p))=\phi_t(g(p)) \quad \forall p\in \partial \D,\quad t\geq 0.
\end{equation}
Let $p\in \partial \D$ be such that $g(p)$ is a fixed point of $(\phi_t)$. Hence, by \eqref{Eq:model-ext-cont}, $g(\psi_t(p))=g(p)$ for all $t\geq 0$. We claim that this implies that $\psi_t(p)=p$ for all $t\geq 0$. Otherwise, the image $[0,+\infty)\ni t\mapsto \psi_t(p)$ would be an arc in $\partial \D$  where $g$ is constant. A contradiction.  

Since $(\psi_t)$ is a non-elliptic group in $\D$, it has at most two fixed points on $\partial \D$, hence, there exist at most two fixed points for $(\phi_t)$ on $\partial \Delta$. From this, (1) follows at once.

Now, assume that  $\Delta$ is parabolic. By Remark \ref{Rem:petalo-parab-parab}, $(\phi_t)$ is necessarily parabolic, and $\tau\in \partial \D$. Hence, by Proposition \ref{Prop:shape-of-petals}, $\Delta$ is a Jordan domain and  $g:\overline{\D}\to \overline{\Delta}$ is a homeomorphism. Now, by Proposition \ref{Prop:simple-prop-petals}(4), there exists a regular backward orbit $\gamma:[0,+\infty)\to \Delta$ such that $\lim_{t\to +\infty}\gamma(t)=\tau$. It is easy to see that $g^{-1}\circ \gamma$ is a backward orbit for $(\psi_t)$ which converges to $g^{-1}(\tau)$.  Since $\psi_t(0)=g^{-1}(\phi_t(g(0)))\to g^{-1}(\tau)$, it follows that $(\psi_t)$ has a backward orbit which converges to its Denjoy-Wolff point. Hence, $(\psi_t)$ is a parabolic group by Proposition \ref{Prop:group-backward}. Therefore, $(\psi_t)$ has only one fixed point on $\partial \D$, and so $(\phi_t)$ has a unique fixed point on $\partial \Delta$ which is $\tau$. 
\end{proof}

As a direct consequence of  Proposition \ref{Prop:petal-no-other-fixed} we have:

\begin{corollary}\label{Cor:frontera-interior-petalo}
Let $(\phi_t)$ be a semigroup, not a group, in $\D$ with Denjoy-Wolff point $\tau\in\overline{\D}$. Suppose $\Delta$ is a petal of $(\phi_t)$. Let $J\subset \partial \Delta\cap (\D\setminus\{\tau\})$ be a connected component. Then the closure of $J$ is a Jordan arc (or Jordan curve) with end points $\tau$ and $p_0\in\partial \D$. Moreover,
\begin{enumerate}
\item if $\Delta$ is hyperbolic and $\sigma\in\partial \Delta$ is the unique (repelling) fixed point of $(\phi_t)$ contained in $\overline{\Delta}$, then, either $p_0=\sigma$ or $p_0$ is not a fixed point of $(\phi_t)$.
\item if $\Delta$ is parabolic, then either $p_0=\tau$ or $p_0$ is not a fixed point of $(\phi_t)$.
\end{enumerate} 
\end{corollary}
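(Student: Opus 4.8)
The plan is to realise the component $J$ as the image of a single maximal invariant curve, read off the shape of its closure from Remark~\ref{Rem:complete-inv-Jordan}, and then pin down the starting point $p_0$ by combining Proposition~\ref{Prop:compl-inv-puntos} (which detects whether $p_0$ is fixed) with Proposition~\ref{Prop:petal-no-other-fixed} and the last assertion of Proposition~\ref{Prop:para-pet-back}.

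First I would fix $z_0\in J$ and let $\gamma:(a,+\infty)\to\D$, with $a\in[-\infty,0)$, be the unique maximal invariant curve of $(\phi_t)$ with $\gamma(0)=z_0$ provided by Proposition~\ref{Prop:compl-inv-puntos}; by the definition of maximal invariant curve its starting point $p_0:=\lim_{t\to a^+}\gamma(t)$ lies on $\partial\D$. Since $z_0\in\partial\Delta\cap(\D\setminus\{\tau\})$, Lemma~\ref{Lem:boundary-inv-c} gives $\gamma((a,+\infty))\subset\partial\Delta$, while Remark~\ref{Rem:complete-inv-Jordan} shows that $\gamma$ is injective, that $\gamma((a,+\infty))\subset\D\setminus\{\tau\}$, and that $\overline{\gamma((a,+\infty))}$ is a Jordan arc with end points $\tau$ and $p_0$ (a Jordan curve when $p_0=\tau$). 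Thus $\gamma((a,+\infty))$ is a connected subset of $\partial\Delta\cap(\D\setminus\{\tau\})$ containing $z_0$, so $\gamma((a,+\infty))\subseteq J$. For the reverse inclusion I would use Proposition~\ref{Prop:shape-of-petals}: in each of its five cases $\partial\Delta\cap(\D\setminus\{\tau\})$ is the disjoint union of the Jordan arcs listed there with their end points removed (of the Jordan curve minus $\tau$, in case~(5)), each of which is the image of a maximal invariant curve. These finitely many pieces are pairwise disjoint, and each is relatively closed in $\partial\Delta\cap(\D\setminus\{\tau\})$ (its closure in $\C$ meets $\D\setminus\{\tau\}$ only in itself, the end points lying on $\partial\D\cup\{\tau\}$), hence relatively clopen; so they are exactly the connected components of $\partial\Delta\cap(\D\setminus\{\tau\})$. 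Therefore $J=\gamma((a,+\infty))$, its closure $\overline{J}$ is the Jordan arc (or curve) above, and since $\partial\Delta$ is closed and contains $J$ we also get $p_0\in\overline{J}\subset\partial\Delta\cap\partial\D$; this proves the first assertion of the corollary.

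It remains to locate $p_0$. If $p_0$ is not a boundary fixed point of $(\phi_t)$, we are in the second alternative of both (1) and (2) and there is nothing more to prove. So assume $p_0$ is a boundary fixed point. By Proposition~\ref{Prop:compl-inv-puntos}(1), when $a>-\infty$ the starting point of $\gamma$ is not fixed; hence necessarily $a=-\infty$, and $[0,+\infty)\ni t\mapsto\gamma(-t)$ is a backward orbit of $(\phi_t)$ landing at $p_0$. If $\Delta$ is parabolic and $p_0\neq\tau$, then $p_0$ would be a boundary fixed point lying in $\partial\Delta\setminus\{\tau\}$, contradicting Proposition~\ref{Prop:petal-no-other-fixed}(2); thus $p_0=\tau$, the first alternative of (2). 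If $\Delta$ is hyperbolic, then $p_0\neq\tau$: otherwise $\gamma$ would be a maximal invariant curve with starting point $\tau$ contained in $\partial\Delta$, which is ruled out by the last assertion of Proposition~\ref{Prop:para-pet-back}; so $p_0\in\partial\Delta\setminus\{\tau\}$ is a boundary fixed point, and Proposition~\ref{Prop:petal-no-other-fixed}(1) forces $p_0=\sigma$, the first alternative of (1).

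The only step requiring some care is the identification $J=\gamma((a,+\infty))$, that is, the fact that a connected component of $\partial\Delta\cap(\D\setminus\{\tau\})$ is exactly the image of a maximal invariant curve; for this one must appeal to the full structural description of $\partial\Delta$ in Proposition~\ref{Prop:shape-of-petals}, together with the observation, implicit in its proof, that the arcs appearing there are closures of maximal invariant curves and are pairwise disjoint away from $\partial\D\cup\{\tau\}$. Everything else is straightforward bookkeeping with the structural results already established, so I do not anticipate further difficulties.
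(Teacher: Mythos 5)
Your proof is correct and follows exactly the route the paper intends: the paper states this corollary without proof as a "direct consequence" of Proposition \ref{Prop:petal-no-other-fixed}, implicitly relying on the same ingredients you make explicit (Proposition \ref{Prop:compl-inv-puntos}, Lemma \ref{Lem:boundary-inv-c}, Remark \ref{Rem:complete-inv-Jordan}, Proposition \ref{Prop:shape-of-petals}, and the last assertion of Proposition \ref{Prop:para-pet-back}). Your identification of the connected components of $\partial\Delta\cap(\D\setminus\{\tau\})$ with images of maximal invariant curves, and the case analysis locating $p_0$, are both sound.
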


The previous results show that the closure of every hyperbolic petal contains exactly one repelling fixed point, now we show the converse:

\begin{proposition}\label{Prop:petals-premodel}
Let $(\phi_t)$ be a semigroup, not a group, in $\D$ with Denjoy-Wolff point $\tau\in\overline{\D}$. Suppose $\sigma\in \partial \D$ is a repelling fixed point for $(\phi_t)$. Then there exists a unique hyperbolic petal $\Delta$ such that $\sigma\in\partial \Delta$. Moreover, for all $M>1$ there exists $\epsilon>0$ such that 
\[
S(\sigma, M)\cap \{\zeta\in \C:|\zeta-\sigma|<\epsilon\}\subset \Delta,
\]
where $S(\sigma,M):=\{z\in \D: |\sigma-z|<M(1-|z|)\}$ is a Stolz region in $\D$.

Also,  if $(\D, g, \eta_t)$ is a pre-model for $(\phi_t)$ at $\sigma$ then $g(\D)=\Delta$. 
\end{proposition}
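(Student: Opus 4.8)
The plan is to use the pre-model $(\D, g, \eta_t)$ at $\sigma$ — which exists by Theorem \ref{Thm:premodel} — and show that $g(\D)$ is precisely the hyperbolic petal attached to $\sigma$. First I would check that $g(\D)\subset \stackrel{\circ}{\mathcal W}$: since $g\circ \eta_t = \phi_t\circ g$ and $(\eta_t)$ is a group, for any $w\in g(\D)$ and any $t\geq 0$ we have $w = \phi_t(g(\eta_{-t}(g^{-1}(w))))\in \phi_t(\D)$, so $g(\D)\subset \mathcal W$; as $g(\D)$ is open this gives $g(\D)\subset\stackrel{\circ}{\mathcal W}$. Being connected, $g(\D)$ lies in a single petal $\Delta$. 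To see $\Delta$ is hyperbolic with $\sigma\in\partial\Delta$: by the semi-conformality of $g$ at $\sigma$ in Definition \ref{Def:premodelo}(2), $g$ has non-tangential limit $\sigma$ at $\sigma$, and pushing forward the backward orbits of the hyperbolic group $(\eta_t)$ that converge to $\sigma$ (Remark \ref{Rem:backward-hyper-direc}) produces, via Corollary \ref{Cor:premodel-and-back}, backward orbits of $(\phi_t)$ inside $g(\D)\subset\Delta$ converging to $\sigma$; hence $\sigma\in\overline{\Delta}$, and since $\sigma$ is a repelling fixed point this forces $\sigma\in\partial\Delta$ and $\Delta$ hyperbolic. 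Uniqueness of such a petal follows from Proposition \ref{Prop:petal-no-other-fixed}(1): each hyperbolic petal carries exactly one repelling fixed point in its closure, and by the shape description in Proposition \ref{Prop:shape-of-petals} (via Remark \ref{Rem:unique-start-max-bd}) the boundary component of $\Delta$ landing at $\sigma$ is determined, so two petals sharing $\sigma$ would have to coincide.

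The heart of the matter is the reverse inclusion $\Delta\subseteq g(\D)$. Here I would use the universality/uniqueness of holomorphic models. Restrict the semigroup to $\Delta$: by Proposition \ref{Prop:simple-prop-petals}, $(\phi_t|_\Delta)$ is a continuous group of automorphisms of $\Delta$, and by Proposition \ref{Prop:petal-no-other-fixed} together with Proposition \ref{Prop:simple-prop-petals}(4), conjugating by a Riemann map $f:\D\to\Delta$ yields a hyperbolic group $(\psi_t)$ on $\D$ with Denjoy-Wolff point $-\sigma$ and other (repelling) fixed point $\sigma$, whose repelling spectral value must be $\lambda$ — this last point is the key computation, and I expect to extract it by comparing the rate at which the backward orbit $t\mapsto(\phi_t|_\Delta)^{-1}(z)$ approaches $\sigma$, using Proposition \ref{Prop:converge-back} (which bounds $\lambda$ by the hyperbolic step) in both directions, i.e. in $\D$ and in $\Delta$, since $f$ is a biholomorphism and hyperbolic distances are preserved. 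Once $(\psi_t)\cong(\eta_t)$ as hyperbolic groups with the same fixed points and same spectral value, the composition $g^{-1}\circ f:\D\to\D$ (suitably interpreted on $g^{-1}(f(\D))$) intertwines $(\eta_t)$ with itself and extends, by the rigidity of hyperbolic groups, to a hyperbolic automorphism $T$ of $\D$ fixing $\pm\sigma$; but then $\tilde g := g\circ T$ is again a pre-model, and $\tilde g(\D) = g(\D)$, while on the other hand $f(\D) = \Delta = \tilde g(\D)$ up to checking surjectivity. More concretely, the cleanest route is: the Königs-type function $f$ of the group $(\psi_t)\cong(\phi_t|_\Delta)$ and the intertwining map $g$ of the pre-model both realize $(\phi_t)$-equivariant univalent maps into the standard hyperbolic-group model on $\D$, and the absorbing property \eqref{absorbing} of the model forces the images to fill out, so that $g(\D) = \Delta$.

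The Stolz-region statement I would obtain as a byproduct of the localization of the hyperbolic distance near $\sigma$. Since $\sigma$ is a repelling fixed point with a pre-model, there is a backward orbit $\gamma$ landing non-tangentially at $\sigma$ inside $g(\D)=\Delta$ (Corollary \ref{Cor:premodel-and-back}), and for any $M>1$ I would compare $\omega$ with the hyperbolic distance of $\Delta$ near $\sigma$: a standard estimate (e.g. via the boundary behavior of the Riemann map $f$, using that $\partial\Delta$ contains the Jordan-arc pieces emanating from $\sigma$ provided by Proposition \ref{Prop:shape-of-petals}, so $\Delta$ contains a genuine "angle" at $\sigma$) shows that a full Stolz angle $S(\sigma,M)$, truncated to a small disc around $\sigma$, is hyperbolically close to $\gamma$ and hence trapped in $\Delta$.

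The main obstacle I anticipate is establishing that the repelling spectral value of the conjugated group $(\psi_t)$ on $\Delta$ equals $\lambda$ — equivalently, that $g$ does not "distort the boundary dilation" at $\sigma$. This is exactly where semi-conformality of $g$ in Definition \ref{Def:premodelo}(2) must be used quantitatively, presumably through the Julia–Wolff–Carathéodory machinery or through the divergence-rate invariance that the paper develops in the next section; it is subtler than the set-theoretic inclusions, and getting it without circularity (the pre-model was only asserted to be semi-conformal, not regular) will require care.
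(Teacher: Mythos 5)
Your first half — showing $g(\D)\subset\mathcal W$, hence $g(\D)\subset\Delta$ for some petal, and that $\sigma\in\partial\Delta$ via backward orbits of $(\eta_t)$ pushed through $g$ — matches the paper's argument. The problem is the reverse inclusion $\Delta\subseteq g(\D)$, which you correctly identify as the heart of the matter but attack by a route that is circular. If you try to view $(\Delta, g, \phi_t|_\Delta)$ as a holomorphic model for $(\eta_t)$ and invoke the absorbing property \eqref{absorbing} to ``fill out'' the image, note that since $(\eta_t)$ is a group, $\cup_{t\geq 0}(\phi_t|_\Delta)^{-1}(g(\D))=\cup_{t\geq 0}g(\eta_{-t}(\D))=g(\D)$; so the absorbing condition you would need to verify is \emph{literally} the statement $g(\D)=\Delta$ that you are trying to prove. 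The same circularity appears in your ``more concrete'' route: $g^{-1}\circ f$ is only defined on $f^{-1}(g(\D))$, and promoting it to an automorphism of $\D$ requires exactly the surjectivity you defer. The spectral-value computation you flag as the main obstacle is a symptom of having chosen the wrong tool: the paper never needs it here (it only appears later, in Lemma \ref{Lem:step-petal}, \emph{after} this proposition is established). The actual argument is much shorter: for $z_0\in\Delta$, Proposition \ref{Prop:simple-prop-petals}(4) says $t\mapsto(\phi_t|_\Delta)^{-1}(z_0)$ is a regular backward orbit converging to the single boundary point attached to $\Delta$, which must be $\sigma$ because $g(\D)\subset\Delta$ already contains backward orbits landing at $\sigma$; Proposition \ref{Prop:converge-back} makes the convergence non-tangential; and then the converse half of Corollary \ref{Cor:premodel-and-back} — which you cite elsewhere but never deploy here — states that every regular (equivalently, non-tangentially convergent) backward orbit landing at $\sigma$ lies in $g(\D)$. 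Hence $z_0\in g(\D)$.

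Two smaller gaps. For uniqueness, your claim that Proposition \ref{Prop:petal-no-other-fixed}(1) plus the shape description forces two petals sharing $\sigma$ to coincide does not follow: that proposition bounds the number of fixed points per petal, not the number of petals per fixed point, and Remark \ref{Rem:unique-start-max-bd} only applies when the starting point is \emph{not} fixed, whereas a repelling $\sigma$ is the landing point of infinitely many distinct backward orbits. The paper instead takes a hypothetical second petal $\Delta'$, uses its Riemann map and Remark \ref{Rem:backward-hyper-direc} to manufacture a regular backward orbit inside $\Delta'$ converging non-tangentially to $\sigma$, and applies Corollary \ref{Cor:premodel-and-back} once more to force that orbit into $g(\D)=\Delta$, contradicting disjointness of petals. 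For the Stolz-region statement, the paper simply quotes the quasi-conformality of $g$ at $\sigma$ (Poggi-Corradini), which directly gives that a truncated Stolz angle lands in $g(\D)$; your proposed hyperbolic-localization argument is plausible but, as written, is another unproved sketch resting on boundary behavior of the Riemann map of $\Delta$ that you have not established.
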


\begin{proof} 
By Theorem \ref{Thm:premodel}, there exists a pre-model $(\D, g, \eta_t)$ for $(\phi_t)$ at $\sigma$. For all $t\geq 0$,
\begin{equation}\label{Eq:petal-grupo-premo}
\phi_t(g(\D))=g(\eta_t(\D))=g(\D).
\end{equation}
This implies that $g(\D)\subset \mathcal W$, the backward invariant set of $(\phi_t)$. Since $g(\D)$ is open and simply connected, there exists a petal $\Delta$ such that $g(\D)\subset \Delta$. Moreover, $\eta_{-t}(0)$ converges non-tangentially to $\sigma$ as $t\to +\infty$ by Proposition \ref{Prop:group-backward}, hence, since $\angle\lim_{z\to \sigma}g(z)=\sigma$, we have
\[
\lim_{t\to +\infty}g(\eta_{-t}(0))=\sigma,
\]
that is, $\sigma\in \overline{g(\D)}$. Since $\Delta$ contains no fixed points of $(\phi_t)$ by Remark \ref{Rem:petalo-parab-parab}, it follows that $\sigma\in \partial \Delta$, proving the first part of the statement.

Now we  show that $g(\D)=\Delta$. Let $z_0\in \Delta$.  By Proposition \ref{Prop:simple-prop-petals}, the curve $[0,+\infty)\ni t\mapsto (\phi_t|_\Delta)^{-1}(z_0)$ is a regular backward orbit for $(\phi_t)$ which converges to $\sigma$, and, by Proposition \ref{Prop:converge-back}, the convergence to $\sigma$ is non-tangential.
Therefore, by Corollary \ref{Cor:premodel-and-back}, $(\phi_t|_\Delta)^{-1}(z_0)\in g(\D)$ for all $t\geq 0$. In particular, $z_0\in g(\D)$ and hence $\Delta=g(\D)$ by the arbitrariness of $z_0$. Moreover, since $g$ is quasi-conformal at $\sigma$, for all $M>1$ there exists $\epsilon>0$ such that 
\[
S(\sigma, M)\cap \{\zeta\in \C:|\zeta-\sigma|<\epsilon\}\subset g(\D)=\Delta.
\]

Finally, we are left to show that $\Delta$ is the unique petal which contains $\sigma$ on its boundary. Assume by contradiction this is not the case and let $\Delta'$ be a petal different from $\Delta$ such that $\sigma\in \partial \Delta'$. Note that $\Delta\cap \Delta'=\emptyset$ (since they are different open connected components of the interior of the backward invariant set of $(\phi_t)$).  We claim that $\Delta'$ has to be a Jordan domain. Indeed, looking at Proposition \ref{Prop:shape-of-petals}, we see if $\Delta'$ is not a Jordan domain, then $\Delta'$ is the only petal of $(\phi_t)$, forcing $\Delta=\Delta'$.  Let $f:\D\to \C$ be univalent such that $f(\D)=\Delta'$. By Carath\'eodory's extension theorem, $f$ extends as a homeomorphism---which we still denote by $f$---from $\overline{\D}$ to $\overline{\Delta'}$. By Proposition \ref{Prop:simple-prop-petals}, $\tau\in \partial{\Delta'}$ and $(\phi_t|_{\Delta'})$ is a continuous  group of automorphisms of $\Delta'$. Hence,  arguing as in the proof of Proposition \ref{Prop:petal-no-other-fixed}, it is easy to see that $(f^{-1}\circ \phi_t\circ f)$ is a group in $\D$, with fixed points $f^{-1}(\tau)$ and $f^{-1}(\sigma)$. Therefore, $(f^{-1}\circ \phi_t\circ f)$ is a hyperbolic group and by Remark \ref{Rem:backward-hyper-direc}, it has a regular backward orbit $\gamma$ converging to $f^{-1}(\sigma)$. It is easy to see that $f\circ \gamma$ is a backward orbit for $(\phi_t)$ converging to $\sigma$ and it is regular because for all $t\geq 0$,
\[
\omega(f(\gamma(t)), f(\gamma(t+1)))\leq \omega(\gamma(t), \gamma(t+1)).
\]
By Proposition \ref{Prop:converge-back}, $f\circ \gamma$ converges to $\sigma$ non-tangentially. Corollary \ref{Cor:premodel-and-back} implies then that $f(\gamma([0,+\infty))\subset g(\D)=\Delta$. Hence, $\Delta\cap \Delta'\neq\emptyset$, a contradiction.
\end{proof}

\begin{remark}\label{unico-modello}
By the previous proposition, if $(\D, g, \eta_t)$ and $(\D,\tilde g, \eta_t)$ are pre-models for $(\phi_t)$ at $\sigma$, then $g^{-1}\circ \tilde g$ is an automorphism of $\D$ which commutes with $\eta_t$ for all $t\geq 0$. Therefore, by a direct computation (or see \cite{Hei}),  $g^{-1}\circ \tilde g$ is a hyperbolic automorphism of $\D$ fixing $\pm \sigma$.
\end{remark}

Given $z$ a point in a petal $\Delta$ of a semigroup $(\phi_t)$, it is well-defined $\phi_{t}|_{\Delta}(z)$ for all $t\in \R$. With a slight abuse of notation, we write $\phi_{t}(z)$ to denote  $\phi_{t}|_{\Delta}(z)$ for all $t\in \R$ when $z\in \Delta$.

\begin{proposition}\label{Prop:rate conv repelling}
Let $(\phi_t)$ be a semigroup in $\D$ with a repelling fixed point $\sigma\in \partial \D$, with repelling spectral value $\lambda \in (-\infty,0)$ and associated hyperbolic petal $\Delta$. Then, for all $z\in \Delta$,
\begin{equation}\label{Eq:rate conv repelling}
\lim_{t\to-\infty} \frac{1}{t}\log\left(1-\overline \sigma \phi_{t}(z)\right)=-\lambda.
\end{equation}
\end{proposition}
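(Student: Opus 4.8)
The plan is to transfer the question to the pre-model, where the dynamics is an explicit hyperbolic group, and then read off the rate from the conformality-type property of the intertwining map $g$. By Theorem \ref{Thm:premodel} and Proposition \ref{Prop:petals-premodel}, there is a pre-model $(\D, g, \eta_t)$ for $(\phi_t)$ at $\sigma$ with $g(\D)=\Delta$, where $(\eta_t)$ is the hyperbolic group with Denjoy--Wolff point $-\sigma$, other fixed point $\sigma$, and $\eta_t'(\sigma)=e^{-\lambda t}$. Fix $z\in\Delta$ and write $z=g(w)$ with $w\in\D$. Since $g\circ\eta_t=\phi_t\circ g$ on $\D$, for all $t\in\R$ we have $\phi_t(z)=g(\eta_t(w))$. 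As $t\to-\infty$, by Proposition \ref{Prop:group-backward}(2) the points $\eta_t(w)$ converge to $\sigma$ non-tangentially, with $\Arg(1-\overline\sigma\eta_t(w))\to\alpha$ for some $\alpha\in(-\pi/2,\pi/2)$.

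First I would compute the rate ``upstairs'', i.e. $\lim_{t\to-\infty}\frac1t\log(1-\overline\sigma\eta_t(w))$. This is an elementary computation with a hyperbolic automorphism: conjugating to the right half-plane $\Ha$ by a Cayley map sending $\sigma\mapsto\infty$ and $-\sigma\mapsto 0$, the group $(\eta_t)$ becomes $\zeta\mapsto e^{\lambda t}\zeta$ (the eigenvalue at $\sigma$ being $e^{-\lambda t}$, hence $e^{\lambda t}$ at the other fixed point; since $\lambda<0$, $e^{\lambda t}\to+\infty$ as $t\to-\infty$, consistent with $\eta_t(w)\to\sigma$). One checks that under the Cayley map $1-\overline\sigma\zeta$ behaves like $c/\psi(\zeta)$ near $\sigma$, where $\psi(\zeta)$ is the half-plane coordinate, so $1-\overline\sigma\eta_t(w)\sim c\,e^{-\lambda t}/\psi(w)$ and therefore $\frac1t\log(1-\overline\sigma\eta_t(w))\to-\lambda$.

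Next I would pass from $\eta_t(w)$ to $\phi_t(z)=g(\eta_t(w))$ using the semi-conformality (``quasi-conformality'') of $g$ at $\sigma$: by Definition \ref{Def:premodelo}(2), $\angle\lim_{\zeta\to\sigma}\Arg\frac{1-\overline\sigma g(\zeta)}{1-\overline\sigma\zeta}=0$. This controls the argument of $1-\overline\sigma\phi_t(z)$ but not its modulus, so by itself it does not give $\frac1t\log|1-\overline\sigma g(\eta_t(w))|\to-\lambda$. The natural fix is to invoke the hyperbolic-distance estimate intrinsic to the petal: since $\phi_t|_\Delta$ is a group of automorphisms of $\Delta$, the quantity $k_\Delta(\phi_t(z),\phi_t(z'))=k_\Delta(z,z')$ is constant in $t$, and comparing $\omega$ and $k_\Delta$ (as in the proof of Proposition \ref{Prop:simple-prop-petals}(4)) bounds $\omega(\phi_t(z),\gamma(t))$ for a reference backward orbit $\gamma$ lying in $g(\D)$ along a fixed Stolz angle. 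Using the standard asymptotics of $\omega$ near $\sigma$, a uniformly bounded hyperbolic distance forces $\log(1-\overline\sigma\phi_t(z))$ and $\log(1-\overline\sigma\gamma(t))$ to differ by $O(1)$, so it suffices to prove \eqref{Eq:rate conv repelling} for one specific orbit $\gamma$ built from the pre-model, namely $\gamma(t)=g(\eta_{-t}(0))$ (up to reparametrization); for that orbit, $1-\overline\sigma\gamma(t)=\frac{1-\overline\sigma g(\eta_{-t}(0))}{1-\overline\sigma\eta_{-t}(0)}\cdot(1-\overline\sigma\eta_{-t}(0))$, and one shows the first factor has $\log$ of order $o(t)$ from semi-conformality plus a Julia--Wolff--Carathéodory / Koebe-type distortion bound (the modulus is comparable to the ratio of distances to the boundary, which along a Stolz angle is pinched), while the second factor contributes $-\lambda$ by the half-plane computation above.

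The main obstacle is precisely this last point: upgrading the angular statement ``$g$ is semi-conformal at $\sigma$'' to a two-sided control of $\frac1t\log|1-\overline\sigma g(\eta_t(w))|$. Semi-conformality alone does not bound the radial distortion of $g$, and $g$ need not be conformal (indeed the paper later constructs an example where it is not regular). The right tool is to stay inside the petal and use that $(\phi_t|_\Delta)$ is a \emph{group}, so that the orbit $t\mapsto\phi_t(z)$ is, for $t\to-\infty$, a regular backward orbit with bounded hyperbolic step (Proposition \ref{Prop:simple-prop-petals}(4) and Proposition \ref{Prop:converge-back}); then $1-\overline\sigma\phi_t(z)\to 0$ non-tangentially and the hyperbolic metric of $\D$ restricted to the orbit is comparable, up to additive constants, to that of a half-plane where the flow is the explicit dilation with multiplier $e^{-\lambda t}$. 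Making the comparison between $\omega$, $k_\Delta$, and the Euclidean quantity $|1-\overline\sigma\cdot|$ precise — using that along a fixed Stolz angle $1-|w|\asymp|1-\overline\sigma w|$ and hence $\omega(w,w')=\frac12\log\frac{|1-\overline\sigma w|}{|1-\overline\sigma w'|}+O(1)$ — is the technical heart, but it is routine once set up, and it yields \eqref{Eq:rate conv repelling} by sandwiching.
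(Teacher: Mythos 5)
Your reduction to a single reference orbit via bounded hyperbolic distance is sound, and your explicit computation of the rate for the hyperbolic group $(\eta_t)$ is correct; but the step you yourself flag as the main obstacle is a genuine gap, and the paper's proof avoids it entirely by a different mechanism. Concretely: writing $\phi_t(z)=g(\eta_t(w))$, you need two-sided control of $\tfrac1t\log\bigl|1-\overline{\sigma}\,g(\eta_t(w))\bigr|$, equivalently $\omega(0,\phi_t(z))=-\tfrac{\lambda}{2}|t|+o(|t|)$. The inequality $\omega\leq k_\Delta$ together with $k_\Delta(z,\phi_t(z))=\omega(w,\eta_t(w))$ gives only the upper bound $\omega(0,\phi_t(z))\leq-\tfrac{\lambda}{2}|t|+O(1)$, i.e.\ $\limsup_{t\to-\infty}\tfrac1t\log|1-\overline{\sigma}\phi_t(z)|\leq-\lambda$. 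For the opposite inequality none of the tools you invoke works: semi-conformality of $g$ controls only the argument of $(1-\overline{\sigma}g(\zeta))/(1-\overline{\sigma}\zeta)$ and says nothing about its modulus; Julia--Wolff--Carath\'eodory would control the modulus only if $g$ were regular at $\sigma$, and Section~\ref{sec-nr} of the paper is devoted to showing that $g$ need not be regular; and your assertion that the metric along the orbit is ``comparable, up to additive constants, to that of a half-plane where the flow is the explicit dilation'' is precisely the statement that $g$ is regular, hence false in general (the whole point of the example in Section~\ref{sec-nr} is that $k_S(0,-it)-k_D(0,-it)\to+\infty$). So the ``sandwiching'' has only one slice of bread. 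What is true, and what must be proved, is that the discrepancy is $o(|t|)$ rather than $O(1)$ --- but that is equivalent to the proposition itself, not a route to it.

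The missing ingredient is the exact value of the angular derivative at the repelling point, and this is what the paper's (quite different and much shorter) proof uses: by \cite{CoDiPo06}, $\angle\lim_{z\to\sigma}G(z)/(z-\sigma)=-\lambda$ for the infinitesimal generator $G$; since the backward orbit converges non-tangentially, $G(\phi_t(z))/(\phi_t(z)-\sigma)\to-\lambda$ along the orbit; and integrating the logarithmic derivative $\tfrac{d}{ds}\log(1-\overline{\sigma}\phi_s(z))=-\overline{\sigma}G(\phi_s(z))/(1-\overline{\sigma}\phi_s(z))$ from $0$ to $t$ and applying L'H\^opital turns this pointwise limit into the Ces\`aro limit \eqref{Eq:rate conv repelling}. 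If you want to keep your framework, the fix is to replace the distortion estimates on $g$ by this angular-derivative input, e.g.\ in the discrete form $\bigl(\sigma-\phi_{t+s}(z)\bigr)/\bigl(\sigma-\phi_t(z)\bigr)=\bigl(\sigma-\phi_s(\phi_t(z))\bigr)/\bigl(\sigma-\phi_t(z)\bigr)\to e^{-\lambda s}$ as $t\to-\infty$ (valid because $\phi_t(z)\to\sigma$ non-tangentially), followed by a telescoping sum; the pre-model and the metric comparisons then become unnecessary.
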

\begin{proof} Let $G$ be the infinitesimal generator of the semigroup. By \cite{CoDiPo06}, 
$$
-\lambda=\angle\lim_{z\to \sigma} \frac{G(z)}{z-\sigma}\in (0,\infty).
$$
Moreover,  the regular backward orbit $[0,+\infty)\ni t \mapsto \phi_{-t}(z)$ converges to $\sigma$ non-tangentially, so that
$$
-\lambda=\angle\lim_{t\to -\infty} \frac{G(\phi_{t}(z))}{\phi_{t}(z)-\sigma}, \quad z\in \Delta.
$$

Given $z\in \Delta$ and $t\in \R$, 
\begin{equation*}
\begin{split}
\int_{0}^{t}\frac{-\overline \sigma G(\phi_{s}(z))}{1-\overline{\sigma} \phi_{s}(z)}\, ds&=\left[\log(1-\overline{\sigma} \phi_{s}(z))\right]_{s=0}^{s=t}\\
&=\log(1-\overline{\sigma} \phi_{t}(z))-\log(1-\overline{\sigma} z).
\end{split}
\end{equation*}
 Then, using L'H\^opital's Rule and the non-tangential convergence, we obtain 
$$
\lim_{t\to -\infty}\frac{1}{t}\int_{0}^{t}\frac{-\overline \sigma G(\phi_{s}(z_{0}))}{1-\overline{\sigma} \phi_{s}(z_{0})}\, ds=\lim_{t\to +\infty}\frac{ G(\phi_{t}(z_{0}))}{ \phi_{t}(z_{0})-\sigma}=-\lambda.
$$ 
Hence, $\lim_{t\to-\infty} \frac{1}{t}\log\left(1-\overline \sigma \phi_{t}(z)\right)=-\lambda.$
\end{proof}

As a last result of this section we prove that a super-repelling fixed point can be the limit of at most one backward orbit:

\begin{proposition}\label{Prop:uno-super-orbit}
Let $(\phi_t)$ be a semigroup, not a group, in $\D$ with Denjoy-Wolff point $\tau\in\overline{\D}$. Suppose $\sigma\in \partial \D$ is a super-repelling fixed point of $(\phi_t)$. Assume $\gamma_j:[0,+\infty)\to \D$, $j=1,2$, are backward orbits of $(\phi_t)$ converging to $\sigma$. Then, either $\gamma_1([0,+\infty))\subseteq \gamma_2([0,+\infty))$ or $\gamma_2([0,+\infty))\subseteq \gamma_1([0,+\infty))$. 

In particular, up to re-parameterization, there is at most one maximal invariant curve for $(\phi_t)$ with starting point $\sigma$.
\end{proposition}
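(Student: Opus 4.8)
The plan is to compare the two backward orbits through the canonical model. Let $(\Omega,h,\psi_t)$ be the canonical model of $(\phi_t)$; since $(\phi_t)$ is not a group, $\Omega$ is one of $\C$, $\Ha$, $\Ha^-$, $\strip_\rho$, and $\psi_t$ is either a translation $w\mapsto w+it$ or a rotation-dilation $w\mapsto e^{-\lambda t}w$. By Remark \ref{Rem:back-model}, for each $j$ we have $h(\gamma_j(t))=\psi_{-t}(h(\gamma_j(0)))$ for all $t\geq 0$. Thus the two image curves $h(\gamma_j([0,+\infty)))$ are half-orbits of the one-parameter group $(\psi_t)$ inside $h(\D)$, namely $\{\psi_{-t}(w_j):t\geq 0\}$ where $w_j:=h(\gamma_j(0))$. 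The key observation is that since both $\gamma_1(t)$ and $\gamma_2(t)$ converge to the same point $\sigma\in\partial\D$, the corresponding curves $\psi_{-t}(w_j)$ must, in a suitable sense, ``accumulate at the same boundary point of $h(\D)$''.

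First I would reduce to the non-elliptic case: if $\tau\in\D$, then by Lemma \ref{Lem:converge-no-back} any backward orbit is either constant or lands at a boundary fixed point; a backward orbit landing at a super-repelling $\sigma$ cannot be constant, but in the elliptic non-group model $\Omega=\C$, $\psi_t(w)=e^{-\lambda t}w$, the image of a non-constant backward orbit is a piece of a logarithmic spiral. Two such spiral pieces $\{e^{-\lambda s}w_j:s\leq 0\}$ (for $s=-t\le 0$) lie on the same spiral through the origin if and only if $w_2=e^{-\lambda s_0}w_1$ for some real $s_0$, i.e. one is a sub-arc of the other; and if they landed at the same boundary point $\sigma$, a monotonicity/uniqueness argument for the landing of spiral pieces (using \cite[Lemma 2, p.~162]{Shabook83} and Lindel\"of-type uniqueness) forces exactly this. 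In the non-elliptic case $\psi_t$ is a translation, and $h(\gamma_j([0,+\infty)))=\{w_j - i t : t\ge0\}$ is a downward vertical half-ray; two such half-rays are contained one in the other precisely when $\Re w_1=\Re w_2$, and otherwise they are disjoint parallel half-rays. So it suffices to show $\Re w_1=\Re w_2$.

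The heart of the argument is therefore: \emph{if $\gamma_1(t)\to\sigma$ and $\gamma_2(t)\to\sigma$ with $\Re h(\gamma_1(0))\neq \Re h(\gamma_2(0))$, we reach a contradiction.} Here I would argue as in the proof of Proposition \ref{Prop:shape-of-petals} and Proposition \ref{Prop:petals-premodel}. If the two vertical half-rays have distinct real parts $c_1<c_2$, the strip-like region $\{w\in\Omega : c_1<\Re w<c_2,\ \Im w < \min(\Im w_1,\Im w_2)\}\cap h(\D)$ is ``trapped'' between them; pushing forward by $\phi_s=h^{-1}\circ\psi_s\circ h$ and using that $h^{-1}$ sends the two half-rays to curves landing at the \emph{same} point $\sigma$, one shows that a whole approach region to $\sigma$ — in fact a set on which one could build a pre-model or a hyperbolic petal — lies in $h(\D)$, hence $\sigma$ would be on the boundary of a petal; by Proposition \ref{Prop:petals-premodel} that petal would be hyperbolic and $\sigma$ a \emph{repelling} (regular) fixed point, contradicting that $\sigma$ is super-repelling. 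More concretely, I expect one can extract from the region between the rays a backward orbit that is \emph{regular} (bounded hyperbolic step, since the hyperbolic distance in $h(\D)$ between the two rays along horizontal segments stays bounded as $\Im\to-\infty$, the segments being of fixed Euclidean length and at bounded hyperbolic distance from $\de h(\D)$), and then Proposition \ref{Prop:converge-back} says $\sigma$ is a repelling fixed point — the desired contradiction.

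The main obstacle I anticipate is making the ``trapped region'' step rigorous: controlling the geometry of $h(\D)$ near the relevant boundary portion well enough to guarantee the horizontal cross-segments stay at bounded hyperbolic distance (equivalently, that $h(\D)$ is not pinched between the two rays), so that the constructed backward orbit is genuinely regular. One clean way around this is to invoke the prime ends / Carath\'eodory machinery directly: both $\gamma_1$ and $\gamma_2$, viewed in $\Omega$, determine cross-cuts of $h(\D)$ converging to $\sigma$; if $\Re w_1\ne\Re w_2$ these cross-cuts are non-nested, so they separate a piece of $h(\D)$ whose image under $h^{-1}$ is a sub-domain of $\D$ abutting $\partial\D$ only at $\sigma$ — and such a sub-domain, being $(\phi_t)$-invariant in the appropriate sense, meets $\mathcal W$ in an open set, so $\sigma\in\de\Delta$ for some petal $\Delta$, which by Proposition \ref{Prop:petals-premodel} again forces $\sigma$ to be repelling. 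The final ``in particular'' statement about maximal invariant curves is then immediate: a maximal invariant curve with starting point $\sigma$ restricts, after reversing orientation, to a backward orbit landing at $\sigma$ (Remark \ref{Rem:backward to complete invariant} and Proposition \ref{Prop:compl-inv-puntos}(2)), and the nesting of images translates into the two maximal invariant curves having the same image, i.e. agreeing up to reparameterization by Remark \ref{Rem:unique-start-max-bd} applied with care to the non-fixed portion.
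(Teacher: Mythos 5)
Your overall strategy coincides with the paper's: assume the two orbits are distinct, show that they then trap a region of $\D$ lying in the backward invariant set, conclude that $\sigma$ lies on the boundary of a petal, and derive a contradiction because a boundary fixed point of a petal other than $\tau$ must be repelling. The paper, however, runs this entirely \emph{downstairs}: the two backward orbits extend (Remark \ref{Rem:backward to complete invariant}) to two maximal invariant curves which, by Remark \ref{Rem:trasla-curva-max}, are either equal or disjoint; in the disjoint case their closures form a Jordan curve $J$ with $J\cap\partial\D=\{\tau,\sigma\}$, and for every point $z_0$ in the bounded component $D$ of $\C\setminus J$ the maximal invariant curve through $z_0$ must start at a point of $\overline D\cap\partial\D=\{\sigma,\tau\}$, both fixed, so $z_0\in\mathcal W$ by Proposition \ref{Prop:compl-inv-puntos}. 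This avoids all the geometry of $h(\D)$ that worries you, and in particular automatically rules out the scenario you (rightly) fear, namely a slit of $\C\setminus h(\D)$ descending between the two rays. Note also that the relevant region abuts $\partial\D$ at \emph{both} $\sigma$ and $\tau$, not only at $\sigma$, and that the final contradiction comes from Proposition \ref{Prop:petal-no-other-fixed} (a fixed point on $\partial\Delta\setminus\{\tau\}$ is necessarily repelling), not from Proposition \ref{Prop:petals-premodel}.

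There is one genuine flaw in your first proposed execution: the claim that one can extract from the region between the two rays a \emph{regular} backward orbit because ``the hyperbolic distance in $h(\D)$ between the two rays along horizontal segments stays bounded, the segments being of fixed Euclidean length and at bounded hyperbolic distance from $\partial h(\D)$.'' Nothing guarantees such a bound; since $\sigma$ is super-repelling, neither $\gamma_1$ nor $\gamma_2$ is regular (Proposition \ref{Prop:converge-back}), so the hyperbolic geometry of $h(\D)$ near the ends of the rays genuinely degenerates, and asserting boundedness there is essentially assuming the conclusion ($\sigma$ regular) that you are trying to contradict. Your fallback via cross-cuts and the trapped sub-domain is sound in outline and is the route to take, but to close it you still need the step the paper supplies with Proposition \ref{Prop:compl-inv-puntos}: every point of the trapped region lies on a maximal invariant curve whose starting point is forced to be a fixed point, hence the region sits inside $\mathcal W$. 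Once that is in place, the ``in particular'' statement follows as you say.
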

\begin{proof}
Suppose by contradiction that the statement is not true.  Let define $\eta_j(t)=\gamma_j(-t)$ for $t\geq 0$ and $\eta_j(t)=\phi_t(\gamma_j(0))$ for $t>0$, $j=1,2$. By Remark \ref{Rem:backward to complete invariant}, $\eta_1, \eta_2$ are maximal invariant curves for $(\phi_t)$. Hence, by Remark \ref{Rem:trasla-curva-max}, either $\eta_1((-\infty,+\infty))=\eta_1((-\infty,+\infty))$ or they are disjoint, and by our hypothesis, the latter case holds. Since $\lim_{t\to +\infty}\eta_j(t)=\tau$ and $\lim_{t\to -\infty}\eta_j(t)=\sigma$, $j=1,2$, by Remark \ref{Rem:complete-inv-Jordan} it follows that the closure of $\eta_1((-\infty,+\infty))\cup\eta_2((-\infty,+\infty))$, call it $J$, is a Jordan curve such that $J\cap \partial \D=\{\tau, \sigma\}$. Let $D$ be the bounded connected component of $\C\setminus J$. We claim that  $D\subset \mathcal W$, the backward invariant set of $(\phi_t)$.

Assuming the claim, it follows at once that $D$ is contained in a petal $\Delta$. But $\sigma\in \partial \Delta$, hence, by Proposition \ref{Prop:petal-no-other-fixed}, $\Delta$ is hyperbolic and $\sigma$ is repelling, contradiction. 

In order to prove the claim, let $z_0\in D$ and let $\eta:(a,+\infty)\to\D$ be the maximal invariant curve such that $\eta(0)=z_0$, with $a\in [-\infty, 0)$. Let $p\in \partial \D$ be the starting point of $\eta$. Since $\overline{D}\cap \partial \D=\{\sigma, \tau\}$, it follows that $p\in \{\tau, \sigma\}$, hence, 
 Proposition \ref{Prop:compl-inv-puntos} implies that $z_0$ belongs to the backward invariant set of $(\phi_t)$.
 
 The last statement follows at once from what we already proved and Remark \ref{Rem:backward to complete invariant}.    
\end{proof}

\section{Petals and the geometry of K\"onigs functions}\label{Sec:petals-Koenigs}

In this section we see how  geometric properties of the K\"onigs function of a semigroup  detect petals. To achieve this goal, we use the divergence rate as introduced in \cite{AroBra16}, which, roughly speaking, measures the average hyperbolic speed of escape of an orbit of a semigroup. We recall from \cite{AroBra16} the basic facts we need. 

\begin{definition}
Let $(\phi_t)$ be a continuous one-parameter semigroup of holomorphic self-maps on a Riemann surface $\Omega$. Let $k_\Omega$ denote the hyperbolic distance of $\Omega$. Let $z\in \Omega$. The number 
\[
c_\Omega(\phi_t):=\lim_{s\to+\infty}\frac{k_\Omega(\phi_s(z), z)}{s}
\]
 is called the {\sl divergence rate} of $(\phi_t)$.
\end{definition}
One can prove (see \cite{AroBra16}) that, indeed, the previous limit exists and it is independent of $z\in \Omega$.

\begin{theorem}[\cite{AroBra16}]\label{Thm:speed-dilation}
Let $(\phi_t)$ be a non-elliptic holomorphic semigroup in $\D$ with Denjoy-Wolff point $\tau\in \partial\D$ and spectral value $\lambda\geq 0$. Let $c_\D(\phi_t)$ denote the divergence rate of $(\phi_t)$. Then
\begin{equation}\label{Eq:speed-divergence}
c_\D(\phi_t)=\frac{1}{2} \lambda.
\end{equation}
Moreover, if $(\Omega, h, \Phi_t)$ is a holomorphic model of $(\phi_t)$ then $c_\D(\phi_t)=c_\Omega(\Phi_t)$.
\end{theorem}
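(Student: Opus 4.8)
The plan is to prove the two assertions in turn, exploiting the canonical models from Theorem \ref{modelholo} and the general fact (proved in \cite{AroBra16}) that the divergence rate is a conjugacy invariant among holomorphic models. First I would observe that, by Theorem \ref{modelholo}, a non-elliptic semigroup $(\phi_t)$ has a canonical model $(\Omega, h, \Phi_t)$ in which $\Omega$ is one of $\C$, $\Ha$, $\Ha^-$, or a strip $\strip_{\pi/\lambda}$, and $\Phi_t(w) = w + it$ in every case. Since $h$ is univalent onto its image and conjugates $(\phi_t)$ to $(\Phi_t)$, and since \eqref{absorbing} holds, the abstract invariance theorem of \cite{AroBra16} gives $c_\D(\phi_t) = c_\Omega(\Phi_t)$; I would state this and cite it, which already disposes of the ``moreover'' clause. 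So the whole problem reduces to computing $c_\Omega(\Phi_t)$ for the four model domains with the vertical translation flow.

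Next I would carry out that computation. Fix a base point $w_0 \in \Omega$; then $c_\Omega(\Phi_t) = \lim_{s\to+\infty} k_\Omega(w_0 + is, w_0)/s$. In the hyperbolic case $\Omega = \strip_{\pi/\lambda}$: up to the scaling $w \mapsto (\lambda/\pi) w$ one reduces to the standard strip $\strip = \{0 < \Re\zeta < 1\}$, whose hyperbolic metric is explicit; along a vertical segment of length $\ell$ at fixed real part the hyperbolic length grows like $\pi \ell$ asymptotically (the density of the metric on $\strip$ is $\pi/(2\sin(\pi\Re\zeta))$ in the relevant normalization, and a vertical geodesic-like estimate gives $k_\strip(\zeta_0 + i\ell,\zeta_0) \sim \pi \ell$). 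After undoing the scaling this yields $c_{\strip_{\pi/\lambda}}(\Phi_t) = \lambda/2$, matching \eqref{Eq:speed-divergence}. In the parabolic positive-hyperbolic-step case $\Omega = \Ha$ (or $\Ha^-$): the hyperbolic distance in the right half-plane between $w_0$ and $w_0 + is$ is $\sim \log s$, hence $k_\Omega(w_0+is,w_0)/s \to 0$, giving $c_\Omega(\Phi_t) = 0 = \lambda/2$ since $\lambda = 0$. In the zero-hyperbolic-step case $\Omega = \C$, the hyperbolic metric degenerates — here one should instead note $k_\D(\phi_s(0),0)$ directly, or use that any reasonable monotone exhaustion argument forces $c_\D(\phi_t) = 0$; in fact the cleanest route for the parabolic cases is to not pass through the (possibly trivial or non-hyperbolic) model at all, but to quote the known estimates $\omega(\phi_s(0),0) = o(s)$ for parabolic semigroups. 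I would spell out the hyperbolic case carefully and treat the parabolic cases more briefly, since there $\lambda = 0$ and the estimate $\lim_s \omega(\phi_s(0),0)/s = 0$ is classical (it follows e.g. from the hyperbolic-step dichotomy in the Preliminaries).

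The one genuine subtlety — and the step I expect to require the most care — is the asymptotic $k_{\strip_\rho}(\zeta_0 + i\ell, \zeta_0) = \rho^{-1}\pi \ell/2 + O(1)$ as $\ell \to +\infty$ (with the normalization matching $\strip_{\pi/\lambda}$). A soft argument avoiding metric computations: the horizontal slab $\{|\Im\zeta - \Im\zeta_0| \le \ell, \, 0 < \Re\zeta < \rho\}$ and the full strip $\strip_\rho$ squeeze the geodesic, and by the Schwarz–Pick lemma applied to the inclusion $\strip_\rho \hookrightarrow \Ha$ one gets a lower bound, while an explicit conformal map of $\strip_\rho$ onto $\D$ gives the matching upper bound; both have the same linear coefficient. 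Alternatively, and most economically, I would simply invoke that the semigroup $w \mapsto w + it$ on $\strip_{\pi/\lambda}$ is the canonical model of \emph{every} hyperbolic semigroup of spectral value $\lambda$, in particular of the hyperbolic automorphism group $z \mapsto$ (the affine map with multiplier $e^{\lambda t}$ at the Denjoy–Wolff point) on $\D$, for which $\omega(\phi_s(0),0)/s \to \lambda/2$ is an immediate computation with the explicit automorphisms; invariance of the divergence rate then transfers this value to $c_{\strip_{\pi/\lambda}}(\Phi_t)$, and hence to $c_\D(\phi_t)$ for every hyperbolic $(\phi_t)$ with spectral value $\lambda$. This last route sidesteps all strip-metric estimates and is the one I would ultimately write up.
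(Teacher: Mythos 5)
The paper does not prove this theorem: it is quoted verbatim from \cite{AroBra16} (note the attribution in the theorem header), so there is no internal proof to compare against. Your proposal is a sound reconstruction of the argument in that reference: reduce to the canonical models via the invariance of the divergence rate under holomorphic models, then compute $c_\Omega(\Phi_t)$ for the translation flow on $\strip_{\pi/\lambda}$, $\Ha$, $\Ha^-$ and $\C$. Your preferred final route --- observing that $(\strip_{\pi/\lambda}, h, w+it)$ is also the model of a hyperbolic \emph{automorphism group} with spectral value $\lambda$, for which $\omega(\phi_s(0),0)/s\to\lambda/2$ is an explicit half-plane computation, and transferring the value by invariance --- is correct and is indeed the cleanest way to avoid strip-metric estimates.

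Two caveats on the alternatives you sketch but do not ultimately rely on. First, the ``soft squeeze'' for the strip is broken as stated: the inclusion $\strip_\rho\hookrightarrow\Ha$ gives the lower bound $k_{\strip_\rho}(w_0,w_0+is)\geq k_\Ha(w_0,w_0+is)\sim\log s$, which is only logarithmic and cannot match the linear upper bound; to get the exact linear rate you must use the centerline geodesic of the strip (density $\pi/2$ at $\Re\zeta=1/2$, whence $k_\strip(\zeta_0,\zeta_0+i\ell)=\pi\ell/2+O(1)$ --- note also your earlier ``grows like $\pi\ell$'' is off by a factor $2$ from your own displayed asymptotic) or the conformal conjugation to an automorphism group. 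Second, the blanket claim that $\omega(\phi_s(0),0)=o(s)$ ``follows from the hyperbolic-step dichotomy'' is only justified for zero hyperbolic step (there $\omega(\phi_{t+1}(0),\phi_t(0))\to 0$ and a Ces\`aro/subadditivity argument gives $o(s)$); for positive hyperbolic step the dichotomy alone only yields $\limsup_s\omega(\phi_s(0),0)/s\leq c_1$ with $c_1>0$, and you genuinely need the half-plane model computation $k_\Ha(w_0,w_0+is)\sim\log s$, which you do supply. With those points cleaned up, the write-up you describe would be complete.
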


 The basic observation we need is contained in the following lemma:

\begin{lemma}\label{Lem:step-petal}
Let $(\phi_t)$ be a semigroup, not a group, in $\D$. Let $(\Omega, h, \psi_t)$ be the canonical model of $(\phi_t)$ (where $\Omega$  and $\psi_t$ are given by Theorem \ref{modelholo}). Suppose $\sigma\in \partial \D$ is a repelling fixed point for $(\phi_t)$ with repelling spectral value $\lambda\in (-\infty,0)$. Let $\Delta$ be the hyperbolic petal of $(\phi_t)$ such that $\sigma\in\partial\Delta$. Let $A:=h(\Delta)$. Then $\psi_t(A)=A$ for all $t\geq 0$ and $(\psi_t|_{A})$ is a continuous group of automorphisms of $A$. Moreover, the divergence rate satisfies
\[
c_\Delta(\phi_t|_\Delta)=c_A(\psi_t|_A)=-\frac{\lambda}{2}.
\]
\end{lemma}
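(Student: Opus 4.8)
The plan is to establish the three assertions of Lemma \ref{Lem:step-petal} in order, using the pre-model machinery built in Section \ref{Sec:petals} together with Theorem \ref{Thm:speed-dilation}. First I would note that, by Proposition \ref{Prop:petals-premodel}, if $(\D,g,\eta_t)$ is a pre-model for $(\phi_t)$ at $\sigma$, then $g(\D)=\Delta$, and by Proposition \ref{Prop:simple-prop-petals}(2) we know $\phi_t(\Delta)=\Delta$ for all $t\ge 0$ and that $(\phi_t|_\Delta)$ is a continuous one-parameter group of automorphisms of $\Delta$. Transporting this through $h$: since $h\circ\phi_t=\psi_t\circ h$ on $\D$ and $h$ is univalent, for $A=h(\Delta)$ we get $\psi_t(A)=\psi_t(h(\Delta))=h(\phi_t(\Delta))=h(\Delta)=A$ for all $t\ge 0$. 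Moreover $\psi_t|_A=h\circ(\phi_t|_\Delta)\circ h^{-1}$ is a biholomorphism of $A$ onto itself, and the group property of $(\phi_t|_\Delta)$ (extending to all real $t$) passes verbatim through the conjugation by $h$; continuity in $t$ is likewise inherited. This disposes of the first two claims.

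For the divergence rate, the key point is that $h|_\Delta:\Delta\to A$ is a biholomorphism intertwining $(\phi_t|_\Delta)$ and $(\psi_t|_A)$, so it is in particular an isometry for the respective hyperbolic distances: $k_\Delta(z,w)=k_A(h(z),h(w))$ for all $z,w\in\Delta$. Hence for any fixed $z_0\in\Delta$,
\[
c_\Delta(\phi_t|_\Delta)=\lim_{s\to+\infty}\frac{k_\Delta((\phi_s|_\Delta)(z_0),z_0)}{s}=\lim_{s\to+\infty}\frac{k_A((\psi_s|_A)(h(z_0)),h(z_0))}{s}=c_A(\psi_t|_A),
\]
so the two divergence rates coincide; note that since $(\phi_t|_\Delta)$ is a group, one should take the limit of $k_\Delta((\phi_s|_\Delta)^{-1}(z_0),z_0)/s$ as $s\to+\infty$ as well, and these agree because the orbit escapes to the boundary point $\sigma$ (Proposition \ref{Prop:simple-prop-petals}(4)) in backward time — in either normalization the divergence rate is the same positive number.

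It remains to identify this common value with $-\lambda/2$. Here I would invoke the pre-model again: $(\D,g,\eta_t)$ conjugates $(\phi_t|_\Delta)$ (via $g:\D\to g(\D)=\Delta$) to the hyperbolic group $(\eta_t)$ of $\D$ whose repelling spectral value at $\sigma$ is $\lambda$ (Remark \ref{Rem:spectro-hyper-pre}), i.e. $\eta_t'(\sigma)=e^{-\lambda t}$ and $\eta_t'(-\sigma)=e^{\lambda t}$; equivalently, in the canonical strip/half-plane coordinates, $(\eta_t)$ is conjugate to a translation group on a strip of width $\pi/(-\lambda)$. Since $g:\D\to\Delta$ is biholomorphic, it is a hyperbolic isometry, so $c_\Delta(\phi_t|_\Delta)=c_\D(\eta_t)$. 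Now $(\eta_t)$ is a non-elliptic (indeed hyperbolic) semigroup — in fact group — in $\D$ with spectral value $-\lambda>0$ at its Denjoy-Wolff point $-\sigma$, so Theorem \ref{Thm:speed-dilation} gives $c_\D(\eta_t)=\frac12(-\lambda)=-\lambda/2$. Combining the three identities yields $c_\Delta(\phi_t|_\Delta)=c_A(\psi_t|_A)=-\lambda/2$, as desired.

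The only subtlety — and the step I would be most careful about — is the normalization of the divergence rate for a group rather than a semigroup: the definition in the excerpt takes $s\to+\infty$, and one must check that replacing $(\phi_t|_\Delta)$ by its inverse flow (which is the relevant direction for the backward orbit landing at $\sigma$) does not change the value, and that Theorem \ref{Thm:speed-dilation}, stated for semigroups with Denjoy-Wolff point on $\partial\D$, applies to $(\eta_t)$ read as a semigroup in forward time with Denjoy-Wolff point $-\sigma$ and spectral value $-\lambda$. Both are routine once one observes that the hyperbolic distance is symmetric and that for a hyperbolic group the forward and backward orbits escape to the two fixed points at the same exponential rate, so the $\limsup$'s in both time directions agree.
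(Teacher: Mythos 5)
Your proposal is correct and follows essentially the same route as the paper: establish $\psi_t(A)=A$ by pushing $\phi_t(\Delta)=\Delta$ through $h$, then compute the divergence rate via the pre-model $(\D,g,\eta_t)$ with $g(\D)=\Delta$ and Theorem \ref{Thm:speed-dilation} applied to the hyperbolic group $(\eta_t)$ of spectral value $-\lambda$. The only cosmetic difference is that you transport divergence rates by observing that $g$ and $h|_\Delta$ are hyperbolic isometries, whereas the paper invokes the model-invariance clause of Theorem \ref{Thm:speed-dilation} — these coincide here since both intertwining maps are surjective.
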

\begin{proof}
Let $(\D, g, \eta_t)$ be a pre-model for $(\phi_t)$ at $\sigma$. By Proposition \ref{Prop:petals-premodel}, $g(\D)=\Delta$. Hence, $(\Delta, g, \phi_t|_\Delta)$ is a holomorphic model for $(\eta_t)$. Therefore, by Theorem \ref{Thm:speed-dilation}, 
\begin{equation}\label{Eq:speed-model-pre1}
c_\Delta(\phi_t|_\Delta)=c_\D(\eta_t)=-\frac{\lambda}{2}.
\end{equation}

Now, let $A=h(\Delta)$. From
\[
A=h(\Delta)=h(\phi_t(\Delta))=\psi_t(h(\Delta))=\psi_t(A), \quad t\geq 0,
\] 
it follows that $(\psi_t|_A)$ is a continuous group of automorphisms of $A$ and that $(\Delta, h|_\Delta, (\phi_t|_\Delta))$ is a holomorphic model for $(\psi_t|_A)$. Hence, again by Theorem \ref{Thm:speed-dilation} and  \eqref{Eq:speed-model-pre1} we have the result. 
\end{proof}

Now, we need to introduce  spirallike sectors.

\begin{definition}
Let $\mu\in \C$, $\Re \mu>0$, $\alpha\in (0,\pi]$ and $\theta_0\in [-\pi,\pi)$. A {\sl $\mu$-spirallike sector}\index{Spirallike sector} of amplitude $2\alpha$ and center $e^{i\theta_0}$ is
\begin{equation*}
\begin{split}
\mathrm{Spir}[\mu, 2\alpha, \theta_0]&:=\{e^{t\mu+i\theta}: t\in \R, \theta\in (-\alpha+\theta_0,\alpha+\theta_0)\}.
\end{split}
\end{equation*}
\end{definition}


\begin{lemma}\label{lem:spiral-sector-rate}
Let $D:=\mathrm{Spir}[\mu, 2\alpha,\theta_0]$ be a $\mu$-spirallike sector for some $\mu\in \C$, $\Re \mu>0$ and $\alpha\in (0,\pi]$. Let $\psi_t(z)=e^{-\mu t}z$, for $z\in \C$. Then $(\psi_t|_{D})$ is a continuous group of automorphisms of $D$ and 
\[
c_{D}(\psi_t|_{D})=\frac{|\mu|^2\pi}{4\alpha\Re \mu }.
\]
\end{lemma}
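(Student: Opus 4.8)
The plan is to compute the divergence rate by transporting the problem to a standard strip model via an explicit biholomorphism, and then invoking Theorem~\ref{Thm:speed-dilation}.

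\textbf{Step 1: Uniformize the spirallike sector.} First I would write $\mu = |\mu|e^{i\beta}$ with $\beta\in(-\pi/2,\pi/2)$ (possible since $\Re\mu>0$), and observe that the map $z\mapsto e^{t\mu+i\theta}$ parametrizes $D=\mathrm{Spir}[\mu,2\alpha,\theta_0]$ by $(t,\theta)\in\R\times(-\alpha+\theta_0,\alpha+\theta_0)$. A natural ``logarithm'' adapted to $\mu$ sends $D$ biholomorphically onto a (skewed) strip: consider $L(z):=\log z$ on a branch making $D$ simply connected (note $\alpha\le\pi$ guarantees $D$ omits a ray, so a branch of $\log$ exists on $D$, and $L$ is univalent there). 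Then $L(D)=\{t\mu+i\theta: t\in\R,\ |\theta-\theta_0|<\alpha\}$, which is the strip between the two parallel lines $\R\mu+i(\theta_0-\alpha)$ and $\R\mu+i(\theta_0+\alpha)$. In these coordinates $\psi_t(z)=e^{-\mu t}z$ becomes the translation $w\mapsto w-\mu t$ along the direction $\mu$ — i.e. translation by $-\mu t$, which preserves the strip $L(D)$. This shows directly that $(\psi_t|_D)$ is a one-parameter group of automorphisms of $D$, as claimed.

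\textbf{Step 2: Straighten to a vertical strip and identify the model.} Next I would compose with a complex-affine map $w\mapsto \frac{i\pi}{2\alpha}\,\frac{w - i\theta_0}{\ ?\ }$ chosen so that the strip $L(D)$ is sent to the standard vertical strip $\strip_\rho=\{0<\Re\zeta<\rho\}$ and the translation-by-$(-\mu t)$ flow is sent to the vertical-translation flow $\zeta\mapsto\zeta+ict$ for the appropriate real constant $c>0$. Concretely: an affine map $\Lambda(w)=aw+b$ with $a=\frac{\pi}{2\alpha}\cdot\frac{i}{\ \cdot\ }$ sends the slanted strip of ``width'' $2\alpha$ (measured transversally, i.e. Euclidean width $2\alpha$ across, since the defining lines are $\{t\mu+i\theta\}$ and the $\theta$-parameter runs over an interval of length $2\alpha$ at fixed... here one must be careful: the transversal Euclidean distance between the two boundary lines is $2\alpha\cos\beta$, because the lines have direction $\mu$ and the $i\theta$-displacement is vertical while the width is measured perpendicular to $\mu$). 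After the affine normalization the flow $\psi_t$ corresponds to $\zeta\mapsto\zeta+ict$ on some $\strip_\rho$; by Theorem~\ref{modelholo}(4) and Theorem~\ref{Thm:speed-dilation}, its divergence rate on $\strip_\rho$ equals $\tfrac{1}{2}$ times the corresponding hyperbolic spectral value, and since $c_D(\psi_t|_D)$ is a conformal invariant it equals $c_{\strip_\rho}$ of the normalized flow. One then just reads off the constant.

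\textbf{Step 3: Bookkeeping of the constant.} Rather than juggling two normalizations, the cleanest route: the hyperbolic divergence rate of $\zeta\mapsto\zeta+ i\nu t$ (for real $\nu>0$) on a vertical strip of width $\rho$ is $\dfrac{\pi\nu}{2\rho}$ — this follows since such a strip is the canonical model of a hyperbolic semigroup with spectral value $\pi\nu/\rho$ and Theorem~\ref{Thm:speed-dilation} gives $c=\tfrac{1}{2}\cdot\tfrac{\pi\nu}{\rho}$ (alternatively, compute the hyperbolic metric of the strip directly, $k(0,it)=\tfrac{\pi}{2\rho}|t|+o(|t|)$). So I need the image of our flow under a conformal map of $D$ to a vertical strip, recording both the transversal width and the translation speed. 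From Step~1–2: in the $w=\log z$ picture the strip has boundary lines in direction $\mu=|\mu|e^{i\beta}$, separated by an $i\theta$-interval of length $2\alpha$ at fixed real part — so the Euclidean transversal width is $2\alpha\cos\beta$ — and the flow is translation by $-\mu t$, with transversal speed $|\mu|\cos\beta$ and longitudinal (harmless) speed $|\mu|\sin\beta$. Under the rotation-and-scaling making this a vertical strip of width $\rho$, the width scales as $\tfrac{\rho}{2\alpha\cos\beta}$ and the transversal speed $|\mu|\cos\beta$ becomes the vertical translation speed $\nu=|\mu|\cos\beta\cdot\tfrac{\rho}{2\alpha\cos\beta}\cdot\tfrac{1}{\rho}\cdot\rho$; plugging into $c=\tfrac{\pi\nu}{2\rho}$ and simplifying, the $\rho$'s cancel and I get
\[
c_D(\psi_t|_D)=\frac{\pi}{2}\cdot\frac{|\mu|\cos\beta}{2\alpha\cos\beta}\cdot\frac{1}{\cos\beta}\cdot|\mu|\cos\beta\ \Big/\ (\text{check})\ =\ \frac{|\mu|^2\pi\cos^2\beta}{4\alpha\cos\beta\,|\mu|\cos\beta}\cdot|\mu|=\frac{|\mu|^2\pi}{4\alpha}\cdot\frac{1}{\Re\mu/|\mu|}\cdot\frac{1}{|\mu|}\cdot|\mu|
\]
which collapses to $\dfrac{|\mu|^2\pi}{4\alpha\,\Re\mu}$ since $\Re\mu=|\mu|\cos\beta$. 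The honest version of this computation should be done carefully with a single named biholomorphism rather than the heuristic scaling above.

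\textbf{Main obstacle.} The only real subtlety is getting the geometric constant right: one is measuring a width transversally to a slanted direction and a translation speed that is itself slanted, and the factors of $\cos\beta=\Re\mu/|\mu|$ enter twice (once in the width, once in resolving the translation into its transversal component) and must combine to leave exactly one factor of $\Re\mu$ in the denominator and $|\mu|^2$ in the numerator. I would therefore do Step~3 by writing down one explicit conformal map $F:D\to\strip$ of the form $F(z)=\dfrac{i\pi}{2\alpha}\big(e^{-i\theta_0}\log z\big)$ composed if needed with a final real-affine tweak, verify $F(D)$ is a strip and $F\circ\psi_t\circ F^{-1}$ is a pure translation $\zeta\mapsto\zeta + \dfrac{i\pi\mu e^{-i\theta_0}}{2\alpha}\cdot(-t)$ on it, note that the divergence rate of a translation flow $\zeta\mapsto\zeta+ bt$ on a strip $\strip_\rho$ depends only on $|\Re(b)|$ relative to the width, and conclude by conformal invariance of $c$. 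Everything else — that $(\psi_t|_D)$ is a group of automorphisms, that the limit defining $c_D$ exists and is point-independent, and the value $c=\tfrac{\lambda}{2}$ for the strip model — is already available from Theorem~\ref{modelholo} and Theorem~\ref{Thm:speed-dilation}.
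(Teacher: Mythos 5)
Your overall strategy is the same as the paper's: conjugate $(\psi_t|_D)$ by explicit biholomorphisms onto a canonical translation/dilation model, then invoke Theorem \ref{Thm:speed-dilation} together with the conformal invariance of the divergence rate. (The paper does the uniformization with power maps, $w\mapsto w^{1-i\Im\mu/\Re\mu}$ followed by $z\mapsto z^{-\pi/(2\alpha)}$, which untwists the spirallike sector into a true sector and then a half-plane; your $\log$-plus-affine route is the same thing written additively.) Step 1 and the strip formula $c=\pi\nu/(2\rho)$ for $\zeta\mapsto\zeta+i\nu t$ on $\strip_\rho$ are both fine.

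The gap is in Step 3, and it is not merely cosmetic. In the coordinate $w=\log z$ the domain becomes the slanted strip bounded by the two lines $\R\mu+i(\theta_0\pm\alpha)$, and the flow becomes $w\mapsto w-\mu t$, i.e.\ translation in the direction $-\mu$ --- which is \emph{parallel to the boundary lines}. The translation therefore has \emph{no} transversal component; it is purely longitudinal. Your decomposition ``transversal speed $|\mu|\cos\beta$, longitudinal (harmless) speed $|\mu|\sin\beta$'' has the roles reversed (a nonzero transversal component would mean the flow does not preserve the strip at all), and if one feeds ``effective speed $|\mu|\cos\beta$ across width $2\alpha\cos\beta$'' into $c=\pi\nu/(2\rho)$ one gets $\pi|\mu|/(4\alpha)$, which is wrong. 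The correct accounting is: full speed $|\mu|$ along a strip of perpendicular width $2\alpha\cos\beta=2\alpha\Re\mu/|\mu|$, whence $c=\frac{\pi}{2}\cdot\frac{|\mu|}{2\alpha\Re\mu/|\mu|}=\frac{\pi|\mu|^2}{4\alpha\Re\mu}$, so $\cos\beta$ enters exactly once (in the width), not twice. Your displayed ``computation'' does not derive this --- it contains a literal ``(check)'' and unexplained compensating factors of $1/\cos\beta$ inserted to force the known answer --- and the statement that the rate of $\zeta\mapsto\zeta+bt$ on $\strip_\rho$ depends on $|\Re(b)|$ is also off (for a vertical strip the invariant translation has $\Re b=0$ and the rate depends on $|\Im b|$). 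So as written the central constant is not actually established. The fix is exactly what you yourself suggest at the end: write one explicit map. Either follow the paper and set $f(w)=w^{1-i\Im\mu/\Re\mu}$, which sends $D$ to the straight sector $\{\rho e^{i\theta}:|\theta-\theta_0|<\alpha\}$ and conjugates $\psi_t$ to $z\mapsto e^{-t|\mu|^2/\Re\mu}z$ (since $\mu(1-i\Im\mu/\Re\mu)=|\mu|^2/\Re\mu$), then apply $z\mapsto z^{-\pi/(2\alpha)}$ to land on $\Ha$ with the hyperbolic group $z\mapsto e^{t\pi|\mu|^2/(2\alpha\Re\mu)}z$; or, in your logarithmic picture, compose $\log$ with multiplication by $i\pi/(2\alpha\mu)$ (not by $i\pi e^{-i\theta_0}/(2\alpha)$, which does not straighten the strip when $\Im\mu\neq0$) and read off the translation speed directly.
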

\begin{proof}
The map $z\mapsto e^{-i\theta_0}z$ is a biholomorphism between $D$ and $\mathrm{Spir}[\mu, 2\alpha,0]$ and conjugate $(\psi_t|_{D})$ to $(\psi_t|_{\mathrm{Spir}[\mu, 2\alpha,0]})$. Since a biholomorphism is an isometry with respect to the hyperbolic distance, the divergence rate of $(\psi_t|_{D})$ and $(\psi_t|_{\mathrm{Spir}[\mu, 2\alpha,0]})$ is the same. Hence, we can assume $\theta_0=0$.

It is clear that $\psi_t(D)=D$ for all $t\geq 0$, hence, $(\psi_t|_D)$ is a continuous group of automorphisms of $D$.
Since $D$ is simply connected and $0\not\in D$, it is well defined a holomorphic  branch of $f: D\ni w\mapsto w^{1-i\frac{\Im \mu}{\Re \mu}}\in \C$. A straightforward computation shows that 
$D':=f(D)=\{\rho e^{i\theta}: \rho>0, \theta\in (-\alpha,\alpha)\}$.

Moreover, $(f\circ \psi_t\circ f^{-1})$ is a continuous group of automorphisms of $D'$. A direct computation shows 
\[
\tilde{\psi}_t(z):=(f\circ \psi_t\circ f^{-1})(z)=e^{-\mu t\left(1-i\frac{\Im \mu}{\Re \mu}\right)}z=e^{-t\frac{|\mu|^2}{\Re \mu}}z, 
\]
 for all $t\geq 0$. Now, consider the function $g:D'\ni z\mapsto z^{-\frac{\pi}{2\alpha}}\in \C$. Then $g(D')=\Ha$ and $(g\circ \tilde{\psi}_t\circ g^{-1})$ is a continuous group of automorphisms of $\Ha$. A direct computation shows that $\eta_t(z):=(g\circ \tilde{\psi}_t\circ g^{-1})(z)=e^{t\frac{\pi |\mu|^2}{2\alpha\Re \mu}}z$ for all $t\geq 0$. That is, $(\eta_t)$ is a group in $\Ha$ which is conjugated to a hyperbolic group $(\tilde\eta_t)$ in $\D$ with spectral value $\frac{\pi |\mu|^2}{2\alpha\Re \mu}$ and then  
 \[
 c_\D(\tilde\eta_t)=\frac{\pi |\mu|^2}{4\alpha\Re \mu}.
 \]
Since $(D, g\circ f, (\psi_t|_D))$ is a holomorphic model for $(\tilde\eta_t)$, it follows $c(\tilde\eta_t)=c_{D}(\psi_t|_{D})$ by  Theorem~\ref{Thm:speed-dilation}. 
\end{proof}

We are now ready to relate petals of a semigroup with the shape of the image of the corresponding  K\"onigs function. As a matter of notation, if $D\subset \C$ is a $\mu$-starlike domain with respect to $0$ for some $\mu\in \C$, $\Re \mu>0$, we say that a $\mu$-spirallike sector $\mathrm{Spir}[\mu, 2\alpha, \theta_0]\subset D$ (for some $\alpha\in [0,\pi)$ and $\theta_0\in [-\pi,\pi)$) is {\sl maximal}\index{Maximal spirallike sector} if there exist no $\theta_1\in [-\pi,\pi), \beta\in (0,2\pi]$ such that 
$\mathrm{Spir}[\mu, 2\alpha, \theta_0]\subset \mathrm{Spir}[\mu, \beta, \theta_1]\subset D$ and  $\mathrm{Spir}[\mu, 2\alpha, \theta_0]\neq\mathrm{Spir}[\mu, \beta, \theta_1]$.

Similarly, if $D\subset \C$ is starlike at infinity, $z_0\in \C$, $\rho>0$, the strip $(\strip_{\rho}+z_0)\subset D$ is {\sl maximal}\index{Maximal strip} if there exist no $r>0$ and $z_1\in \C$ such that $(\strip_{\rho}+z_0)\subset (\strip_{r}+z_1)\subset D$ and $(\strip_{\rho}+z_0)\neq (\strip_{r}+z_1)$.

In the discrete (elliptic, starlike type) case, Theorem \ref{Thm:petals-koenigs} was proven in \cite{Pog98}. For semigroups, 
Theorem \ref{Thm:petals-koenigs} and Theorem \ref{Thm:koenigs-petals} were first proved with a direct, lengthy and more complicated argument in \cite{ConDia05a}. 

\begin{theorem}\label{Thm:petals-koenigs}
Let $(\phi_t)$ be a semigroup, not a group, in $\D$. Let $\tau\in \overline{\D}$ be the Denjoy-Wolff point of $(\phi_t)$ and $\mu$ its spectral value. Let $h$ be the K\"onigs function of $(\phi_t)$. Suppose $\Delta$ is a hyperbolic petal for $(\phi_t)$,  let $\sigma\in \partial\Delta$ be  the unique repelling fixed point for $(\phi_t)$ on $\partial \D$, and let $\lambda\in (-\infty, 0)$ be the repelling spectral value  of $(\phi_t)$ at $\sigma$. 
\begin{enumerate}
\item  If $\tau\in \D$, then there exists $\theta_0\in [-\pi,\pi)$ such that $h(\Delta)$ is a maximal spirallike sector of $h(\D)$ of center $e^{i\theta_0}$ and amplitude $-\frac{|\mu|^2\pi}{\lambda\Re\mu}$, {\sl i.e.},
\[
h(\Delta)=\mathrm{Spir}[\mu, -\frac{|\mu|^2\pi}{\lambda\Re\mu}, \theta_0].
\]
\item If $\tau \in \partial \D$, then there exists $z_0\in \C$ such that $h(\Delta)$ is a maximal strip $z_0+\strip_{-\frac{\pi}{\lambda}}$, {\sl i.e.},
\[
h(\Delta)=z_0+\strip_{-\frac{\pi}{\lambda}}.
\]
\end{enumerate} 
\end{theorem}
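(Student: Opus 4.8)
The plan is to combine the structural results of Section \ref{Sec:petals} with the rigidity of the divergence rate established in Lemma \ref{Lem:step-petal} and Lemma \ref{lem:spiral-sector-rate}. First I would record the geometric picture of the canonical model: by Theorem \ref{modelholo}, if $\tau\in\D$ then $(\Omega,h,\psi_t)=(\C,h,z\mapsto e^{-\mu t}z)$, and $h(\D)$ is $\mu$-spirallike with respect to $0$; if $\tau\in\partial\D$ then $(\Omega,h,\psi_t)$ is one of $(\C,h,z\mapsto z+it)$, $(\Ha,h,z\mapsto z+it)$, $(\Ha^-,h,z\mapsto z+it)$ or $(\strip_{\pi/\mu},h,z\mapsto z+it)$, and $h(\D)$ is starlike at infinity in the direction $i$. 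Set $A:=h(\Delta)$. By Lemma \ref{Lem:step-petal}, $A$ is a completely $\psi_t$-invariant domain on which $(\psi_t|_A)$ is a continuous group of automorphisms, and $c_A(\psi_t|_A)=-\lambda/2$.

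The next step is to identify $A$ as the right kind of domain. In the case $\tau\in\D$: since $\psi_t(z)=e^{-\mu t}z$ is a spiral flow, a connected open set $A$ with $\psi_t(A)=A$ for all $t$ must be invariant under the one-parameter spiral group, hence $A$ is a union of $\mu$-spirals $\{e^{s\mu}w:s\in\R\}$; since moreover $(\psi_t|_A)$ is a \emph{group} of automorphisms of $A$ (not just a semigroup) and $A$ is simply connected (as $\Delta$ is, by Proposition \ref{Prop:simple-prop-petals}, and $h$ is univalent), $A$ cannot be all of $\C\setminus\{0\}$ nor a punctured disc-type set, so it must be exactly a $\mu$-spirallike sector $\mathrm{Spir}[\mu,2\alpha,\theta_0]$ for some amplitude $2\alpha\in(0,2\pi]$ and some $\theta_0$. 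Then Lemma \ref{lem:spiral-sector-rate} gives $c_A(\psi_t|_A)=\frac{|\mu|^2\pi}{4\alpha\,\Re\mu}$, and equating with $-\lambda/2$ solves $2\alpha=-\frac{|\mu|^2\pi}{\lambda\,\Re\mu}$, which is statement (1). In the case $\tau\in\partial\D$: $\psi_t$ is vertical translation, so a connected open $A$ with $\psi_t(A)=A$ for all real $t$ is invariant under all vertical translations, hence $A=\{z:\Re z\in E\}$ for some open $E\subseteq\R$; connectedness forces $E$ to be an interval, and since $(\psi_t|_A)$ is a group of automorphisms of a simply connected proper subdomain of $\C$ (the petal is not all of $\D$ because $(\phi_t)$ is not a group, by Proposition \ref{Prop:simple-prop-petals}), $E$ is a bounded open interval, so $A$ is a strip $z_0+\strip_\rho$. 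The divergence rate of vertical translation on $\strip_\rho$ is $\pi/(2\rho)$ (conjugating the strip to $\Ha$ by $z\mapsto e^{\pi z/\rho}$, which sends $z\mapsto z+it$ to a hyperbolic group of spectral value $\pi/\rho$, cf. the computation in Lemma \ref{lem:spiral-sector-rate}), so $\pi/(2\rho)=-\lambda/2$ gives $\rho=-\pi/\lambda$, which is statement (2).

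Finally I would verify maximality. Suppose $A\subsetneq \widetilde A\subseteq h(\D)$ with $\widetilde A$ a spirallike sector (resp. a strip) of strictly larger amplitude (resp. width). Pulling back by $h$, $\widetilde\Delta:=h^{-1}(\widetilde A)$ would be a $\psi_t$-invariant simply connected domain contained in $\mathcal W$ (since $\psi_t^{-1}(h(\D))\supseteq A$ forces, via \eqref{absorbing} and invariance, that $h^{-1}(\widetilde A)\subseteq\cap_{t\ge0}\phi_t(\D)$) and strictly larger than $\Delta$; but then $\widetilde\Delta$ lies in the interior of $\mathcal W$ and in a single petal, contradicting that $\Delta$ is a connected component of $\stackrel{\circ}{\mathcal W}$. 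Hence $A=h(\Delta)$ is maximal.

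The main obstacle I anticipate is the rigidity step in the second paragraph: ruling out that $A$ could be a "wrap-around" sector of amplitude exactly $2\pi$ (i.e. $\C\setminus\{$a spiral ray$\}$) or, in the translation case, a half-plane or the whole plane. This is where one genuinely uses that $(\psi_t|_A)$ is a group of \emph{automorphisms} of $A$ together with $c_A(\psi_t|_A)=-\lambda/2\in(0,+\infty)$: a full punctured plane, a half-plane translation-invariant domain, or $\C$ has divergence rate $0$ for the relevant flow (the flow is not "escaping" in the hyperbolic metric), contradicting $-\lambda/2>0$; this finiteness-and-positivity of the rate is exactly what pins $A$ down to a proper sector or strip. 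I would phrase this cleanly by noting that $c_\Omega(\psi_t|_\Omega)>0$ forces $\Omega$ to carry a nondegenerate hyperbolic metric and the orbit to leave every compact set at linear hyperbolic speed, which excludes the degenerate cases.
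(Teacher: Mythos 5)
Your proposal is correct and follows essentially the same route as the paper: invariance of $h(\Delta)$ under the full group $(\psi_t)_{t\in\R}$ identifies it as a spirallike sector (resp.\ strip), Lemma \ref{Lem:step-petal} together with Lemma \ref{lem:spiral-sector-rate} pins down the amplitude (resp.\ width) by equating divergence rates, and maximality follows by pulling back a strictly larger invariant sector and contradicting that $\Delta$ is a connected component of the interior of the backward invariant set. The only small point worth noting is that a sector of amplitude $2\pi$ (i.e.\ $\C$ minus a spiral ray) need not be excluded --- it is simply connected, is admitted by the paper's definition ($\alpha\in(0,\pi]$), and actually occurs (see Example \ref{Ex:petal type1}); the genuinely degenerate cases ($\C\setminus\{0\}$, half-planes, $\C$) are ruled out exactly as you say, by simple connectivity and by positivity of the divergence rate.
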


\begin{proof}
We first consider the elliptic case. The canonical model is $(\C, h, e^{-\mu t}z)$. Since $\phi_t(\Delta)=\Delta$ for all $t\geq 0$, it follows that $e^{-\mu t}h(\Delta)=h(\Delta)$  and $z\mapsto e^{-\mu t}z$ is an automorphism of $h(\Delta)$ for all $t\geq 0$. In particular, $e^{-\mu t}h(\Delta)=h(\Delta)$ for all $t\in \R$. It follows easily that $h(\Delta)=\mathrm{Spir}[\mu, 2\alpha, \theta_0]$ for some $\theta_0\in [-\pi,\pi)$ and $\alpha\in (0,\pi]$. By Lemma \ref{Lem:step-petal} and Lemma \ref{lem:spiral-sector-rate} it follows at once that $2\alpha=-\frac{|\mu|^2\pi}{\lambda\Re\mu}$. 

It is left to prove that $\mathrm{Spir}[\mu, -\frac{|\mu|^2\pi}{\lambda\Re\mu}, \theta_0]$ is maximal. Suppose this is not the case. Therefore, there exist $\theta_1\in [-\pi,\pi), \beta\in (0,2\pi]$ such that $D:=\mathrm{Spir}[\mu, \beta, \theta_1]\subset h(\D)$ and $\mathrm{Spir}[\mu, -\frac{|\mu|^2\pi}{\lambda\Re\mu}, \theta_0]$ is properly contained in $D$. Therefore, $(e^{-\mu t}z|_D)$ is a continuous group of automorphisms of $D$. Since $D\subset h(\D)$, it follows $\phi_t(h^{-1}(z))=h^{-1}(e^{-\mu t}z)$ for every $z\in h(\D)$, hence $\phi_t(h^{-1}(D))=h^{-1}(D)$ for all $t\geq 0$. This implies that $h^{-1}(D)$ is an open connected component in the backward invariant set of $(\phi_t)$ which properly contains $\Delta$, a contradiction.

The proof in case $(\phi_t)$ is non-elliptic is similar and we  leave the details to the reader.
\end{proof}

The converse of the previous theorem is also true:

\begin{theorem}\label{Thm:koenigs-petals}
Let $(\phi_t)$ be a semigroup, not a group, in $\D$. Let $\tau\in \overline{\D}$ be the Denjoy-Wolff point of $(\phi_t)$ and $\mu$ its spectral value. Let $h$ be the K\"onigs function of $(\phi_t)$. 
\begin{enumerate}
\item  If $\tau\in \D$ and there exist $\theta_0\in [-\pi,\pi)$ and $\beta\in (0,2\pi]$ such that $\mathrm{Spir}[\mu, \beta, \theta_0]\subset h(\D)$ is a maximal spirallike sector of $h(\D)$, then $h^{-1}(\mathrm{Spir}[\mu, \beta, \theta_0])$ is a hyperbolic petal for $(\phi_t)$. Moreover, if  $\sigma\in\partial \Delta$ is the unique repelling fixed point of $(\phi_t)$ contained in $\partial \Delta$, then the repelling spectral value of $(\phi_t)$ at $\sigma$ is $\lambda=-\frac{|\mu|^2\pi}{\beta \Re \mu}$.

\item If $\tau \in \partial \D$ and there exist $z_0\in \C$ and $\rho>0$ such that $z_0+\strip_{\rho}\subset h(\D)$ is a maximal strip of $h(\D)$, then $h^{-1}(z_0+\strip_{\rho})$ is a hyperbolic petal for $(\phi_t)$. Moreover, if  $\sigma\in\partial \Delta$ is the unique repelling fixed point of $(\phi_t)$ contained in $\partial \Delta$, then the repelling spectral value of $(\phi_t)$ at $\sigma$ is $\lambda=-\frac{\pi}{\rho}$.
\end{enumerate} 
\end{theorem}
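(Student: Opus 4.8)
\emph{Setup.} The plan is to show that $\Delta':=h^{-1}(D)$ is a hyperbolic petal, where $D$ denotes the maximal spirallike sector (case (1)) or maximal strip (case (2)) from the hypothesis, and then to read off the repelling spectral value from Theorem~\ref{Thm:petals-koenigs}. First I would record the basic fact that $\psi_t(D)=D$ for \emph{all} $t\in\R$, not merely $t\ge 0$: by definition a $\mu$-spirallike sector is a union of full trajectories of $w\mapsto e^{-\mu t}w$, and a vertical strip $z_0+\strip_\rho$ is invariant under every imaginary translation $w\mapsto w+it$. Hence $(\psi_t|_D)$ is a continuous one-parameter group of automorphisms of $D$; since $h\circ\phi_t=\psi_t\circ h$ on $\D$ and $D\subset h(\D)$ is nonempty, the set $\Delta':=h^{-1}(D)$ is open, connected and nonempty, and one checks $\phi_t(\Delta')=\Delta'$ for every $t\ge 0$. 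In particular $\Delta'\subset\cap_{t\ge 0}\phi_t(\D)=\mathcal W$, so $\Delta'$ is contained in a unique petal $\Delta$. By Proposition~\ref{Prop:simple-prop-petals}, $(\phi_t|_\Delta)$ is a continuous one-parameter group of automorphisms of the simply connected domain $\Delta$; through a Riemann map of $\Delta$, which extends continuously to $\overline{\D}$ because $\partial\Delta$ is locally connected (Proposition~\ref{Prop:shape-of-petals}), it is conjugate to a one-parameter subgroup of the automorphism group of $\D$ — elliptic, parabolic, or hyperbolic.

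\emph{The key step: $\Delta$ is a hyperbolic petal.} It cannot be elliptic, since an elliptic group has an interior fixed point, which would be a fixed point of $(\phi_t)$ inside $\Delta$, contradicting Remark~\ref{Rem:petalo-parab-parab}. To rule out the parabolic case I would argue as in the proof of Proposition~\ref{Prop:petal-no-other-fixed}: a parabolic $(\phi_t|_\Delta)$ makes $\Delta$ a parabolic petal, hence (Remark~\ref{Rem:petalo-parab-parab}) forces $(\phi_t)$ to be a parabolic semigroup with $\tau\in\partial\D$; in case (1) this already contradicts $\tau\in\D$. In case (2) I would look at $A:=h(\Delta)$: from $\phi_t(\Delta)=\Delta$ one gets $\psi_t(A)=A$ for all $t\in\R$, so $A$ is invariant under every imaginary translation; being open, connected and different from $\C$ (as $A\subset h(\D)\subsetneq\C$), $A$ is either a finite-width vertical strip or a half-plane $\{\Re w>c\}$ or $\{\Re w<c\}$. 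On a finite-width vertical strip the translation group $(\psi_t|_A)$ is conjugate to a hyperbolic group of $\D$ (cf.\ Theorem~\ref{modelholo}(4) and Lemma~\ref{lem:spiral-sector-rate}), which would make $(\phi_t|_\Delta)$ hyperbolic — a contradiction; so $A$ is a half-plane. But $\Delta'=\Delta$ would give $A=D$, a finite-width strip, so we must have $D\subsetneq A\subset h(\D)$ with $A$ a half-plane, and then a strictly larger finite-width vertical strip can be interposed between $D$ and $h(\D)$ (extend $D$ away from the edge of the half-plane $A$), contradicting the maximality of $D$. Therefore $(\phi_t|_\Delta)$ is a hyperbolic group and, by the same boundary argument of Proposition~\ref{Prop:petal-no-other-fixed} together with Proposition~\ref{Prop:simple-prop-petals}(4), $\overline{\Delta}$ contains, besides $\tau$, exactly one boundary fixed point $\sigma\in\partial\D$ of $(\phi_t)$, which is necessarily repelling; that is, $\Delta$ is a hyperbolic petal with associated repelling fixed point $\sigma$.

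\emph{Conclusion.} Let $\lambda\in(-\infty,0)$ be the repelling spectral value of $(\phi_t)$ at $\sigma$. By Theorem~\ref{Thm:petals-koenigs}, $h(\Delta)$ is a maximal $\mu$-spirallike sector of $h(\D)$ of amplitude $-\tfrac{|\mu|^2\pi}{\lambda\Re\mu}$ in case (1), resp.\ a maximal strip of $h(\D)$ of width $-\tfrac{\pi}{\lambda}$ in case (2). Since $D\subset h(\Delta)\subset h(\D)$, $D$ is itself a $\mu$-spirallike sector (resp.\ a strip), and $D$ is maximal in $h(\D)$ by hypothesis, it follows that $h(\Delta)=D$; hence $\Delta=h^{-1}(D)=\Delta'$, which proves the first assertion, and comparing amplitudes (resp.\ widths) gives $\lambda=-\tfrac{|\mu|^2\pi}{\beta\Re\mu}$ (resp.\ $\lambda=-\tfrac{\pi}{\rho}$). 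The value of $\lambda$ could equally be obtained from Lemma~\ref{Lem:step-petal} and Lemma~\ref{lem:spiral-sector-rate}, since $-\lambda/2=c_\Delta(\phi_t|_\Delta)=c_D(\psi_t|_D)$.

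\emph{Main obstacle.} The only genuine work lies in excluding the parabolic case in the second step: this is precisely where the maximality hypothesis on $D$ is used, and it must be combined with the (elementary but case-sensitive) description of the $\psi_t$-invariant subdomains of $h(\D)$ — finite-width vertical strips versus half-planes in the non-elliptic case. Once the hyperbolicity of $\Delta$ is in place, Theorem~\ref{Thm:petals-koenigs} and the maximality of $D$ yield both $\Delta=\Delta'$ and the formula for $\lambda$ essentially for free.
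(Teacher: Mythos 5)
Your proof is correct and follows essentially the same route as the paper's: the $\psi_t$-invariance of $D$ places $\Delta':=h^{-1}(D)$ inside a petal $\Delta$, and the maximality of $D$ combined with Theorem \ref{Thm:petals-koenigs} forces $h(\Delta)=D$ and yields the spectral value. The extra care you take in excluding a parabolic petal in case (2) (via the classification of the translation-invariant connected subdomains of $h(\D)$ and the interposition of a wider strip inside a half-plane) fills in exactly the step the paper handles implicitly in case (1) and omits in case (2).
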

\begin{proof}
(1) The canonical model is $(\C, h, e^{-\mu t}z)$. Let $D:=\mathrm{Spir}[\mu, \beta, \theta_0]$. Since $\phi_t(h^{-1}(z)=h^{-1}(e^{-\mu t}z)$ for all $z\in h(\D)$ and $t\geq 0$, it follows at once that $\phi_t(D)=D$ for all $t\geq 0$. Hence, there exists a petal $\Delta$ such that $h^{-1}(D)\subseteq \Delta$. However, if $\Delta\neq h^{-1}(D)$, then by Theorem \ref{Thm:petals-koenigs},  $h(\Delta)$ is a spirallike sector properly containing $D$, against the maximality of $D$. Therefore, $\Delta=h^{-1}(D)$ and, the result follows from Theorem \ref{Thm:petals-koenigs}. 

(2) the argument is similar and we omit it.  
\end{proof}

Finally, we turn our attention to parabolic petals. Recall that only parabolic semigroups can have parabolic petals (Remark \ref{Rem:petalo-parab-parab}). 

As a matter of notation, if $W\subset \C$ is a domain starlike at infinity and $a\in \R$, we say that a half-plane $\{w\in \C: \Re w>a\}\subset W$ ({\sl respectively}  $\{w\in \C: \Re w<a\}\subset W$) is {\sl maximal}\index{Maximal half-plane} if $\{w\in \C: \Re w>b\}\not\subset W$ for every $b<a$  ({\sl respect.} $\{w\in \C: \Re w<b\}\not\subset W$ for every $b>a$).

\begin{theorem}\label{Thm:parabolic-petal-geometric-koenigs}
Let $(\phi_t)$ be a parabolic semigroup, not a group, in $\D$, with Denjoy-Wolff point $\tau\in \partial\D$. Let $h$ be the K\"onigs function of $(\phi_t)$. If $\Delta$ is a parabolic petal for $(\phi_t)$ then $h(\Delta)$ is a maximal half-plane in $h(\D)$. Conversely, if $H\subset h(\D)$ is a maximal half-plane in $h(\D)$ then  $h^{-1}(H)$ is a parabolic petal for $(\phi_t)$.

Moreover, $(\phi_t)$ can have at most two parabolic petals and, if this is the case, $(\phi_t)$ has zero hyperbolic step. 
\end{theorem}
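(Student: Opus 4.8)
The plan is to exploit the by-now familiar principle: on a petal the semigroup acts as a group, so the restriction of the K\"onigs function is a holomorphic model for that group, and the divergence rate must match. First I would recall that, by Remark \ref{Rem:petalo-parab-parab}, a parabolic petal forces $(\phi_t)$ to be parabolic; since $(\phi_t)$ is not a group, its canonical model is one of $(\Ha,h,z\mapsto z+it)$, $(\Ha^-,h,z\mapsto z+it)$ or $(\C,h,z\mapsto z+it)$, and in all three cases $h(\D)$ is starlike at infinity. For a parabolic petal $\Delta$ we have $\phi_t(\Delta)=\Delta$ for all $t\ge 0$ by Proposition \ref{Prop:simple-prop-petals}, hence $h(\Delta)+it=h(\Delta)$ for all $t\in\R$, so $A:=h(\Delta)$ is a translation-invariant (in the imaginary direction) simply connected domain, i.e.\ $A=\{w:\alpha<\Re w<\beta\}$ for some $-\infty\le\alpha<\beta\le+\infty$, or $A=\C$; the last case is excluded since $\Delta\ne\D$. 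If $A$ were a genuine strip $\strip_\rho+z_0$ with $\rho<\infty$, then $(\phi_t|_\Delta)$ would be a holomorphic model for a \emph{hyperbolic} group, contradicting Proposition \ref{Prop:simple-prop-petals}(4) which says $\sigma=\tau$ and hence, by Proposition \ref{Prop:petal-no-other-fixed}, the petal is parabolic and carries no repelling fixed point; equivalently one sees directly that the canonical model of the hyperbolic group on $\D$ conjugate to $(\phi_t|_\Delta)$ would be a strip, forcing a repelling fixed point on $\partial\Delta$. Therefore $A$ must be a half-plane $\{\Re w>a\}$ or $\{\Re w<a\}$. Maximality of $A$ inside $h(\D)$ follows exactly as in the proof of Theorem \ref{Thm:petals-koenigs}: if $A$ were properly contained in a larger half-plane $H\subset h(\D)$, then $H$ would be invariant under $w\mapsto w+it$, so $h^{-1}(H)$ would be an open, connected, $(\phi_t)$-invariant subset of the backward invariant set properly containing $\Delta$, contradicting that $\Delta$ is a connected component of $\stackrel{\circ}{\mathcal W}$.

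For the converse, given a maximal half-plane $H\subset h(\D)$, invariance of $H$ under $w\mapsto w+it$ gives $\phi_t(h^{-1}(H))=h^{-1}(H)$, so $h^{-1}(H)\subset\stackrel{\circ}{\mathcal W}$ lies in some petal $\Delta$. If $\Delta\supsetneq h^{-1}(H)$, then by the already-proven direct implication (a parabolic petal maps to a maximal half-plane) or by Theorem \ref{Thm:petals-koenigs} (if $\Delta$ were hyperbolic, $h(\Delta)$ would be a strip, which cannot contain a half-plane), $h(\Delta)$ is a half-plane properly containing $H$, contradicting maximality of $H$. Hence $\Delta=h^{-1}(H)$; it is parabolic because its image is a half-plane rather than a strip (a hyperbolic petal would give a strip, by Theorem \ref{Thm:petals-koenigs}).

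Finally, the count: any parabolic petal has image a maximal half-plane in $h(\D)$, and such a half-plane is either of the form $\{\Re w>a\}$ or $\{\Re w<a\}$, with $a$ uniquely determined by maximality in each of the two directions. So there are at most two parabolic petals, one ``to the right'' and one ``to the left''. If two parabolic petals exist, then $h(\D)$ contains both a right half-plane and a left half-plane, hence $h(\D)=\C$ (it is starlike at infinity and contains two opposite half-planes whose union is all of $\C$, or at least $h(\D)$ cannot be contained in $\Ha$ or $\Ha^-$); by Theorem \ref{modelholo}(5)--(6) this means the canonical model is $(\C,h,z\mapsto z+it)$, i.e.\ $(\phi_t)$ is parabolic of zero hyperbolic step.

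The main obstacle I expect is the step ruling out that $A=h(\Delta)$ is a bona fide strip of finite width: one must make precise that a group model on a strip is a hyperbolic group and that this is incompatible with $\Delta$ being parabolic. This is handled cleanly by combining Proposition \ref{Prop:simple-prop-petals}(4) (which already pins down $\sigma=\tau$ for a parabolic petal, so $(\phi_t|_\Delta)$ conjugated to $\D$ is a \emph{parabolic} group by Proposition \ref{Prop:group-backward}) with Theorem \ref{modelholo}: a parabolic group on $\D$ has canonical model with $\Omega$ equal to $\Ha$, $\Ha^-$ or $\C$ (never a strip $\strip_\rho$, $\rho<\infty$), and the only translation-invariant such domains that are proper subsets of $\C$ are the two half-planes. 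A second, minor point to get right is the passage from ``$h(\D)$ contains two opposite half-planes'' to ``$h(\D)=\C$ and hence zero hyperbolic step'', which follows from the classification in Theorem \ref{modelholo} since $\Ha$ and $\Ha^-$ each fail to contain a half-plane pointing the wrong way.
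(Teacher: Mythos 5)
Your proposal is correct and follows essentially the same route as the paper: translation-invariance of $h(\Delta)$ forces a strip or half-plane, maximality is obtained exactly as in Theorem \ref{Thm:petals-koenigs}, the converse and the count of maximal half-planes are identical, and the zero-hyperbolic-step conclusion comes from $h(\D)$ not fitting in $\Ha$ or $\Ha^-$. The only (harmless) variation is that you exclude the finite-width strip by arguing that the group induced on a parabolic petal is parabolic, whereas the paper simply cites Theorem \ref{Thm:koenigs-petals} to conclude that a maximal strip would make the petal hyperbolic; also note your parenthetical that two opposite half-planes have union $\C$ is not needed (and not quite right when $a\le b$) — the correct point, which you also state, is only that $h(\D)$ cannot sit inside $\Ha$ or $\Ha^-$.
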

\begin{proof}
Let $(\Omega, h, z+it)$ be the canonical model of $(\phi_t)$, where $\Omega=\C, \Ha$ or $\Ha^-$. Let $\Delta$ be a parabolic petal for $(\phi_t)$. Since $\phi_t(\Delta)=\Delta$ for all $t\geq 0$, and $h(\Delta)=h(\phi_t(\Delta))=h(\Delta)+it$ for all $t\geq 0$, it follows that $h(\Delta)+it=h(\Delta)$ for all $t\geq 0$, and, hence,  $h(\Delta)+it=h(\Delta)$ for all $t\in \R$. Therefore, either $h(\Delta)$ is a strip $\strip_\rho+z_0$ for some $\rho>0$ and $z_0\in h(\D)$ or $h(\Delta)$ is a half-plane. Arguing as in the proof of Theorem \ref{Thm:petals-koenigs} it is easy to see that $h(\Delta)$ is maximal in $h(\D)$. Therefore, if $h(\Delta)$ is a maximal strip, the petal $\Delta$ is hyperbolic by Theorem \ref{Thm:koenigs-petals}, contradicting our hypothesis. Hence, $h(\Delta)$ is a maximal half-plane in $h(\D)$. 

Conversely,   if $H\subset h(\D)$ is a maximal half-plane in $h(\D)$, then arguing as in the proof of Theorem \ref{Thm:koenigs-petals}, it follows that $h^{-1}(H)$ is a petal. Moreover, by Theorem \ref{Thm:petals-koenigs}, $h^{-1}(H)$ cannot be hyperbolic, hence, it is parabolic.

It is clear that a domain starlike at infinity (different from $\C$) can contain at most two maximal half-planes, one given by $\{w\in \C: \Re w<a\}$ and the other given by $\{w\in \C: \Re w>b\}$, for some $-\infty<a\leq b<+\infty$. Hence, $(\phi_t)$ can have at most two parabolic petals. 

Finally, assume that $(\phi_t)$ has two parabolic petals $\Delta_1, \Delta_2$. Hence, there exist $-\infty<a\leq b<+\infty$ such that $h(\Delta_1)=\{w\in \C: \Re w<a\}$ and $h(\Delta_2)=\{w\in \C: \Re w>b\}$. This implies that $h(\D)$ is not contained in $\Ha$ or $\Ha^-$, and, by Theorem \ref{modelholo}, it follows that $\Omega=\C$ and $(\phi_t)$ has zero hyperbolic step. 
\end{proof}

\section{Analytic properties of K\"onigs functions at boundary fixed points }

\label{Sec:Boundary Fixed Points and Koenigs Functions}

We start by recalling the following straightforward consequence of \cite[Prop. 3.4, Prop. 3.7]{Gum14}:

\begin{proposition}\label{Prop:fixed-infinity}
Let $(\phi_t)$ be a semigroup, not a group, in $\D$ with Denjoy-Wolff point $\tau\in\overline{\D}$ and  K\"onigs function $h$. Then $\sigma\in \partial \D$ is a fixed point of $(\phi_t)$ if and only if   $\angle\lim_{z\to \sigma}h(z)=\infty\in \C_\infty$. Moreover, if $\sigma\neq \tau$, then (the unrestricted limit) $\lim_{z\to \sigma}h(z)=\infty$.
\end{proposition}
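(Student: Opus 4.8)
The plan is to obtain this statement by applying \cite[Prop.~3.4 and Prop.~3.7]{Gum14}. Write $(\Omega,h,\psi_t)$ for the canonical model of $(\phi_t)$ provided by Theorem~\ref{modelholo}, so that $h\circ\phi_t=\psi_t\circ h$ for all $t\ge 0$, where $\psi_t$ is the translation $w\mapsto w+it$ when $\tau\in\partial\D$ and the linear map $w\mapsto e^{-\mu t}w$ when $\tau\in\D$; in either case $\psi_t$ extends to a M\"obius automorphism of $\C_\infty$ fixing $\infty$. In particular $h$ also intertwines the single self-map $\phi_1$ with the M\"obius map $\psi_1$, so it plays for $\phi_1$ the role of a K\"onigs linearizer, and, as recalled above (see \cite[Theorem~1]{CoDiPo04}, \cite[Theorem~2]{CoDiPo06}, \cite[p.~255]{Sis85}), a point $\sigma\in\partial\D$ is a boundary fixed point of $(\phi_t)$ exactly when it is a boundary fixed point of $\phi_1$. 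Granting this, the equivalence ``$\sigma$ is a boundary fixed point $\iff$ $\angle\lim_{z\to\sigma}h(z)=\infty$'' is \cite[Prop.~3.4]{Gum14} applied to $\phi_1$, and the ``moreover'' clause is \cite[Prop.~3.7]{Gum14}.

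One direction is elementary. If $\angle\lim_{z\to\sigma}h(z)=\infty$, then $h(\phi_t(z))=\psi_t(h(z))\to\infty$ non-tangentially for every $t\ge 0$, and arguing as in the proof of Lemma~\ref{Lem:converge-no-back}---transporting back through $h$ and invoking \cite[Lemma~2, p.~162]{Shabook83} together with the Lehto-Virtanen theorem---one concludes that $\sigma$ is a boundary fixed point. As for the converse: when $\sigma\ne\tau$ is a \emph{repelling} fixed point, the conclusion $\angle\lim_{z\to\sigma}h(z)=\infty$ is in fact already a consequence of the results of Section~\ref{Sec:petals-Koenigs}. Indeed, Proposition~\ref{Prop:petals-premodel} places a whole Stolz region $S(\sigma,M)\cap\{|\zeta-\sigma|<\epsilon\}$ inside the associated hyperbolic petal $\Delta$, while Theorem~\ref{Thm:petals-koenigs} identifies $h(\Delta)$ with a maximal spirallike sector (if $\tau\in\D$) or a maximal strip (if $\tau\in\partial\D$), along which one escapes to $\infty$ in the direction singled out by $\sigma$. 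Thus the genuinely new input we are borrowing from \cite{Gum14} is the remaining part of the equivalence---the case $\sigma=\tau$ and the super-repelling case---together with the unrestricted refinement.

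The real obstacle, and the place where \cite[Prop.~3.7]{Gum14} is essential, is precisely this unrestricted refinement when $\sigma\ne\tau$: one must rule out any finite cluster value of $h$ along a sequence reaching $\sigma$ tangentially, equivalently show that the prime end of $h(\D)$ carried by $\sigma$ has degenerate impression $\{\infty\}$. The petal picture above only controls the non-tangential approach and, moreover, is blind to super-repelling fixed points (which have no petal attached), so it does not suffice here. This is also exactly the step in which the hypothesis $\sigma\ne\tau$ cannot be dropped, since at the Denjoy-Wolff point the unrestricted limit of $h$ may genuinely be finite---for instance for a parabolic semigroup of zero hyperbolic step, where $h(\D)=\C$---so that no statement of this strength can hold at $\tau$. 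I therefore expect the tangential control of the cluster set of $h$ at $\sigma$ to be the crux of any self-contained proof.
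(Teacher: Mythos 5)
The paper offers no proof of this proposition at all: it is introduced verbatim as ``a straightforward consequence of [Gum14, Prop.~3.4, Prop.~3.7]'', which is exactly the reduction you make. Your proposal therefore takes essentially the same approach as the paper (the extra remarks about recovering the repelling case from the petal theory and about why the unrestricted refinement must fail at $\tau$ are sensible but not part of the paper's argument).
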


The aim of this section is to  characterize repelling and super-repelling fixed points via K\"onigs functions. In order to properly deal with the elliptic case we need to introduce some terminology. 

Let $\lambda\in \C$, $\Re \lambda>0$. Every point  $w\in \C\setminus\{0\}$ can be written in a unique way in $\lambda$-spirallike coordinates as $w=e^{-\lambda t+i\theta}$, where $t\in \R$ and $\theta\in [-\pi,\pi)$. We define
\[
\Arg_\lambda (w):=\theta,
\]
and we call it the {\sl $\lambda$-spirallike argument of $w$}.

\begin{theorem}\label{Thm:konigs-rep-elliptic}
Let $(\phi_t)$ be an elliptic semigroup in $\D$, not a group, with Denjoy-Wolff point $\tau\in \D$ and spectral value $\lambda\in \C$ with $\Re \lambda>0$. Let $h$ be the associated K\"onigs function and $\sigma\in \partial \D$. The following are equivalent:
\begin{enumerate}
\item $\sigma$ is a repelling fixed point of $(\phi_t)$,
\item $\lim_{z\to \sigma}|h(z)|=\infty$ and $\angle\liminf_{z\to \sigma}\Arg_\lambda (h(z))\neq \angle\limsup_{z\to \sigma}\Arg_\lambda (h(z))$,
\item $\lim_{z\to \sigma}|h(z)|=\infty$ and $\liminf_{z\to \sigma}\Arg_\lambda (h(z))\neq \limsup_{z\to \sigma}\Arg_\lambda (h(z))$.
\end{enumerate}
Moreover, if $\sigma$ is a repelling fixed point for $(\phi_t)$ with repelling spectral value $\nu\in (-\infty,0)$, then there exists $\theta_0\in [-\pi,\pi)$ such that if $\{z_n\}\subset \D$ is a sequence converging to $\sigma$ and $\lim_{n\to \infty}\Arg(1-\overline{\sigma}z_n)=\beta\in (-\pi/2,\pi/2)$, then
\[
\lim_{n\to \infty}\Arg_\lambda (h(z_n))=\theta_0+\frac{\beta |\lambda|^2}{\nu \Re \lambda} \mod [-\pi,\pi).
\]
\end{theorem}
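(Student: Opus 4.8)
The plan is to separate the equivalences \textup{(1)}$\Leftrightarrow$\textup{(2)}$\Leftrightarrow$\textup{(3)} from the quantitative ``moreover'' clause, and in both to exploit that, by Theorem~\ref{modelholo}(3), the canonical model of an elliptic non‑group semigroup is $(\C,h,z\mapsto e^{-\lambda t}z)$ with $h(\D)$ a $\lambda$‑spirallike domain containing $0=h(\tau)$. Concretely I record the elementary fact that every spiral flowline $R_c:=\{e^{-\lambda s+ic}\colon s\in\R\}$ meets $h(\D)$ in a single tail $\{e^{-\lambda s+ic}\colon s>s(c)\}$ for some $s(c)\in[-\infty,+\infty)$ (because $R_c\cap h(\D)$ is open, forward‑flow invariant, and nonempty since $0\in h(\D)$), and $R_c\subset h(\D)$ iff $s(c)=-\infty$; moreover $h(\D)$ cannot contain a full $2\pi$ spiral sector, for that would contradict $\bigcup_{t\ge0}e^{\lambda t}h(\D)=\C$.

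For the equivalences, first note that by Proposition~\ref{Prop:fixed-infinity} (and $\sigma\neq\tau$) the clause $\lim_{z\to\sigma}|h(z)|=\infty$ is equivalent to $\sigma$ being a boundary fixed point of $(\phi_t)$, so in \textup{(2)} and \textup{(3)} the real content is the dichotomy, among boundary fixed points, between repelling and super‑repelling. For \textup{(1)}$\Rightarrow$\textup{(2)}: let $\Delta$ be the hyperbolic petal with $\sigma\in\partial\Delta$ (Proposition~\ref{Prop:petals-premodel}), so by Theorem~\ref{Thm:petals-koenigs} $h(\Delta)=\mathrm{Spir}[\lambda,2\alpha_0,\theta_0]$ with $2\alpha_0=-\tfrac{|\lambda|^2\pi}{\nu\Re\lambda}>0$. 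Fixing a pre‑model $(\D,g,\eta_t)$ at $\sigma$ and using Corollary~\ref{Cor:premodel-and-back} together with Proposition~\ref{Prop:converge-back}, for each slope $\alpha\in(-\pi/2,\pi/2)$ there is a backward orbit $\gamma_\alpha\subset g(\D)=\Delta$ converging non‑tangentially to $\sigma$ with $\lim_t\Arg(1-\overline\sigma\gamma_\alpha(t))=\alpha$; along it $h(\gamma_\alpha(t))=e^{\lambda t}h(\gamma_\alpha(0))$, hence $\Arg_\lambda\circ h$ is constant in $t$, and distinct slopes produce orbits on distinct spiral flowlines of the sector, hence distinct constants. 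Choosing two such orbits realizes two different values of $\Arg_\lambda\circ h$ as non‑tangential limits at $\sigma$, so $\angle\liminf_{z\to\sigma}\Arg_\lambda(h(z))<\angle\limsup_{z\to\sigma}\Arg_\lambda(h(z))$, i.e.\ \textup{(2)} holds; and \textup{(2)}$\Rightarrow$\textup{(3)} is immediate, an angular sub‑limit being a sub‑limit.

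The substance is \textup{(3)}$\Rightarrow$\textup{(1)}. Suppose $\theta_-:=\liminf_{z\to\sigma}\Arg_\lambda(h(z))<\theta_+:=\limsup_{z\to\sigma}\Arg_\lambda(h(z))$, with $\sigma$ a boundary fixed point. I would first check that the cluster set $C$ of the circle‑valued map $\Arg_\lambda\circ h$ at $\sigma$ is a closed arc (nested intersection of connected compacta), necessarily proper (else $h(\D)$ would contain a $2\pi$ spiral sector), so after rotating the spiral coordinate $C=[\theta_-,\theta_+]$. I claim $s(c)=-\infty$ on an open arc $I$: granting this, $\mathrm{Spir}[\lambda,|I|,\theta_I]\subset h(\D)$, its $h$‑preimage is $\phi_t$‑invariant, open and connected, hence lies in a petal $\Delta^\ast$, which is hyperbolic by Remark~\ref{Rem:petalo-parab-parab}; since $I\subset C$ there are $z_n\to\sigma$ with $h(z_n)$ in that sector, so $\sigma\in\overline{\Delta^\ast}$, hence $\sigma\in\partial\Delta^\ast$ ($\sigma$ a boundary fixed point $\neq\tau$), and Proposition~\ref{Prop:petal-no-other-fixed} forces $\sigma$ to be the repelling fixed point on $\partial\Delta^\ast$. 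To produce $I$: for $\theta_-<a<b<\theta_+$ one has $(a,b)\cap C\neq\emptyset$ and $[\theta_-,a)\cap C\neq\emptyset$, so for every small $\epsilon>0$ the connected set $h(\D\cap B(\sigma,\epsilon))$ lies in $\{|w|>M(\epsilon)\}$ with $M(\epsilon)\to\infty$ (Proposition~\ref{Prop:fixed-infinity}) and meets both components of $\C\setminus\{0\}$ cut along $R_a\cup R_b$; hence it meets $R_a\cup R_b$ at a point of modulus $>M(\epsilon)$. Letting $\epsilon\to0$ and invoking $\lambda$‑spirallikeness gives $s(a)=-\infty$ or $s(b)=-\infty$, for \emph{every} admissible pair $a<b$; a short combinatorial step (according to whether $(\theta_-,\theta_1)\setminus\{s=-\infty\}$ or $(\theta_1,\theta_+)\setminus\{s=-\infty\}$ contains an interval, for a fixed $\theta_1\in(\theta_-,\theta_+)$) yields $I$. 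The delicate point I expect to be the crux is precisely this last step: one must rule out a Cantor‑type set of spiral directions with empty interior carrying a prime end of $h(\D)$ that oscillates at $\infty$, and the way out is that $c\mapsto s(c)$ is upper semicontinuous (because $h(\D)$ is open), which forces each connected component of $\{c\colon s(c)<-n\}$ to shrink to a point as $n\to\infty$.

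For the ``moreover'' clause, assume $\sigma$ repelling with repelling spectral value $\nu$, let $\Delta=h^{-1}\!\bigl(\mathrm{Spir}[\lambda,2\alpha_0,\theta_0]\bigr)$, $2\alpha_0=-\tfrac{|\lambda|^2\pi}{\nu\Re\lambda}$, be the associated petal, and let $(\D,g,\eta_t)$ be a pre‑model at $\sigma$, so $g(\D)=\Delta$ (Proposition~\ref{Prop:petals-premodel}). Composing the explicit biholomorphisms from the proof of Lemma~\ref{lem:spiral-sector-rate} (precomposed with $w\mapsto e^{-i\theta_0}w$, which commutes with $w\mapsto e^{-\lambda t}w$) gives a biholomorphism $F\colon\mathrm{Spir}[\lambda,2\alpha_0,\theta_0]\to\Ha$ conjugating $w\mapsto e^{-\lambda t}w$ to $w\mapsto e^{-\nu t}w$, and a direct computation with those maps gives
\[
\Arg_\lambda(w)=\theta_0+\frac{|\lambda|^2}{\nu\Re\lambda}\,\Arg\bigl(F(w)\bigr)\qquad\text{for $w$ in the sector.}
\]
Now $\Psi:=F\circ h\circ g\colon\D\to\Ha$ is a biholomorphism intertwining $\eta_t$ (the hyperbolic group with fixed points $\pm\sigma$ and spectral value $-\nu$) with $w\mapsto e^{-\nu t}w$ on $\Ha$ (the hyperbolic group with fixed points $0,\infty$ and the same spectral value), hence a Möbius map with $\Psi(\sigma)=0$, $\Psi(-\sigma)=\infty$, so $\Psi(z)=\kappa\,\tfrac{1-\overline\sigma z}{1+\overline\sigma z}$ with $\kappa>0$, whence $\Psi(z)/(1-\overline\sigma z)\to\kappa/2$ and $\Arg\Psi(z)-\Arg(1-\overline\sigma z)\to0$ as $z\to\sigma$. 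Writing the display with $w=h(g(z))$ and substituting $z\mapsto g^{-1}(z)$:
\[
\Arg_\lambda(h(z))=\theta_0+\frac{|\lambda|^2}{\nu\Re\lambda}\,\Arg\bigl(\Psi(g^{-1}(z))\bigr)\qquad(z\in\Delta).
\]
Finally, if $z_n\to\sigma$ with $\Arg(1-\overline\sigma z_n)\to\beta\in(-\pi/2,\pi/2)$, the approach is non‑tangential, so $z_n\in\Delta$ for $n$ large by Proposition~\ref{Prop:petals-premodel}; using Proposition~\ref{Prop:shape-of-petals}, Corollary~\ref{Cor:frontera-interior-petalo} and the Stolz‑region statement of Proposition~\ref{Prop:petals-premodel} one checks that $\sigma$ is not an endpoint of any boundary Jordan arc of $\Delta$, so $\Delta$ coincides with $\D$ near $\sigma$, $g^{-1}$ extends continuously to $\sigma$ with $g^{-1}(\sigma)=\sigma$, and $g^{-1}$ is semi‑conformal at $\sigma$ because $g$ is; hence $g^{-1}(z_n)\to\sigma$ non‑tangentially with $\Arg(1-\overline\sigma g^{-1}(z_n))\to\beta$, so $\Arg\Psi(g^{-1}(z_n))\to\beta$ and the last display gives $\Arg_\lambda(h(z_n))\to\theta_0+\tfrac{\beta|\lambda|^2}{\nu\Re\lambda}$ $\bmod\,[-\pi,\pi)$, with $\theta_0$ the centre of the sector. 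The only real subtlety here is the boundary behaviour of $g^{-1}$ at $\sigma$ (continuity and isogonality), which in the ``non‑regular'' case must be extracted from the one‑prime‑end structure of $\Delta$ at $\sigma$ rather than from an angular derivative.
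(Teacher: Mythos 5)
Your architecture is sound and the equivalences go through, but the route differs from the paper's in the hard direction. For \textup{(3)}$\Rightarrow$\textup{(1)} the paper works with Carath\'eodory prime ends: it builds a circular null chain $(C_n)$ representing $\sigma$ with $C_n\subset\{|z|=R_n\}$, tracks the endpoints of $\overline{C_n}$ in spirallike coordinates, and extracts a spirallike sector inside $h(\D)$ from the interior parts $V_n$. Your cluster-set/connectedness argument (the set $h(\D\cap B(\sigma,\epsilon))$ is connected, escapes to $\infty$, and meets both sides of $R_a\cup R_b$, so by spirallikeness $s(a)=-\infty$ or $s(b)=-\infty$ for every pair) reaches the same sector more directly and is, if anything, cleaner. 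Two remarks: your final ``combinatorial step'' is trivial, since ``for every pair $a<b$ at least one of $s(a),s(b)$ is $-\infty$'' already says the complement of $\{s=-\infty\}$ in $(\theta_-,\theta_+)$ has at most one point, so the worry about Cantor-type sets and the appeal to upper semicontinuity are unnecessary. Also, your parenthetical that the cluster set $C$ is ``necessarily proper'' is false: for the Koebe-type semigroup of Example \ref{Ex:petal type1} the cluster set of $\Arg_\lambda\circ h$ at the repelling point is the whole circle (a $2\pi$-spirallike sector inside $h(\D)$ is not a contradiction). This is harmless, because your argument only needs $C$ to contain a nondegenerate arc, but the sentence should be deleted or the full-circle case treated as a (trivial) subcase. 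Your \textup{(1)}$\Rightarrow$\textup{(2)} via backward orbits of prescribed slope is also a legitimate shortcut; the paper instead derives \textup{(2)} as a byproduct of the Möbius computation that yields the ``moreover'' clause.

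The one step I would push back on is in the ``moreover'' part: the claim that ``$\sigma$ is not an endpoint of any boundary Jordan arc of $\Delta$, so $\Delta$ coincides with $\D$ near $\sigma$'' is wrong in general. Corollary \ref{Cor:frontera-interior-petalo}(1) explicitly allows a connected component of $\partial\Delta\cap\D$ to have $\sigma$ as its endpoint, and in the configuration of Proposition \ref{Prop:shape-of-petals}(3) both boundary arcs land at $\sigma$, so $\Delta$ is pinched between two curves ending at $\sigma$ and is not a full neighbourhood of $\sigma$ in $\D$. What is true, and all you actually need, is the Stolz-region containment of Proposition \ref{Prop:petals-premodel}: every truncated Stolz angle at $\sigma$ eventually lies in $\Delta=g(\D)$. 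Combined with the semi-conformality of $g$ at $\sigma$ (e.g., by trapping $z_n$ between the images $g(\Gamma_{\alpha_1})$ and $g(\Gamma_{\alpha_2})$ of two rays with $\alpha_1<\beta<\alpha_2$, and using $\Psi(w_n)=F(h(z_n))\to 0$ to get $w_n\to\sigma$), this yields that $w_n=g^{-1}(z_n)\to\sigma$ non-tangentially with $\Arg(1-\overline{\sigma}w_n)\to\beta$, which is exactly the point the paper dispatches with ``it follows at once''. So the conclusion of your computation stands, but the stated justification must be replaced; the rest of the quantitative clause matches the paper's calculation.
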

\begin{proof}
(1) implies (2). Suppose $\sigma$ is a repelling fixed point for $(\phi_t)$ with spectral value $\nu\in (-\infty,0)$. By Proposition \ref{Prop:fixed-infinity}, $\lim_{z\to \sigma}|h(z)|=\infty$. By Theorem \ref{Thm:petals-koenigs} there exists a hyperbolic petal $\Delta$ such that $\sigma\in\partial \Delta$ and $h(\Delta)=\mathrm{Spir}[\lambda, 2\alpha, \theta_0]$ is a maximal $\lambda$-spirallike sector of amplitude $2\alpha$ and center $e^{i\theta_0}$ in $h(\D)$, where $\theta_0\in [0,2\pi)$ and $2\alpha=-\pi|\lambda|^2/\nu\Re\lambda$. Let $(\D, g, \eta_t)$ be a pre-model for $(\phi_t)$ at $\sigma$. By Proposition \ref{Prop:petals-premodel}, $g(\D)=\Delta$. Now, take a holomorphic branch of $f:z\mapsto z^{1-i\frac{\Im \lambda}{\Re \lambda}}$ such that the image of $\mathrm{Spir}[\lambda, 2\alpha, \theta_0]$ is $V=\{\rho e^{i\theta}: \rho>0, \theta\in (\theta_0-\alpha, \theta_0+\alpha)\}$. Let then $k:V\to \Ha$ be defined by $k(w)=w^{-\pi/2\alpha}$. By construction, $C:= k  \circ f \circ  h \circ g:\D \to \Ha$ is a biholomorphism. Hence, $C$ is a M\"obius transformation, and, looking at the definition and taking into account that $\lim_{z\to \sigma}|h(z)|=\infty$, we see that $C(\sigma)=0$ and $C(-\sigma)=\infty$. Therefore,
\[
C(z)=\frac{\sigma-z}{\sigma+z}
\]
Now, let $\{z_n\}\subset \D$ be a sequence converging to $\sigma$ such that $\lim_{n\to \infty}\Arg(1-\overline{\sigma}z_n)=\beta\in (-\pi/2,\pi/2)$. By Proposition \ref{Prop:petals-premodel}, $\{z_n\}$ is eventually contained in $g(\D)$ and, without loss of generality, we can assume $\{z_n\}\subset g(\D)$. Let $w_n:=g^{-1}(z_n)$. Since $\angle\lim_{z\to \sigma}g(z)=\sigma$ and $g$ is semi-conformal at $\sigma$, it follows at once that $\lim_{n\to \infty}\Arg(1-\overline{\sigma}w_n)=\beta$. Hence, $\lim_{n\to \infty}\Arg (C(w_n))=\beta$. Therefore, taking into account that $\Arg_\lambda f^{-1}(w)=\Arg (w)$ for all $w\in V$, we have
\[
\lim_{n\to \infty}\Arg_\lambda (h(z_n))=\lim_{n\to \infty}\Arg_\lambda (f^{-1}\circ k^{-1}\circ C\circ g^{-1})(z_n)=\lim_{n\to \infty}\Arg (k^{-1}\circ C)(w_n).
\]
Taking into account that $k^{-1}(z)=e^{i\theta_0}z^{-2\alpha/\pi}$, the previous equation gives immediately
\[
\lim_{n\to \infty}\Arg_\lambda (h(z_n))=\theta_0-\frac{2\alpha\beta}{\pi} \mod [-\pi,\pi).
\]
This proves (2) and, since $2\alpha=-\pi|\lambda|^2/\nu\Re\lambda$,  the final part of the statement.

Clearly, (2) implies (3).

(3) implies (1). Since $\lim_{z\to \sigma}|h(z)|=\infty$ implies $\lim_{z\to \sigma}h(z)=\infty$ in $\C_\infty$, Proposition \ref{Prop:fixed-infinity} immediately implies that $\sigma$ is a boundary fixed point of $(\phi_t)$. We have to show that $\sigma$ is repelling. Let $\underline{x}_\sigma\in \partial_C\D$ be the prime end representing $\sigma$and let $\hat h:\widehat{\D}\to \widehat{h(\D)}$ be the homeomorphism in the Carath\'eodory topology defined by $h$. Since $\lim_{z\to\sigma}h(z)=\infty$, we have $I(\hat{h}(\underline{x}_\sigma))=\{\infty\}$, where $I(\hat{h}(\underline{x}_\sigma))$ is the impression of $\hat{h}(\underline{x}_\sigma)$. Hence, we can find a circular null chain $(C_n)$ representing $\hat{h}(\underline{x}_\sigma)$ such that there exists an increasing sequence of positive real numbers $\{R_n\}$ converging to $+\infty$ such that $C_n\subset \{z\in \C: |z|=R_n\}$ for every $n\in \N_0$. 
For every $n\in \N_0$, let $e^{-\lambda t_n+i\theta^1_n}$ and $e^{-\lambda t_n+i\theta^2_n}$ be the end points of $\overline{C_n}$, where $t_n\in \R$ is such that $e^{-t_n\Re \lambda}=R_n$ and $\theta_n^j\in [-\pi,\pi)$ $j=1,2$ with $\theta^1_n\leq \theta^2_n$.

Given $\mu\in \C$ with $\Re \mu>0$ and $c\in \C\setminus\{0\}$, we let
\begin{equation}\label{spir-def}
\mathrm{spir}_\mu[c]:=\{e^{-\mu s}c:s\in \R\}\cup\{0\}
\end{equation} 

If $\theta^1_n=\theta^2_n$ for all $n\in \N$, then $C_n=\{z\in \C: |z|=R_n\}\setminus\{e^{-\lambda t_n+i\theta^1_n}\}$ for all $n\in \N$. Since $h(\D)$ is $\lambda$-spirallike, it follows that for all $n\in \N$,
\[
h(\D)\cap (\mathrm{spir}_\lambda[e^{-\lambda t_n+i\theta^1_n}]\cap \{w\in \C: |w|\geq R_n\})=\emptyset.
\]
Hence, the only possibility is  that $\theta^1_n$ is constant for all $n$, say, $\theta^1_n=:\theta_0$. That is, $h(\D)=\C\setminus (\mathrm{spir}_\lambda[e^{i\theta_0}]\cap \{w\in \C: |w|\geq R\})$, for some $R\in (0, R_0]$. It is then clear that $\C\setminus \mathrm{spir}_\lambda[e^{i\theta_0}]$ is a maximal $\lambda$-spirallike sector in $h(\D)$ of amplitude $2\pi$.  By Theorem \ref{Thm:koenigs-petals}, there exists a hyperbolic petal $\Delta\subset \D$ such that  $h(\Delta)=\C\setminus \mathrm{spir}_\lambda[e^{i\theta_0}]$. Moreover, if $J:=(\mathrm{spir}_\lambda[e^{i\theta_0}]\cap \{w\in \C: |w|< R\})$, then $\Delta=\D\setminus h^{-1}(J)$.  Note that, by \cite[Lemma 2, p. 162]{Shabook83}, the closure of $h^{-1}(J)$ is a Jordan arc with end points $\tau\in \D$ and a point $p\in \partial \D$. Hence, $\partial \Delta=h^{-1}(J)\cup \partial \D$. By Proposition \ref{Prop:petal-no-other-fixed}, $\partial \Delta$ contains only one boundary fixed point of $(\phi_t)$, which is repelling. Since $\sigma\in \partial \Delta$ is a fixed point, it follows that $\sigma$ is repelling.

Now, we assume that there exists $n_0\in\N$ such that $\theta_{n_0}^1<\theta_{n_0}^2$.  Up to considering the equivalent null chain $(C_n)_{n\geq n_0}$, we can assume $n_0=0$. Since $h(\D)$ is $\lambda$-spirallike, $h(\D)\cap (\mathrm{spir}_\lambda[e^{i\theta_0^j}]\cap \{w\in \C: |w|\geq R_0\})=\emptyset$, $j=1,2$. Hence,  it is easy to see that $\theta_n^1<\theta_n^2$ for all $n\geq 0$.  Let $V_n$ be the interior part of $C_n$, $n\geq 1$.  Since $\mathrm{spir}_\lambda[e^{i\theta_0^1}]\cup \mathrm{spir}_\lambda[e^{i\theta_0^2}]\cup\{\infty\}$, forms a Jordan curve $J$ in the Riemann sphere $\C_\infty$ containing $0$ and $\infty$, taking into account that $h(\D)$ is $\lambda$-spirallike, it follows that $V_n$ is contained in one of the connected component of $\C\setminus J$. Thus, taking also into account that for every $w\in C_n$,  we have $(\mathrm{spir}_\lambda[w]\cap \{w\in \C: |w|<R_n\})\subset h(\D)$, we have two possibilities. Either
\begin{equation}\label{Eq:intra-angle-good}
\begin{split}
&V_n\subseteq (\mathrm{Spir}[\lambda, \theta_n^2-\theta_n^1, \frac{\theta_n^2+\theta_n^1}{2}]\cap \{w\in \C: |w|>R_n\}),\\
&(\mathrm{Spir}[\lambda, \theta_n^2-\theta_n^1, \frac{\theta_n^2+\theta_n^1}{2}]\cap \{w\in \C: |w|<R_n\}) \subset h(\D)
\end{split}
\end{equation}
or, setting $\xi_n:=\frac{\theta_n^2+\theta_n^1+2\pi}{2}\mod 2\pi$, 
\begin{equation}\label{Eq:intra-angle-good-bis}
\begin{split}
&V_n\subseteq (\mathrm{Spir}[\lambda, \theta_n^1-\theta_n^2+2\pi, \xi_n]\cap \{w\in \C: |w|>R_n\}),\\
&(\mathrm{Spir}[\lambda, \theta_n^1-\theta_n^2+2\pi, \xi_n]\cap \{w\in \C: |w|<R_n\}) \subset h(\D).
\end{split}
\end{equation}

If   \eqref{Eq:intra-angle-good} holds for some $n=n_0$, since $V_n\subset V_{n_0}$ for all $n\geq {n_0}$, \eqref{Eq:intra-angle-good} implies that $\theta_n^1<\theta_n^2$ and \eqref{Eq:intra-angle-good} holds for all $n\geq {n_0}$. Assume we are in this case---the proof for the case \eqref{Eq:intra-angle-good-bis} holds for every $n$ is similar and we omit it.

Again, we can assume $n_0=0$. By hypothesis, there exist two sequences $\{z_m\}, \{w_m\}\subset \D$ converging to $\sigma$ such that $\alpha:=\lim_{m\to \infty}\Arg_\lambda (h(z_m))$ and $\beta:=\lim_{m\to \infty}\Arg_\lambda (h(w_m))$ exist and $\alpha<\beta$. Let us write $h(z_m)=e^{-\lambda r_m+i \alpha_m}$ and $h(w_m)=e^{-\lambda s_m+i \beta_m}$, where $r_m, s_m\in \R$ and $\alpha_m, \beta_m\in [-\pi,\pi)$ have the property that $\lim_{m\to \infty}\alpha_m=\alpha$ and $\lim_{m\to \infty}\beta_m=\beta$.  Since $\{z_m\}$ and $\{w_m\}$ are converging to $\sigma$ in the Euclidean topology, it follows  that they also converge to $\underline{x}_\sigma$ in the Carath\'eodory topology of $\D$. Hence, $\{h(z_m)\}$ and $\{h(w_m)\}$ converge to $\hat h(\underline{x}_\sigma)$ in the Carath\'eodory topology of $h(\D)$.It follows that for all $n\in \N$ there exists $m_n\in \N$ such that $h(z_m), h(w_m)\in V_n$ for all $m\geq m_n$. By \eqref{Eq:intra-angle-good}, we have 
$\theta_n^1<\alpha_m, \beta_m<\theta_n^2$ for all $n$ and $m\geq m_{n}$, and, taking the limit in $m$, we obtain
\begin{equation}\label{Eq:infinity-sector}
\theta_n^1\leq \alpha<\beta\leq \theta_n^2, \quad n\in \N.
\end{equation}
Let $\theta_0:=(\beta+\alpha)/2$ and $a\in (0,\beta-\alpha)$. Equation in \eqref{Eq:infinity-sector} implies at once that   $\mathrm{Spir}[\lambda, a, \theta_0]\cap \{w\in \C: |w|>R_n\}\subset V_n$ for all $n\in \N$ and $\mathrm{Spir}[\lambda, a, \theta_0]\subset h(\D)$. By Theorem \ref{Thm:koenigs-petals} there exists a hyperbolic petal $\Delta\subset \D$ such that $\mathrm{Spir}[\lambda, a, \theta_0]\subseteq h(\Delta)$. Moreover, by Proposition \ref{Prop:petal-no-other-fixed}, $\partial \Delta$ contains only one boundary fixed point of $(\phi_t)$ which is repelling. Thus, if we prove that $\sigma\in\partial \Delta$, it follows that $\sigma$ is repelling. To this aim, consider the curve $\gamma:(-\infty, 0) \ni t\mapsto e^{-t\lambda + i\theta_0}$. Since for all $n\in \N$ there exists $t_n\in (-\infty,0)$ such that $\gamma(t)\in \mathrm{Spir}[\lambda, a, \theta_0]\cap \{w\in \C: |w|>R_n\}\subset V_n$ for all $t\leq t_n$, it follows  that $\gamma(t)$ converges in the Carath\'eodory topology of $h(\D)$ to $\hat h(\underline{x}_\sigma)$ as $t\to -\infty$. Hence,  $h^{-1}(\gamma(t))\to \sigma$ as $t\to -\infty$, proving that $\sigma\in \partial \Delta$.
 \end{proof}

As an immediate corollary of Proposition \ref{Prop:fixed-infinity} and Theorem \ref{Thm:konigs-rep-elliptic} we have:

\begin{corollary}\label{Cor:fijo-super-elliptico}
Let $(\phi_t)$ be an elliptic semigroup in $\D$, not a group. Let $h$ be the associated K\"onigs function and $\sigma\in \partial \D$. The following are equivalent:
\begin{enumerate}
\item $\sigma$ is a super-repelling fixed point of $(\phi_t)$,
\item $\lim_{z\to \sigma}|h(z)|=\infty$ and $\lim_{z\to \sigma}\Arg_\lambda (h(z))=\theta_0$ for some  $\theta_0\in [-\pi,\pi]$.
\end{enumerate}
\end{corollary}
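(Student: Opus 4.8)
The plan is to deduce the statement by directly combining Proposition~\ref{Prop:fixed-infinity} with the equivalence (1)$\Leftrightarrow$(3) of Theorem~\ref{Thm:konigs-rep-elliptic}, together with the elementary observation that on $\partial\D$ the super-repelling fixed points of a non-group elliptic semigroup are precisely the boundary fixed points that fail to be repelling.

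First I would set up the dichotomy. Since $(\phi_t)$ is elliptic its Denjoy--Wolff point $\tau$ lies in $\D$, so no point of $\partial\D$ equals $\tau$; recalling that, apart from the trivial semigroup, the only boundary regular fixed points of a semigroup are the Denjoy--Wolff point and the repelling ones, it follows that a boundary fixed point $\sigma\in\partial\D$ of $(\phi_t)$ is super-repelling if and only if it is not repelling. Next, by Proposition~\ref{Prop:fixed-infinity}, applied with $\sigma\neq\tau$ (which is automatic here), $\sigma$ is a boundary fixed point of $(\phi_t)$ if and only if $\lim_{z\to\sigma}h(z)=\infty$, that is, $\lim_{z\to\sigma}|h(z)|=\infty$.

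Assuming $\lim_{z\to\sigma}|h(z)|=\infty$, the equivalence of (1) and (3) in Theorem~\ref{Thm:konigs-rep-elliptic} gives that $\sigma$ is repelling if and only if $\liminf_{z\to\sigma}\Arg_\lambda(h(z))\neq\limsup_{z\to\sigma}\Arg_\lambda(h(z))$. Combining the last three sentences, $\sigma$ is a super-repelling fixed point of $(\phi_t)$ if and only if $\lim_{z\to\sigma}|h(z)|=\infty$ and $\liminf_{z\to\sigma}\Arg_\lambda(h(z))=\limsup_{z\to\sigma}\Arg_\lambda(h(z))$. To finish I would translate the last equality into the existence of a limit: $\Arg_\lambda$ is bounded (it takes values in $[-\pi,\pi)$), so $\liminf=\limsup$ exactly when $\theta_0:=\lim_{z\to\sigma}\Arg_\lambda(h(z))$ exists. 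The only point where a little care is needed --- and it is the only ``obstacle'' --- is the endpoint of the fundamental interval: since $\Arg_\lambda$ never attains the value $\pi$, the common value of $\liminf$ and $\limsup$ may equal $\pi$, which is why the statement allows $\theta_0\in[-\pi,\pi]$ rather than $[-\pi,\pi)$, with $\pi$ and $-\pi$ understood modulo $2\pi$. This yields precisely the equivalence of (1) and (2).
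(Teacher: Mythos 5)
Your argument is correct and is exactly the deduction the paper intends: the corollary is stated there as an immediate consequence of Proposition \ref{Prop:fixed-infinity} and Theorem \ref{Thm:konigs-rep-elliptic}, with no further proof given, and you have simply spelled out that deduction (including the correct observation that, for a non-trivial elliptic semigroup, a boundary fixed point on $\partial\D$ is super-repelling precisely when it is not repelling, and that for the bounded function $\Arg_\lambda\circ h$ the equality of $\liminf$ and $\limsup$ is equivalent to the existence of the limit in $[-\pi,\pi]$).
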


Now we turn our attention to non-elliptic semigroups. The proofs of the next results are similar to those for the elliptical case and we omit them---in fact, 
roughly speaking,  in the non-elliptic case, the role of the modulus is played by the imaginary part and that of the $\lambda$-argument by the real part.  

\begin{theorem}\label{Thm:konigs-rep-nonelliptic}
Let $(\phi_t)$ be a  non-elliptic semigroup in $\D$, not a group, with Denjoy-Wolff point $\tau\in \partial \D$ and spectral value $\lambda\geq 0$. Let $h$ be the associated K\"onigs function and $\sigma\in \partial \D$. The following are equivalent:
\begin{enumerate}
\item $\sigma$ is a repelling fixed point of $(\phi_t)$,
\item $\lim_{z\to \sigma}\Im h(z)=-\infty$ and $-\infty<\angle\liminf_{z\to \sigma}\Re h(z)\neq \angle\limsup_{z\to \sigma}\Re h(z)<+\infty$,
\item $\lim_{z\to \sigma}\Im h(z)=-\infty$ and $-\infty<\liminf_{z\to \sigma}\Re h(z)\neq \limsup_{z\to \sigma}\Re h(z)<+\infty$.
\end{enumerate}
Moreover, if $\sigma$ is a repelling fixed point for $(\phi_t)$ with repelling spectral value $\nu\in (-\infty,0)$, then there exists $a\in \R$ such that if $\{z_n\}\subset \D$ is a sequence converging to $\sigma$ and $\lim_{n\to \infty}\Arg(1-\overline{\sigma}z_n)=\beta\in (-\pi/2,\pi/2)$, then
\[
\lim_{n\to \infty}\Re h(z_n)=a+\frac{\beta}{\nu}.
\]
\end{theorem}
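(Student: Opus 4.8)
The plan is to follow verbatim the argument proving Theorem~\ref{Thm:konigs-rep-elliptic}, replacing the elliptic canonical model $(\C,h,e^{-\lambda t}z)$ by the non-elliptic one $(\Omega,h,z\mapsto z+it)$ with $\Omega=\C,\Ha$ or $\Ha^-$, and translating everything through the dictionary in which $\Im h$ plays the role of $\log|h|$ and $\Re h$ the role of $\Arg_\lambda h$. Thus a hyperbolic petal $\Delta$ now corresponds, by Theorem~\ref{Thm:petals-koenigs}(2), to a maximal vertical strip $h(\Delta)=z_0+\strip_{-\pi/\nu}$ instead of a spirallike sector; the restricted model group is the vertical translation $w\mapsto w+it$; the domain $h(\D)$ is starlike at infinity, i.e.\ invariant under $w\mapsto w+it$ for $t\ge0$; and the backward orbit inside $\Delta$ escapes in the direction $\Im w\to-\infty$, so that $\sigma$ is, heuristically, the prime end ``at $-i\infty$'' of that strip.

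For (1)$\Rightarrow$(2) and the final formula, assume $\sigma$ is repelling with repelling spectral value $\nu\in(-\infty,0)$. By Proposition~\ref{Prop:fixed-infinity}, $\lim_{z\to\sigma}h(z)=\infty$; by Theorem~\ref{Thm:petals-koenigs} there is a hyperbolic petal $\Delta$ with $\sigma\in\partial\Delta$ and $h(\Delta)=z_0+\strip_{-\pi/\nu}$; and by Proposition~\ref{Prop:petals-premodel} a pre-model $(\D,g,\eta_t)$ at $\sigma$ satisfies $g(\D)=\Delta$. Then $h\circ g\colon\D\to z_0+\strip_{-\pi/\nu}$ is a biholomorphism conjugating the hyperbolic group $(\eta_t)$ to $w\mapsto w+it$; composing with the standard conformal map of a vertical strip onto $\Ha$ gives a biholomorphism $\D\to\Ha$, hence a M\"obius transformation, and the facts that $\lim_{z\to\sigma}h(z)=\infty$ and that $-\sigma$ is the Denjoy--Wolff point of $(\eta_t)$ pin it down (up to the residual vertical-translation freedom, cf.\ Remark~\ref{unico-modello}), so that $h(g(z))=\frac{i}{\nu}\log\frac{\sigma+z}{\sigma-z}+c$ for a constant $c$ with $\Re c=:a\in\R$. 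Since $g$ is semiconformal at $\sigma$, for a sequence $z_n\to\sigma$ with $\Arg(1-\overline{\sigma}z_n)\to\beta$ we get $\Arg(1-\overline{\sigma}g^{-1}(z_n))\to\beta$, hence $\arg\frac{\sigma+g^{-1}(z_n)}{\sigma-g^{-1}(z_n)}\to-\beta$ while its modulus tends to $+\infty$; taking real and imaginary parts yields $\Im h(z_n)\to-\infty$ and $\Re h(z_n)\to a+\frac{\beta}{\nu}$. Letting $\beta$ range over $(-\pi/2,\pi/2)$ and using that every Stolz region at $\sigma$ eventually lies in $\Delta$ (Proposition~\ref{Prop:petals-premodel}), so that the angular $\liminf$ and $\limsup$ of $\Re h$ at $\sigma$ are finite and distinct, gives (2) together with the displayed limit. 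The finiteness of the \emph{unrestricted} $\liminf,\limsup$ required in (3) then follows because a neighbourhood basis of $\sigma$ in $\D$ is carried by $h$ into $h(\Delta)$ up to a bounded error, so $\Re h$ is bounded near $\sigma$; and (3)$\Rightarrow$(2) is trivial once (3)$\Rightarrow$(1) is known.

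The substantive direction is (3)$\Rightarrow$(1) (the same argument with angular approach also yields (2)$\Rightarrow$(1)). From $\Im h(z)\to-\infty$ one gets $h(z)\to\infty$, so $\sigma$ is a boundary fixed point by Proposition~\ref{Prop:fixed-infinity}, and it remains to show it is repelling. Mimicking the null-chain part of the elliptic proof, I would represent, via $\hat h$, the prime end $\underline x_\sigma$ as a prime end of $h(\D)$ of impression $\{\infty\}$ and choose for it a null chain $(C_n)$ whose cross-cuts lie on horizontal lines $\{\Im w=-M_n\}$ with $M_n\uparrow+\infty$ (possible precisely because $h(\D)$ is starlike at infinity and $\Im h\to-\infty$). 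Writing the endpoints of $\overline{C_n}$ as $a_n^1-iM_n$ and $a_n^2-iM_n$ with $a_n^1\le a_n^2$, the hypothesis that $\liminf_{z\to\sigma}\Re h$ and $\limsup_{z\to\sigma}\Re h$ are finite and distinct forces, by the same case analysis as in the elliptic argument (using that $\Re h$ stays between these two finite values on the regions $V_n$ cut off by $C_n$), the $V_n$ to be trapped between two vertical half-lines; hence $h(\D)$ contains a maximal vertical strip $z_0+\strip_\rho$ with $z_0,\rho$ read off from the $\liminf/\limsup$. By Theorem~\ref{Thm:koenigs-petals} this strip is a hyperbolic petal $\Delta$, and feeding the curve $(-\infty,0)\ni t\mapsto z_0+\tfrac{\rho}{2}-it$ through $h^{-1}$ shows $\sigma\in\partial\Delta$, so $\sigma$ is repelling by Proposition~\ref{Prop:petal-no-other-fixed}. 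The main obstacle is exactly this last step: unlike the spirallike case, where $\Arg_\lambda$ is automatically confined to $[-\pi,\pi)$, here one must genuinely exploit the finiteness of $\liminf/\limsup\,\Re h$ to keep the real parts $a_n^j$ of the cross-cut endpoints from running off to $\pm\infty$, and combine this carefully with the ``starlike at infinity'' geometry of $h(\D)$ in order to produce the maximal strip and to certify that it is $\sigma$, rather than some other prime end, that sits on the boundary of the resulting petal.
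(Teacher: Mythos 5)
The paper in fact omits the proof of this theorem, stating only that it is ``similar to the elliptic case, with the modulus replaced by the imaginary part and the $\lambda$-argument by the real part''; your proposal implements exactly that dictionary, and your explicit computation $h\circ g=\tfrac{i}{\nu}\log\tfrac{\sigma+z}{\sigma-z}+c$ for the direct implication, as well as the null-chain argument for (3)$\Rightarrow$(1), correctly mirror the paper's proof of Theorem \ref{Thm:konigs-rep-elliptic}. So this is the same approach as the (omitted) proof intended by the authors, and it is correct, with the caveat you yourself flag handled as in the elliptic case: the finiteness of $\liminf/\limsup\Re h$ together with starlikeness at infinity (a point of $h(\D)$ carries its whole upward vertical ray) traps the interior parts $V_n$ in a half-strip and yields the maximal strip.
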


As a corollary  we have:

\begin{corollary}\label{Cor:fijo-super-nonelliptico}
Let $(\phi_t)$ be a non-elliptic semigroup in $\D$, not a group. Let $h$ be the associated K\"onigs function and $\sigma\in \partial \D$. The following are equivalent:
\begin{enumerate}
\item $\sigma$ is a super-repelling fixed point of $(\phi_t)$,
\item $\lim_{z\to \sigma}\Im h(z)=-\infty$ and $\lim_{z\to \sigma}\Re h(z)=a$ for some  $a\in \R$.
\end{enumerate}
\end{corollary}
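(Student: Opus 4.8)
The plan is to read the statement off Proposition~\ref{Prop:fixed-infinity} and Theorem~\ref{Thm:konigs-rep-nonelliptic}, in exactly the way Corollary~\ref{Cor:fijo-super-elliptico} follows, in the elliptic case, from Proposition~\ref{Prop:fixed-infinity} and Theorem~\ref{Thm:konigs-rep-elliptic}. First I would make a purely dynamical reduction: by definition a super-repelling fixed point is a boundary fixed point which is not a boundary regular fixed point, and, as recalled in the Preliminaries, for a non-elliptic non-trivial semigroup the only boundary regular fixed points are the Denjoy--Wolff point $\tau$ and the repelling fixed points. Hence $\sigma\in\partial\D$ is super-repelling if and only if $\sigma$ is a boundary fixed point of $(\phi_t)$, $\sigma\neq\tau$, and $\sigma$ is not a repelling fixed point.

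For the implication $(2)\Rightarrow(1)$ the argument is immediate. If $\lim_{z\to\sigma}\Im h(z)=-\infty$ and $\lim_{z\to\sigma}\Re h(z)=a\in\R$, then $\lim_{z\to\sigma}h(z)=\infty$ in $\C_\infty$, so $\angle\lim_{z\to\sigma}h(z)=\infty$ and Proposition~\ref{Prop:fixed-infinity} gives that $\sigma$ is a boundary fixed point. It cannot be $\tau$: in the canonical model of Theorem~\ref{modelholo} one has $h(\phi_t(z_0))=h(z_0)+it$, so $\Im h(\phi_t(z_0))\to+\infty$ along the orbit $\phi_t(z_0)\to\tau$, which is incompatible with $\lim_{z\to\tau}\Im h(z)=-\infty$. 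And $\sigma$ is not repelling, since $\liminf_{z\to\sigma}\Re h(z)=\limsup_{z\to\sigma}\Re h(z)=a$ violates the strict inequality in condition~(3) of Theorem~\ref{Thm:konigs-rep-nonelliptic}. By the reduction above, $\sigma$ is super-repelling.

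For $(1)\Rightarrow(2)$ I would start from the fact that, $\sigma$ being a boundary fixed point with $\sigma\neq\tau$, the ``moreover'' part of Proposition~\ref{Prop:fixed-infinity} yields the unrestricted limit $\lim_{z\to\sigma}h(z)=\infty$, hence $\lim_{z\to\sigma}|h(z)|=+\infty$. The point then is the geometric statement, non-elliptic counterpart of ``$\lim_{z\to\sigma}|h(z)|=\infty$'' in the elliptic case, that at such a fixed point one in fact has $\lim_{z\to\sigma}\Im h(z)=-\infty$ and $\Re h$ bounded on a neighbourhood of $\sigma$ in $\D$: since $h(\D)$ is starlike at infinity and the prime end of $h(\D)$ attached to $\sigma$ has impression $\{\infty\}$, the channel of $h(\D)$ escaping to $\infty$ at $\sigma$ is forced to be asymptotically a single vertical line (this is the null-chain analysis of the proof of Theorem~\ref{Thm:konigs-rep-elliptic} transcribed with $\Im h$ playing the role of the modulus and $\Re h$ the role of the $\lambda$-argument, and it is precisely what the proof of Theorem~\ref{Thm:konigs-rep-nonelliptic} provides). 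Granting this, $\liminf_{z\to\sigma}\Re h(z)$ and $\limsup_{z\to\sigma}\Re h(z)$ are finite; since $\sigma$ is not repelling, condition~(3) of Theorem~\ref{Thm:konigs-rep-nonelliptic} together with $\lim_{z\to\sigma}\Im h(z)=-\infty$ forces them to be equal, so $\lim_{z\to\sigma}\Re h(z)=:a$ exists, which is $(2)$.

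The hard part is exactly this geometric input --- that a boundary fixed point $\sigma\neq\tau$ of a non-elliptic semigroup automatically satisfies $\lim_{z\to\sigma}\Im h(z)=-\infty$ with $\Re h$ locally bounded near $\sigma$; everything else is the combinatorics of combining Proposition~\ref{Prop:fixed-infinity} with Theorem~\ref{Thm:konigs-rep-nonelliptic} and bookkeeping of one-sided limits. As that input is built into the proof of Theorem~\ref{Thm:konigs-rep-nonelliptic}, the corollary is indeed immediate once that theorem is in hand, which is why it is stated as a corollary.
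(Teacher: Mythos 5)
Your overall architecture is the right one and matches the paper's intent: the paper offers no proof of this corollary, presenting it as falling out of Proposition~\ref{Prop:fixed-infinity} and Theorem~\ref{Thm:konigs-rep-nonelliptic} exactly as Corollary~\ref{Cor:fijo-super-elliptico} falls out of the elliptic versions. Your reduction of ``super-repelling'' to ``boundary fixed point, $\neq\tau$, not repelling'' is correct, and your $(2)\Rightarrow(1)$ argument is complete: $h\to\infty$ gives a fixed point, the model $h\circ\phi_t=h+it$ rules out $\sigma=\tau$, and the failure of condition~(3) of Theorem~\ref{Thm:konigs-rep-nonelliptic} rules out repelling.

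The gap is in $(1)\Rightarrow(2)$, and you have located it yourself but then discharged it incorrectly. You need the geometric input that a boundary fixed point $\sigma\neq\tau$ automatically satisfies $\lim_{z\to\sigma}\Im h(z)=-\infty$ with $\Re h$ bounded near $\sigma$; without it, the negation of condition~(3) is consistent with, say, $\Re h(z_n)\to+\infty$ or with $\Im h$ not tending to $-\infty$, and $(2)$ does not follow. You assert this input ``is precisely what the proof of Theorem~\ref{Thm:konigs-rep-nonelliptic} provides,'' but it is not: transcribing the proof of Theorem~\ref{Thm:konigs-rep-elliptic}, the implication $(3)\Rightarrow(1)$ \emph{assumes} as hypothesis exactly the two facts you need ($\lim\Im h=-\infty$ and finiteness of $\liminf/\limsup\Re h$), while the implication $(1)\Rightarrow(2)$ establishes them only for \emph{repelling} points, via the petal and the pre-model — machinery unavailable at a super-repelling point. (In the elliptic corollary this issue is invisible because $|h|\to\infty$ comes from Proposition~\ref{Prop:fixed-infinity} and $\Arg_\lambda$ is bounded by definition; $\Re h$ is not bounded a priori, which is why the non-elliptic case is genuinely not a formal consequence of the two cited statements.) To close the gap one must actually run the null-chain/channel analysis at an arbitrary boundary fixed point $\sigma\neq\tau$: the prime end $\hat h(\underline{x}_\sigma)$ has impression $\{\infty\}$, admits a null chain of crosscuts lying on horizontal lines $\{\Im w=-R_n\}$ with $R_n\to+\infty$, and starlikeness at infinity of $h(\D)$ forces the interiors $V_n$ into sets of the form $\{\theta_n^1<\Re w<\theta_n^2,\ \Im w<-R_n\}$ with $\theta_n^1$ nondecreasing and $\theta_n^2$ nonincreasing; combined with the unrestricted limit $h(z)\to\infty$ from Proposition~\ref{Prop:fixed-infinity}, this yields $\Im h\to-\infty$ and local boundedness of $\Re h$ at $\sigma$. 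Alternatively one can simply cite the full characterization of boundary fixed points via the K\"onigs function from \cite{ConDia05a} or \cite{Gum14}. Either way, this step must be stated and justified, not attributed to a proof that does not contain it.
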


The previous results allow also to easily prove the following:

\begin{corollary}
Let $(\phi_t)$ be a semigroup, not an elliptic group, in $\D$ with Denjoy-Wolff point $\tau\in\overline{\D}$ and let $h$ be its K\"onigs function. Let $\sigma\in\partial \D$. Then $\sigma$ is not a fixed point of $(\phi_t)$ if and only if $\angle\lim_{z\to \sigma}h(z)\in \C$. Moreover, in this case,
\begin{enumerate}
\item if $\tau\in \D$ then $\lim_{z\to \sigma}\Arg_\lambda h(z)$ exists, where $\lambda$ is the spectral value of $(\phi_t)$;
\item if $\tau\in\partial\D$ then $\lim_{z\to \sigma}\Re h(z)$ exists.
\end{enumerate}
\end{corollary}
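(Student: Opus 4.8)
The plan is to combine Proposition~\ref{Prop:fixed-infinity} with a prime end analysis of $\Omega_0:=h(\D)$ parallel to the proofs of Theorems~\ref{Thm:konigs-rep-elliptic} and~\ref{Thm:konigs-rep-nonelliptic}. I would first dispose of the case in which $(\phi_t)$ is a group: then, as $(\phi_t)$ is not an elliptic group, it is non-elliptic, so $\tau\in\partial\D$ and $\Omega_0$ is $\Ha$, $\Ha^-$, or a vertical strip --- a domain whose boundary in $\C_\infty$ is locally connected. Hence $h$ extends continuously to $\oD$ (with values in $\C_\infty$) by Carath\'eodory's theorem, the fixed points of $(\phi_t)$ are exactly the one or two points of $\partial\D$ carried to $\infty$, and at every other $\sigma\in\partial\D$ one has $\lim_{z\to\sigma}h(z)=h(\sigma)\in\partial\Omega_0\subset\C$ and $\lim_{z\to\sigma}\Re h(z)=\Re h(\sigma)$ --- exactly the assertion. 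From then on I would assume $(\phi_t)$ is not a group; as in Remark~\ref{Rem:back-model}, $\Omega_0$ is then $\lambda$-spirallike with $0=h(\tau)\in\Omega_0$ when $\tau\in\D$, and starlike at infinity (that is, $\Omega_0+it\subseteq\Omega_0$ for all $t\ge0$) when $\tau\in\partial\D$; in the non-elliptic case this gives $\Omega_0=\{x+iy:x\in I,\ y>b(x)\}$ for an open interval $I$ and an upper semicontinuous $b\colon I\to[-\infty,+\infty)$, and the elliptic case has this local shape after passing to a branch of $\log$.

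By Proposition~\ref{Prop:fixed-infinity}, the condition $\angle\lim_{z\to\sigma}h(z)\in\C$ forces $\sigma$ not to be a fixed point, and every fixed point satisfies $\angle\lim_{z\to\sigma}h(z)=\infty$; so the statement follows once I show that, for $\sigma\in\partial\D$ \emph{not} a fixed point, the prime end $\underline y:=\hat h(\underline x_\sigma)$ of $\Omega_0$ has impression a single point $w_0\in\C$. Indeed, then $h$ has the unrestricted limit $w_0$ at $\sigma$, whence $\angle\lim_{z\to\sigma}h(z)=w_0\in\C$; moreover $\Re h(z)\to\Re w_0$ if $\tau\in\partial\D$, while, writing $w_0=e^{-\lambda t_0+i\theta_0}$, $\Arg_\lambda h(z)\to\theta_0$ if $\tau\in\D$ (here $w_0\neq0$ because $0\in\Omega_0$, and $\Arg_\lambda$ is continuous off $0$). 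To prove the single point claim I would fix a circular null chain $(C_n)$ representing $\underline y$, let $V_n$ be the component of $\Omega_0\setminus C_n$ it separates off, and proceed in two steps. \emph{Step 1: $\infty\notin I(\underline y)$.} Otherwise $\bigcap_n\overline{V_n}$ contains $\infty$; the starlike at infinity structure of $\Omega_0$ --- through the crosscut/upper-semicontinuity considerations of Step 2, applied to the directions along which the $V_n$ escape to $\infty$ --- forces $\bigcap_n\overline{V_n}=\{\infty\}$, i.e.\ $\lim_{z\to\sigma}h(z)=\infty$; by Proposition~\ref{Prop:fixed-infinity} this makes $\sigma$ a fixed point, a contradiction. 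Hence $I(\underline y)$ is compact in $\C$ and $V_n$ is eventually bounded. \emph{Step 2: $\bigcap_n\overline{V_n}$ is a point.} Since the complement of a starlike at infinity domain is a union of closed downward vertical half-lines (together with the two vertical lines over $\partial I$), $\Omega_0$ has no bottleneck near a finite boundary point beyond trivial vertical slits: if arbitrarily small crosscuts cut off sets of diameter bounded away from $0$, then $b$ would have to take, arbitrarily close to some $x_0\in I$, values exceeding $b(x_0)$ by a fixed positive amount, contradicting the upper semicontinuity of $b$. Hence $\mathrm{diam}\,V_n\to0$ and $I(\underline y)=\{w_0\}$ (in the elliptic case apply Step 2 to a branch of $\log$ near $I(\underline y)\subset\C\setminus\{0\}$).

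The step I expect to be the main obstacle is Step 2: converting ``no bottleneck'' into a clean contradiction with the upper semicontinuity of $b$ requires a careful --- though elementary --- topological argument about crosscuts in starlike at infinity domains, and the same mechanism is what makes Step 1 go through. By comparison, the group case is immediate, and the passage from ``single point impression'' to the three stated conclusions is routine.
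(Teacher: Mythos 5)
Your reduction of the statement to the claim that, for a non-fixed $\sigma$, the impression $I(\hat h(\underline{x}_\sigma))$ is a single finite point---so that $h$ has an \emph{unrestricted} finite limit at $\sigma$---is where the argument breaks down: that claim is false, and with it both Step~1 and Step~2. Consider the comb domain $\Omega_0=\{w: 0<\Re w<1\}\setminus\bigcup_{n\ge 2}\{w:\Re w=1/n,\ \Im w\le 0\}$, which is simply connected and starlike at infinity, hence equals $h(\D)$ for the K\"onigs function of a hyperbolic semigroup which is not a group. The prime end $p_0$ represented by the null chain of crosscuts $C_r\subset\{|w|=r\}$ joining the boundary line $\{\Re w=0\}$ to the nearest tooth has impression $\{iy:\ y\le 0\}\cup\{\infty\}$: it contains $\infty$ together with a whole half-line of finite points, and the cut-off domains $V_r$ have unbounded diameter. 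Yet $\sigma_0:=\hat h^{-1}(p_0)\in\partial\D$ is \emph{not} a fixed point: the crosscuts $C_r$ shrink to $0$, so $0$ is a principal point of $p_0$, the angular cluster set of $h$ at $\sigma_0$ contains the finite point $0$, and Proposition~\ref{Prop:fixed-infinity} excludes $\sigma_0$ from being fixed. So Step~1 ($\infty\in I\Rightarrow I=\{\infty\}$) and Step~2 ($\diam V_n\to 0$) both fail, and indeed $h$ has no unrestricted limit at $\sigma_0$. The upper-semicontinuity mechanism you rely on only controls bottlenecks above interior points $x_0\in I$; the comb accumulates at an endpoint of $I$, where $b$ is undefined, and that is exactly where non-fixed points with nondegenerate impression occur.

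The corollary itself survives because it asserts strictly less than what you set out to prove: only the angular limit of $h$ and the unrestricted limit of $\Re h$ (resp.\ $\Arg_\lambda h$), which in the example above are both $0$. A repair along your lines would have to establish, for non-fixed $\sigma$, that (i) the finite part of the impression lies on a single vertical line $\{\Re w=a\}$ (resp.\ a single $\lambda$-spiral)---this does follow from the starlike-at-infinity/spirallike structure by the null-chain analysis in the proof of Theorem~\ref{Thm:konigs-rep-elliptic}, and already yields $\lim_{z\to\sigma}\Re h(z)=a$---and, separately, that (ii) the principal point set of the prime end is a single finite point, giving $\angle\lim_{z\to\sigma}h(z)\in\C$; alternatively one argues by exclusion from Theorem~\ref{Thm:konigs-rep-nonelliptic} and Corollary~\ref{Cor:fijo-super-nonelliptico}. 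Your disposal of the (necessarily non-elliptic) group case is correct.
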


\section{Examples}
\label{Sec:examples}

Let $(\phi_t)$ be a semigroup, not a group, in $\D$ and let $h$ be its K\"onigs function. Recall that, by Theorem \ref{Thm:petals-koenigs} and Theorem \ref{Thm:koenigs-petals} there is a one-to-one correspondence between hyperbolic petals of $(\phi_t)$ and maximal strips in the non-elliptic case (or maximal spirallike sectors in the elliptic case) in $h(\D)$. Moreover, the repelling spectral value can be read by the width of the strip (or the angle of the spirallike sector). Also,  by Theorem \ref{Thm:parabolic-petal-geometric-koenigs}, there is a one-to-one correspondence between parabolic petals and maximal half-planes. The previous developed theory allows also to read information on the boundary of a petal using directly the image of $h$. We summarize and translate here the results in a suitable manageable way. We start with the elliptic case (recall Definition \ref{spir-def}):

\begin{proposition}\label{Prop:leer-frontera-h-petalo-ell}
Let $(\phi_t)$ be an elliptic semigroup, not a group, with Denjoy-Wolff point $\tau\in \D$ and spectral value $\lambda\in \C$, $\Re\lambda>0$ and let $h$ be its K\"onigs function. Let $\Delta$ be a hyperbolic petal which corresponds to the maximal spirallike sector $\mathrm{Spir}[\mu, 2\alpha, \theta_0]$, for some $\alpha\in [0,\pi)$ and $\theta_0\in [-\pi, \pi)$. Let $\sigma\in \partial \D\cap \partial \Delta$ be the only repelling fixed point of $(\phi_t)$ contained in $\partial\Delta$. Let $S:=\mathrm{spir}_\lambda[e^{i(\theta_0+\alpha})]\setminus\{0\}$ or $S:=\mathrm{spir}_\lambda[e^{i(\theta_0-\alpha})]\setminus\{0\}$. Then one and only one of the following happens:
\begin{enumerate}
\item There exists $a>0$ such that $S\cap \{w\in \C: |w|<a\}\subset h(\D)$ and $S\cap \{w\in \C: |w|\geq a\}\cap h(\D)=\emptyset$. This is the case if and only if $h^{-1}(S\cap \{w\in \C: |w|<a\})$ is a connected component of $\partial \Delta\cap (\D\setminus\{\tau\})$ whose closure is a Jordan arc with end points $\tau$ and a non-fixed point $p\in \partial \D$ such that $\angle\lim_{z\to p}h(z)=S\cap \{w\in \C: |w|=a\}$. 
\item $S\subset h(\D)$. This is the case if and only if $h^{-1}(S)$ is a connected component of $\partial \Delta\cap (\D\setminus\{\tau\})$ whose closure is a Jordan arc with end points $\tau$ and $\sigma$. 
\end{enumerate}
\end{proposition}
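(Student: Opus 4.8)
The plan is to pull everything back to the model side via the K\"onigs function and to read the boundary structure of $\Delta$ off the geometry of the spirallike sector $h(\Delta)$.

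\emph{Setup.} Since $(\phi_t)$ is elliptic and not a group, its canonical model is $(\C,h,z\mapsto e^{-\lambda t}z)$ (Theorem \ref{modelholo}), so $h$ is a homeomorphism of $\D$ onto $h(\D)$ with $h(\tau)=0$, and $e^{-\lambda t}h(\D)=h(\phi_t(\D))\subseteq h(\D)$ for $t\ge0$; thus $h(\D)$ is $\lambda$-spirallike and contains a disc about $0$. By Theorem \ref{Thm:petals-koenigs}, $\Sigma:=h(\Delta)$ is a maximal $\lambda$-spirallike sector, open in $\C$, with $\partial\Sigma=S_+\cup S_-\cup\{0\}$ where $S_\pm:=\mathrm{spir}_\lambda[e^{i(\theta_0\pm\alpha)}]\setminus\{0\}$ (and $S_+=S_-$ when $2\alpha=2\pi$); the set $S$ of the statement is one of these. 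As $h$ is a homeomorphism, $h(\overline\Delta\cap\D)=\overline\Sigma\cap h(\D)$, and removing $\Sigma$ and the point $\tau=h^{-1}(0)$ gives
\[
\partial\Delta\cap(\D\setminus\{\tau\})=h^{-1}\big((S_+\cup S_-)\cap h(\D)\big).
\]
Since $S_+\cap h(\D)$ and $S_-\cap h(\D)$ are disjoint, connected, and relatively clopen in the right-hand side, the connected components of $\partial\Delta\cap(\D\setminus\{\tau\})$ are exactly $h^{-1}(S_+\cap h(\D))$ and $h^{-1}(S_-\cap h(\D))$.

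\emph{The dichotomy.} Write $S=\{w(s):s\in\R\}$ with $w(s)=e^{-\lambda s}e^{i(\theta_0\pm\alpha)}$, so $|w(s)|=e^{-s\Re\lambda}$. The set $E:=\{s:w(s)\in h(\D)\}$ is open; it is nonempty because $w(s)\to0$ as $s\to+\infty$ while $h(\D)$ contains a disc about $0$; and it is upward closed, because $w(s_0)\in h(\D)$ forces $w(s_0+t)=e^{-\lambda t}w(s_0)\in h(\D)$ for $t\ge0$ by $\lambda$-spirallikeness. Hence $E=(s^*,+\infty)$ for some $s^*\in[-\infty,+\infty)$. If $s^*=-\infty$ then $S\subset h(\D)$; if $s^*\in\R$, set $a:=e^{-s^*\Re\lambda}$, so $w(s)\in h(\D)\iff|w(s)|<a$, i.e. $S\cap\{|w|<a\}\subset h(\D)$ and $S\cap\{|w|\ge a\}\cap h(\D)=\emptyset$. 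These are precisely the geometric alternatives in (2) and (1); they are mutually exclusive and exhaustive.

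\emph{Identifying the far end.} In either case $C:=h^{-1}(S\cap h(\D))$ is a connected component of $\partial\Delta\cap(\D\setminus\{\tau\})$, so by Corollary \ref{Cor:frontera-interior-petalo} its closure $\overline C$ is a Jordan arc with end points $\tau$ and some $p_0\in\partial\D$ (an arc, not a curve, since $\tau\in\D$, hence $p_0\ne\tau$). Parametrising $\overline C$ by $s$: as $h^{-1}(w(s))\to h^{-1}(0)=\tau$ when $s\to+\infty$, the $\tau$-end is $s\to+\infty$ and $p_0$ is the limit at the other end, along which $h(h^{-1}(w(s)))=w(s)$ tends to $\infty$ in case (2) and to the finite point $w(s^*)$ with $|w(s^*)|=a$ in case (1). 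Since $h$ is univalent, hence a normal function, the Lehto--Virtanen theorem yields $\angle\lim_{z\to p_0}h(z)=\infty$ in case (2) and $\angle\lim_{z\to p_0}h(z)=w(s^*)$ in case (1). By Proposition \ref{Prop:fixed-infinity}, in case (2) the point $p_0\in\partial\Delta\setminus\{\tau\}$ is a boundary fixed point of $(\phi_t)$, hence $p_0=\sigma$ by Proposition \ref{Prop:petal-no-other-fixed}, so $\overline C$ is a Jordan arc from $\tau$ to $\sigma$; and in case (1) $p_0$ is not a fixed point, $\angle\lim_{z\to p_0}h(z)$ equals the single point $S\cap\{|w|=a\}$, and $\overline C=\overline{h^{-1}(S\cap\{|w|<a\})}$. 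Thus each geometric alternative implies the corresponding description.

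\emph{Conclusion.} For fixed $S$, the component $C$ and its closing arc $\overline C$ are uniquely determined, and the two descriptions are mutually exclusive, since the far end point is a non-fixed point in (1) but the fixed point $\sigma$ in (2). Combined with the fact that exactly one geometric alternative holds, the forward implications above become equivalences, and exactly one of (1), (2) occurs; the other choice of $S$ is treated identically. I expect the main obstacle to be the endpoint identification: passing from ``$h\to\infty$ (resp.\ a finite value) along the boundary arc $C$'' to an angular limit at $p_0$ (this is where normality of $h$ and Lehto--Virtanen are needed), and then detecting whether $p_0$ is a fixed point through the criterion $\angle\lim h=\infty$.
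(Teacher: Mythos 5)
Your proof is correct and follows essentially the same route as the paper: identify $\partial\Delta\cap(\D\setminus\{\tau\})$ with the boundary spirals of the maximal sector via $h$, invoke Corollary \ref{Cor:frontera-interior-petalo} for the Jordan-arc structure, and use the Lehto--Virtanen theorem together with Proposition \ref{Prop:fixed-infinity} (and Proposition \ref{Prop:petal-no-other-fixed}) to decide whether the far endpoint is $\sigma$ or a non-fixed point. You merely supply details the paper compresses, namely the ``upward-closed interval'' argument showing the two geometric alternatives are exclusive and exhaustive, and the explicit treatment of case (2).
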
 
\begin{proof} Assume $S:=\mathrm{spir}_\lambda[e^{i(\theta_0+\alpha})]\setminus\{0\}$ (the other case is similar).

(1) Clearly, $h^{-1}(S\cap \{w\in \C: |w|<a\})$ is a connected component of $\partial \Delta\cap (\D\setminus\{\tau\})$. By  Corollary \ref{Cor:frontera-interior-petalo}, the closure of $h^{-1}(S\cap \{w\in \C: |w|<a\})$ is a Jordan arc joining $\tau$ with a point $p\in\partial\D$ which can be either $\sigma$ or a non-fixed point. Let $\gamma:(-\infty, t_0)\ni t\mapsto e^{\lambda t+i(\theta_0+\alpha)}$ be a parameterization of $S$, with $t_0\in \R$ such that $e^{\lambda t_0+i(\theta_0+\alpha)}=S\cap \{w\in \C: |w|=a\}$. Then  $\lim_{t\to t_0}h^{-1}(\gamma(t))=p$. Since $\lim_{t\to t_0}h(h^{-1}(\gamma(t)))=S\cap \{w\in \C: |w|=a\}$, Lehto-Virtanen Theorem, implies that $\angle\lim_{z\to p}h(z)=S\cap \{w\in \C: |w|=a\}$. In particular, $\angle\lim_{z\to p}|h(z)|<+\infty$, and hence $p$ is not a fixed point by Proposition \ref{Prop:fixed-infinity}.

(2) The argument is similar and we leave the proof to the reader. 
\end{proof}

A similar argument allows to handle the non-elliptic case:

\begin{proposition}\label{Prop:leer-frontera-h-petalo-nell}
Let $(\phi_t)$ be a  non-elliptic semigroup, not a group, with Denjoy-Wolff point $\tau\in \partial\D$  and let $h$ be its K\"onigs function. Let $\Delta$ be a hyperbolic petal which corresponds to the maximal strip $S=\{w\in \C: a_{1}<\Re w<a_{2}\}$ for some $a_{1}, a_{2}\in \R$, $a_{1}<a_{2}$. Let $\sigma\in \partial \D\cap \partial \Delta$ be the only repelling fixed point of $(\phi_t)$ contained in $\overline{\Delta}$. Fix $j\in \{1,2\}$.
 Then one and only one of the following happens for $j=1, 2$:
\begin{enumerate}
\item $\{w\in \C: \Re w=a_{j}\}\cap h(\D)=\emptyset$.
\item There exists $r\in \R$ such that $\{w\in \C: \Re w=a_{j}, \Im w>r\}\subset h(\D)$ and $\{w\in \C: \Re w=a_{j}, \Im w\leq r\}\cap h(\D)=\emptyset$. This is the case if and only if $h^{-1}(\{w\in \C: \Re w=a_{j}, \Im w>r\})$ is a connected component of $\partial \Delta\cap\D$ whose closure is a Jordan arc with end points $\tau$ and a non-fixed point $p\in \partial \D$ such that $\angle\lim_{z\to p}h(z)=a_{j}+ir$. 
\item $\{w\in \C: \Re w=a_{j}\}\subset h(\D)$. This is the case if and only if $h^{-1}(\{w\in \C: \Re w=a_{j}\})$ is a connected component of $\partial \Delta\cap \D$ whose closure is a Jordan arc with end points $\tau$ and $\sigma$. 
\end{enumerate}
\end{proposition}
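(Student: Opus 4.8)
The plan is to transport the whole picture through the canonical model $(\Omega,h,\psi_t)$ of $(\phi_t)$, where $\psi_t(w)=w+it$ and $\Omega\in\{\C,\Ha,\Ha^-\}$, using that $h(\D)$ is starlike at infinity (invariant under $w\mapsto w+it$, $t\ge0$) and that, by Theorem~\ref{Thm:petals-koenigs}, $h(\Delta)=S=\{w:a_1<\Re w<a_2\}$. Fix $j\in\{1,2\}$ and put $L_j:=\{w:\Re w=a_j\}\cap h(\D)$. Since $h(\D)$ is starlike at infinity, $L_j$ is a relatively open, upward-invariant subset of the vertical line $\{\Re w=a_j\}$, hence it is either empty, an open half-line $\{\Re w=a_j,\ \Im w>r\}$ with $a_j+ir\in\partial h(\D)$, or the whole line. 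These three alternatives are mutually exclusive and exhaustive and are precisely cases (1), (2), (3); this proves the first assertion of the statement.

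Next I would identify the relevant boundary arc. As $h\colon\D\to h(\D)$ is a homeomorphism onto the open set $h(\D)$, it maps $\partial\Delta\cap\D$ homeomorphically onto $\partial_{\C}S\cap h(\D)=L_1\cup L_2$. The sets $L_1,L_2$ lie on distinct vertical lines, so they are disjoint, and each of them is connected (empty, half-line or line); hence the nonempty sets among $h^{-1}(L_1),h^{-1}(L_2)$ are exactly the connected components of $\partial\Delta\cap\D$. In particular, in cases (2) and (3) the set $h^{-1}(L_j)$ is a connected component of $\partial\Delta\cap(\D\setminus\{\tau\})$ (note $\tau\in\partial\D$), so Corollary~\ref{Cor:frontera-interior-petalo} gives that $\overline{h^{-1}(L_j)}$ is a Jordan arc joining $\tau$ to a point $p\in\partial\D$ which is either $\sigma$ or not a fixed point of $(\phi_t)$.

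To decide which, parametrise $L_j$ by $\gamma(t):=h^{-1}(a_j+it)$ (shifted so that $0$ lies in the parameter interval); then $\phi_s(\gamma(t))=\gamma(t+s)$, so $\gamma$ is the maximal invariant curve through each of its points, with domain $(a,+\infty)$ and $\lim_{t\to+\infty}\gamma(t)=\tau$. In case (3) the whole line lies in $h(\D)$, so $a=-\infty$; by Proposition~\ref{Prop:compl-inv-puntos}(2) the starting point $p=\lim_{t\to a^+}\gamma(t)$ is a boundary fixed point, and then Corollary~\ref{Cor:frontera-interior-petalo} forces $p=\sigma$. In case (2) the curve can only be continued down to the level where $h\circ\gamma$ reaches $a_j+ir\notin h(\D)$, so $a$ is finite; by Proposition~\ref{Prop:compl-inv-puntos}(1) the starting point $p$ is not a fixed point, and since $\lim_{t\to a^+}\gamma(t)=p$ while $\lim_{t\to a^+}h(\gamma(t))=a_j+ir$, the Lehto--Virtanen theorem gives $\angle\lim_{z\to p}h(z)=a_j+ir$ (which also re-confirms, via Proposition~\ref{Prop:fixed-infinity}, that $p$ is non-fixed). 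This establishes the ``only if'' parts of (2) and (3); the ``if'' parts are automatic, because the trichotomy of the first paragraph is exhaustive and the three descriptions of $h^{-1}(L_j)$ attached to it---empty, a Jordan arc to a non-fixed point with $\angle\lim h=a_j+ir$, a Jordan arc to $\sigma$---are mutually exclusive.

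There is no genuine analytic obstacle here; the delicate point is the topological bookkeeping of the previous paragraph, namely checking that $h^{-1}(L_j)$ is a \emph{full} connected component of $\partial\Delta\cap\D$ and then extracting from Corollary~\ref{Cor:frontera-interior-petalo} together with Proposition~\ref{Prop:compl-inv-puntos} whether its free end point is $\sigma$ (the case $a=-\infty$) or a non-fixed point (the case $a$ finite). This is the straightforward non-elliptic transcription of the argument already given for Proposition~\ref{Prop:leer-frontera-h-petalo-ell}, with the imaginary part and the real part of $h$ playing the roles of the modulus and of the $\lambda$-argument of the elliptic case.
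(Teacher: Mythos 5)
Your proof is correct and follows essentially the route the paper intends: the paper itself only proves the elliptic analogue (Proposition \ref{Prop:leer-frontera-h-petalo-ell}) and declares the non-elliptic case "similar", and your argument is precisely that transcription — the starlike-at-infinity trichotomy for $L_j$, the identification of $h^{-1}(L_j)$ as a connected component of $\partial\Delta\cap\D$, Corollary \ref{Cor:frontera-interior-petalo} for the Jordan-arc structure, and Proposition \ref{Prop:compl-inv-puntos} together with the Lehto--Virtanen theorem to distinguish the end point $\sigma$ (when $a=-\infty$) from a non-fixed point with $\angle\lim_{z\to p}h(z)=a_j+ir$ (when $a$ is finite).
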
 

Finally, we have the parabolic petals:

\begin{proposition}\label{Prop:leer-frontera-p-petalo}
Let $(\phi_t)$ be a  parabolic semigroup, not a group, with Denjoy-Wolff point $\tau\in \partial\D$  and let $h$ be its K\"onigs function. Let $\Delta$ be a parabolic petal which corresponds to the maximal half-plane $H=\{w\in \C: \Re w>a\}$ for some $a\in \R$. 
 Then one and only one of the following happens:
\begin{enumerate}
\item There exists $r\in \R$ such that $\{w\in \C: \Re w=a, \Im w>r\}\subset h(\D)$ and $\{w\in \C: \Re w=a, \Im w\leq r\}\cap h(\D)=\emptyset$. This is the case if and only if $h^{-1}(\{w\in \C: \Re w=a, \Im w>r\})$ is a connected component of $\partial \Delta\cap \D$ whose closure is a Jordan arc with end points $\tau$ and a non-fixed point $p\in \partial \D$ such that $\angle\lim_{z\to p}h(z)=a+ir$. 
\item $\{w\in \C: \Re w=a\}\subset h(\D)$. This is the case if and only if $h^{-1}(\{w\in \C: \Re w=a\})$ is a connected component of $\partial \Delta\cap \D$ whose closure $J$ is a Jordan curve with  $J\cap \partial \D=\{\tau\}$. 
\end{enumerate}
\end{proposition}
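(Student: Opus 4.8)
The plan is to transfer everything to the canonical model $(\Omega,h,w\mapsto w+it)$ of the parabolic semigroup, where $\Omega\in\{\C,\Ha,\Ha^{-}\}$ and, by Theorem \ref{Thm:parabolic-petal-geometric-koenigs}, $h(\Delta)=H:=\{w\in\C:\Re w>a\}$. Two facts will be used throughout: $h\colon\D\to h(\D)$ is a homeomorphism, and $h(\D)$ is invariant under $w\mapsto w+it$ for $t\ge0$ (because $h\circ\phi_t=(\,\cdot\,+it)\circ h$ and $\phi_t(\D)\subseteq\D$). Put $L_a:=\{\Re w=a\}$. Then $L_a\cap h(\D)$ is relatively open in $L_a$ and closed under $w\mapsto w+it$ for $t>0$, so it is either empty, a proper half-line $\{a+is:s>r\}$ with $r\in\R$, or all of $L_a$. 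The empty case cannot occur: then $h(\D)$, being connected and containing $H$, would equal $H$, hence be invariant under all vertical translations, and the absorbing identity $\bigcup_{t\ge0}(h(\D)-it)=\Omega$ would force $\Omega=H$; but $H$ is never one of $\C,\Ha,\Ha^{-}$ unless $H=\Ha$, in which case $h(\D)=\Omega$ and $(\phi_t)$ is a group, contrary to hypothesis. Thus exactly one of the geometric conditions in (1), (2) holds.

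Since $h$ is a homeomorphism onto $h(\D)$ and $\overline H\cap h(\D)=H\cup(L_a\cap h(\D))$, the relative boundary of $H$ in $h(\D)$ is $L_a\cap h(\D)$, so $\partial\Delta\cap\D=h^{-1}(L_a\cap h(\D))$; being connected, this is the unique connected component of $\partial\Delta\cap(\D\setminus\{\tau\})$ (recall $\tau\in\partial\D$). Parametrize $L_a$ by $\gamma_0(s)=a+is$. Because $\phi_t$ acts on $h(\D)$ as $w\mapsto w+it$, the curve $\delta:=h^{-1}\circ\gamma_0$ is, after a shift of parameter, a maximal invariant curve of $(\phi_t)$: in case (1) defined on $(r,+\infty)$, in case (2) on all of $\R$, and in both cases $\delta(s)\to\tau$ as $s\to+\infty$. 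By Corollary \ref{Cor:frontera-interior-petalo}(2), $\overline{\delta}$ is a Jordan arc (or curve) with end points $\tau$ and a point $p_0\in\partial\D$, and either $p_0=\tau$ or $p_0$ is not a fixed point.

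In case (1) the parameter interval $(r,+\infty)$ has a finite left end, so the backward trajectory of a point $\delta(s_0)$, computed in the model, is $u\mapsto a+i(s_0-u)$ and leaves $h(\D)$ at $u=s_0-r$ (using $\{a+is:s\le r\}\cap h(\D)=\emptyset$); hence $\delta(s_0)\notin\mathcal W$, and Proposition \ref{Prop:compl-inv-puntos}(1) gives that the starting point $p$ of $\delta$ is not a fixed point, in particular $p\ne\tau$. Therefore $\overline{\delta}$ is a genuine Jordan arc with end points $\tau$ and the non-fixed point $p$, and $\angle\lim_{z\to p}h(z)=\lim_{s\to r^+}h(\delta(s))=a+ir$ by the Lehto--Virtanen Theorem — here $h$ is a normal function, since its omitted set $\C\setminus h(\D)$ is nonempty and stable under downward translation, hence infinite. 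In case (2), $\delta$ is defined on all of $\R$, and $s\mapsto\delta(-s)$ is a backward orbit, which by Lemma \ref{Lem:converge-no-back} converges to a boundary fixed point $q$; since $q\in\overline\Delta$ and $\tau$ is the only boundary fixed point of $(\phi_t)$ in $\overline\Delta$ by Proposition \ref{Prop:petal-no-other-fixed}(2), we get $q=\tau$. Thus both ends of $\delta$ land at $\tau$, so $J:=\overline{\delta(\R)}$ is a Jordan curve with $J\cap\partial\D=\{\tau\}$.

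Finally, the two topological conclusions are mutually exclusive (a Jordan arc whose second end point $p$ differs from $\tau$, versus a Jordan curve meeting $\partial\D$ only at $\tau$); combined with the dichotomy of the first paragraph and the forward implications just proved, this yields both ``if and only if'' statements — if the conclusion of (1) holds, the geometric condition of (2) must fail, hence that of (1) holds, and symmetrically for (2). The step I expect to be the main obstacle is precisely the first paragraph: pinning down that $L_a\cap h(\D)$ is empty, a proper upper half-line, or all of $L_a$, and excluding the empty case via the absorbing property, since this is where the parabolicity of the petal is genuinely used (for a hyperbolic petal the analogous boundary line can miss $h(\D)$ entirely, cf.\ Proposition \ref{Prop:leer-frontera-h-petalo-nell}(1)); a minor secondary point is checking that $h$ is normal so that Lehto--Virtanen applies. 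The remainder is bookkeeping with the structure of petals already established in Proposition \ref{Prop:compl-inv-puntos}, Corollary \ref{Cor:frontera-interior-petalo}, and Proposition \ref{Prop:petal-no-other-fixed}.
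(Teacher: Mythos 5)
Your proof is correct and follows essentially the same route as the paper's: the paper defers to the argument of Proposition \ref{Prop:leer-frontera-h-petalo-ell} (identify $\partial\Delta\cap\D$ with the preimage of the relative boundary line, use the maximal invariant curve through it together with Corollary \ref{Cor:frontera-interior-petalo} and Lehto--Virtanen) and singles out exactly the point you flag as the main obstacle, namely that $\{\Re w=a\}\cap h(\D)=\emptyset$ would force $h(\D)=H$ and hence $(\phi_t)$ to be a group. Your substitutions of Proposition \ref{Prop:compl-inv-puntos}(1) for Proposition \ref{Prop:fixed-infinity} in case (1), and of Lemma \ref{Lem:converge-no-back} plus Proposition \ref{Prop:petal-no-other-fixed}(2) in case (2), are minor variants within the same framework.
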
 
\begin{proof}
The proof is similar to that of Proposition \ref{Prop:leer-frontera-h-petalo-ell}, so we leave it to the reader. The only issue here is to show that the case $\{w\in \C: \Re w=a\}\cap h(\D)=\emptyset$ cannot happen. Indeed, if this is the case, then $h(\D)=H$ and $(\phi_t)$ is a parabolic group. 
 \end{proof}

A similar result holds in case the maximal half-plane associated with the parabolic petal is $H=\{w\in \C: \Re w<a\}$ for some $a\in \R$.

\smallskip

We give now a list of examples of petals of all types described in Proposition \ref{Prop:shape-of-petals}.

\begin{example} \label{Ex:petal type1}
Let $h$ be the Koebe function $h(z):=\frac{z}{(1-z)^{2}}$, $z\in \D$. It is univalent and $h(\D)=\C\setminus (-\infty, -1/4]$. Consider the semigroup whose model is $(\C,h,e^{-t}z)$, that is, $\phi_{t}(z):=h^{-1}(e^{-t}h(z))$, for all $z\in \D$ and $t\geq 0$. Since $\cap_{t\geq 0}e^{-t}h(\D)\setminus \{0\}=\C\setminus (-\infty, 0]=\mathrm{Spir}[-1, -\pi, 2\pi]$ and it is a maximal spirallike sector of $h(\D)$, Theorem \ref{Thm:koenigs-petals} shows that 
\[
h^{-1}(\mathrm{Spir}[-1, -\pi, 2\pi])=\D\setminus (-1,0]
\]
 is a hyperbolic petal for $(\phi_{t})$. Clearly, it is the unique petal of the semigroup. Therefore, $(\phi_t)$ has a unique boundary fixed point $\sigma\in \partial \D$, which is repelling with repelling spectral value $-2\pi$.  Since $\lim_{(0,1)\ni r\to 1}h(r)=\infty$, we get $\sigma=1$ by  Proposition \ref{Prop:petal-no-other-fixed}. This petal is an example of  the type described in Proposition \ref{Prop:shape-of-petals}(1).
\end{example}

\begin{example} \label{Ex:petal type2}
Consider the domain
$$
\Omega=\strip \cup (1+i\R)\cup \{w\in \strip+1: \Im w (1-\Re w)>1\}\subset\strip_2.
$$
Let $h:\D \to \C$ be univalent  such that  $h(\D)=\Omega $. By construction, $\Omega+it\subset \Omega $ for all $t\geq 0$ and $\cup_{t\geq 0}(\Omega-it)=\strip_{2}$. 
Consider the semigroup whose model is $(\strip_2,h,z+it)$, that is, $\phi_{t}(z):=h^{-1}(h(z)+it)$, for all $z\in \D$ and $t\geq 0$. Let us call $\tau\in \partial \D$ its Denjoy-Wolff point. 
Since $\cap_{t\geq 0} (\Omega+it)=\cup _{x\in (0,1]}(x+i\R)=\strip\cup (1+i\R)$, $h(\D)$ contains a unique maximal strip,  $\strip$, whose boundary is $i\R\cup (1+i\R)$. Let $\Delta=h^{-1} (\strip)$. 
Therefore, $\Delta$ is the unique hyperbolic petal of the semigroup $(\phi_t)$. Let us denote by $\sigma$ the repelling fixed point associated with $\Delta$ given by Proposition \ref{Prop:petal-no-other-fixed}.  By Proposition \ref{Prop:leer-frontera-h-petalo-nell}, the maximal invariant curve $\R\ni t\mapsto h^{-1}(1+it)$
is a connected component of $\partial \Delta\cap \D$ whose closure is a Jordan arc with end points $\tau$ and $\sigma$.  It divides the unit disc in two connected components, one of them is the petal, and the other one is $B=h^{-1} (\{w\in \strip+1: \Im w (1-\Re w)>1\})$. Clearly, $\overline \Delta \cap \partial \D$ and $\overline B\cap \partial \D $ are the two Jordan arcs in $\partial \D$ that joins $\sigma$ and $\tau$. Let us denote by $J$ the one which  is included in $\partial \Delta$. 
Then $\partial \Delta=\{\tau, \sigma\}\cup h^{-1}(1+i\R)\cup J$. Thus $\Delta$ is an example of a petal of the type described in  Proposition \ref{Prop:shape-of-petals}(2). 
\end{example}

\begin{example} \label{Ex:petal type2-2}
Consider the domain
\begin{equation*}
\Omega=\{w\in \strip: \Im w >0\}\cup (1+i(0,+\infty))\cup  (\Ha+1)\subset \Ha.
\end{equation*}
Let $h:\D \to \C$ be univalent  such that  $h(\D)=\Omega $. Since $\overline{\Omega}^\infty$ is a Jordan domain, there exists a homeomorphism $\tilde{h}:\overline{\D}\to \overline{\Omega}^\infty$ such that $\tilde{h}|_\D=h$.
Let $(\phi_t)$ be the parabolic semigroup  whose model is $(\Ha,h,z+it)$, that is, $\phi_{t}(z):=h^{-1}(h(z)+it)$, for all $z\in \D$ and $t\geq 0$. Let $\tau\in \partial \D$ be its Denjoy-Wolff point. 
Note that $\cap_{t\geq 0} (\Omega+it)=\cup _{x\in (1,+\infty)}(x+i\R)=\Ha+1$. Let  $\Delta=h^{-1} (\Ha+1)$. 
Therefore, $\Delta$ is the unique (parabolic) petal of the semigroup $(\phi_t)$. Write $J=h^{-1}(1+i(0,+\infty))\subset \D$, $\sigma =\tilde h^{-1}(1)$ and $A=\tilde h^{-1}(1+i(-\infty,0))\subset \partial  \D$. Then $\partial \Delta=\{\tau\}\cup J\cup \{\sigma\}\cup A$. Thus $\Delta$ is an example of a petal described in  Proposition \ref{Prop:shape-of-petals}(2). The difference with Example \ref{Ex:petal type2} is that $\sigma$ in this case is not a boundary fixed point. 
\end{example}

\begin{example} \label{Ex:petal type3}
Consider the domain
\begin{equation*}
\begin{split}
\Omega=&\{w\in \strip: \Im w (\Re w-1)<\Re w\}\cup (1+i\R)\cup (\strip+1)\cup\\  
&\qquad\qquad\cup(2+i\R)\cup\{w\in \strip+2: \Im w (\Re w-2)>\Re w-3\}\subset\strip_3.
\end{split}
\end{equation*}
Let $h:\D \to \C$ be univalent  such that  $h(\D)=\Omega $. By construction, $\Omega+it\subset \Omega $ for all $t\geq 0$ and $\cup_{t\geq 0}(\Omega-it)=\strip_{3}$. 
Consider the semigroup whose model is $(\strip_3,h,z+it)$, that is, $\phi_{t}(z):=h^{-1}(h(z)+it)$, for all $z\in \D$ and $t\geq 0$. Let  $\tau\in \partial \D$ be its Denjoy-Wolff point. 
Since $\cap_{t\geq 0} (\Omega+it)=\cup _{x\in [1,2]}(x+i\R)$, its interior has a unique maximal strip whose boundary is $(1+i\R)\cup (2+i\R)$. Let $\Delta=h^{-1} (\cup _{x\in (1,2)}(x+i\R))=h^{-1}(\strip+1)$. 
Therefore, $\Delta$ is the unique (hyperbolic) petal of the semigroup $(\phi_t)$. Let us denote by $\sigma$ the repelling fixed point associated with $\Delta$ given by Proposition \ref{Prop:petal-no-other-fixed}. By Proposition \ref{Prop:leer-frontera-h-petalo-nell}(3), $h^{-1}(1+i\R)$ and $h^{-1}(2+i\R)$ are connected components of $\partial \Delta\cap \D$ whose closures  are Jordan arcs with end points $\tau$ and $\sigma$. According to Proposition \ref{Prop:shape-of-petals},  $\partial \Delta=\{\tau, \sigma\}\cup h^{-1}(1+i\R)\cup h^{-1}(2+i\R)$. Thus $\Delta$ is an example of a petal  described in  Proposition \ref{Prop:shape-of-petals}(3).
\end{example}

\begin{example} \label{Ex:petal type4}
Consider the domain
\begin{equation*}
\begin{split}
\Omega=&\{w\in \strip: \Im w >0\}\cup (1+i(0,+\infty))\cup (\strip+1)\cup\\  
&\qquad \qquad\cup(2+i(0,+\infty))\cup\{w\in \strip+2: \Im w>0\}\subset\strip_3.
\end{split}
\end{equation*}
Let $h:\D \to \C$ be univalent  such that  $h(\D)=\Omega $.
Consider the semigroup whose model is $(\strip_3,h,z+it)$, that is, $\phi_{t}(z):=h^{-1}(h(z)+it)$, for all $z\in \D$ and $t\geq 0$. Denote by  $\tau\in \partial \D$ its Denjoy-Wolff point. 
Note that $\cap_{t\geq 0} (\Omega+it)=\cup _{x\in (1,2)}(x+i\R)=\strip+1$. Let  $\Delta=h^{-1} (\strip+1)$. 
Therefore, $\Delta$ is the unique hyperbolic petal of the semigroup $(\phi_t)$. Let us denote by $\sigma$ the repelling fixed point associated with $\Delta$ given by Proposition \ref{Prop:petal-no-other-fixed}. Write $J_j=h^{-1}(j+i(0,+\infty))\subset \D$, $j=1,2$. 
Proposition \ref{Prop:leer-frontera-h-petalo-nell}(2) guarantees that $J_{1}$ and $J_{2}$ are connected components of $\partial \Delta\cap \D$ with end points $\tau$ and a non-fixed point $p_{j}\in \partial \D$ such that $\angle\lim_{z\to p_{j}}h(z)=j+ir$. By default, looking at Proposition \ref{Prop:shape-of-petals}, $\Delta$ is an example of a  described in  statement  (4). Therefore, if $A\subset \partial \D$ is the  closed arc with end points $p_1$ and $p_2$ which does not pass through $\tau$, we have that 
$\partial \Delta=\{\tau\}\cup J_1\cup J_2 \cup A$. 
\end{example}

\begin{example} \label{Ex:petal type5}
Consider the domain
\begin{equation*}
\Omega=\{w\in \strip: \Im w (\Re w-1)<\Re w\}\cup \overline{ (\Ha+1)}\subset \Ha.
\end{equation*}
Let $h:\D \to \C$ be univalent  such that  $h(\D)=\Omega $.
Consider the parabolic semigroup whose model is $(\Ha,h,z+it)$, that is, $\phi_{t}(z):=h^{-1}(h(z)+it)$, for all $z\in \D$ and $t\geq 0$. Let  $\tau\in \partial \D$ be its Denjoy-Wolff point. 
Note that $\cap_{t\geq 0} (\Omega+it)=\cup _{x\in [1,+\infty)}(x+i\R)=\overline {\Ha+1}$ and write $\Delta=h^{-1} (\Ha+1)$. 
Therefore, $\Delta$ is the unique parabolic petal of the semigroup $(\phi_t)$. Let $A=h^{-1}(1+i\R)\subset \D$. Then Proposition \ref{Prop:leer-frontera-p-petalo}(2) shows that $A$
is a connected component of $\partial \Delta\cap \D$ and $\partial \Delta=\{\tau\}\cup A$. Thus $\Delta$ is an example of a petal described in  Proposition \ref{Prop:shape-of-petals}(5).
\end{example}

\section{An example of a semigroup with a non-regular pre-model}\label{sec-nr}

We end up this paper by constructing an example of a semigroup with a repelling fixed point $\sigma$, a pre-model $(\D, g, \eta_t)$ for $(\phi_t)$ at $\sigma$ such that $g$ is {\sl not regular} at $\sigma$.  

Some remarks are in order. Let  $(\D, g, \eta_t)$  be a pre-model for $(\phi_t)$ at a repelling fixed point $\sigma\in\partial \D$. Let $\Delta=g(\D)\subset \D$ be the associated petal.

Since by (2) in the definition of pre-model $\angle\lim_{z\to \sigma}g(z)=\sigma$, by definition, $g$ is {\sl regular} at $\sigma$ if $\angle\lim_{z\to \sigma}\frac{\sigma-g(z)}{\sigma-z}$ exists (finite). 
 By the Julia-Wolff-Carath\'eodory theorem, this condition  is equivalent to 
$\alpha:=\liminf_{z\to \sigma}\frac{1-|g(z)|}{1-|z|}<+\infty$ and, in fact,  $\angle\lim_{z\to \sigma}\frac{\sigma-g(z)}{\sigma-z}=\alpha$. Since $\eta_{-t}(z)$ converges to $\sigma$ non-tangentially  as $t\to+\infty$ for every $z\in \D$, the previous argument implies that $g$ is regular at $\sigma$ if and only if $\limsup_{t\to +\infty}\frac{1-|g(\eta_{-t}(z))|}{1-|\eta_{-t}(z)|}<+\infty$ for every $z\in\D$, 
which,  in turn, is equivalent to
\begin{equation}\label{reg-dw}
\limsup_{t\to +\infty}[\omega(0,\eta_{-t}(z))-\omega(0,g(\eta_{-t}(z)))]<+\infty, \quad \forall z\in \D.
\end{equation}
The map $g$ is a biholomorphism on the image---hence an isometry for the hyperbolic distance---and $g\circ \eta_{-t}=(\phi_t|_\Delta)^{-1}\circ g$ for all $t\geq 0$. Therefore,  
\[
\omega(0,\eta_{-t}(z))=k_\Delta(g(0),g(\eta_{-t}(z)))=k_\Delta(g(0), (\phi_t|_\Delta)^{-1}(g(z))). 
\]
If the semigroup $(\phi_t)$ is parabolic, with universal model $(\C, h, z+it)$, then by Theorem \ref{Thm:petals-koenigs}(2), $h(\Delta)$ is a maximal strip $S\subset h(\D)$. Therefore, taking into account that $h\circ (\phi_t|_\Delta)^{-1}(g(z))=h(g(z))-it$ for all $t\geq 0$, all $z \in \D$, and that $g$ is regular at $\sigma$, it turns out that \eqref{reg-dw} is equivalent to
\begin{equation*}
\limsup_{t\to +\infty}[k_S(h(g(0)),h(g(z))-it)-k_{h(\D)}(h(0),h(g(z))-it)]<+\infty, \quad \forall z\in \D.
\end{equation*}
Using the triangle inequality, setting $w:=h(g(z))$, this latter inequality is equivalent to
\begin{equation}\label{reg-dw2}
\limsup_{t\to +\infty}[k_S(w,w-it)-k_{h(\D)}(w,w-it)]<+\infty \quad \forall w\in S.
\end{equation}

We construct our example as follows. Let $\{y_k\}$ be a strictly decreasing sequence of real numbers,  to be suitable chosen later, such that  $\lim_{k\to \infty}y_k=-\infty$. For $k=1,2,\ldots$, let
\[
D_k:=\C\setminus \{z\in \C: \Re z=\pm (1+\frac{1}{k}), \Im z\leq y_k\}.
\] 
Let $D:=\bigcap_k D_k$. Note that $D$ is a simply connected domain, $D+it\subset D$ for all $t\geq 0$, and $D$ is symmetric with respect to the imaginary axis. Moreover, $D$ contains a maximal strip $S:=\{z\in \C: -1<\Re z<1\}$. 

Let $h:\D\to D$ be a Riemann map. Setting $\phi_t(z):=h^{-1}(h(z)+it))$, $z\in \D$, $t\geq 0$, it follows that $(\phi_t)$ is a holomorphic semigroup of $\D$ with universal model $(\C, h, z+it)$.  By Theorem \ref{Thm:koenigs-petals}, the maximal strip $S$ in $D$ corresponds to a hyperbolic petal, whose closure contains a unique repelling point of $(\phi_t)$, say $\sigma\in \partial \D$. Let $(\D, g, \eta_t)$ be a pre-model for $(\phi_t)$ at $\sigma$.  According to \eqref{reg-dw2}, $g$ is not regular at $\sigma$ if 
\begin{equation}\label{no-reg3}
\limsup_{t\to +\infty}[k_S(0,-it)-k_{D}(0,-it)]=+\infty. 
\end{equation}

Recall that, if $r>0$ and $S_{2r}=\{w\in \C:\, -r<\Re w<r\}$,  then for all $s,t\in \R$, 
\[
k_{S_{2r}}(is,it)=\frac{\pi}{4r}|t-s|.
\]

Given $p\in \D$ and $R>0$, we denote by $D^{hyp}(p,R)$ the hyperbolic disc of center $p$ and  radius $R$, that is, $D^{hyp}(p,R):=\{z\in \D:\, \omega(z,p)<R\}$. Since $\cup_{R>0}D^{hyp}(p,R)=\D$,  it follows that for every $z,w\in \D$, $\lim_{R\to +\infty} k_{D^{hyp}(p,R)}(z,w)=\omega(z,w)$. From this observation, it is not hard to prove the following lemma:

\begin{lemma}\label{Lem:hyp dist in hyp disc}
For all $c>1$ and $M>0$, there exists $R>0$ such that for all $p\in \D$,  
$$
k_{D^{hyp}(p,R)}(z,w)\leq c \, \omega (z,w)
$$
for all $z,w\in D^{hyp}(p,M)$.
\end{lemma}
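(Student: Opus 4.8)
The plan is to use the homogeneity of $\D$ to reduce to the base point $p=0$, and then to carry out an elementary comparison of hyperbolic metric densities.

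First I would reduce to $p=0$. Fix $T_p\in\Aut(\D)$ with $T_p(0)=p$. Being an isometry of $(\D,\omega)$, $T_p$ maps $D^{hyp}(0,\rho)$ onto $D^{hyp}(p,\rho)$ for every $\rho>0$, and in particular it restricts to a biholomorphism of $D^{hyp}(0,R)$ onto $D^{hyp}(p,R)$, hence to an isometry for the intrinsic hyperbolic distances of these two subdomains. Therefore $k_{D^{hyp}(p,R)}(T_pz,T_pw)=k_{D^{hyp}(0,R)}(z,w)$ and $\omega(T_pz,T_pw)=\omega(z,w)$, so it suffices to prove the desired inequality for $p=0$ with an $R$ that will then work for every $p$ simultaneously.

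Next, for $p=0$ I would identify $D^{hyp}(0,\rho)$ with the Euclidean disc $r(\rho)\D$ for an appropriate $r(\rho)\in(0,1)$; since $\bigcup_{R>0}D^{hyp}(0,R)=\D$ we have $r(R)\to1$ as $R\to+\infty$. Put $m:=r(M)$, so $D^{hyp}(0,M)=m\D$ with $m<1$, and denote by $\lambda_\Omega$ the density of the hyperbolic metric of $\Omega$. Using the scaling $z\mapsto z/r$ one gets $\lambda_{r\D}(z)=\frac{1}{r}\lambda_\D(z/r)$, so for $|z|\le m<r$
\[
\frac{\lambda_{r\D}(z)}{\lambda_\D(z)}=\frac{r(1-|z|^2)}{r^2-|z|^2}=1+\frac{(1-r)(r+|z|^2)}{r^2-|z|^2}\le 1+\frac{2(1-r)}{r^2-m^2}.
\]
Given $c>1$, I would then choose $R$ so large that $r:=r(R)>m$ and $\frac{2(1-r)}{r^2-m^2}\le c-1$; with this choice $\lambda_{D^{hyp}(0,R)}\le c\,\lambda_\D$ pointwise on $\overline{m\D}=\overline{D^{hyp}(0,M)}$.

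Finally, for $z,w\in D^{hyp}(0,M)$ let $\gamma$ be the $\D$-geodesic joining $z$ and $w$. Since the distance to a point is convex along geodesics in a space of nonpositive curvature, the closed hyperbolic ball $\overline{D^{hyp}(0,M)}$ is geodesically convex in $\D$, so $\gamma$ remains in $\overline{D^{hyp}(0,M)}\subset D^{hyp}(0,R)$ and is therefore an admissible path for $k_{D^{hyp}(0,R)}$. Using the density bound along $\gamma$ and the fact that $\gamma$ realizes $\omega(z,w)$,
\[
k_{D^{hyp}(0,R)}(z,w)\le\int_\gamma\lambda_{D^{hyp}(0,R)}\le c\int_\gamma\lambda_\D=c\,\omega(z,w),
\]
which, together with the first step, completes the proof. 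No serious obstacle is expected: the only delicate points are the uniformity in $p$---handled by the homogeneity reduction---and the fact that a $\D$-geodesic between two points of a hyperbolic ball stays inside that ball, which is just geodesic convexity of metric balls in nonpositive curvature.
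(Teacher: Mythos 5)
Your proof is correct, and it is essentially the argument the paper has in mind (the paper only remarks that the lemma "is not hard to prove" from the pointwise convergence $k_{D^{hyp}(p,R)}(z,w)\to\omega(z,w)$, leaving the details to the reader). Your reduction to $p=0$ by homogeneity, the explicit comparison of densities on $\overline{D^{hyp}(0,M)}=\overline{m\D}$, and the geodesic convexity of hyperbolic balls together supply exactly the uniformity in $p$, $z$ and $w$ that the bare pointwise-limit observation does not by itself give, so the write-up is complete as it stands.
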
 

Now we need a localization lemma which allows to suitably choose the sequence $\{y_n\}$:

\begin{lemma}\label{Lem:pre model no conforme}
There exist two strictly decreasing sequences $\{y_{k}\}$ and $\{\alpha_k\}$ of real numbers, both converging to $-\infty$, such that, for each $k\in \N$, 
\[
k_D((\alpha_k+\frac{1}{2})i, (\alpha_k-\frac{1}{2})i)\leq (1+\frac{1}{2k})k_{D_k}((\alpha_k+\frac{1}{2})i, (\alpha_k-\frac{1}{2})i). 
\]
\end{lemma}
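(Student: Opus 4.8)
Write $P_k^{\pm}:=\bigl(\alpha_k\pm\tfrac12\bigr)i$. The plan is to squeeze $k_D(P_k^+,P_k^-)$ between $k_{D_k}(P_k^+,P_k^-)$ and the hyperbolic distance in a small half-strip around $\alpha_k i$ on which $D$ and $D_k$ coincide, and then to reabsorb that half-strip into $D_k$ by means of a hyperbolic ball together with Lemma~\ref{Lem:hyp dist in hyp disc}. Set
\[
U_k:=\{w\in\C:\ |\Re w|<1+\tfrac1k,\ \Im w>y_{k+1}\}.
\]
Every slit removed in forming $D$ of index $\le k$ lies in $\{|\Re w|\ge 1+\tfrac1k\}$, and every slit of index $\ge k+1$ lies in $\{\Im w\le y_{k+1}\}$ (because $y_j\le y_{k+1}$ for $j\ge k+1$ as soon as $\{y_j\}$ is decreasing); hence $U_k\subset D$ and $U_k\subset D_k$, no matter how the later $y_j$ are chosen. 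Since $D_k$ is simply connected ($\widehat{\C}\setminus D_k$ is an arc through $\infty$), Lemma~\ref{Lem:hyp dist in hyp disc} transfers to $D_k$ via a Riemann map: for $c_k:=1+\tfrac1{2k}$ and $M_k:=1$ there is $R_k\ge 1$ with
\[
k_{B_{D_k}(p,R_k)}(z,w)\le c_k\,k_{D_k}(z,w)\qquad\text{whenever }z,w\in B_{D_k}(p,M_k),
\]
for every $p\in D_k$, where $B_{D_k}(p,r)$ is the $k_{D_k}$-ball. As $\Sigma_k:=\{w:|\Re w|<1+\tfrac1k\}\subset D_k$, the strip formula recalled above gives $k_{D_k}(\alpha_k i,P_k^{\pm})\le k_{\Sigma_k}(\alpha_k i,P_k^{\pm})=\tfrac{\pi}{8(1+1/k)}<1$, so $P_k^{\pm}\in B_{D_k}(\alpha_k i,M_k)$. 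Therefore, as soon as $B_{D_k}(\alpha_k i,R_k)\subset U_k$, monotonicity of the hyperbolic distance yields
\[
k_D(P_k^+,P_k^-)\le k_{U_k}(P_k^+,P_k^-)\le k_{B_{D_k}(\alpha_k i,R_k)}(P_k^+,P_k^-)\le c_k\,k_{D_k}(P_k^+,P_k^-),
\]
which is exactly the inequality claimed.

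So everything reduces to choosing strictly decreasing sequences $\{y_k\},\{\alpha_k\}$ tending to $-\infty$ with $B_{D_k}(\alpha_k i,R_k)\subset U_k$ for each $k$. Here I would use the Koebe $1/4$-distortion estimate $\lambda_{D_k}(w)\ge\frac1{2\,\mathrm{dist}(w,\partial D_k)}$ for the hyperbolic density of the simply connected domain $D_k$: if $w$ lies in the ``middle channel'' $\{|\Re w|<1+\tfrac1k,\ \Im w<y_k\}$ then the nearest point of $\partial D_k$ sits on one of the two slits $\{\Re w=\pm(1+\tfrac1k),\ \Im w\le y_k\}$, so $\mathrm{dist}(w,\partial D_k)\le 1+\tfrac1k\le 2$ and $\lambda_{D_k}(w)\ge\tfrac14$; hence any path in $D_k$ staying in that channel has hyperbolic length at least $\tfrac14$ times its Euclidean length. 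Below level $y_k$ the two slits block every sideways exit of the channel, so a path in $D_k$ issuing from $\alpha_k i$ either stays in $\{\Im w<y_k\}$ — and is then confined to the middle channel — or must reach the level $\{\Im w=y_k\}$, which already forces hyperbolic length $\ge\tfrac14(y_k-\alpha_k)$. Consequently, if $\alpha_k\le y_k-4R_k$ and $y_{k+1}\le\alpha_k-4R_k$, then every $z\in B_{D_k}(\alpha_k i,R_k)$ satisfies $\Im z<y_k$, hence lies in the middle channel, hence $|\Re z|<1+\tfrac1k$ and $|z-\alpha_k i|\le 4\,k_{D_k}(\alpha_k i,z)<4R_k$, so $\Im z>\alpha_k-4R_k\ge y_{k+1}$; that is, $B_{D_k}(\alpha_k i,R_k)\subset U_k$.

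This dictates the induction. Pick $y_1<0$; having chosen $y_1>\dots>y_k$ (which determines $D_k$, hence $R_k$), set $\alpha_k:=\min\{y_k-4R_k,\ \alpha_{k-1}-1,\ -k\}$ (with $\alpha_0:=0$) and then $y_{k+1}:=\min\{\alpha_k-4R_k,\ y_k-1,\ -(k+1)\}$. Both sequences are then strictly decreasing with limit $-\infty$, the conditions $\alpha_k\le y_k-4R_k$ and $y_{k+1}\le\alpha_k-4R_k$ hold, and $P_k^{\pm}\in U_k$ (their real part is $0$ and their imaginary part is $\alpha_k\pm\tfrac12>\alpha_k-4R_k\ge y_{k+1}$). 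By the previous paragraph $B_{D_k}(\alpha_k i,R_k)\subset U_k\subset D$, so the displayed chain gives $k_D(P_k^+,P_k^-)\le(1+\tfrac1{2k})k_{D_k}(P_k^+,P_k^-)$ for every $k$, as required. The one step that is not mere bookkeeping is the containment $B_{D_k}(\alpha_k i,R_k)\subset U_k$ — the ``channel localization'' of the hyperbolic metric of $D_k$ — and that is precisely what the Koebe distortion estimate is used for; the rest is monotonicity of the hyperbolic distance together with careful tracking of the positions of the slits.
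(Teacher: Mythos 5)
Your proof is correct, and it shares the paper's skeleton: localize the hyperbolic metric of $D_k$ near $\alpha_k i$ by transferring Lemma~\ref{Lem:hyp dist in hyp disc} through a Riemann map, then show that the hyperbolic ball $B_{D_k}(\alpha_k i,R_k)$ sits inside a set that is also contained in $D$, so that monotonicity of the hyperbolic distance closes the chain. Where you genuinely diverge is in how that containment is obtained. The paper argues softly: for the slits of index $j<k$ it uses prime ends (the preimage of $D_k\setminus D_j$ under a Riemann map of $D_k$ does not accumulate at the prime end corresponding to $-i\infty$, so for $\alpha_k$ negative enough the hyperbolic ball misses it), and for the slits of index $j>k$ it uses compactness of the closed ball in $D_k$ to define $\beta_k=\inf\{\Im w:\,w\in B_k(\alpha_k i,R_k)\}$ and then pushes the later $y_j$ below $\beta_k$. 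You instead give a quantitative Euclidean estimate: the distance-to-the-boundary lower bound for the hyperbolic density of the simply connected domain $D_k$ bounds the density from below in the channel $\{|\Re w|<1+\frac{1}{k},\ \Im w<y_k\}$, and since the slits block every sideways exit, any path of hyperbolic length $<R_k$ issued from $\alpha_k i$ is confined to the channel and travels a controlled Euclidean distance; this traps $B_{D_k}(\alpha_k i,R_k)$ in the explicit half-strip $U_k$, which by construction avoids every slit of $D$ simultaneously. Your route is more self-contained (no prime ends, no separate treatment of earlier versus later slits) and produces explicit recursions for $y_k$ and $\alpha_k$; the paper's is shorter per step but delegates the induction to the reader. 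One constant needs fixing: in the normalization the paper uses (for which $k_{S_{2r}}(is,it)=\frac{\pi}{4r}|t-s|$, i.e.\ $\lambda_{\D}(z)=(1-|z|^2)^{-1}$), the Koebe estimate for a simply connected domain is $\lambda_{D_k}(w)\ge \frac{1}{4\,\mathrm{dist}(w,\partial D_k)}$, not $\frac{1}{2\,\mathrm{dist}(w,\partial D_k)}$, so your density bound in the channel is $\frac{1}{8}$ rather than $\frac{1}{4}$ and the offsets $4R_k$ should be $8R_k$ throughout; your recursion absorbs this with no other change.
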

\begin{proof} Given $p\in D_k$ and $R>0$, let $B_k(p,R):=\{ w\in D_k: \, k_{D_k} (p,w)<R\}$. Let $S_{k}:=\{w\in \C:\, |\Re z|<1+1/k\}$. Clearly $S_{k}\subset D_{k}$ for all $k$. Hence, 
\begin{equation}\label{Eq:pre model no conforme2}
k_{D_{k}}((\alpha_{k}+1/2)i,\alpha_{k}i)\leq k_{S_{k}}((\alpha_{k}+1/2)i,\alpha_{k}i)=\frac{\pi}{8(1+1/k)}<\pi. 
\end{equation}

Set $y_{1}=0$ and $\alpha_{1}=-1/2$. By Lemma \ref{Lem:hyp dist in hyp disc} and the invariance of hyperbolic distance with respect to biholomorphisms, there exists $R_{1}>0$ such that for all $z,w\in B_{1}(-i/2,\pi)$, we have 
$k_{B_{1}(-i/2,R_{1})}(z,w)\leq \left(1+\frac{1}{2}\right) k_{D_{1}}(z,w)$. In particular, by \eqref{Eq:pre model no conforme2},
\begin{equation}\label{Eq:pre model no conforme3}
k_{B_{1}(-i/2,R_{1})}(0,-\frac{i}{2})\leq \left(1+\frac{1}{2}\right) k_{D_{1}}(0,-\frac{i}{2}).
\end{equation}
Since the closure of $B_{1}(-i/2,R_{1})$ is compact in $D_{1}$, $
\beta_{1}:=\inf\{\Im w:\, w\in B_{1}(-i/2,R_{1})\}>-\infty$.

If $y_{j}<\beta_{1}$ for all $j\geq 2$, then  $ B_{1}(-i/2,R_{1})\subset D$. Hence, $k_{D}(0,-\frac{i}{2})\leq k_{B_{1}(-i/2,R_{1})}(0,-\frac{i}{2})$ and the statement holds for $k=1$ by \eqref{Eq:pre model no conforme3}. 

Now we choose $y_2$ and $\alpha_2$. Let $y_{2}<\beta_{1}$ and let $f_{2}:\D\to D_2$ be a Riemann map. By \cite[Lemma 2, p. 162]{Shabook83}, there exists $p:=\lim_{t\to-\infty} f_{2}^{-1}(it)$. Let $\Gamma:=   f_{2}^{-1}(D_2\setminus D_1)$.

For each $n=0,1,\ldots$, let $J_n$ be the segment in $D_2$ with extreme points $-1-1/2+i(y_2-n)$ and $1+1/2+i(y_2-n)$ and $C_n=f_2^{-1}(J_n)$. Then $(C_n)$ is a null chain in $\D$ and its impression is the singleton $p$. Clearly,  $\overline{\Gamma}$ does not intersect the interior part of $C_n$ for all $n$ and then $p\not\in \overline{\Gamma}$.

Let $R_{2}$ be given by Lemma \ref{Lem:hyp dist in hyp disc} with $M=\pi$ and $c=1+1/4$. Since $p\notin \overline{\Gamma}$, there exists $\alpha_{2}<y_{2}$ such that
$D^{hyp}(f_{2}^{-1}(\alpha_{2}i),R_{2})\cap \overline{\Gamma}=\emptyset$. Hence, $B_{2}(\alpha_{2}i,R_{2})\subset D_{1}\cap D_{2}$. Therefore, arguing as before, the statement holds for $k=2$.

Let $\beta_{2}=\inf\{\Im w:\, w\in B_{2}(\alpha_{2}i,R_{2})\}>-\infty.$ Since $\alpha_{2}<y_{2}<\beta_{1}$, we get that $\beta_{2}<\beta_{1}$ and the lemma is proved repeating the previous argument by induction.
 \end{proof}
 
Now, choose the sequences $\{y_k\}, \{\alpha_k\}$ as in the previous lemma. Taking into account that $D$ is symmetric with respect to the imaginary axis, hence the imaginary axis is a geodesic for the hyperbolic distance, we have 
\begin{equation*}
\begin{split}
&k_{S}(0, i(\alpha_k+1/2))-k_{D}(0,i(\alpha_k+1/2))\\
&  \quad \geq\sum_{j=1}^{k-1} \left[k_{S}(i(\alpha_j-1/2), i(\alpha_j+1/2))+k_{S}(i(\alpha_j-1/2), i(\alpha_{j+1}+1/2))\right.\\
& \quad\qquad \left.-k_{D}(i(\alpha_j-1/2), i(\alpha_j+1/2))-k_{D}(i(\alpha_j-1/2), i(\alpha_{j+1}+1/2))\right]\\
& \quad\geq  \sum_{j=1}^{k-1} \left[k_{S}(i(\alpha_j-1/2), i(\alpha_j+1/2))-k_{D}(i(\alpha_j-1/2), i(\alpha_j+1/2))\right]\\
&\quad \geq  \sum_{j=1}^{k-1} \left[k_{S}(i(\alpha_j-1/2), i(\alpha_j+1/2))-\left(1+\frac{1}{2j}\right)k_{D_j}(i(\alpha_j-1/2), i(\alpha_j+1/2))\right.]\\
&\quad \geq  \sum_{j=1}^{k-1} \left[k_{S}(i(\alpha_j-1/2), i(\alpha_j+1/2))-\left(1+\frac{1}{2j}\right)k_{S_j}(i(\alpha_j-1/2), i(\alpha_j+1/2))\right.]\\
&\quad =  \sum_{j=1}^{k-1}\left[\frac{\pi}{4}-\left(1+\frac{1}{2j}\right)\frac{\pi}{4\left(1+\frac{1}{j}\right)}\right]=\frac{\pi}{8}\sum_{j=1}^{k-1}\frac{1}{1+j},
\end{split}
\end{equation*}
and \eqref{no-reg3} holds. 

Therefore, $g$ is not regular at $\sigma$. Note that, by Remark \ref{unico-modello}, for every other pre-model $(\D, \tilde g, \eta_t)$ for $(\phi_t)$ at $\sigma$ the map $\tilde g$ is not regular at $\sigma$.


\begin{thebibliography}{99}
\bibitem{Ababook89} M. Abate, {\sl Iteration theory of holomorphic maps on taut manifolds}, Mediterranean Press, Rende, 1989.


\bibitem{Ar} L. Arosio, {\sl Canonical models for the forward and backward iteration of holomorphic maps}. J. Geom. Anal. 27 (2017), no. 2, 1178-1210.


\bibitem{AroBra16} L. Arosio, F. Bracci, {\sl Canonical models for holomorphic iteration}. Trans. Amer. Math. Soc., 5, \textbf{368}, (2016), 3305--3339.


\bibitem{BerPor78} E. Berkson, H. Porta, \textsl{Semigroups of
holomorphic functions and composition operators}. Michigan
Math. J., \textbf{25} (1978), 101--115.




\bibitem{BrTAMS} F. Bracci, {\sl Fixed points of commuting holomorphic mappings other than the Wolff point}. Trans. Amer. Math. Soc. 355, 6, (2003), 2569-2584.


\bibitem{BrGu} F. Bracci, P. Gumenyuk, {\sl Contact points and fractional singularities for semigroups of holomorphic self-maps in the unit disc}.  J. Anal. Math., 130, (2016), 1, 185-217. 

\bibitem{BCD} F. Bracci, M. D. Contreras, S. D\'iaz-Madrigal, {\sl Topological invariants for semigroups of holomorphic self-maps of the unit disc}.  J. Math. Pures Appl., 107, 1, (2017), 78-99. 

\bibitem{ColLohbook66} E.F. Collingwood, A.J. Lohwater, {\sl The theory of cluster sets}, Cambridge Tracts in Mathematics and Mathematical Physics, No. 56 Cambridge Univ. Press, Cambridge, 1966.


\bibitem{ConDia05a} M. D. Contreras, S. D\'iaz-Madrigal, {\sl Analytic flows on the unit disk: angular derivatives and boundary fixed points}. Pacific J. Math., \textbf{222}  (2005), 253--286.

\bibitem{CoDiPo04} M. D. Contreras, S. D\'iaz-Madrigal, and Ch. Pommerenke, {\sl Fixed points and boundary behavior of the Koenigs function}. Ann. Acad. Sci. Fenn. Math., \textbf{29} (2004), 471--488.

\bibitem{CoDiPo06} M. D. Contreras, S. D\'iaz-Madrigal, Ch. Pommerenke, {\sl On boundary critical points for semigroups of analytic functions}. Math. Scand. \textbf{98} (2006), 125--142.


\bibitem{Cow81} C. C. Cowen, {\sl Iteration and the solution of functional equations for functions analytic in the unit disk}, Trans. Amer. Math. Soc. \textbf{265} (1981), 69--95.
\bibitem{Cow84} C. C. Cowen, {\sl Commuting analytic functions,} Trans. Amer. Math. Soc. \textbf{283} (1984),
685--695.
\bibitem{CowMacbook95} C.\,C.\,\,Cowen, B.\,D.\,\,MacCluer, {\sl Composition operators on spaces of analytic functions}. Studies in Advanced Mathematics. CRC Press, Boca Raton, FL, 1995.




\bibitem{EliShobook10} M.\,Elin and D.\,Shoikhet, Linearization models for complex dynamical systems. Topics in univalent functions, functional equations and semigroup theory. Birkh\"auser Basel, 2010.


\bibitem{ElShZa08-1} M. Elin, D. Shoikhet, L.  Zalcman. {\sl A flower structure of backward flow invariant domains for semigroups}.
 C. R. Math. Acad. Sci. Paris  346  (2008),  no. 5-6, 293--296.

\bibitem{ElShZa08} M. Elin, D. Shoikhet, L. Zalcman, {\sl A flower structure of backward flow invariant domains for semigroups}. Ann. Acad. Sci. Fenn. Math., \textbf{33} (2008), 3--34.



\bibitem{Gum14} P. Gumenyuk, {\sl Angular and unrestricted limits of one-parameter semigroups in the unit disk}. J. Math. Anal. Appl.,  \textbf{417},  (2014), 200--224.


\bibitem{Hei} M.H. Heins {\sl A generalization of the Aumann-Carathodory ``Starrheitssatz''}. Duke Math. 8 (1941),
312-316.


\bibitem{Pog98} P. Poggi-Corradini, {\sl Angular derivatives at
boundary fixed points for self-maps of the disk,} Proc. Amer. Math. Soc.
\textbf{126 }(1998) 1697--1708.

\bibitem{Pog00} P. Poggi-Corradini, {\sl Canonical conjugation at fixed points other than the Denjoy-Wolff point}. Ann. Acad. Sci. Fenn. Math. 25 (2000), 2, 487-499.

\bibitem{Pog03} P. Poggi-Corradini, {\sl Backward-iteration sequences with bounded hyperbolic steps for analytic self-maps of the disk}.
 Rev. Mat. Iberoamericana  19  (2003),  no. 3, 943-970.
 
\bibitem{Pog04} P. Poggi-Corradini, {\sl Iteration of analytic self-maps of the disk: an overview}. Cubo  6  (2004),  no. 1, 73-80.

\bibitem{Pombook75}Ch. Pommerenke, {\sl Univalent functions}. Vandenhoeck \&
Ruprecht, G\"{o}ttingen, 1975.
\bibitem{Pombook92} Ch. Pommerenke, {\sl Boundary behaviour of conformal mappings}. Springer-Verlag, 1992.

\bibitem{Shabook83} J.H.\,Shapiro, Composition Operators and Classical
Function Theory. Springer-Verlag, New York, 1993.

\bibitem{Shobook01} D. Shoikhet, {\sl Semigroups in geometrical function theory}. Kluwer Academic Publishers, Dordrecht, 2001.

\bibitem{Sis85} A. G. Siskakis, {\sl Semigroups of composition operators and the Ces\`{a}ro operator on $H^{p}(D)$}. Ph.\,D.
Thesis, University of Illinois, 1985

\bibitem{Sis98} A. G. Siskakis, {\sl Semigroups of composition operators
on spaces of analytic functions, a review,} 229--252. \emph{Contemp. Math.}
vol. 213, Amer. Math. Soc., Providence, RI, 1998.


\end{thebibliography}
\end{document}